\documentclass[11pt]{article}

\usepackage{amsmath}
\usepackage{amssymb}
\usepackage{amsfonts}
\usepackage{amsthm}
\usepackage{array}
\usepackage{bm}
\usepackage[shortlabels]{enumitem}
\usepackage{stmaryrd}

\usepackage{amsxtra}
\usepackage{array}
\usepackage{mathrsfs}
\usepackage{slashed}
\usepackage{xcolor}
\usepackage{tikz-cd}
\usepackage{tkz-euclide} 
\usepackage{pgfplots}

\newcolumntype{C}[1]{>{\centering\hspace{0pt}}p{#1}}
\usepackage[margin=1in]{geometry}

\newcommand{\GL}{\mathrm{GL}}

\newcommand{\SO}{\mathrm{SO}}
\newcommand{\Spin}{\mathrm{Spin}}
\newcommand{\U}{\mathrm{U}}
\newcommand{\SU}{\mathrm{SU}}
\newcommand{\Sp}{\mathrm{Sp}}

\newcommand{\G}{\mathrm{G}}

\newcommand{\Ker}{\mathrm{Ker}}

\newcommand{\End}{\mathrm{End}}

\newcommand{\Sym}{\mathrm{Sym}}

\newcommand{\Cone}{\mathrm{C}}

\newcommand{\Z}{\mathbb{Z}}

\newcommand{\R}{\mathbb{R}}
\newcommand{\C}{\mathbb{C}}
\newcommand{\HH}{\mathbb{H}}

\newcommand{\CP}{\mathbb{CP}}

\newcommand{\vol}{\mathrm{vol}}
\newcommand{\Hol}{\mathrm{Hol}}

\newcommand{\LB}{[\![}
\newcommand{\RB}{]\!]}

\newtheorem{thm}{Theorem}[section]
\newtheorem{prop}[thm]{Proposition}
\newtheorem{lem}[thm]{Lemma}
\newtheorem{cor}[thm]{Corollary}

\theoremstyle{definition}
\newtheorem{defn}[thm]{Definition}
\newtheorem{example}[thm]{Example}
\newtheorem{rmk}[thm]{Remark}
\newtheorem{notat}[thm]{Notation}

\pagestyle{plain}
\setcounter{page}{1}
\pagenumbering{arabic}

\usepackage[nottoc]{tocbibind}
\setcounter{tocdepth}{1}
\numberwithin{equation}{section}

\usepackage{hyperref}
\hypersetup{colorlinks, linkcolor=blue, citecolor=blue}


\title{On $\Sp(n)$-Instantons and the \\ Fourier-Mukai Transform of Complex Lagrangians}
\author{Jesse Madnick, Emily Autumn Windes}
\date{July 2024}

\newcommand{\Addresses}
{{  \bigskip
\noindent	\textsc{Seton Hall University} \par\nopagebreak
\noindent	\textsc{South Orange, NJ, United States}\par\nopagebreak
\noindent	\texttt{jesse.ochs.madnick@gmail.com} \\

\medskip
\noindent	\textsc{University of Oregon} \par\nopagebreak
\noindent	\textsc{Eugene, OR, United States} \par\nopagebreak
\noindent	\texttt{ewindes@uoregon.edu} \\

}}


\begin{document}

\maketitle

\begin{abstract}
The real Fourier-Mukai (RFM) transform relates calibrated graphs to so-called ``deformed instantons" on Hermitian line bundles.  We show that under the RFM transform, complex Lagrangian graphs in $\R^{2n} \times T^{2n}$ correspond to $\Sp(n)$-instantons over $\R^{2n} \times (T^{2n})^*$.  In other words, the deformed $\Sp(n)$-instanton equation coincides with the usual $\Sp(n)$-instanton equation. \\
\indent Motivated by this observation, we study $\Sp(n)$-instantons on hyperk\"{a}hler manifolds $X^{4n}$, with an emphasis on conical singularities.  First, when $X = C(M)$ is a hyperk\"{a}hler cone, we relate $\Sp(n)$-instantons on $X$ to tri-contact instantons on the $3$-Sasakian link $M$ and consider various dimensional reductions.  Second, when $X$ is an asymptotically conical (AC) hyperk\"{a}hler manifold of rate $\nu \leq -\frac{2}{3}(2n+1)$, we prove a Lewis-type theorem to the following effect: If the set of AC $\Sp(n)$-instantons is non-empty, then every AC Hermitian Yang-Mills connection over $X$ with sufficiently fast decay at infinity is an $\Sp(n)$-instanton.
\end{abstract}

 \tableofcontents
 
\section{Introduction}

\subsection{Background and Motivation}

\indent \indent Gauge-theoretic equations are a potent tool in geometry and topology.  As such, mathematicians and physicists have begun considering certain gauge-theoretic PDE in the context of Ricci-flat manifolds with special holonomy, i.e., those Riemannian manifolds with holonomy group
$$H = \textstyle \SU(n), \ \ \Sp(n), \ \ \G_2, \ \ \Spin(7).$$
Perhaps the simplest such PDE are the \emph{$H$-instanton equations}, which generalize the anti-self-dual (ASD) equations familiar from the theory of $4$-manifolds. \\
\indent Historically, the study of these four instanton equations has proceeded along rather different lines.  For example, the $\SU(n)$-instanton equation is precisely the \emph{primitive Hermitian Yang-Mills} (pHYM) condition, a highly well-studied PDE that enjoys deep ties to algebraic geometry via the Donaldson--Uhlenbeck--Yau Theorem.  Where $\G_2$- and $\Spin(7)$-instantons are concerned, influential papers of Donaldson--Thomas \cite{donaldson1998gauge} and Donaldson--Segal \cite{donaldson2011gauge} suggest that such connections might play a role in defining an enumerative invariant of $\G_2$- and $\Spin(7)$-manifolds.   \\
\indent The $\Sp(n)$-instanton equation, on the other hand, has received comparably less attention.  In the literature, vector bundles equipped with an $\Sp(n)$-instanton are called \emph{hyperholomorphic}, as they carry a triple of holomorphic structures that satisfy the quaternionic relations.  Such bundles have been studied mathematically by, for example, Verbitsky \cite{verbitsky1993hyperholomorphic, verbitsky2003hyperkahler}, Kaledin--Verbitsky \cite{kaledin1998non}, Feix \cite{feix2002hypercomplex}, Haydys \cite{haydys2008hyperkahler}, Hitchin \cite{hitchin2014hyperholomorphic}, Iona{\c{s}} \cite{ionacs2019twisted}, Meazzini--Onorati \cite{meazzini2023hyper}, among others, and arise in physics via $4$-dimensional supersymmetric field theories \cite{neitzke2011hyperholomorphic}.  Given the importance of ASD connections (which are nothing other than $\Sp(1)$-instantons) in the study of $4$-manifolds, we expect that the moduli space of $\Sp(n)$-instantons will prove useful in the study of hyperk\"{a}hler $4n$-manifolds. \\

\indent A second motivation for this work comes from mirror symmetry.  Roughly speaking, mirror symmetry predicts the existence of deep relationships between the calibrated geometry of a special holonomy manifold $X$, and gauge theory on its ``mirror" $X^*$.  In the Calabi-Yau setting, Leung--Yau--Zaslow \cite{leung2000special} showed that under certain simplifying assumptions, the \emph{real Fourier-Mukai (RFM) transform} provides such a relationship.  Later, Lee--Leung \cite{lee2009geometric} studied this transform in the $\G_2$ and $\Spin(7)$ settings, the latter of which was explored further by Kawai--Yamamoto \cite{kawai2021real}.  In all three contexts, the RFM transform relates calibrated graphs to \emph{deformed $H$-instantons}, a nonlinear analogue of the $H$-instanton equation.
\begin{align*}
\text{Calabi-Yau geometry:} &&   \text{special Lagrangian graph} & \ \longleftrightarrow \text{ deformed HYM connection} \\
\text{$\G_2$ geometry:} & & \text{co/associative graph} & \ \longleftrightarrow \text{ deformed $\G_2$-instanton} \\
\text{$\Spin(7)$ geometry:} & & \text{Cayley graph} &\  \longleftrightarrow \text{ deformed $\Spin(7)$-instanton}
\end{align*}
In hyperk\"{a}hler $4n$-manifolds, perhaps the most natural class of calibrated submanifolds consists of the \emph{complex Lagrangians}, those $2n$-dimensional submanifolds that are both complex with respect to $I$ and Lagrangian with respect to $J$ and $K$.  Thus, by analogy with the Calabi-Yau, $\G_2$, and $\Spin(7)$ settings, we would like to say that a connection arising from the RFM transform of a graphical submanifold $S$ satisfies the \emph{deformed $\Sp(n)$-instanton equation} if and only if $S$ is complex Lagrangian. \\
\indent Our first result is a derivation of this putative deformed $\Sp(n)$-instanton equation.  Surprisingly, we find that it coincides with the ordinary $\Sp(n)$-instanton equation.  To be precise:

\begin{thm} \label{thm:RFM-Result} Let $B \subset \R^{2n}$ be an open set, and equip $X = B \times T^{2n}$ with a product hyperk\"{a}hler structure $(g, (I,J,K))$ such that $B \times \{\mathrm{pt}\} \subset X$ is a complex Lagrangian with respect to $I$.  Let $h \colon B \to T^{2n}$ be a smooth function, let $S_h \subset X$ be its graph, and let $\nabla^h$ be its real Fourier-Mukai transform on $\underline{\C} \to B \times (T^n)^*$.  Then:
$$S_h \text{ is complex Lagrangian with respect to }I \ \iff \ \nabla^h \text{ is an }\Sp(n)\text{-instanton.}$$
\end{thm}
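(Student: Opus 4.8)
The plan is to reduce both sides of the claimed equivalence to explicit linear systems on the Jacobian of $h$ and to check that these systems coincide. First I would fix flat hyperk\"ahler coordinates adapted to the splitting $X = B \times T^{2n}$: writing $B \subset \C^n$ with $I$-holomorphic coordinates $z^\alpha = x^\alpha + i\,x^{n+\alpha}$ and the fibre $T^{2n}$ with $I$-holomorphic coordinates $w^\alpha = y^\alpha + i\,y^{n+\alpha}$, the hypothesis that $B \times \{\mathrm{pt}\}$ is complex Lagrangian forces the holomorphic symplectic form to be $\Omega := \omega_J + i\,\omega_K = \sum_\alpha dz^\alpha \wedge dw^\alpha$ (up to normalization). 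Parametrizing $S_h$ by $z \mapsto (z, w(z,\bar{z}))$ with $w^\alpha := h^\alpha + i\,h^{n+\alpha}$, I would use the standard fact that a middle-dimensional submanifold is complex Lagrangian iff $\Omega$ restricts to zero on it (the vanishing of $\omega_J|_{S}$ and $\omega_K|_{S}$ already forcing $I$-invariance of $TS$). Expanding $\Omega|_{S_h}$ and separating the $dz \wedge dz$ and $dz \wedge d\bar{z}$ components then gives
\[
S_h \text{ is complex Lagrangian} \iff \frac{\partial w^\alpha}{\partial \bar{z}^\beta} = 0 \ \text{ and }\ \frac{\partial w^\alpha}{\partial z^\beta} = \frac{\partial w^\beta}{\partial z^\alpha} \quad (\forall\,\alpha,\beta),
\]
i.e.\ $w$ is holomorphic with symmetric Jacobian.

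Next I would compute the transform. On the dual side $X^* = B \times (T^{2n})^*$ with dual fibre coordinates $\check{y}_1, \dots, \check{y}_{2n}$, the real Fourier--Mukai transform produces a connection $\nabla^h$ on $\underline{\C}$ whose curvature is, up to a nonzero constant, the base--fibre $2$-form $F = \sum_{j,a} (\partial h^a / \partial x^j)\, dx^j \wedge d\check{y}_a$ (the type decomposition of $F$ being insensitive to the constant). I would equip $X^*$ with the mirror hyperk\"ahler structure obtained by dualizing the fibre, in which $I^*$ has holomorphic coordinates $\check{z}^\alpha = x^\alpha + i\,x^{n+\alpha}$ and $\check{w}_\alpha = \check{y}_\alpha + i\,\check{y}_{n+\alpha}$ and holomorphic symplectic form $\check{\Omega} = \sum_\alpha d\check{z}^\alpha \wedge d\check{w}_\alpha$. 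By the characterization of $\Sp(n)$-instantons recalled above, $\nabla^h$ is an instanton iff $F$ is of type $(1,1)$ with respect to $I^*$, $J^*$, and $K^*$; since $F$ is $(1,1)$ with respect to a complex structure iff the endomorphism associated to $F$ via the metric commutes with it, and an endomorphism commuting with $I^*$ and $J^*$ automatically commutes with $K^* = I^* J^*$, it suffices to impose the $I^*$- and $J^*$-conditions.

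The heart of the proof is then a type decomposition of $F$. Converting $dx^j, d\check{y}_a$ into the $I^*$-complex coframe, I expect the $(2,0)_{I^*}$-component of $F$ (the coefficients of $d\check{z}^\alpha \wedge d\check{w}_\beta$) to be proportional to $\overline{\partial w^\beta / \partial \bar{z}^\alpha}$, so that $F$ is $(1,1)$ with respect to $I^*$ exactly when $w$ is holomorphic --- matching the first complex Lagrangian condition and forcing $F = \tfrac12 \sum_{\alpha\beta} (\partial w^\beta / \partial z^\alpha)\, d\check{z}^\alpha \wedge d\overline{\check{w}}_\beta + \text{c.c.}$ For the $J^*$-condition I would pass to the $(1,0)_{J^*}$-coframe $\theta^\alpha = d\check{z}^\alpha + i\, d\overline{\check{w}}_\alpha$, $\phi_\alpha = d\check{w}_\alpha - i\, d\overline{\check{z}}^\alpha$ (read off from $J^* \partial_{\check{z}^\alpha} = \partial_{\overline{\check{w}}_\alpha}$ and $J^* \partial_{\check{w}_\alpha} = -\partial_{\overline{\check{z}}^\alpha}$) and extract $F^{2,0}_{J^*}$; this should come out proportional to $\sum_{\alpha\beta} (\partial w^\beta / \partial z^\alpha)\, \theta^\alpha \wedge \theta^\beta$ plus its conjugate, which vanishes precisely when the Jacobian is symmetric --- matching the second complex Lagrangian condition. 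Combining the two equivalences with the automatic $K^*$-condition then yields the theorem.

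The main obstacle is bookkeeping rather than conceptual: correctly pinning down the mirror hyperk\"ahler structure on $X^*$ --- in particular the precise action of $J^*$ on the dualized fibre coordinates, signs included --- and then carrying out the two change-of-coframe computations without sign errors. A useful consistency check along the way is the case $n = 1$, where $X^*$ is a hyperk\"ahler $4$-manifold, the $\Sp(1)$-instanton condition reduces to anti-self-duality, and the symmetric-Jacobian condition is vacuous, so that both sides collapse to the single Cauchy--Riemann system for $w = h^1 + i\,h^2$.
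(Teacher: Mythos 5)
Your proposal is correct, and its overall strategy is the same as the paper's: work in the flat model, reduce both sides of the equivalence to linear conditions on the Jacobian of $h$, and check that the two linear systems coincide, using the algebraic fact that curvature of type $(1,1)$ with respect to two of the three complex structures already forces the full $\Sp(n)$-instanton condition. The execution, however, is genuinely different, and worth comparing. The paper stays entirely real: it encodes $dh$ in four $n \times n$ blocks, computes $\omega_J|_{S_h}$ and $\omega_K|_{S_h}$ entry by entry (Lemma \ref{lem:omega-comp}), determines when the curvature is of type $(1,1)$ for $J$ and for $K$ (Lemmas \ref{lem:J-Type} and \ref{lem:K-Type}) --- the complex structure $I$ is never used --- and closes with Lemma \ref{lem:Spn-Equiv-Cond}. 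You complexify both sides: the Lagrangian condition becomes the vanishing of $\Omega = \omega_J + i\omega_K = \sum dz^\alpha \wedge dw^\alpha$ on the graph, i.e.\ ``$w$ holomorphic with symmetric Jacobian,'' and the instanton condition is tested against the pair $(I^*, J^*)$, with $K^*$ free by the commutation argument. This halves the bookkeeping and makes the mechanism transparent ($I^*$-$(1,1)$ is exactly holomorphicity, $J^*$-$(1,1)$ is exactly symmetry of the Jacobian); your coframes $\theta^\alpha, \phi_\alpha$ and the identification of $J^*$ do agree with the paper's conventions, so the computations you defer would come out as you predict. What the paper's route buys in exchange is that it never needs your normalization step --- that a general product hyperk\"ahler structure with $B \times \{\mathrm{pt}\}$ complex Lagrangian can be put in the Darboux form $\sum dz^\alpha \wedge dw^\alpha$ compatibly with the torus lattice and with the dual-torus identification defining the instanton equation on $X^*$; the paper simply fixes the standard model. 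Two points in your outline should be made explicit in a final write-up. First, the claim ``instanton iff $(1,1)$ for $I^*, J^*, K^*$'' silently uses that a $2$-form of type $(1,1)$ for all three structures is automatically primitive for all three; this is exactly (\ref{eq:Tri-Primitivity}) (equivalently Lemma \ref{lem:Spn-Equiv-Cond}), and it is the one algebraic lemma your argument cannot do without, since the $\Sp(n)$-instanton condition of Proposition \ref{prop:Sp(n)-equiv-def} includes the three primitivity equations. Second, the $(2,0)_{J^*}$-part of the reduced curvature is not $\sum_{\alpha,\beta} P_{\alpha\beta}\,\theta^\alpha \wedge \theta^\beta$ plus its conjugate, but rather $c \sum_{\alpha,\beta} P_{\alpha\beta}\,\theta^\alpha \wedge \theta^\beta + c' \sum_{\alpha,\beta} \overline{P_{\alpha\beta}}\,\phi_\alpha \wedge \phi_\beta$ with $c, c'$ nonzero constants, where $P_{\alpha\beta} = \partial w^\beta/\partial z^\alpha$; both summands vanish precisely when $P$ is symmetric, so your conclusion is unaffected.
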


\indent Perhaps the most striking feature of Theorem \ref{thm:RFM-Result} is the linearity of the deformed $\Sp(n)$-instanton equation.  Indeed, since the complex Lagrangian condition is defined by the vanishing of a pair of $2$-forms, one expects that the deformed $\Sp(n)$-instanton equation will be quadratic in the curvature rather than linear.  (The reason for this linearity is explained in the proof.)  From the point of view of mirror symmetry, it is tempting to interpret the above result as an instance of the philosophy that hyperk\"{a}hler manifolds are in some sense ``self-mirror" \cite{verbitsky1999mirror}.

\subsection{$\Sp(n)$-Instantons}

\indent \indent We are thus motivated to study $\Sp(n)$-instantons on hyperk\"{a}hler manifolds $(X, g, (I,J,K))$.  Essentially by definition,
$$A \text{ is an }\Sp(n)\text{-instanton} \ \iff \ A \text{ is primitive HYM with respect to } I, J,\text{ and }K.$$
In this work, we will study the situation in which $X$ is a cone or asymptotically conical. \\
\indent Suppose first that $X^{4n+4} = \Cone(M)$ is a hyperk\"{a}hler cone with $n \geq 1$, and let $M^{4n+3}$ be its $3$-Sasakian link.  In Proposition \ref{prop:Cone-Sp(n)}, we show that conical $\Sp(n+1)$-instantons on $X$ are modeled by \emph{tri-contact instantons} on $M$, connections that are contact instantons in an $S^2$-family of ways.  This raises the question of how to construct tri-contact instantons.  To this end, we establish two dimensional reductions.  Specifically, fixing one of the projections $p \colon M \to Z^{4n+2}$ to the $(4n+2)$-dimensional (K\"{a}hler-Einstein) twistor space $Z$, and letting $h \colon M \to Q^{4n}$ denote the projection to the $4n$-dimensional quaternionic-K\"{a}hler quotient, in $\S$\ref{sub:DimRed} we show that:
\begin{align}
p^*A \text{ is a tri-contact instanton on }M^{4n+3} & \iff A \text{ is an }\Sp(n)\text{-instanton on }Z^{4n+2} \label{eq:DimRed1} \\
h^*A \text{ is a tri-contact instanton on }M^{4n+3} & \iff A \text{ is an }\Sp(n)\text{-instanton on }Q^{4n}. \label{eq:DimRed2}
\end{align}
That the notion of ``$\Sp(n)$-instanton" makes sense on the $(4n+2)$-dimensional twistor space $Z$ is explained in $\S$\ref{sub:DimRed}.  These generalize results that are well-known when $n = 1$. \\
\indent Returning to the setting of smooth hyperk\"{a}hler manifolds, we consider the obvious implication
$$A \text{ is an }\Sp(n)\text{-instanton } \implies A \text{ is primitive HYM with respect to }I,$$
and ask to what extent the converse might hold.  If $X$ is compact and at least one $\Sp(n)$-instanton exists, then the converse can be deduced from results of Verbitsky \cite{verbitsky1993hyperholomorphic}.  More precisely:

\begin{thm} \label{thm:Lewis-Compact} Let $(X^{4n}, g, (I,J,K))$ be a compact hyperk\"{a}hler $4n$-manifold.  Let $P \to X$ be a principal $G$-bundle, where $G$ is a compact Lie group.  Suppose that $P$ admits an $\Sp(n)$-instanton.  For any connection $A$ on $P$, we have
$$A \text{ is an }\Sp(n)\text{-instanton } \iff A \text{ is primitive HYM with respect to }I.$$
\end{thm}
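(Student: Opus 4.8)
The forward implication is immediate: by the characterization recalled above an $\Sp(n)$-instanton is primitive HYM with respect to each of $I$, $J$, $K$, hence in particular with respect to $I$. The substance is the converse, and I would establish it along the stability-theoretic route signaled by the reference to Verbitsky. To connect with his vector-bundle framework, I would first fix a faithful unitary representation $G \hookrightarrow \U(N)$ and pass to the associated Hermitian vector bundle $E = P \times_G \C^N \to X$, observing that a $G$-connection is an $\Sp(n)$-instanton (respectively primitive $I$-HYM) exactly when the induced connection on $E$ is; alternatively one could invoke a principal-bundle Hitchin--Kobayashi correspondence directly.

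The first key step uses the existence hypothesis to extract a purely topological fact. Let $A_0$ be an $\Sp(n)$-instanton on $P$. Its curvature $F_{A_0}$ is an $\ad P$-valued $2$-form whose $\Lambda^2 T^*X$-component lies in the $\Sp(1)$-invariant subspace, where $\Sp(1) = \SU(2)$ acts on $\Lambda^* T^*X$ through the pointwise quaternionic action determined by $(I,J,K)$. Since this action is by algebra automorphisms of the exterior algebra and commutes with the Laplacian, the Chern--Weil representatives $\mathrm{tr}(F_{A_0})$ and $\mathrm{tr}(F_{A_0} \wedge F_{A_0})$ are $\SU(2)$-invariant closed forms, so that $c_1(E)$ and $c_2(E)$ are $\SU(2)$-invariant classes. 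The crucial point is that these classes are topological invariants of $E$ and therefore independent of the connection: the existence of a \emph{single} $\Sp(n)$-instanton certifies the $\SU(2)$-invariance of the characteristic classes for \emph{every} connection of the given type.

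Now let $A$ be any primitive $I$-HYM connection. Since primitivity forces the Einstein constant to vanish, the Donaldson--Uhlenbeck--Yau theorem identifies the holomorphic bundle $(E, \bar\partial_A)$ as an $I$-polystable bundle of slope zero. Verbitsky's criterion then applies: an $I$-stable bundle on a compact hyperk\"{a}hler manifold is hyperholomorphic if and only if its $c_1$ and $c_2$ are $\SU(2)$-invariant, and in the polystable case one decomposes $(E,\bar\partial_A)$ into its $I$-stable slope-zero summands, noting that the $\SU(2)$-action permutes these summands and transports the invariance hypothesis to them. By Step 1 the hypothesis is satisfied, so $(E,\bar\partial_A)$ admits a hyperholomorphic connection $A'$. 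Finally, $A'$ is in particular an $I$-HYM connection compatible with the fixed holomorphic structure $\bar\partial_A$ and Hermitian metric; by the uniqueness of the Hermitian--Einstein metric (on each polystable factor, up to automorphisms) it is gauge-equivalent to $A$. Hence $A$ is itself hyperholomorphic, i.e.\ an $\Sp(n)$-instanton.

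The conceptual heart of the argument is Step 1, where the existence hypothesis does its work by converting the analytic datum of one instanton into the topological $\SU(2)$-invariance condition demanded by Verbitsky's criterion; once this is in hand the remaining implications are standard Hitchin--Kobayashi theory. Accordingly, I expect the genuine technical obstacles to be (i) the bookkeeping needed to pass from the stable case of Verbitsky's theorem to the polystable case actually at issue, in particular controlling the $\SU(2)$-action on the stable summands, and (ii) making the reduction to vector bundles and the uniqueness statement in the final step fully rigorous for a general compact structure group $G$.
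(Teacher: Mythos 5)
Your forward implication and your first two steps (reduction to a Hermitian vector bundle, extraction of the $\SU(2)$-invariance of $c_1(E)$ and $c_2(E)$ from the instanton $A_0$, and the identification of $(E,\bar\partial_A)$ as slope-zero polystable) are sound. The genuine gap is the polystable step of your third paragraph. Verbitsky's criterion, as you invoke it, applies to a \emph{stable} bundle whose \emph{own} $c_1$ and $c_2$ are $\SU(2)$-invariant; to apply it to the stable summands $E_i$ of $(E,\bar\partial_A)$ you need $c_1(E_i)$ and $c_2(E_i)$ to be $\SU(2)$-invariant, and this does not follow formally from invariance of the total classes, since $c_2(E)=\sum_i c_2(E_i)+\sum_{i<j}c_1(E_i)c_1(E_j)$ and the cross terms can a priori conspire. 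Your proposed mechanism --- ``the $\SU(2)$-action permutes these summands and transports the invariance hypothesis to them'' --- is not a real argument: Verbitsky's $\SU(2)$ acts fiberwise on forms (hence on cohomology via harmonic representatives, using that the fiberwise action is parallel); it does not act on $I$-holomorphic structures, on $E$, or on the set of stable summands of the $I$-holomorphic decomposition, so there is nothing to permute. The statement you need is in fact true, but proving it is essentially equivalent to proving the theorem itself: one must show that for the direct-sum Hermitian--Einstein connection, $\sum_i \Vert (F_i)_I \Vert^2$ equals, up to a positive constant, the pairing of the $\SU(2)$-invariant class $[\mathrm{tr}(F\wedge F)]$ against $\frac{1}{(2n-2)!}(\lambda_1\omega_I^{2n-2}+\lambda_2\omega_J^{2n-2}+\lambda_3\omega_K^{2n-2})$ with $\lambda_1+\lambda_2+\lambda_3=0$, which vanishes, and then use positivity to kill each summand. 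So what you defer as ``bookkeeping'' is the analytic heart of the result, and the sketch you give of it would fail.

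For comparison, the paper avoids this route entirely. Its proof is a self-contained Chern--Weil argument: writing $F = F_I+F_J+F_K+F_W+\sum_L \mu_L\omega_L$, it establishes the identity of Lemma \ref{lem:Phi-lambda-topological}, namely $\sum_L \left( a_L\Vert F_L\Vert^2 + b_L\Vert \mu_L\omega_L\Vert^2\right) = \int_X \mathrm{tr}(F\wedge F)\wedge\Phi_\lambda$ (crucially with no $F_W$ term and with $a_I>0$), whose right-hand side is topological; evaluated on the given $\Sp(n)$-instanton it is zero, while for an $I$-primitive HYM connection the left-hand side reduces to $a_I\Vert F_I\Vert^2$. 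This needs no stability theory, no Donaldson--Uhlenbeck--Yau, no reduction to vector bundles, and handles the ``polystable'' difficulty for free. If you wish to keep your stability-theoretic framing, the cleanest repair is to replace the summand-permutation claim by exactly this integral identity, applied either to $E$ directly or summand by summand using $\mathrm{tr}(F\wedge F)=\sum_i\mathrm{tr}(F_i\wedge F_i)$ for a block-diagonal connection.
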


\indent In $\S$\ref{sec:CompactCase}, we will give a self-contained proof of Theorem \ref{thm:Lewis-Compact} with an eye towards the non-compact setting.  That is, if $X$ is asymptotically conical (AC), and assuming that at least one AC $\Sp(n)$-instanton exists, we expect by analogy that AC primitive HYM connections with sufficiently fast decay ought to be $\Sp(n)$-instantons.  Indeed, in $\S$\ref{sec:AC-Case}, we prove the following theorem:

\begin{thm} \label{thm:Lewis-AC} Let $P \to (X^{4n}, g, (I,J,K))$ be a principal $G$-bundle over an AC hyperk\"{a}hler manifold $X$ of rate $\nu < 0$, where $G$ is a compact Lie group.  Let $\Sigma^{4n-1}$ be the asymptotic link of $X$, and fix a tri-contact instanton $A_\infty$ on $Q \to \Sigma$, where $Q$ is an asymptotic framing of $P$. \\
\indent Suppose that $\nu \leq -\frac{2}{3}(2n+1)$, and that $P$ admits an $\Sp(n)$-instanton asymptotic to $A_\infty$ with decay rate less than $\nu + 1$.  Then every $I$-primitive HYM connection on $P$ that is asymptotic to $A_\infty$ with sufficiently fast decay is an $\Sp(n)$-instanton.
\end{thm}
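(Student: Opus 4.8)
The plan is to reduce the statement to a vanishing result for a single ``defect'' tensor, and to prove that vanishing by a Chern--Weil comparison whose boundary contribution at infinity is killed by the rate hypothesis. Since $A$ is already $I$-primitive HYM, its curvature $F_A$ is a primitive $(1,1)$-form with respect to $I$, so the only obstruction to $A$ being an $\Sp(n)$-instanton is the component $\pi_{\mathrm{def}}F_A$ of $F_A$ lying in the orthogonal complement of $\mathfrak{sp}(n)\otimes\mathrm{ad}(P)$ inside the primitive $(1,1)_I$ forms (when $n=1$ this complement is zero, recovering the classical fact that $I$-pHYM $=$ ASD). As in the compact case (Theorem \ref{thm:Lewis-Compact}), the key algebraic input is a parallel $(4n-4)$-form $\Theta$, built from wedge powers of $\omega_I,\omega_J,\omega_K$, with the property that for \emph{every} $I$-primitive HYM curvature one has the pointwise identity $\mathrm{tr}(F\wedge F)\wedge\Theta = -c\,|\pi_{\mathrm{def}}F|^2\,\vol$ for a fixed $c>0$ (positivity using that $\mathrm{tr}$ is negative definite on the Lie algebra of the compact group $G$). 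It therefore suffices to prove that $\int_X \mathrm{tr}(F_A\wedge F_A)\wedge\Theta = 0$.

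I would then bring in the given $\Sp(n)$-instanton $A_0$. Writing $a = A - A_0$ and using that $\mathrm{tr}(F\wedge F)$ is closed while $\Theta$ is parallel, Stokes' theorem on $X_R = \{r\le R\}$ gives $\int_{X_R}\big(\mathrm{tr}(F_A\wedge F_A)-\mathrm{tr}(F_{A_0}\wedge F_{A_0})\big)\wedge\Theta = \int_{\Sigma_R}\mathrm{CS}(A,A_0)\wedge\Theta$, where $\mathrm{CS}(A,A_0) = \mathrm{tr}\big(a\wedge(F_A+F_{A_0})\big)-\tfrac13\mathrm{tr}(a\wedge a\wedge a)$ is the relative Chern--Simons form and $\Sigma_R$ is the link at radius $R$. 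Because $A_\infty$ is a tri-contact instanton, Proposition \ref{prop:Cone-Sp(n)} shows that the conical model connection is itself an $\Sp(n)$-instanton, so its curvature has $\pi_{\mathrm{def}}=0$ to leading order; this is exactly what makes $\int_X\mathrm{tr}(F_{A_0}\wedge F_{A_0})\wedge\Theta$ (and likewise the integral for $A$) converge despite the borderline $r^{-2}$ decay of the model curvature. Since $A_0$ is a genuine $\Sp(n)$-instanton the pointwise identity gives $\int_X\mathrm{tr}(F_{A_0}\wedge F_{A_0})\wedge\Theta = 0$, and combining with the same identity for $A$ reduces everything to showing $\int_{\Sigma_R}\mathrm{CS}(A,A_0)\wedge\Theta\to 0$ as $R\to\infty$.

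This boundary estimate is the analytic heart of the argument. On $\Sigma_R$ the induced volume grows like $R^{4n-1}$, the parallel form $\Theta$ has pointwise norm $O(1)$, and $a$ decays at the rate of $A_0-A_\infty$, which is $<\nu+1$ by hypothesis (this dominates, since $A$ is assumed to decay arbitrarily fast). The cubic term then contributes $\int_{\Sigma_R}\mathrm{tr}(a\wedge a\wedge a)\wedge\Theta = O(R^{3\mu+4n-1})$ with $\mu<\nu+1$, and since $3(\nu+1)+4n-1 = 3\nu+4n+2$ this tends to $0$ precisely once $\nu\le-\tfrac{2}{3}(2n+1)$, which is the origin of the stated rate. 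The terms linear and quadratic in $a$ carry a factor of $F_{A_0}$ or $d_{A_0}a$; naively these decay no faster, but one rewrites the $\Sp(n)$-instanton equation for $A_0$ in the form $\star F_{A_0} = -F_{A_0}\wedge\Theta_0$ for a suitable parallel form $\Theta_0$ and integrates by parts on $\Sigma_R$, using the Yang--Mills/Bianchi identities to expose these contributions as total derivatives along the link (hence vanishing) up to genuinely lower-order remainders controlled by the fast decay of $A$.

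The main obstacle is thus the boundary analysis of the preceding paragraph: establishing convergence of the relative Chern--Weil integral (which rests on Proposition \ref{prop:Cone-Sp(n)}), and carrying out the sharp decay bookkeeping that shows the lower-order transgression terms cancel while the purely algebraic cubic term $\mathrm{tr}(a\wedge a\wedge a)$ -- the one term whose rate is pinned by the fixed connection $A_0$ and cannot be improved by taking $A$ to decay faster -- is the binding constraint and yields the threshold $\nu\le-\tfrac{2}{3}(2n+1)$. Once the boundary term vanishes, $\int_X|\pi_{\mathrm{def}}F_A|^2\,\vol = 0$, so $\pi_{\mathrm{def}}F_A\equiv 0$ and $A$ is an $\Sp(n)$-instanton.
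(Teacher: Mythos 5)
Your global strategy is the paper's own: transgress $\mathrm{tr}(F\wedge F)\wedge\Phi_\lambda$ against a parallel $(4n-4)$-form built from $\omega_I^{2n-2},\omega_J^{2n-2},\omega_K^{2n-2}$, compare with the given $\Sp(n)$-instanton $A_0$, and kill the resulting Chern--Simons boundary term at infinity (Proposition \ref{prop:CS-Identity}, Theorem \ref{thm:ACLewis}). Your treatment of the cubic term is also correct, including the observation that its rate is pinned by $A_0$ and that the exponent $3(\nu+1)+(4n-1)$ is what produces the threshold $\nu\le-\tfrac{2}{3}(2n+1)$.

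The genuine gap is in the terms linear and quadratic in $a=A-A_0$. Your proposed mechanism --- rewrite the instanton equation for $A_0$ as $\ast F_{A_0}=-F_{A_0}\wedge\Theta_0$ and integrate by parts on $\Sigma_R$ so that these contributions become ``total derivatives along the link'' --- does not work and is not what is needed. First, the term $\mathrm{tr}(a\wedge F_{A_0})\wedge\Phi_\lambda$ vanishes \emph{identically, pointwise}, because $F_{A_0}$ takes values in $W$ and $F_W\wedge\Phi_\lambda=0$ (Lemma \ref{lem:Hodge-Star-F}(a)); no integration by parts is involved. Second, the remaining term should not be expanded into $d_{A_0}a$ pieces at all: keeping $F_A$ intact, Lemma \ref{lem:Hodge-Star-F}(b) gives $F_A\wedge\Phi_\lambda=\tau\ast F_I$ for an $I$-pHYM connection, so the boundary integrand reduces to (a bounded multiple of) $\langle a,\,\partial_\rho\,\lrcorner\,F_I\rangle\,\mathrm{vol}_{S_r}$. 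This form is not exact on the link (for instance $d\,\mathrm{tr}(a\wedge d_{A_0}a)=\mathrm{tr}(d_{A_0}a\wedge d_{A_0}a)-\mathrm{tr}(a\wedge[F_{A_0}\wedge a])$, which has no reason to vanish), so your Stokes-type cancellation is unavailable. More importantly, no amount of decay imposed on $A$ relative to $A_\infty$ helps here: $a$ is pinned at rate $\alpha<\nu+1$ by $A_0$, and $F_I$ is pinned at rate roughly $\max(\nu-2,\alpha-1)$ --- by the $O(r^\nu)$ deviation of the hyperk\"{a}hler structure of $X$ from its cone and by the deviation of $A_0$ from the conical model. The naive bound on this boundary term is therefore $O\!\left(r^{(4n-1)+\alpha+(\alpha-1)}\right)=O\!\left(r^{4n-2+2\alpha}\right)$, and as $\alpha\to\nu+1$ at the threshold $\nu=-\tfrac{2}{3}(2n+1)$ the exponent tends to $4n+2\nu=\tfrac{4}{3}(n-1)\ge 0$. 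So for $n\ge2$ it is this linear term, not the cubic one, that is the binding constraint, and it cannot be closed from the stated rate hypotheses alone. This is precisely why the paper's precise statement (Theorem \ref{thm:ACLewis}) imposes the additional hypothesis (\ref{eq:beta-bound}), a sufficiently negative decay rate on the radial contraction $\partial_r\,\lrcorner\,\Psi^*F_I$; the phrase ``sufficiently fast decay'' in the informal statement refers to this hypothesis, not merely to fast decay of $A$ toward $A_\infty$. Your argument needs this (or an equivalent) assumption made explicit; as written, it fails for $n\ge2$.
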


\noindent The term ``sufficiently fast decay" is explained in the more precise statement of Theorem \ref{thm:ACLewis}.

\subsection{Organization and Notation}

\indent \indent This work is organized as follows.  In $\S$\ref{sec:Prelims}, we recall the definitions of ``$\Omega$-ASD connection" and ``$H$-instanton," and discuss basic aspects of the RFM transform.  In $\S$\ref{sec:Spn-Instantons}, we systematically discuss $\Sp(n)$-instantons in both the quaternionic-K\"{a}hler and hyperk\"{a}hler settings, and speculate on bubbling loci in the latter context (see $\S$\ref{subsub:Bubbling}).  Subsection \ref{sub:RFM-ComplexLag} contains a proof of Theorem \ref{thm:RFM-Result}, and $\S$\ref{sub:Complex-Lag-dHYM} points out that in hyperk\"{a}hler manifolds with complex Lagrangian distributions, there exists a simple PDE whose solutions yield dHYM connections. \\
\indent Section \ref{sec:HKCones-3Sas} discusses conical $\Sp(n)$-instantons on hyperk\"{a}hler cones, tri-contact instantons on $3$-Sasakian links, and establishes the dimensional reductions (\ref{eq:DimRed1}) and (\ref{eq:DimRed2}).  As preparation, we include the preliminary $\S$\ref{sec:CYCones-SasEin} concerning the simpler situation of Calabi-Yau cones and their Sasaki-Einstein links.  Finally, in $\S$\ref{sec:LewisThms}, we prove the Lewis-type Theorems \ref{thm:Lewis-Compact} and \ref{thm:Lewis-AC}.

\begin{notat} \label{notation} ${}$
\begin{itemize}
\item Let $E \to (M,g_M)$ be a Hermitian vector bundle over a Riemannian manifold.  For an $E$-valued $k$-form $\alpha \in \Lambda^k(T^*M) \otimes E$, we let $|\alpha| = |\alpha|_{g_M}$ denote the natural tensor product norm induced by the metrics on $T^*M$ and $E$.  When $\alpha \in \Omega^k(M;E)$ is a section, we let $|\alpha|$ denote the pointwise norm, and (when $M$ is compact) let $\Vert \alpha \Vert := \Vert \alpha \Vert_{L^2}$ denote the $L^2$ norm.
\item We employ Salamon bracket notation \cite{salamon1989riemannian}.  That is, if $V \simeq \C^k$ is a $\C$-vector space, then $\LB V \RB \simeq \R^{2k}$ denotes its underlying $\R$-vector space.  If, in addition, $V$ is equipped with a real structure $r \in \End(V)$ (i.e, an endomorphism with $r^2 = \mathrm{Id}_V$), we let $[V] \simeq \R^k$ denote the $(+1)$-eigenspace of $r$.  Note that $[V] \otimes_{\R} \C \cong V$, whereas $\LB V \RB \otimes_{\R} \C \cong V \oplus \overline{V}$.
\end{itemize}
\end{notat}

\noindent \textbf{Convention:} On a Hermitian vector bundle $E \to M$, the term ``connection on $E$" always refers to a connection on $E$ that is compatible with the Hermitian bundle metric. \\

\noindent \textbf{Acknowledgements:} The representation theory in $\S$\ref{subsub:RepTheory} grew out of discussions with Benjamin Aslan.  We also acknowledge helpful conversations with Nick Addington, Gavin Ball, Daniel Fadel, Udhav Fowdar, Jason Lotay, Gon\c{c}alo Oliveira, Henrique S\`{a} Earp, and Alex Waldron.  Finally, we thank Boris Botvinnik and Aleksander Doan for their support and encouragement.

\section{Preliminaries} \label{sec:Prelims}

\indent \indent We begin by setting foundations, recalling basic ideas from gauge theory.  Subsections \ref{sub:Omega-ASD} and \ref{sub:H-instantons} concern two notions of higher-dimensional instantons, namely $\Omega$-ASD connections and $H$-instantons, our discussion drawing from \cite{chen2022compactness}, \cite{fadel2019gauge},  \cite{oliveira2021yang}.  Subsection \ref{sub:RFM} recalls the real Fourier-Mukai transform following \cite{kawai2021real}.

\subsection{$\Omega$-ASD Connections} \label{sub:Omega-ASD}

\indent \indent Let $(M^n, g)$ be an oriented Riemannian $n$-manifold with $n \geq 4$, and fix a smooth $(n-4)$-form $\Omega \in \Omega^{n-4}(M)$ with comass at most $1$.  A connection $A$ on a vector bundle $E \to M$ is called \emph{$\Omega$-anti-self-dual} (or \emph{$\Omega$-ASD}) if it satisfies
\begin{equation} \label{eq:Omega-ASD}
\ast\! F_A = -\Omega \wedge F_A.
\end{equation}
By applying $d_A$ and the Bianchi identity, one sees that any such connection $A$ satisfies the \emph{$\Omega$-Yang-Mills equation} (or the \emph{Yang-Mills with torsion equation}):
\begin{equation} \label{eq:Omega-YM}
d_A(\ast F_A) = -d\Omega \wedge F_A.
\end{equation}
An equivalent condition is $d_A^*(F_A + \ast (F_A \wedge \Omega)) = 0$.  It is well-known that the $\Omega$-Yang-Mills equations are the Euler-Lagrange equations of the functional
\begin{equation} \label{eq:Omega-YM-Functional}
\mathrm{YM}_\Omega(A) = \int_M |F_A|^2\,\mathrm{vol} - \int_M \mathrm{tr}(F_A \wedge F_A) \wedge \Omega.
\end{equation}
Moreover, $A$ is an absolute minimizer of $\mathrm{YM}_\Omega$ if and only if it is $\Omega$-ASD. \\

\indent Now, if $\Omega$ is closed, then the $\Omega$-Yang-Mills equation reduces to the usual \emph{Yang-Mills equation}
\begin{equation} \label{eq:YM}
d_A(\ast F_A) = 0.
\end{equation}
If, in addition, $M$ is compact, then the second term in (\ref{eq:Omega-YM-Functional}) is topological, and so the critical points of $\mathrm{YM}_\Omega$ are equivalent to those of the usual Yang-Mills functional $\mathrm{YM}(A) = \int_M |F_A|^2\,\mathrm{vol}$.

\begin{example} The classical situation is that of $n = 4$ and $\Omega = 1$.  In that case, the $\Omega$-ASD equation (\ref{eq:Omega-ASD}) reduces to the familiar ASD equation $\ast F_A = -F_A$.
\end{example}

\indent We reiterate that if $\Omega$ is not closed, then $\Omega$-ASD connections are $\Omega$-Yang Mills, but not necessarily Yang-Mills.  Nevertheless, for certain special choices of (non-closed) $\Omega$, it can happen that $\Omega$-ASD connections are automatically Yang-Mills.  For example, this phenomenon occurs when $(M,g)$ is the (Einstein) link of a Ricci-flat cone with special holonomy $H$.  More precisely:
\begin{itemize}
\item ($H = \SU(n)$) If $M^{2n-1}$ is a Sasaki-Einstein manifold with $n \geq 3$, then the \emph{contact instanton equation} (see $\S$\ref{subsec:Contact-Sas-Ein}) is equivalent to an $\Omega$-ASD equation for a particular non-closed $\Omega \in \Omega^{2n-5}(M)$.  In this context, contact instantons are Yang-Mills \cite{baraglia2016moduli}, \cite{portilla2023instantons}.
\item ($H = \G_2$) If $M^6$ is a strict nearly-K\"{a}hler $6$-manifold, then the \emph{Hermitian-Yang-Mills equation} (see $\S$\ref{subsec:HYM}) is equivalent to an $\Omega$-ASD equation for a particular non-closed $\Omega \in \Omega^2(M)$.  In this context, HYM connections are Yang-Mills \cite{xu2009instantons}.
\item ($H = \Spin(7)$) If $M^7$ is a nearly-parallel $7$-manifold, then the \emph{$\G_2$-instanton equation} (see $\S$\ref{subsec:Examples-H-Instantons}) is equivalent to an $\Omega$-ASD equation for a particular non-closed $\Omega \in \Omega^{3}(M)$.  In this context, $\G_2$-instantons are Yang-Mills \cite{harland2012instantons}.
\end{itemize}

\subsection{$H$-Structures} \label{sec:H-struc}

\indent \indent Let $M$ be a smooth $n$-manifold, and let $H \leq \GL_n(\R)$ be a Lie subgroup.  An \emph{$H$-structure on $M$} is an $H$-subbundle of the general frame bundle of $M$.  In most cases of interest, the data of an $H$-structure is equivalent to a collection of tensors on $M$ satisfying suitable algebraic conditions.  For instance:

\begin{example} \label{ex:U(n)-str} Let $M^{2n}$ be a $2n$-manifold.  A \emph{$\U(n)$-structure} on $M$ is equivalent to a triple $(g,J,\omega)$ consisting of a Riemannian metric $g$, a $g$-orthogonal almost-complex structure $J$, and the non-degenerate $2$-form $\omega := g(J\cdot, \cdot)$.  A \emph{K\"{a}hler structure} is a $\U(n)$-structure with $\nabla \omega = 0$ (or, equivalently, $\nabla J = 0$), where $\nabla$ is the Levi-Civita connection of $M$.
\end{example}

\indent We will be particularly interested in those $H$-structures where $H$ is one of the Ricci-flat holonomy groups on Berger's list, i.e.:
\begin{equation} \label{eq:Ricci-Flat-Holonomy}
\textstyle \SU(n), \ \ \ \Sp(n), \ \ \ \G_2, \ \ \ \Spin(7).
\end{equation}
In each of the following, $\nabla$ denotes the Levi-Civita connection of the given Riemannian metric.
\begin{enumerate}
\item Let $M^{2n}$ be a $2n$-manifold.  An \emph{$\SU(n)$-structure} $(g,J,\omega,\Upsilon)$ on $M$ is equivalent to a $\U(n)$-structure $(g,J,\omega)$ together with a a complex volume form $\Upsilon \in \Omega^{n,0}(M)$, normalized so that
$$c_n\Upsilon \wedge \overline{\Upsilon} = \frac{1}{n!}\omega^n, \ \ \ \ c_n = (-1)^{n(n-1)/2} (i/2)^n.$$
A \emph{Calabi-Yau structure} is an $\SU(n)$-structure satisfying $\nabla \omega = 0$ and $\nabla \Upsilon = 0$ (or, equivalently, $\nabla J = 0$ and $\nabla \Upsilon = 0$).  It is well-known that if $(g,J,\omega,\Upsilon)$ is Calabi-Yau, then $\Hol^0(g) \leq \SU(n)$, and hence $g$ is Ricci-flat.  Conversely, every Riemannian metric with holonomy contained in $\SU(n)$ arises from a Calabi-Yau structure.
\item Let $M^{4n}$ be a $4n$-manifold.  An \emph{$\Sp(n)$-structure} on $M$ is equivalent to a pair $(g, (I,J,K))$ consisting of a Riemannian metric $g$ and a triple $(I,J,K)$ of $g$-orthogonal almost-complex structures satisfying the quaternionic relations $IJ = K$.  In this case, we often write $\omega_I := g(I\cdot, \cdot)$, and similarly for $\omega_J$ and $\omega_K$.  Note that an $\Sp(n)$-structure on $M$ induces an $\SU(2n)$-structure $(g, I, \omega_I, \Upsilon_I)$ on $M$, where $\Upsilon_I := \frac{1}{n!}(\omega_J + i\omega_K)^n$. \\

A \emph{hyperk\"{a}hler structure} is an $\Sp(n)$-structure satisfying $\nabla I = \nabla J = \nabla K = 0$.  It is well-known that if $(g,(I,J,K))$ is hyperk\"{a}hler, then $\Hol^0(g) \leq \Sp(n)$, and hence $g$ is Ricci-flat.  Conversely, every Riemannian metric with holonomy contained in $\Sp(n)$ arises from a hyperk\"{a}hler structure.
\item Let $M^7$ be a $7$-manifold.  A \emph{$\G_2$-structure} on $M$ is equivalent to a $3$-form $\varphi \in \Omega^3(M)$ such that at each $x \in M$, there exists a coframe $(e^1, \ldots, e^7) \colon T_xM \to \R^7$ for which
$$\left.\varphi\right|_x = e^{123} + e^{145} + e^{167} + e^{246} - e^{257} - e^{347} - e^{356}$$
where $e^{ijk} := e^i \wedge e^j \wedge e^k$.  Every $\G_2$-structure $\varphi$ induces a canonical Riemannian metric $g_{\varphi}$ on $M$.  It is well-known that if $\nabla \varphi = 0$, then $\Hol^0(g_\varphi) \leq \G_2$, and hence $g_\varphi$ is Ricci-flat.  Conversely, every Riemannian metric with holonomy contained in $\G_2$ arises in this way.
\item Let $M^8$ be an $8$-manifold.  A \emph{$\Spin(7)$-structure} on $M$ is equivalent to a $4$-form $\Phi \in \Omega^4(M)$ such that at each $x \in M$, there exists a coframe $(e^1, \ldots, e^8) \colon T_xM \to \R^8$ for which
$$\left.\Phi\right|_x = e^{1234} + (e^{12} + e^{34}) \wedge (e^{56} + e^{78}) + (e^{13} - e^{24}) \wedge (e^{57} - e^{68}) - (e^{14} + e^{23}) \wedge (e^{58} + e^{67}) + e^{5678}$$
where $e^{ij} = e^i \wedge e^j$ and similarly for $e^{ijk\ell}$.  Every $\Spin(7)$-structure $\Phi$ induces a canonical Riemannian metric $g_{\Phi}$ on $M$.  It is well-known that if $\nabla \Phi = 0$, then $\Hol^0(g_\Phi) \leq \Spin(7)$, and hence $g_\Phi$ is Ricci-flat.  Conversely, every Riemannian metric with holonomy contained in $\Spin(7)$ arises in this way.
\end{enumerate}
Later in this work, we will require the notions of an $\Sp(n)\U(1)$-structure on $M^{4n+2}$, and an $\Sp(n)\Sp(1)$-structure on $M^{4n}$; these will be introduced as needed.

\subsection{$H$-Instantons} \label{sub:H-instantons}

\indent \indent Let $(M^n, g)$ be an oriented Riemannian $n$-manifold equipped with an $H$-structure, where $H \leq \SO(n)$ is a closed subgroup.  Letting $\mathfrak{h} \subset \mathfrak{so}(n)$ denote the Lie algebra of $H$, we may orthogonally split $\Lambda^2(\R^n)^* \cong \mathfrak{so}(n) = \mathfrak{h} \oplus \mathfrak{h}^\perp$ with respect to the Killing form.  Accordingly, the bundle of $2$-forms on $M$ splits as
$$\Lambda^2(T^*M) \cong \Lambda^2_{\mathfrak{h}} \oplus \Lambda^2_{\mathfrak{h}^\perp}$$
\indent Let $E \to M$ be a vector bundle with structure group $G$, where $G$ is a compact Lie group, and let $\mathrm{ad}_E \to M$ denote the adjoint bundle.  Then the bundle of $\mathrm{ad}_E$-valued $2$-forms decomposes as
\begin{align*}
\Lambda(T^*M) \otimes \mathrm{ad}_E \cong \left( \Lambda^2_{\mathfrak{h}} \otimes \mathrm{ad}_E \right) \oplus ( \Lambda^2_{\mathfrak{h}^\perp} \otimes \mathrm{ad}_E ).
\end{align*}
We let $\pi_{\mathrm{h}}$ and $\pi_{\mathfrak{h}^\perp}$ denote the corresponding projection maps from $\Lambda^2(T^*M) \otimes \mathrm{ad}_E$ onto its respective components.  The following definition is due to Reyes Carri\'{o}n \cite{carrion1998generalization}.

\begin{defn} A connection $A$ on the vector bundle $E \to M$ is called an \emph{$H$-instanton} if its curvature $2$-form $F_A \in \Omega^2(M; \mathrm{ad}_E)$ takes values in $\mathfrak{h}$, in the sense that:
$$\pi_{\mathfrak{h}}(F_A) = F_A.$$
An equivalent condition is $\pi_{\mathfrak{h}^\perp}(F_A) = 0$.
\end{defn}
We emphasize that this condition is linear.  More precisely, if $A$ is a connection on the trivial complex line bundle $\underline{\C} \to M$, then the $H$-instanton equation is a linear condition on $F_A$, and is therefore a first-order linear PDE for $A$.

\subsubsection{Hermitian Yang-Mills Connections} \label{subsec:HYM}

\indent \indent Let $M^{2n}$ be a $2n$-manifold with a $\U(n)$-structure $(g,J,\omega)$, and let $E \to M$ be a Hermitian vector bundle of complex rank $r$.  By the above discussion, a connection $A$ on $E \to M$ is a \emph{$\U(n)$-instanton} provided that any of the following equivalent conditions holds:
$$F_A \in \Lambda^2_{\mathfrak{u}(n)} \otimes \mathrm{ad}_E \ \iff \ F_A \in [\Lambda^{1,1}] \otimes \mathrm{ad}_E.$$
Extending $A$ by $\C$-linearity (and denoting this extension by $A$), the $\U(n)$-instanton condition becomes:
$$F_A \in \Lambda^{1,1} \otimes \mathrm{ad}_E \ \iff \ F_A^{0,2} = 0.$$
Note that if $A$ is a $\U(n)$-instanton, then
$$F_A \wedge \omega^{n-1} = \langle F_A,\omega \rangle\,\omega^n,$$
where $\langle F_A,\omega \rangle \in \Omega^0(M; \mathfrak{u}(r))$ is a matrix-valued function.  A \emph{Hermitian Yang-Mills (HYM) connection} is a $\U(n)$-instanton $A$ satisfying $\langle F_A,\omega \rangle = \lambda_E\,\mathrm{Id}$ for some constant $\lambda_E \in \C$.  In this case, if $M$ is a compact K\"{a}hler manifold, then it is well-known that
$$\lambda_E = -\frac{2\pi i}{n!\,\mathrm{vol}(M)} \frac{\deg(E)}{r}.$$
The HYM connections with $\lambda_E = 0$ are called \emph{primitive HYM connections}.

\begin{rmk} \label{rmk:U(n)-Holo} Let $E \to M^{2n}$ be a Hermitian vector bundle, and suppose $M$ is K\"{a}hler.  In this setting, there is a well-known relationship between $\U(n)$-instantons and holomorphic structures.  Indeed, if $E$ has a holomorphic structure, then there exists a unique $\U(n)$-instanton $A$ (the \emph{Chern connection}) compatible with the holomorphic structure.  Conversely, if $A$ is a $\U(n)$-instanton on $E$, then by the Koszul-Malgrange Theorem, $E$ admits a holomorphic structure compatible with $A$.  
\end{rmk}

\subsubsection{Examples of $H$-instantons} \label{subsec:Examples-H-Instantons}

\indent \indent We will primarily be concerned with $H$-instantons where $H$ is a Ricci-flat holonomy group on Berger's list (\ref{eq:Ricci-Flat-Holonomy}).  As we now recall, in each of these cases, the $H$-instanton equation is equivalent to the $\Omega$-ASD equation for a suitable choice of $\Omega \in \Omega^{n-4}(M)$.

\begin{enumerate}
\item ($H = \SU(n)$) Let $M^{2n}$ be an oriented $2n$-manifold with an $\SU(n)$-structure $(g,J,\omega, \Upsilon)$, and let $E \to M$ be a Hermitian vector bundle.  It is well-known that the $\SU(n)$-instanton equation can be rephrased as follows
\begin{equation*}
F_A \in \Lambda^2_{\mathfrak{su}(n)} \otimes \mathrm{ad}_E \ \iff \  F_A \in [\Lambda^{1,1}_0] \otimes \mathrm{ad}_E
\end{equation*}
This is equivalent to requiring that the $\C$-linear extension of $A$ (which we continue to call $A$) satisfies:
\begin{equation*}
F_A \in \Lambda^{1,1}_0 \otimes \mathrm{ad}_E \ \iff \  \begin{cases} F_A^{0,2} = 0 \\ F_A \wedge \omega^{n-1} = 0. \end{cases} \ \iff \  \ast F_A = -\frac{1}{(n-2)!}\omega^{n-2} \wedge F_A.
\end{equation*}
In particular, ($\C$-linear extensions of) $\SU(n)$-instantons are precisely the \emph{primitive} Hermitian Yang-Mills connections, which are in turn precisely the $\Omega$-ASD connections for $\Omega = \frac{1}{(n-2)!}\omega^{n-2} \in \Omega^{2n-4}(M)$. \\  

\item ($H = \Sp(n)$) Let $M^{4n}$ be an oriented $4n$-manifold with an $\Sp(n)$-structure $(g, (I,J,K))$, and let $E \to M$ be a Hermitian vector bundle.  Then the $\Sp(n)$-instanton equation can be rephrased as follows:
$$F_A \in \Lambda^2_{\mathfrak{sp}(n)} \otimes \mathrm{ad}_E \ \iff \ F_A \in \left( [\Lambda^{1,1}_{0,I}] \cap [\Lambda^{1,1}_{0,J}] \cap [\Lambda^{1,1}_{0,K}] \right) \otimes \mathrm{ad}_E,$$
where $\Lambda^{1,1}_{0,L} \to M$ denotes the bundle of $2$-forms that are type $(1,1)$ and primitive with respect to $L \in \{I,J,K\}$.

Thus, a connection $A$ is an $\Sp(n)$-instanton if and only if (its $\C$-linear extension) is \emph{hyper-holomorphic}, meaning that $A$ is primitive HYM with respect to $I,J,K$ simultaneously.  In particular, hyper-holomorphic connections are those that are simultaneously $\Omega_I$-, $\Omega_J$-, and $\Omega_K$-ASD, where $\Omega_L = \frac{1}{(2n-2)!}\omega_L^{2n-2} \in \Omega^{4n-4}(M)$ for $L \in \{I,J,K\}$.  We will explore these further in $\S$\ref{sec:Spn-Instantons}. \\

\item ($H = \G_2$) Let $M^7$ be an oriented $7$-manifold with a $\G_2$-structure $\varphi \in \Omega^3(M)$, and let $E \to M$ be a Hermitian vector bundle.  It is well-known that the $\G_2$-instanton equation can be rephrased as follows:
$$F_A \in \Lambda^2_{\mathfrak{g}_2} \otimes \mathrm{ad}_E \ \ \iff \ \ F_A \wedge \ast \varphi = 0 \ \ \iff \ \ \ast F_A = -\varphi \wedge F_A.$$
In particular, $\G_2$-instantons are $\Omega$-ASD for the choice $\Omega = \varphi \in \Omega^3(M)$. \\

\item ($H = \Spin(7)$)  Let $M^8$ be an oriented $8$-manifold with a $\Spin(7)$-structure $\Phi \in \Omega^4(M)$, and let $E \to M$ be a Hermitian vector bundle.  It is well-known that the $\Spin(7)$-instanton equation can be rephrased as follows:
$$F_A \in \Lambda^2_{\mathfrak{spin}(7)} \otimes \mathrm{ad}_E \ \ \iff \ \ \ast F_A = -\Phi \wedge F_A.$$
In particular, $\Spin(7)$-instantons are $\Omega$-ASD for the choice $\Omega = \Phi \in \Omega^4(M)$.
\end{enumerate}

\begin{rmk}[Representation dependence] \label{rmk:RepDep} Despite the terminology, the notions of $H$-structure and $H$-instanton depend not only on the abstract Lie group $H$, but \emph{also} on the representation $H \leq \SO(n)$.
\end{rmk}

\begin{rmk}[Generalization] \label{rmk:NH-Gen} The notion of $H$-instanton actually makes sense for a more general class of geometric structures.  Indeed, let $H \leq \SO(n)$ be a closed subgroup, let $N(H) \leq \SO(n)$ denote its normalizer in $\SO(n)$, and assume that $N(H)$ is connected.  Note that $N(H)$ acts on $\mathfrak{so}(n)$ via the adjoint action, and it can be shown \cite{carrion1998generalization} that this $N(H)$-action preserves the subspace $\mathfrak{h} \subset \mathfrak{so}(n)$.  Consequently, ``$H$-instanton" is a well-defined concept on manifolds equipped with simply an $N(H)$-structure.  This generalization is relevant to several contexts:
\begin{enumerate}[(a)]
\item If $H = \SU(2) \leq \SO(4)$, then $N(H) = \SO(4)$, so the notion of an $\SU(2)$-instanton makes sense on any oriented Riemannian $4$-manifold.  In fact, it coincides with the classical SD or ASD equation, depending on the conjugacy class of $\SU(2)$ in $\SO(4)$ (see Remark \ref{rmk:RepDep}).
\item If $H = \SU(n) \leq \SO(2n)$ with $n \geq 3$, then $N(H) = \U(n)$, so the notion of an $\SU(n)$-instanton makes sense on any $2n$-manifold (for $2n \geq 6$) with a $\U(n)$-structure.
\item If $H = \Sp(n) \leq \SO(4n)$, then $N(H) = \Sp(n)\Sp(1)$, so the notion of an $\Sp(n)$-instanton makes sense on any $4n$-manifold with a $\Sp(n)\Sp(1)$-structure.  We will return to this point in $\S$\ref{sub:Spn-QK}.
\end{enumerate}
\end{rmk}

\subsection{The Real Fourier-Mukai Transform} \label{sub:RFM}

\indent \indent Let $X$ denote a Calabi-Yau, $\G_2$, or $\Spin(7)$-manifold.  Roughly speaking, mirror symmetry predicts the existence of a geometric functor relating the calibrated geometry and gauge theory of $X$ to that of its ``mirror" $X^*$.  When $(X, X^*)$ is a mirror pair of Calabi-Yau manifolds that satisfies certain conditions, Leung--Yau--Zaslow \cite{leung2000special} proposed such a functor, known as the \emph{real Fourier-Mukai (RFM) transform}.  Later, Lee--Leung \cite{lee2009geometric} considered the $\G_2$ and $\Spin(7)$ analogues of the RFM transform, the latter of which was revisited by Kawai--Yamamoto \cite{kawai2021real}.  Here, we will review this transform in the simplest idealized setting.  Our discussion closely follows \cite[$\S$2]{kawai2021real}. The reader desiring more details might consult \cite{kawai2021real} as well as \cite[$\S$3.2.1]{donaldson1997geometry}, \cite{leung2000special}, \cite{lee2009geometric}.

\indent Let $T^m = \R^m/(2\pi \Z^m)$ denote the $m$-torus, and let $(T^m)^* = (\R^m)^*/(2\pi \Z^m)^*$ denote its dual torus.  Every point $p \in T^m$ yields a flat connection $\nabla^p$ on the trivial Hermitian line bundle $\underline{\C} \to (T^m)^*$ in the following way. First, each point $p = (p^1, \ldots, p^m) \in T^m$ can be identified with the homomorphism
$$\mu_p = e^{-i\langle \cdot, p \rangle} \colon ( (2\pi\Z)^m)^* \to \U(1),$$
where $\langle \cdot, \cdot \rangle \colon (\R^m)^* \times \R^m \to \R$ is the natural dual pairing.  Identifying $((2\pi\Z)^m)^* \cong \pi_1( (T^m)^*)$, we can view $\mu_p \colon \pi_1( (T^m)^*) \to \U(1)$ as a unitary representation of the fundamental group of $(T^m)^*$.  From this representation and the principal $\pi_1( (T^m)^*)$-bundle $(\R^m)^* \to (T^m)^*$, we obtain an associated Hermitian line bundle over $(T^m)^*$, namely
$$E_p := (\R^m)^* \times_{\mu_p} \C \to (T^m)^*,$$
along with a flat Hermitian connection $\widetilde{\nabla}^p$ on $E_p$.  It turns out that $E_p$ admits a non-vanishing global section, so that $E_p$ is trivializable, and hence there is a bundle isomorphism $E_p \cong \underline{\C}$.  Via this isomorphism, we obtain a flat connection called $\nabla^p$ on $\underline{\C} \to (T^m)^*$, as desired.  In fact, letting $(y^1, \ldots, y^m)$ denote the standard coordinates on $(T^m)^*$, we have
$$\nabla^p = d + i \sum_{a=1}^m p^a dy^a.$$

\indent Next, we shift our perspective and view $T^m$ as a fiber of the trivial bundle $X := B \times T^m \to B$, where here $B \subset \R^k$ is an open set with coordinates $(x^1, \ldots, x^k)$.  Notice that every smooth function $h \colon B \to T^m$ provides a section $(\mathrm{Id}, h) \colon B \to B \times T^m$ whose image is the graph of $h$.  On the other hand, by the above discussion, the family of points $\{h(x) \in T^m \colon x \in B\}$ yields a family of connections $\nabla^{h(x)}$, which altogether produces a connection $\nabla^h$ on the trivial line bundle $\underline{\C} \to X^* := B \times (T^m)^*$. 

\indent In summary, to every smooth function $h \colon B^k \to T^m$, we may associate two objects:
\begin{enumerate}
\item A submanifold $S_h \subset X$ given by the graph of $h$:
$$S_h := \text{Graph}(h) = \{(x, h(x)) \colon x \in B\} \subset B^k \times T^m$$
\item A connection $\nabla^h$ on the trivial complex line bundle $\underline{\C} \to X^* := B \times (T^m)^*$ given by:
$$\nabla^h := d + i \sum_{a=1}^m h^a dy^a.$$
\end{enumerate}
We refer to $\nabla^h$ as the \emph{real Fourier-Mukai (RFM) transform} of $S_h$. 

\subsubsection{Examples} \label{subsub:Examples-Deformed}

\indent \indent The real Fourier-Mukai transform provides a correspondence between graphs $S_h \subset X$ and connections $\nabla^h$ on $\underline{\C} \to X^*$.  In particular, requiring that $S_h$ satisfy a geometric PDE is equivalent to requiring that $\nabla^h$ satisfy a ``mirror" geometric PDE. \\
\indent For example, suppose that $X^{k+m} = B^k \times T^m$ is equipped with a product $H$-structure in such a way that $B \times \{\text{pt}\} \subset X$ is a $\phi$-calibrated submanifold for a suitable calibration $\phi \in \Omega^k(X)$.  Then the condition that $S_h \subset X$ be $\phi$-calibrated is equivalent to requiring that $\nabla^h$ satisfy a so-called ``deformed $H$-instanton equation."  In general, the latter is a nonlinear first-order PDE system given by a polynomial condition (of degree at most $k$) on the curvature of $\nabla^h$. \\
\indent For illustration, we now recall some examples of this correspondence in which $H \leq \SO(k+m)$ is a Ricci-flat holonomy group on Berger's list (\ref{eq:Ricci-Flat-Holonomy}).

\begin{enumerate}
\item ($H = \SU(n)$) Let $(k,m) = (n,n)$, and equip $X^{2n} = B^n \times T^n$ with a product Calabi-Yau structure $(g, J, \omega, \Upsilon)$ such that $B \times \{\text{pt}\} \subset X$ is a special Lagrangian.  Let $h \colon B \to T^n$ be a smooth function.  Then it is well-known \cite{leung2000special}, \cite{kawai2021real} that
$$S_h \text{ is special Lagrangian } \iff \nabla^h \text{ is deformed Hermitian Yang-Mills,}$$
where the \emph{deformed Hermitian Yang-Mills (dHYM) equation} (or \emph{deformed $\SU(n)$-instanton equation}) is the system
\begin{align*} 
(F_A)^{0,2} & = 0 \\
\mathrm{Im}( (\omega + F_A)^{n}) & = 0.
\end{align*}

\item ($H = \Sp(n)$) Let $(k,m) = (2n, 2n)$, and equip $X^{4n} = B^{2n} \times T^{2n}$ with a product hyperk\"{a}hler structure $(g, (I,J,K))$ such that $B \times \{\text{pt}\} \subset X$ is complex Lagrangian with respect to $I$.  Let $h \colon B \to T^{2n}$ be a smooth function.  Then we may consider a \emph{deformed $\Sp(n)$-instanton equation} by requiring that
$$S_h \text{ is complex Lagrangian w.r.t. } I \iff \nabla^h \text{ is a deformed }\Sp(n)\text{-instanton.}$$
In Theorem \ref{thm:RFM}, we will prove that this putative ``deformed $\Sp(n)$-instanton equation" coincides with the ordinary $\Sp(n)$-instanton equation discussed in $\S$\ref{subsec:Examples-H-Instantons} and $\S$\ref{sub:Spn-HK}. \\

\item ($H = \G_2$) Let $(k,m) = (3,4)$, and equip $X^{7} = B^3 \times T^4$ with a product $\G_2$ structure $\varphi \in \Omega^3(X)$ such that $B \times \{\text{pt}\} \subset X$ is an associative submanifold.  Let $h \colon B \to T^4$ be a smooth function.  Then according to \cite{lee2009geometric}, \cite{kawai2021real}, we have that
$$S_h \text{ is associative } \iff \nabla^h \text{ is a deformed }\G_2\text{-instanton},$$
where the \emph{deformed $\G_2$-instanton equation} is
\begin{align*}
\textstyle \frac{1}{6}F_A^3 + F_A \wedge \ast \varphi = 0.
\end{align*}
If one instead takes $(k,m) = (4,3)$ and requires that $B \times \{\text{pt}\} \subset \R^4$ be a coassociative submanifold, one obtains the same deformed $\G_2$-instanton equation. \\

\item ($H = \Spin(7)$) Let $(k,m) = (4,4)$, and equip $X^{8} = B^4 \times T^4$ with a product $\Spin(7)$ structure $\Phi \in \Omega^4(X)$ such that $B \times \{\text{pt}\} \subset X$ is a Cayley submanifold.  Let $h \colon B \to T^4$ be a smooth function.  Then according to \cite{kawai2021real}, we have that
$$S_h \text{ is Cayley } \iff \nabla^h \text{ is a deformed }\Spin(7)\text{-instanton},$$
where the \emph{deformed $\Spin(7)$-instanton equation} is the system
\begin{align*}
\textstyle \pi^2_7\left( F_A + \frac{1}{6}F_A^3 \right) & = 0  \\
\textstyle \pi^4_7\left( F_A \wedge F_A \right) & = 0.
\end{align*}
\end{enumerate}

The following table summarizes the various geometric PDE discussed above.
$$\begin{tabular}{| l | l | l | l |} \hline
$(k,m)$ & Structure on $X$ & Condition on $S_h$ & Condition on $\nabla^h$ \\ \hline \hline
$(n,n)$ & Calabi-Yau & Special Lagrangian & Deformed HYM (dHYM) connection 
 \\ \hline
 $(2n, 2n)$ & Hyperk\"{a}hler & Complex Lagrangian & Deformed $\Sp(n)$-instanton 
 \\ \hline
$(3,4)$ & $\G_2$ & Associative & Deformed $\G_2$-instanton \\ \hline
$(4,3)$ & $\G_2$ & Coassociative & Deformed $\G_2$-instanton \\ \hline
$(4,4)$ & $\Spin(7)$ & Cayley & Deformed $\Spin(7)$-instanton \\ \hline
\end{tabular}$$

\section{$\Sp(n)$-Instantons} \label{sec:Spn-Instantons}

\indent \indent In this section, we set the foundations for our study of $\Sp(n)$-instantons.  In $\S$\ref{sub:Spn-QK}, we study $\Sp(n)$-instantons on quaternionic-K\"{a}hler $4n$-manifolds.  Then, in $\S$\ref{sub:Spn-HK}, we specialize to the hyperk\"{a}hler setting.  In that context, there are several equivalent characterizations of $\Sp(n)$-instantons: see Proposition \ref{prop:Sp(n)-equiv-def}.  At the end of the section, we briefly speculate on the geometry of bubbling loci of sequences of $\Sp(n)$-instantons.

\subsection{$\Sp(n)$-Instantons on Quaternionic-K\"{a}hler Manifolds} \label{sub:Spn-QK}

\subsubsection{Linear Algebra} \label{subsub:Lin-Alg-QuatHerm}

\indent \indent A \emph{quaternionic-Hermitian vector space} is a $4n$-dimensional $\R$-vector space $V$ equipped with a pair $(g, \mathcal{F})$, where $g(\cdot, \cdot) = \langle \cdot, \cdot \rangle$ is a positive-definite inner product, and $\mathcal{F} \subset \End(V)$ is a $3$-dimensional subspace that admits a basis $\{I,J,K\}$ of $g$-orthogonal complex structures satisfying the quaternionic relations $IJ = K$, etc.  Such a basis is called \emph{admissible}.  For each $k \geq 1$, the \emph{fundamental $4k$-forms} are
$$\textstyle \Pi_k := \frac{1}{(2k+1)!}(\omega_I^2 + \omega_J^2 + \omega_K^2)^k \in \Lambda^{4k}(V^*).$$
Each $\Pi_k$ is well-defined (independent of the admissible basis) and has co-mass one. \\
\indent It is well-known that the space of real $2$-forms $\Lambda^2(V^*)$ decomposes into three $\Sp(n)\Sp(1)$-irreducible submodules, say
\begin{align*}
\Lambda^2(V^*) = U \oplus W \oplus Y,
\end{align*}
where $W \cong \mathfrak{sp}(n)$ and $Y \cong \mathfrak{sp}(1)$.  In particular,
\begin{align*}
\dim(U) & = 6n^2 - 3n - 3 & \dim(W) & = 2n^2 + n & \dim(Y) & = 3.
\end{align*}
To be more explicit, let $\{I,J,K\}$ be an admissible basis of $V$, let $\{I^*, J^*, V^*\}$ be the dual basis of $V^*$, and let $\omega_L = g(L \cdot, \cdot) \in \Lambda^2(V^*)$ for $L \in \{I,J,K\}$.   Note that the $\Sp(n)\Sp(1)$-invariant endomorphism $T \in \End( \Lambda^2(V^*) )$ given by $T = I^* \otimes I^* + J^* \otimes J^* + K^* \otimes K^*$ is independent of the choice of basis.  Then according to \cite{capria1988yang}, the map $T$ has eigenvalues $3$ and $-1$, and moreover
\begin{align*}
W & = [\Lambda^{1,1}_I] \cap [\Lambda^{1,1}_J] \cap [\Lambda^{1,1}_K] = \{ \beta \in \Lambda^2(V^*) \colon T \beta = 3\beta\}, \\
U \oplus Y & = \LB \Lambda^{2,0}_I \RB + \LB \Lambda^{2,0}_J \RB + \LB \Lambda^{2,0}_K \RB = \{ \beta \in \Lambda^2(V^*) \colon T\beta = -\beta\}\!, \\
Y & = \mathrm{span}_{\R}(\omega_I, \omega_J, \omega_K),
\end{align*}
where $\Lambda^{p,q}_L \subset \Lambda^{p+q}(V^*; \C)$ denotes the $(p,q)$-forms on $V$ with respect to $L \in \{I,J,K\}$. Alternatively, according to \cite[Theorem 2.2]{galicki1991duality}, one has
\begin{align}
U & = \left\{ \beta \in \Lambda^2(V^*) \colon \ast\! \beta = 3\,\beta \wedge \Pi_{n-1} \right\} \notag \\
W & = \left\{ \beta \in \Lambda^2(V^*) \colon \ast\! \beta = -\beta \wedge \Pi_{n-1} \right\} \label{eq:W-char} \\
Y & = \textstyle \left\{ \beta \in \Lambda^2(V^*) \colon \ast\! \beta = \frac{3}{2n+1}\,\beta \wedge \Pi_{n-1} \right\}\!. \notag
\end{align}

\subsubsection{Quaternionic-K\"{a}hler Manifolds}

\indent \indent Let $Q^{4n}$ be an oriented $4n$-manifold.  An \emph{$\Sp(n)\Sp(1)$-structure} on $Q$ is equivalent to a pair $(g_Q, \mathcal{F})$ consisting of a Riemannian metric $g_Q$ and a rank $3$ subbundle $\mathcal{F} \subset \End(TQ)$ such that:
\begin{enumerate}
\item At each $q \in Q$, there exists a local frame $(j_1, j_2, j_3)$ of $\mathcal{F}$ satisfying the quaternionic relations $j_1 j_2 = j_3$ and $j_1^2 = j_2^2 = j_3^2 = -\mathrm{Id}$.
\item Every $j \in \mathcal{F}$ acts by isometries: $g_Q(jX, jY) = g_Q(X,Y)$ for all $X,Y \in TQ$.
\end{enumerate}
Equivalently, an $\Sp(n)\Sp(1)$-structure may be defined as a $4$-form $\Pi \in \Omega^4(Q)$ such that at each $q \in Q$, there exists a coframe $u \colon T_qQ \to \R^{4n}$ in which $\Pi|_q = \frac{1}{6}u^*(\omega_I^2 + \omega_2^2 + \omega_3^2)$, where here $\{\omega_I, \omega_2, \omega_3\}$ is the standard hyperk\"{a}hler triple on $\R^{4n}$. \\
\indent For $n \geq 2$, a \emph{quaternionic-K\"{a}hler structure} is an $\Sp(n)\Sp(1)$-structure $(g_Q, \mathcal{F})$ such that $\mathcal{F} \subset \End(TQ)$ is a $\nabla$-parallel subbundle, where $\nabla$ is the connection induced by $g_Q$.  An equivalent condition is that the $4$-form $\Pi \in \Omega^4(Q)$ is $g_Q$-parallel.  For $n = 1$, we say that $(Q^4, g_Q)$ is \emph{quaternionic-K\"{a}hler} provided that $g_Q$ is Einstein and anti-self-dual. \\ 

\indent Let $Q^{4n}$ be a quaternionic-K\"{a}hler manifold.  Since each tangent space $T_qQ$ has a natural quaternionic-Hermitian structure, the bundle of $2$-forms on $Q$ admits an $\Sp(n)\Sp(1)$-invariant decomposition
$$\Lambda^2(T^*Q) = U \oplus W \oplus Y$$
where $U, W, Y$ now refer to vector bundles (rather than vector spaces).  By Remark \ref{rmk:NH-Gen}, we may speak of $\Sp(n)$-instantons on quaternionic-K\"{a}hler manifolds.  Indeed, the above discussion shows the following:

\begin{prop} Let $A$ be a connection on a Hermitian vector bundle $E \to Q^{4n}$.  The following are equivalent:
\begin{enumerate}[(i)]
\item $A$ is an $\Sp(n)$-instanton on $E$ (i.e., $F_A \in \Gamma(W \otimes \mathrm{ad}_E)$).
\item $A$ is $\Omega$-ASD for $\Omega = \Pi_{n-1}$.
\end{enumerate}
\end{prop}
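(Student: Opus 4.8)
The plan is to prove the equivalence fiberwise, reducing everything to the pointwise characterization of $W$ recorded in (\ref{eq:W-char}). By definition, condition (ii) asserts that $F_A$ satisfies the $\Omega$-ASD equation (\ref{eq:Omega-ASD}) with $\Omega = \Pi_{n-1}$, namely $\ast F_A = -\Pi_{n-1} \wedge F_A$, while condition (i) asserts that $F_A$ is a section of $W \otimes \mathrm{ad}_E$. Both are conditions imposed pointwise on the curvature, so it suffices to check that, at each $q \in Q$, an element of $\Lambda^2(T_q^*Q) \otimes \mathrm{ad}_E$ lies in $W \otimes \mathrm{ad}_E$ if and only if it satisfies $\ast F_A = -\Pi_{n-1} \wedge F_A$. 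Here I note that $\Pi_{n-1} = \frac{1}{(2n-1)!}(\omega_I^2 + \omega_J^2 + \omega_K^2)^{n-1} \in \Omega^{4n-4}(Q)$, which is precisely an $(\dim Q - 4)$-form of comass one, so it is a legitimate choice of $\Omega$.

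First I would observe that both operations appearing in the $\Omega$-ASD equation act only on the form factor: the Hodge star $\ast$ acts on $\Lambda^2$ alone, and wedging with the scalar form $\Pi_{n-1}$ likewise leaves the $\mathrm{ad}_E$ factor untouched. Moreover $\Pi_{n-1}$ has even degree, so $\beta \wedge \Pi_{n-1} = \Pi_{n-1} \wedge \beta$ for any $2$-form $\beta$ and no sign ambiguity arises when comparing (\ref{eq:Omega-ASD}) with (\ref{eq:W-char}). Fixing a basis $(\xi_1, \ldots, \xi_r)$ of $\mathrm{ad}_E|_q$ and writing $F_A|_q = \sum_a \beta_a \otimes \xi_a$ with $\beta_a \in \Lambda^2(T_q^*Q)$, the equation $\ast F_A = -\Pi_{n-1} \wedge F_A$ holds if and only if $\ast \beta_a = -\beta_a \wedge \Pi_{n-1}$ for every $a$. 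By the characterization (\ref{eq:W-char}), this says exactly that each $\beta_a$ lies in $W$, i.e. that $F_A|_q \in W \otimes \mathrm{ad}_E$. This yields (i) $\iff$ (ii).

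The only genuinely nontrivial input is the eigenform description (\ref{eq:W-char}) identifying $W$ with $\{\beta \colon \ast \beta = -\beta \wedge \Pi_{n-1}\}$, which we have already cited from \cite{galicki1991duality}; granting this, there is essentially no obstacle, and the remaining content is the purely formal passage from the scalar statement to the $\mathrm{ad}_E$-valued one carried out above. I would also remark that closedness of $\Pi_{n-1}$ (which holds on a quaternionic-K\"{a}hler manifold, since the whole quaternionic algebra is $\nabla$-parallel) is \emph{not} needed for the equivalence itself; rather, it is what guarantees, via (\ref{eq:Omega-YM}), that these $\Sp(n)$-instantons are in fact Yang-Mills.
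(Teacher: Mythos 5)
Your proof is correct and matches the paper's approach: the paper offers no written proof, asserting only that the proposition follows from ``the above discussion,'' by which it means exactly the fiberwise application of the eigenform characterization (\ref{eq:W-char}) of $W$ that you spell out. Your explicit handling of the $\mathrm{ad}_E$ factor and the parity of $\Pi_{n-1}$ simply fills in the routine details the paper leaves implicit.
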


\indent The study of $\Sp(n)$-instantons on QK manifolds goes back to the late 1980's with the work of \cite{capria1988yang} and \cite{nitta1989moduli}, who show that they are Yang-Mills.  See also, for example, \cite{bartocci1998hyperkahler}, \cite{devchand2020hyperkahler}, \cite{devchand2020instantons}, \cite{galicki1991duality}.

\subsection{$\Sp(n)$-Instantons on Hyperk\"{a}hler Manifolds} \label{sub:Spn-HK}

\indent \indent We now specialize to $\Sp(n)$-instantons over hyperk\"{a}hler manifolds.  In the literature, vector bundles over hyperk\"{a}hler manifolds equipped with an $\Sp(n)$-instanton are called \emph{hyperholomorphic} (recalling Remark \ref{rmk:U(n)-Holo}).  Such bundles have been investigated in numerous studies, including for example Verbitsky \cite{verbitsky1993hyperholomorphic, verbitsky2003hyperkahler}, Kaledin--Verbitsky \cite{kaledin1998non}, Feix \cite{feix2002hypercomplex}, Haydys \cite{haydys2008hyperkahler}, Hitchin \cite{hitchin2014hyperholomorphic}, Iona{\c{s}} \cite{ionacs2019twisted}, Meazzini--Onorati \cite{meazzini2023hyper}, and others.

\subsubsection{Linear Algebra} \label{subsub:LinAlgHK}

\indent \indent A \emph{hyper-Hermitian vector space} is a $4n$-dimensional $\R$-vector space $V$ equipped with a pair $(g, (I,J,K))$, where $g(\cdot, \cdot) = \langle \cdot, \cdot \rangle$ is a positive-definite inner product, and $I,J,K \in \End(V)$ are $g$-orthogonal complex structures that satisfy the quaternionic relations $IJ = K$, etc.  In other words, a hyper-Hermitian vector space is a quaternionic-Hermitian vector space together with a choice of admissible basis.  As such, there is an $\Sp(n)\Sp(1)$-irreducible splitting
\begin{equation}
\Lambda^2(V^*) = U \oplus W \oplus Y \label{eq:Real-2forms}
\end{equation}
as discussed in $\S$\ref{subsub:Lin-Alg-QuatHerm}.  We aim to refine this decomposition further into $\Sp(n)$-irreducible pieces. \\
\indent For $L \in \{I,J,K\}$, define a non-degenerate $2$-form $\omega_L \in \Lambda^2(V^*)$ by $\omega_L = g(L \cdot, \cdot)$.  Next, letting $\Lambda^{p,q}_L \subset \Lambda^{p+q}(V; \C)$ denote the space of $(p,q)$-forms with respect to $L$, and letting $\Lambda^{p,p}_{0,L} \subset \Lambda^{2p}(V^*;\C)$ denote the primitive $(p,p)$-forms, we have a $\U(2n)$-irreducible decomposition
\begin{equation*}
\Lambda^2(V^*) = \LB \Lambda^{2,0}_L \RB \oplus [\Lambda^{1,1}_{0,L}] \oplus \R \omega_L.
\end{equation*}
Concretely,
\begin{align*}
\LB \Lambda^{2,0}_L \RB & = \left\{ \beta \in \Lambda^2(V^*) \colon \beta(LX, LY) = -\beta(X,Y) \right\} & \dim \LB \Lambda^{2,0}_L \RB & = 4n^2 - 2n \\
[ \Lambda^{1,1}_L ] & = \left\{ \beta \in \Lambda^2(V^*) \colon \beta(LX, LY) = \beta(X,Y) \right\} & \dim [ \Lambda^{1,1}_L ] & = 4n^2.
\end{align*}
 Note that there are relationships between the various subspaces $\LB \Lambda^{2,0}_L \RB$ and $[\Lambda^{1,1}_L]$ for $L \in \{I,J,K\}$.  For example, it is easy to check that $\omega_K \in \LB \Lambda^{2,0}_I \RB \cap \LB \Lambda^{2,0}_J \RB$, and in fact
\begin{align*}
\LB \Lambda^{2,0}_I \RB \cap \LB \Lambda^{2,0}_J \RB & = \LB \Lambda^{2,0}_J \RB \cap \LB \Lambda^{2,0}_K \RB \cap [\Lambda^{1,1}_{K}],
\end{align*}
and similarly for cyclic permutations of $(I,J,K)$.  Moreover, one can check that
\begin{equation} \label{eq:Tri-Primitivity}
[\Lambda^{1,1}_I] \cap [\Lambda^{1,1}_J] = [\Lambda^{1,1}_I] \cap [\Lambda^{1,1}_J] \cap [\Lambda^{1,1}_K] = [\Lambda^{1,1}_{I,0}] \cap [\Lambda^{1,1}_{J,0}] \cap [\Lambda^{1,1}_{K,0}].
\end{equation}
To prove the last equality, let $\beta \in [\Lambda^{1,1}_I] \cap [\Lambda^{1,1}_J] \cap [\Lambda^{1,1}_K]$, and write $\beta = \gamma + r\omega_I$ for some $\gamma \in [\Lambda^{1,1}_{I,0}]$ and $r \in \R$.  Since $\beta \in [\Lambda^{1,1}_J]$ and $r\omega_I \in \LB \Lambda^{2,0}_J \RB$, we have $r = \langle \beta, r\omega_I \rangle = 0$, so that $\beta \in [\Lambda^{1,1}_{I,0}]$. \\
\indent Now, using the hyper-Hermitian structure, we can refine (\ref{eq:Real-2forms}) even further.

\begin{prop} \label{prop:HyperHermitianDecomp} Let $V$ be a hyper-Hermitian vector space.  Then there is an $\Sp(n)$-irreducible decomposition
$$\Lambda^2(V^*) \cong U_I \oplus U_J \oplus U_K \oplus W \oplus \R \omega_I \oplus \R \omega_J \oplus \R \omega_K,$$
where
\begin{align*}
U_I & = [\Lambda^{1,1}_{0,I}] \cap \LB \Lambda^{2,0}_J \RB \cap \LB \Lambda^{2,0}_K \RB & W & =  [\Lambda^{1,1}_{0,I}] \cap  [\Lambda^{1,1}_{0,J}] \cap  [\Lambda^{1,1}_{0,K}]. \\
U_J & = \LB \Lambda^{2,0}_I \RB \cap [\Lambda^{1,1}_{0,J}] \cap \LB \Lambda^{2,0}_K \RB \\
U_K & = \LB \Lambda^{2,0}_I \RB \cap \LB \Lambda^{2,0}_J \RB \cap [\Lambda^{1,1}_{0,K}]
\end{align*}
Moreover,
\begin{align*}
\dim(U_I) = \dim(U_J) = \dim(U_K) & = 2n^2 - n - 1, & \dim(W) & = 2n^2 + n.
\end{align*}
\end{prop}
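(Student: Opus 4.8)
The plan is to realize the claimed decomposition as the common eigenspace decomposition of a commuting family of involutions, and then to upgrade orthogonality to irreducibility by comparison with the known $\Sp(n)\Sp(1)$-decomposition. For $L \in \{I,J,K\}$, let $\rho_L \in \End(\Lambda^2(V^*))$ be the involution defined by $(\rho_L\beta)(X,Y) = \beta(LX,LY)$; its $(+1)$- and $(-1)$-eigenspaces are precisely $[\Lambda^{1,1}_L]$ and $\LB \Lambda^{2,0}_L \RB$. Because each $L$ is $g$-orthogonal, each $\rho_L$ is self-adjoint for the induced inner product on $\Lambda^2(V^*)$, and a direct computation from the quaternionic relations (e.g.\ $JI = -K$) shows that $\rho_I\rho_J = \rho_J\rho_I = \rho_K$, so that the three involutions commute and satisfy $\rho_I\rho_J\rho_K = \mathrm{Id}$.

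First I would decompose $\Lambda^2(V^*)$ into the simultaneous eigenspaces of $\rho_I,\rho_J,\rho_K$. The relation $\rho_I\rho_J\rho_K = \mathrm{Id}$ forces the product of the three eigenvalues to be $+1$ on every joint eigenvector, so only four of the eight sign patterns occur. The pattern $(+,+,+)$ gives $[\Lambda^{1,1}_I]\cap[\Lambda^{1,1}_J]\cap[\Lambda^{1,1}_K]$, which is exactly $W$ by (\ref{eq:Tri-Primitivity}); the pattern $(+,-,-)$ gives $[\Lambda^{1,1}_I]\cap\LB \Lambda^{2,0}_J \RB\cap\LB \Lambda^{2,0}_K \RB$, which contains $\omega_I$ and hence equals $U_I\oplus\R\omega_I$; and cyclically for $(-,+,-)$ and $(-,-,+)$. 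Being simultaneous eigenspaces of commuting self-adjoint operators, these four subspaces are mutually orthogonal and sum to $\Lambda^2(V^*)$, giving the full orthogonal direct-sum decomposition. The dimensions then follow from a short trace computation: using $\dim[\Lambda^{1,1}_L]=4n^2$ and $\dim\LB \Lambda^{2,0}_L \RB = 4n^2-2n$ one gets $\mathrm{tr}(\rho_L)=2n$ for each $L$, while $\mathrm{tr}(\mathrm{Id})=\dim\Lambda^2(V^*)=8n^2-2n$. Writing $d_{+++}$ for the dimension of the $(+,+,+)$ eigenspace and $d$ for those of the other three, the four trace identities force the latter three to be equal and reduce to $d_{+++}-d=2n$ and $d_{+++}+3d=8n^2-2n$, yielding $d_{+++}=2n^2+n=\dim W$ and $d=2n^2-n$; subtracting the line $\R\omega_L$ gives $\dim U_L = 2n^2-n-1$.

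The main work — and the step I expect to be the real obstacle — is irreducibility. Since $\Sp(n)$ commutes with $I,J,K$, it preserves each $\rho_L$ and fixes each $\omega_L$, so each of $W,U_I,U_J,U_K$ is an $\Sp(n)$-submodule. To prove irreducibility I would complexify and invoke the standard description $V_\C \cong E \otimes H$, with $E\cong\C^{2n}$ the standard $\Sp(n)$-module and $H\cong\C^2$ the standard $\Sp(1)$-module, so that $\Lambda^2(V_\C) \cong (\Sym^2 E \otimes \Lambda^2 H)\oplus\big((\Lambda^2_0 E \oplus \C)\otimes \Sym^2 H\big)$, identifying $W_\C\cong\Sym^2 E\cong\mathfrak{sp}(n)_\C$ and $U_\C\cong\Lambda^2_0 E \otimes \Sym^2 H$. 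The adjoint module $\Sym^2 E$ is $\Sp(n)$-irreducible, so $W$ is irreducible. Restricting to $\Sp(n)$ collapses $\Sym^2 H$ to three trivial summands, whence $U_\C \cong (\Lambda^2_0 E)^{\oplus 3}$; since $\Lambda^2_0 E$ is the fundamental module $V_{\omega_2}$, which is $\Sp(n)$-irreducible and of real Frobenius–Schur type, $U$ is a sum of three copies of a single real-irreducible module of real dimension $2n^2-n-1$. As each $U_L$ is an $\Sp(n)$-submodule of exactly this dimension, Schur's lemma forces it to be a single such copy, hence irreducible (for $n=1$ the $U_L$ vanish and the statement is trivial). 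The delicate points to get right are the real type of $\Lambda^2_0 E$ and the multiplicity-three bookkeeping, which together ensure that the concretely defined $U_I,U_J,U_K$ are genuinely irreducible rather than merely a convenient orthogonal splitting of $U$.
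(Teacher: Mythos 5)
Your proof is correct and is essentially the paper's own argument in different packaging: the explicit projections $\beta_I = \tfrac14(\beta + I^*\beta - J^*\beta - K^*\beta)$, etc., that the paper writes down are precisely the spectral projections onto the joint eigenspaces of your commuting involutions $\rho_I, \rho_J, \rho_K$, and both proofs of irreducibility rest on the same $\Sp(n)\Sp(1)$-equivariant identification $U \cong [\Lambda^2_0(\mathcal{E}) \otimes \Sym^2(\mathcal{H})]$ restricted to $\Sp(n)$. The only genuine deviations are minor: you obtain the dimensions from traces of the involutions where the paper uses $W \cong \mathfrak{sp}(n)$ together with the symmetry among $I, J, K$, and you spell out the real-type and isotypic (Schur) bookkeeping that the paper compresses into ``from which we obtain $U_I \cong U_J \cong U_K \cong [\Lambda^2_0(\mathcal{E})]$.''
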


\begin{proof} First, note that every pairwise intersection among $\{U_I, U_J, U_K, W\}$ is equal to $\{0\}$, and so $U_I + U_J + U_K + W = U_I \oplus U_J \oplus U_K \oplus W$.  Next, set $Y := \mathrm{span}_\R(\omega_I, \omega_J, \omega_K)$, and let  $Y^\perp \subset \Lambda^2(V^*)$ be its orthogonal complement.  It is clear that if $\beta \in U_I \oplus U_J \oplus W$, then $\beta \in Y^\perp$.  Conversely, every $\beta \in Y^\perp$ can be written
\begin{align*}
\beta & = \beta_I + \beta_J + \beta_K + \beta_W,
\end{align*}
where
\begin{align*}
\beta_I & := \textstyle \frac{1}{4}( \beta + I^*\beta - J^*\beta - K^*\beta)  & \beta_W & := \textstyle \frac{1}{4}( \beta + I^*\beta + J^*\beta + K^*\beta). \\
\beta_J & := \textstyle \frac{1}{4}( \beta - I^*\beta + J^*\beta - K^*\beta)  \\
\beta_K & := \textstyle \frac{1}{4}( \beta - I^*\beta - J^*\beta + K^*\beta)
\end{align*}
It is straightforward to check that $\beta_L \in U_L$ for $L \in \{I,J,K\}$ and $\beta_W \in W$.  This proves that $U_I \oplus U_J \oplus U_K \oplus W = Y^\perp$. \\
\indent We now count dimensions.  Since $W = [\Lambda^{1,1}_I] \cap [\Lambda^{1,1}_J] \cap [\Lambda^{1,1}_K] = \mathrm{u}(V,I) \cap \mathrm{u}(V,J) \cap \mathrm{u}(V,K)$, where $\mathrm{u}(V,L)$ denotes the Lie algebra of the unitary group $\mathrm{U}(V, L)$, we see that $W \cong \mathfrak{sp}(n)$, and hence $\dim(W) = \binom{2n+1}{2} = 2n^2 + n$.  Now, since $\dim(U_I) = \dim(U_J) = \dim(U_K)$ for symmetry reasons, we find that
$$\textstyle 8n^2 - 2n = \binom{4n}{2} = \dim[\Lambda^2(V^*)] = 3 \dim(U_I) + \dim(W) + \dim(Y).$$
Substituting $\dim(W) = 2n^2 + n$ and $\dim(Y) = 3$ yields $\dim(U_I) = 2n^2 - n - 1$. \\
\indent Finally, to see that $U_I, U_J, U_K$ are $\Sp(n)$-irreducible, we argue as follows.  Let $\mathcal{E} \simeq \C^{2n} = \HH^n$ be the standard complex $\Sp(n)$-representation given by left-multiplication by quaternionic matrices, and let $\mathcal{H} \simeq \C^2 = \HH$ be the complex $\Sp(1)$-representation given by right-multiplication by unit quaternions.  Then it is well-known that there is an $\Sp(n)\Sp(1)$-equivariant isomorphism $U \cong [\Lambda^2_0(\mathcal{E}) \otimes \Sym^2(\mathcal{H})]$, from which we obtain  $\Sp(n)$-equivariant isomorphisms $U_I \cong U_J \cong U_K \cong [\Lambda^2_0(\mathcal{E})]$.  Since $[\Lambda^2_0(\mathcal{E})]$ is $\Sp(n)$-irreducible, this establishes the claim.
\end{proof}

\indent The following algebraic fact is useful when studying of $\Sp(n)$-instantons on complex line bundles.  To set notation, for a $2$-form $F \in \Lambda^2(V^*; \C)$, we let $F^{p,q}_L$ denote its type $(p,q)$-component with respect to $L \in \{I,J,K\}$.

\begin{lem} \label{lem:Spn-Equiv-Cond} Let $F \in \Lambda^2(V^*; i\R)$ be an $i\R$-valued $2$-form.  The following are equivalent:
\begin{enumerate}[(i)]
\item $F^{0,2}_J = 0$ and $F^{0,2}_K = 0$.
\item $F^{0,2}_I = 0$ and $F^{0,2}_J = 0$ and $F^{0,2}_K = 0$.
\item $F$ satisfies the following four equations:
\begin{align*}
F^{0,2}_J & = 0 & F^{0,2}_K & = 0 \\
F \wedge \omega_J^{2n-1} & = 0 & F \wedge \omega_K^{2n-1} & = 0
\end{align*}
\item $F$ satisfies the following six equations:
\begin{align*}
F^{0,2}_I & = 0 & F^{0,2}_J & = 0 & F^{0,2}_K & = 0 \\
F \wedge \omega_I^{2n-1} & = 0 & F \wedge \omega_J^{2n-1} & = 0 & F \wedge \omega_K^{2n-1} & = 0.
\end{align*}
\end{enumerate}
\end{lem}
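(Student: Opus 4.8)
The plan is to reduce the entire lemma to the purely real identity (\ref{eq:Tri-Primitivity}), which already asserts that the intersection of any two of the $(1,1)$-conditions coincides with the full triple-primitive space $W$. The first step is a reality bookkeeping observation. Writing $F = i\eta$ with $\eta \in \Lambda^2(V^*;\R)$, the reality of $\eta$ forces $\overline{\eta^{2,0}_L} = \eta^{0,2}_L$ for each $L \in \{I,J,K\}$, so that $F^{0,2}_L = 0$ if and only if $F^{2,0}_L = 0$, i.e. if and only if $F$ is of type $(1,1)$ with respect to $L$, that is $\eta \in [\Lambda^{1,1}_L]$. This translates each equation ``$F^{0,2}_L = 0$'' into the membership ``$F \in [\Lambda^{1,1}_L]$'', while each equation ``$F \wedge \omega_L^{2n-1} = 0$'' is exactly primitivity with respect to $L$ (as $V$ is $4n$-dimensional), so that $F \in [\Lambda^{1,1}_L]$ together with $F \wedge \omega_L^{2n-1} = 0$ is precisely $F \in [\Lambda^{1,1}_{0,L}]$.

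With this dictionary in place, condition (i) reads $F \in [\Lambda^{1,1}_J] \cap [\Lambda^{1,1}_K]$. Applying the cyclic analogue of the first equality in (\ref{eq:Tri-Primitivity}), whose proof in $\S$\ref{subsub:LinAlgHK} is symmetric under relabeling of $I,J,K$, yields $[\Lambda^{1,1}_J] \cap [\Lambda^{1,1}_K] = [\Lambda^{1,1}_I] \cap [\Lambda^{1,1}_J] \cap [\Lambda^{1,1}_K] = W$. Thus condition (i) is equivalent to $F \in W$. Since the membership $F \in [\Lambda^{1,1}_I] \cap [\Lambda^{1,1}_J] \cap [\Lambda^{1,1}_K]$ is, via the reality dictionary, exactly the statement $F^{0,2}_I = F^{0,2}_J = F^{0,2}_K = 0$, this immediately delivers the equivalence (i) $\iff$ (ii).

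It remains to fold in the primitivity equations appearing in (iii) and (iv). The key point is that (\ref{eq:Tri-Primitivity}) identifies $W$ with $[\Lambda^{1,1}_{0,I}] \cap [\Lambda^{1,1}_{0,J}] \cap [\Lambda^{1,1}_{0,K}]$, so any $F \in W$ is automatically primitive with respect to all three complex structures. Hence from $F \in W$ one recovers $F \wedge \omega_I^{2n-1} = F \wedge \omega_J^{2n-1} = F \wedge \omega_K^{2n-1} = 0$ at no cost, giving (i) $\implies$ (iii) and (i) $\implies$ (iv); the reverse implications are trivial, since the equations of (iii) and of (iv) each contain the equations of (i) as a subset. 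This closes the cycle (i) $\iff$ (ii) $\iff$ (iii) $\iff$ (iv). I expect no genuine obstacle: the lemma is essentially a Hermitian-line-bundle restatement of (\ref{eq:Tri-Primitivity}), and the only steps requiring care are the reality bookkeeping of the first paragraph and the verification that the argument proving (\ref{eq:Tri-Primitivity}) is invariant under cyclic relabeling so that it may be applied to the pair $(J,K)$ rather than $(I,J)$.
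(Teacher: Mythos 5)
Your proposal is correct and follows essentially the same route as the paper's proof: both use the reality of $F$ (being $i\R$-valued) to upgrade the vanishing of $F^{0,2}_J$ and $F^{0,2}_K$ to full type-$(1,1)$ membership, and then invoke the identity (\ref{eq:Tri-Primitivity}) to obtain the triple intersection $[\Lambda^{1,1}_{0,I}] \cap [\Lambda^{1,1}_{0,J}] \cap [\Lambda^{1,1}_{0,K}]$, with the remaining implications being trivial inclusions of equation sets. Your explicit remark that (\ref{eq:Tri-Primitivity}) must be applied in its cyclic form for the pair $(J,K)$ is a point the paper glosses over, but it does not constitute a different argument.
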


\begin{proof} It is clear that (iv) $\implies$ (iii) $\implies$ (i), and that (iv) $\implies$ (ii) $\implies$ (i).  Thus, it remains to prove (i) $\implies$ (iv).  Suppose that $F$ satisfies (i).  Since $F$ is $i\R$-valued, we have $F^{2,0}_J = -\overline{F^{0,2}_J} = 0$ and $F^{2,0}_K = -\overline{F^{0,2}_K} = 0$, so that $F \in \Lambda^{1,1}_J \cap \Lambda^{1,1}_K$.  The result now follows from (\ref{eq:Tri-Primitivity}). 
\end{proof}

\subsubsection{Hyperk\"{a}hler Manifolds}

\indent \indent Let $X^{4n}$ be a hyper\"{a}hler manifold.  Since each tangent space $T_xX$ has a natural hyper-Hermitian structure, Proposition \ref{prop:HyperHermitianDecomp} implies that the bundle of $2$-forms on $X$ admits an $\Sp(n)$-invariant decomposition
$$\Lambda^2(T^*X) \cong U_I \oplus U_J \oplus U_K \oplus W \oplus \R \omega_I \oplus \R \omega_J \oplus \R \omega_K,$$
where now $W =  [\Lambda^{1,1}_{0,I}] \cap  [\Lambda^{1,1}_{0,J}] \cap  [\Lambda^{1,1}_{0,K}]$, along with $U_I, U_J, U_K$, etc., refer to vector bundles (rather than vector spaces). \\
\indent Now, let $E \to X$ be a Hermitian vector bundle.  An \emph{$\Sp(n)$-instanton} is a connection $A$ on $E$ satisfying either of the following equivalent conditions:
$$F_A \in \Gamma(\Lambda^2_{\mathfrak{sp}(n)} \otimes \mathrm{ad}_E) \ \iff \ F_A \in \Gamma(W \otimes \mathrm{ad}_E).$$
More explicitly, the linear algebraic discussion above provides the following characterizations:

\begin{prop} \label{prop:Sp(n)-equiv-def} Let $A$ be a connection on a Hermitian vector bundle $E \to X$.  The following are equivalent:
\begin{enumerate}[(i)]
\item $A$ is an $\Sp(n)$-instanton (i.e., $F_A \in \Gamma(W \otimes \mathrm{ad}_E)$).
\item $A$ is $\Omega$-ASD for $\Omega = \frac{1}{(2n-1)!}(\omega_I^2 + \omega_J^2 + \omega_K^2)^{n-1}$.
\item $A$ is simultaneously $\Omega_I$-, $\Omega_J$-, and $\Omega_K$-ASD, where $\Omega_L = \frac{1}{(2n-2)!}\omega_L^{2n-2} \in \Omega^{4n-4}(X)$.
\item $A$ is a primitive HYM connection with respect to $I$, $J$, and $K$:
\begin{align*}
(F_A)^{0,2}_I & = 0 & (F_A)^{0,2}_J & = 0 & (F_A)^{0,2}_K & = 0 \\
F_A \wedge \omega_I^{2n-1} & = 0 & F_A \wedge \omega_J^{2n-1} & = 0 & F_A \wedge \omega_K^{2n-1} & = 0.
\end{align*}
\end{enumerate}
Moreover, if $E$ is a Hermitian line bundle (i.e., if $\mathrm{rank}_\C(E) = 1$), then the above conditions are equivalent to:
\begin{enumerate}[(i)]  \setcounter{enumi}{4}
\item $(F_A)^{0,2}_J = 0$ and $(F_A)^{0,2}_K = 0$.
\end{enumerate}
\end{prop}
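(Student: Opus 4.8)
The plan is to reduce every equivalence to the pointwise linear algebra of \S\ref{subsub:LinAlgHK} (in particular Proposition \ref{prop:HyperHermitianDecomp} and the characterization (\ref{eq:W-char})) together with the $\SU(2n)$-instanton facts recalled in \S\ref{subsec:Examples-H-Instantons}, since all four conditions (i)--(iv) are pointwise, fiberwise-linear conditions on $F_A$. As a preliminary, I would record the reality fact that, for each $L \in \{I,J,K\}$, one has $(F_A)^{0,2}_L = 0$ if and only if $(F_A)^{2,0}_L = 0$: because $A$ is compatible with the Hermitian metric, $F_A$ is $\mathrm{ad}_E$-valued, and the conjugation $\beta \otimes \xi \mapsto \overline{\beta} \otimes (-\xi^{\dagger})$ on $\Lambda^2(T^*X) \otimes \End(E)$ fixes the $\mathrm{ad}_E$-valued forms while interchanging the $(2,0)_L$ and $(0,2)_L$ components. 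Hence $F_A$ is of type $(1,1)$ with respect to $L$ exactly when $(F_A)^{0,2}_L = 0$, and is moreover $L$-primitive exactly when $F_A \wedge \omega_L^{2n-1} = 0$.

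With this in hand, the equivalence (i) $\iff$ (iv) is immediate: Proposition \ref{prop:HyperHermitianDecomp} identifies $W = [\Lambda^{1,1}_{0,I}] \cap [\Lambda^{1,1}_{0,J}] \cap [\Lambda^{1,1}_{0,K}]$, so $F_A \in \Gamma(W \otimes \mathrm{ad}_E)$ holds precisely when $F_A$ is pointwise of type $(1,1)$ and primitive with respect to each of $I$, $J$, $K$, which by the preceding paragraph is exactly the six-equation system of (iv). For (i) $\iff$ (iii), I would note that the $\Sp(n)$-structure induces an $\SU(2n)$-structure for each $L$, so the discussion of \S\ref{subsec:Examples-H-Instantons} gives that being primitive HYM with respect to a fixed $L$ is equivalent to being $\Omega_L$-ASD with $\Omega_L = \frac{1}{(2n-2)!}\omega_L^{2n-2}$; imposing this for all three $L$ reproduces (iv). For (i) $\iff$ (ii), I would invoke (\ref{eq:W-char}), namely $W = \{\beta : \ast \beta = -\beta \wedge \Pi_{n-1}\}$, together with $\Pi_{n-1} = \frac{1}{(2n-1)!}(\omega_I^2 + \omega_J^2 + \omega_K^2)^{n-1}$ from the definition of $\Pi_k$; since $\Pi_{n-1}$ has even degree, $\beta \wedge \Pi_{n-1} = \Pi_{n-1} \wedge \beta$, so the $\Omega$-ASD equation $\ast F_A = -\Omega \wedge F_A$ with $\Omega = \Pi_{n-1}$ is exactly the fiberwise condition $F_A \in \Gamma(W \otimes \mathrm{ad}_E)$.

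For the rank-one refinement (v): when $E$ is a line bundle, $F_A$ is $i\R$-valued, so Lemma \ref{lem:Spn-Equiv-Cond} applies verbatim, its condition (i) being precisely (v) and its condition (iv) being precisely condition (iv) of the present proposition. I expect the implication $\text{(v)} \Rightarrow \text{(iv)}$ to be the conceptual crux, and it is exactly where the rank-one hypothesis is indispensable: vanishing of $(F_A)^{0,2}_J$ and $(F_A)^{0,2}_K$ forces, via the reality of the $i\R$-valued curvature, that $F_A$ is type $(1,1)$ with respect to both $J$ and $K$, and then the algebraic identity (\ref{eq:Tri-Primitivity})---whereby $[\Lambda^{1,1}_J] \cap [\Lambda^{1,1}_K]$ already collapses onto the full triple-primitive space $W$---delivers the remaining four equations for free. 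The only genuine obstacle is thus keeping straight that this collapse uses that $F_A$ is a \emph{genuine} (scalar) real $2$-form; for higher rank the conjugation involves the transpose and the collapse fails, which is precisely why (v) must be stated as a separate, rank-one statement.
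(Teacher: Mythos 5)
Your proof is correct, and it is essentially the paper's own argument: the paper asserts Proposition \ref{prop:Sp(n)-equiv-def} as a direct consequence of ``the linear algebraic discussion above,'' which is exactly the collection of ingredients you assemble --- Proposition \ref{prop:HyperHermitianDecomp} together with the reality of $\mathrm{ad}_E$-valued curvature for (i) $\iff$ (iv), the eigenvalue characterization (\ref{eq:W-char}) for (i) $\iff$ (ii), the $\SU(2n)$-instanton discussion of $\S$\ref{subsec:Examples-H-Instantons} applied to each of $I,J,K$ for (iii), and Lemma \ref{lem:Spn-Equiv-Cond} for (v).

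One claim in your final paragraph is wrong, however, and it contradicts your own first paragraph. You assert that for higher rank ``the conjugation involves the transpose and the collapse fails,'' so that (v) is genuinely a rank-one phenomenon. But the reality fact you correctly recorded at the outset --- that the conjugation $\beta \otimes \xi \mapsto \overline{\beta} \otimes (-\xi^{\dagger})$ fixes $\mathrm{ad}_E$-valued forms while interchanging the $(2,0)_L$ and $(0,2)_L$ components --- holds in \emph{every} rank, and it already yields $(F_A)^{0,2}_L = 0 \iff (F_A)^{2,0}_L = 0$ for any metric-compatible connection. Combined with the purely scalar identity (\ref{eq:Tri-Primitivity}), which survives tensoring with $\mathrm{ad}_E$ because $(U_1 \otimes T) \cap (U_2 \otimes T) = (U_1 \cap U_2) \otimes T$ for subspaces $U_1, U_2$ of a fixed vector space, your own argument for (v) $\Rightarrow$ (iv) goes through verbatim in arbitrary rank. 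So the algebra does not fail for $\mathrm{rank}_{\C}(E) > 1$; the paper's restriction to line bundles is simply the setting in which $F_A$ is literally an $i\R$-valued $2$-form, so that Lemma \ref{lem:Spn-Equiv-Cond} applies exactly as stated. This does not affect the correctness of your proof of the proposition as stated (which only requires the rank-one case of (v)), but the claimed dichotomy between rank one and higher rank should be deleted, as it is both unsupported and inconsistent with your preliminary reality fact.
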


\subsubsection{Remark on Low Dimensions}

\indent \indent In low dimensions, $\Sp(n)$-instantons on hyperk\"{a}hler manifolds have relationships with other natural gauge-theoretic objects.  Indeed:

\begin{example}[Dimension $4$] Let $X^4$ be a hyperk\"{a}hler $4$-manifold, let $E \to X$ be a Hermitian vector bundle, and let $A$ be a connection on $E$.  Then
$$A \text{ is an }\Sp(1)\text{-instanton} \ \iff \ A \text{ is primitive Hermitian Yang-Mills} \ \iff \ A \text{ is ASD.}$$
In this way, $\Sp(n)$-instantons are seen to be natural generalizations of the classical ASD equations to $4n$ dimensions.
\end{example}

\begin{example}[Dimension $8$] \label{ex:Dim8} Let $X^8$ be a hyperk\"{a}hler $8$-manifold, let $E \to X$ be a Hermitian vector bundle, and let $A$ be a connection on $E$.  The hyperk\"{a}hler structure on $X$ induces a torsion-free $\Spin(7)$-structure $\Phi \in \Omega^4(X)$.  Then:
$$A \text{ is an }\Sp(2)\text{-instanton} \, \implies \, A \text{ is primitive Hermitian Yang-Mills} \, \implies \, A \text{ is a }\Spin(7)\text{-instanton.}$$
If $X$ is compact and $E \to X$ admits at least one $\Sp(2)$-instanton, then both converses hold: see Corollary \ref{cor:Lewis-Dimension8}.
\end{example}

\subsubsection{Remark on Bubbling Loci} \label{subsub:Bubbling}

\indent \indent We now briefly consider the bubbling of $\Sp(n)$-instantons on hyperk\"{a}hler manifolds.  In general, the bubbling loci of sequences of $\Omega$-ASD are $\Omega$-calibrated.  The algebraic reason for this is the following fact, whose proof can be found in \cite[Prop 2.88]{fadel2019gauge}:

\begin{prop} Equip $(\R^{4n}, g_0)$ with the flat metric, where $n \geq 2$.   Let $\R^{4n} = V \oplus \R^4$ be an orthogonal decomposition, and let $\pi \colon \R^{4n} \to \R^4$ be the corresponding projection. Let $E \to \R^4$ be a principal $G$-bundle, where $G$ is a compact Lie group, and let $A$ be a non-flat connection on $E$.  Diagrammatically:
$$\begin{tikzcd}
\pi^*E \arrow[d] \arrow[r]         & E \arrow[d]  \\
\mathbb{R}^{4n} \arrow[r, "\pi"] & \mathbb{R}^4
\end{tikzcd}$$
Let $\Omega \in \Lambda^{4n-4}(\R^{4n})^*$ be a calibration.  Then:
\begin{equation*}
\pi^*A \text{ is an }\Omega\text{-ASD instanton} \iff \begin{cases}  A \text{ is an ASD instanton, and} \\ V \subset \R^{4n} \text{ is } \pm\!\Omega\text{-calibrated.} \\ \end{cases}
\end{equation*}
\end{prop}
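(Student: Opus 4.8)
The plan is to reduce the equivalence to a pointwise linear-algebra statement on the $\R^4$-factor and then decompose the $\Omega$-ASD equation by bidegree. Write $W := V^\perp \cong \R^4$, so the flat splitting is $\R^{4n} = V \oplus W$. Since $F_{\pi^*A} = \pi^* F_A$ and $F_A$ is a $2$-form pulled back from $\R^4 = W$, the curvature $\pi^* F_A$ takes values pointwise in $\Lambda^2(W^*) \subset \Lambda^2((\R^{4n})^*)$ and is constant in the $V$-directions. Hence $\pi^* A$ is $\Omega$-ASD if and only if, pointwise, $\pi^* F_A$ lies in the linear subspace $\{\beta : \ast\beta = -\Omega \wedge \beta\}$. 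A short computation with the product metric gives $\ast_{4n}(\pi^* F_A) = \vol_V \wedge \pi^*(\ast_W F_A)$, a form of pure $(V,W)$-bidegree $(4n-4,2)$.

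Next I would expand $\Omega = \sum_{q} \Omega_{4n-4-q,\,q}$ into its $(V,W)$-bidegree components, noting that wedging with the $2$-form $\pi^* F_A$ kills every piece of $W$-degree $q \geq 3$. Matching bidegrees in $\ast_{4n}(\pi^* F_A) = -\Omega \wedge \pi^* F_A$ then yields three conditions (using injectivity of $\vol_V \wedge (\cdot)$ in the first):
\begin{align*}
\text{(a)} \quad & \ast_W F_A = -c\,F_A, \qquad c := \Omega|_V, \\
\text{(b)} \quad & \Omega_{4n-5,1} \wedge \pi^* F_A = 0, \\
\text{(c)} \quad & \Omega_{4n-6,2} \wedge \pi^* F_A = 0.
\end{align*}

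Conditions (a) and (b) are dispatched directly. For (a): since $A$ is non-flat, $F_A \neq 0$, while $\ast_W$ has eigenvalues $\pm 1$ on $\Lambda^2(W^*)$ and $\mathrm{comass}(\Omega) \leq 1$ forces $|c| \leq 1$; hence (a) can hold only if $c = \pm 1$ and $F_A$ is anti-self-dual (when $c=1$) or self-dual (when $c=-1$) on $W$. But $c = \pm 1$ is precisely the statement that $V$ is $\pm\Omega$-calibrated, and after orienting $W = V^\perp$ compatibly this is exactly ``$A$ is an ASD instanton,'' the case $c=-1$ giving the opposite duality recorded by the sign in $\pm\Omega$. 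For (b): the calibrated plane $[V]$ is a maximum of the comass function on the Grassmannian $\Gr(4n-4,4n)$, and its first-order criticality forces the purely linear mixed term $\Omega_{4n-5,1}$ to vanish identically, so (b) is automatic.

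The main obstacle is condition (c). Using $\tau \wedge F_A = \langle \ast_W \tau, F_A\rangle\,\vol_W$ together with the fact that $F_A$ is ASD on $W$, one sees that $\Omega_{4n-6,2} \wedge \pi^* F_A$ depends only on the anti-self-dual part (on $W$) of the $W$-component of $\Omega_{4n-6,2}$. Thus (c) reduces to showing that this $W$-component is \emph{self-dual}, so that it annihilates the ASD curvature $F_A$. This self-duality is the crux: it is not a formal consequence of the comass bound alone (the second-order Hessian inequality at $[V]$ merely bounds the size of the mixed term, not its duality type), but rather expresses the compatibility of the calibrated geometry with the self-dual structure on the normal $\R^4$. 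I would establish it by writing $\Omega$ explicitly in a frame adapted to a splitting $\HH^{n-1} \oplus \HH$ with $W$ the $\R^4$ of the second factor: for the $\Sp(n)$ calibration $\frac{1}{(2n-1)!}(\omega_I^2 + \omega_J^2 + \omega_K^2)^{n-1}$ the cross terms pair $2$-forms on $V$ with the self-dual forms $\omega_I|_W, \omega_J|_W, \omega_K|_W$ on $W$, exactly as the Cayley $4$-form does in the $\Spin(7)$ case of Example \ref{ex:Dim8}. Granting this, (c) holds whenever $V$ is calibrated, and assembling (a), (b), (c) gives the claimed equivalence.
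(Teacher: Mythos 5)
Your reduction of $\ast F = -\Omega \wedge F$ into the three bidegree conditions (a), (b), (c) is exactly right, and your treatments of (a) and (b) are correct: non-flatness plus the eigenvalue argument pins $c = \pm 1$ and the duality type of $F_A$, and first-order criticality of $\xi \mapsto \Omega(\xi)$ on the Grassmannian at the maximizing plane $V$ really does force $\Omega_{4n-5,1} = 0$ (testing against elementary homomorphisms $e_i \mapsto w_a$ recovers every coefficient of the mixed component). Note that the forward implication is already finished after (a), and that the paper supplies no proof of its own to compare against --- it cites Fadel, Prop.\ 2.88 --- so your argument must stand alone.

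It does not, and the failure is precisely at the step you flagged and then waved through. The proposition quantifies over \emph{all} calibrations $\Omega$, but your verification of the self-duality needed for (c) is sketched only for the quaternionic calibration; the closing sentence ``granting this, (c) holds whenever $V$ is calibrated'' promotes a special feature of that calibration to all calibrations, and that promotion is false rather than merely unproven. Concretely, take $\Omega = \mathrm{Re}(dz^1 \wedge dz^2 \wedge dz^3 \wedge dz^4)$ on $\R^8 = \C^4$ (a calibration by Harvey--Lawson), $V = \R^4$ the standard special Lagrangian plane, $W = iV$ oriented by $dy^{1234}$. Then $\Omega_{3,1} = 0$, but $\Omega_{2,2}$ contains the terms $-dx^{12}\wedge dy^{34} - dx^{34}\wedge dy^{12}$, and one computes $\Omega_{2,2}\wedge (dy^{12} - dy^{34}) = (dx^{34} - dx^{12})\wedge dy^{1234} \neq 0$. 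Since $dy^{12} - dy^{34}$ is anti-self-dual on $W$, any non-flat ASD instanton $A$ on $W$ whose curvature has a component along it (e.g.\ the basic $\SU(2)$ instanton) satisfies the right-hand side of the proposition while violating (c), so $\pi^*A$ is not $\Omega$-ASD: the backward implication fails for this $\Omega$, and no completion of your step (c) can exist. What your method actually proves is the forward implication for arbitrary calibrations, together with the equivalence for exactly those calibrations whose $(4n-6,2)$-component has purely self-dual $W$-part along each calibrated plane --- a class that contains the K\"ahler powers $\frac{1}{(2n-2)!}\omega_L^{2n-2}$ to which the paper actually applies the proposition, as well as the quaternionic and Cayley forms, and for which your adapted-frame computation is the right verification. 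You should either restrict the statement to that class (and carry out the computation), or weaken the conclusion to the one-way implication; as stated, the equivalence is not a theorem.
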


\indent Let $A$ be a non-flat connection on $E \to \R^4$.  By the above proposition, $\pi^*A$ is an $\Sp(n)$-instanton on $\pi^*E \to \R^{4n}$ if and only if $A$ is an ASD instanton and the $(4n-4)$-dimensional subspace $V \subset \R^{4n}$ is simultaneously calibrated by $\frac{1}{(2n-2)!}\omega_I^{2n-2}, \frac{1}{(2n-2)!}\omega_J^{2n-2}$, and $\frac{1}{(2n-2)!}\omega_K^{2n-2}$.  This last condition is equivalent to requiring that $V$ be a complex subspace with respect to $I$, $J$, and $K$. \\
\indent Accordingly, on a hyperk\"{a}hler manifold $X^{4n}$, we expect that the bubbling locus of a sequence of $\Sp(n)$-instantons admits the structure of a (possibly singular) $(4n-4)$-dimensional submanifold that is $I$-, $J$-, and $K$-complex.  However, it is well-known that such submanifolds (when smooth) are totally-geodesic \cite{gray1969note}, \cite[$\S$3]{alekseevsky2000almost}.  This suggests that sequences of $\Sp(n)$-instantons generically do not bubble.

\section{Complex Lagrangians and Gauge Theory}

\indent \indent In hyperk\"{a}hler $4n$-manifolds $(X, g, (I,J,K))$, one of the most important classes of submanifolds is the \emph{complex Lagrangians}, $2n$-dimensional submanifolds that are simultaneously complex with respect to $I$ and Lagrangian with respect to both $\omega_J$ and $\omega_K$.  They have been studied extensively by algebraic geometers, often in the context of fibrations of complex symplectic manifolds.  In this section, we make two observations relating complex Lagrangians in $X$ to gauge theoretic objects on bundles over $X$. \\
\indent First, in $\S$\ref{sub:RFM-ComplexLag}, we prove Theorem \ref{thm:RFM-Result}, which says that complex Lagrangian graphs correspond to $\Sp(n)$-instantons via the real Fourier-Mukai transform.  Second, in $\S$\ref{sub:Complex-Lag-dHYM}, we observe that when $X$ is equipped with a distribution of complex Lagrangian $2n$-planes, there exists a simple first-order PDE system whose solutions are deformed HYM connections.  Subsections \ref{sub:RFM-ComplexLag} and \ref{sub:Complex-Lag-dHYM} are independent from one another, and also from the rest of the paper.

\subsection{The Real Fourier-Mukai Transform of Complex Lagrangians} \label{sub:RFM-ComplexLag}

\begin{defn} Let $(X^{4n}, g, (I,J,K))$ be a hyperk\"{a}hler $4n$-manifold with symplectic forms $\omega_I, \omega_J, \omega_K$.  Consider the following three conditions on a $2n$-dimensional submanifold $L \subset X$:
\begin{enumerate}[(i)]
\item $L$ is complex with respect to $I$
\item $L$ is Lagrangian with respect to $\omega_J$
\item $L$ is Lagrangian with respect to $\omega_K$
\end{enumerate}
If $L$ satisfies any two of these conditions, then $L$ also satisfies the third.  In this case, $L$ is called an \emph{$I$-complex Lagrangian}.
\end{defn}

\begin{rmk} We remark that complex Lagrangian submanifolds are, in fact, special Lagrangians.  Indeed, recall that $X$ has complex volume forms $\Upsilon_J := \frac{1}{n!}(\omega_K + i\omega_I)^n$ and $\Upsilon_K = \frac{1}{n!}(\omega_I + i\omega_J)^n$.  If $L \subset X$ is an $I$-complex Lagrangian, then it is easy to check that $L$ is calibrated by both $\mathrm{Re}( (-i)^{n+1} \Upsilon_J)$ and $\mathrm{Re}(\Upsilon_K)$.  By definition, this means that $L$ is \emph{special Lagrangian} for $\Upsilon_J$ with phase $i^{n+1}$, and also special Lagrangian for $\Upsilon_K$ of phase $1$.
\end{rmk}

\indent Recall from $\S$\ref{subsub:Examples-Deformed} that the the real Fourier-Mukai (RFM) transform of a special Lagrangian graph $S_h \subset B^n \times T^n$ is a deformed HYM connection.  By analogy, we will say that the RFM transform of a complex Lagrangian graph is a \emph{deformed $\Sp(n)$-instanton}.  As we now prove, this deformed $\Sp(n)$-instanton equation coincides with the usual $\Sp(n)$-instanton equation:

\begin{thm} \label{thm:RFM} Let $B \subset \R^{2n}$ be an open set, and equip $X = B \times T^{2n}$ with a product hyperk\"{a}hler structure $(g, (I,J,K))$ such that $B \subset X$ is a complex Lagrangian with respect to $I$.  Let $h \colon B \to T^{2n}$ be a smooth function, let $S_h \subset X$ be its graph, and let $\nabla^h$ be its real Fourier-Mukai transform on $\underline{\C} \to B \times (T^n)^*$.  Then:
$$S_h \text{ is complex Lagrangian with respect to }I \ \iff \ \nabla^h \text{ is an }\Sp(n)\text{-instanton.}$$
\end{thm}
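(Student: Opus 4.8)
The plan is to reduce each side to an explicit linear-algebraic condition on the Jacobian of $h$, written in $I$-holomorphic coordinates, and to check that the two conditions coincide. First I would record the curvature: from $\nabla^h = d + i\sum_a h^a\,dy^a$ one gets $F := F_{\nabla^h} = i\sum_{a,b}\tfrac{\partial h^a}{\partial x^b}\,dx^b\wedge dy^a$, which is $i\R$-valued and manifestly linear in the Jacobian of $h$. Since $\underline{\C}$ has rank one, Proposition \ref{prop:Sp(n)-equiv-def}(v) reduces the $\Sp(n)$-instanton condition to the single pair of equations $F^{0,2}_J=0$ and $F^{0,2}_K=0$.

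Next I would fix an adapted flat model. Pair the base coordinates into $a^j=x^{2j-1}+ix^{2j}$ (so that $B\subset\C^n$), the fibre coordinates on $T^{2n}$ into $b^j$, and the dual-fibre coordinates on $(T^{2n})^*$ into $w^j$. Because $B$ is $I$-complex Lagrangian, $I$ preserves both $TB$ and the fibre while $J$ and $K$ interchange them; in particular the fibre is \emph{itself} Lagrangian for $\omega_J$ and $\omega_K$, since $J(\text{fibre})=J\cdot J(TB)=TB$, which is orthogonal to the fibre. Consequently the holomorphic symplectic form $\Omega:=\omega_J+i\omega_K$ has only mixed ``base $\wedge$ fibre'' components, so after normalization $\Omega=\sum_j da^j\wedge db^j$, a $(2,0)$-form for $I$. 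Encoding $h$ as the $\C^n$-valued map $\eta^j=h^{2j-1}+ih^{2j}$, I set $P^j_k:=\partial\eta^j/\partial a^k$ and $Q^j_k:=\partial\eta^j/\partial\bar a^k$.

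Then I would compute both sides against $P$ and $Q$. On the calibration side, pulling $\Omega$ back to the graph gives $\Omega|_{S_h}=\sum_{j,k}P^j_k\,da^j\wedge da^k+\sum_{j,k}Q^j_k\,da^j\wedge d\bar a^k$; this is automatically linear in the Jacobian precisely because $\Omega$ has no base $\wedge$ base or fibre $\wedge$ fibre part, which is the origin of the advertised linearity. Hence $S_h$ is complex Lagrangian $\iff\Omega|_{S_h}=0\iff P$ is symmetric and $Q=0$. On the gauge side, I would write the $(1,0)$-coframes for $J$ and for $K$ explicitly in terms of $da,d\bar a,dw,d\bar w$ (obtained by diagonalizing the swap action of $J$ and $K$ on the tangent space) and re-expand $F=\tfrac i2(\Theta+\overline{\Theta})$ with $\Theta=\sum_j\overline{d\eta^j}\wedge dw^j$. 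Extracting the $(0,2)$-parts then yields $F^{0,2}_J=0\iff[\,P\text{ symmetric and }\bar Q^j_k=Q^k_j\,]$ and $F^{0,2}_K=0\iff[\,P\text{ symmetric and }\bar Q^j_k=-Q^k_j\,]$. Imposing both equations simultaneously forces $Q=0$ with $P$ symmetric, which is exactly the complex Lagrangian condition, and this proves the equivalence.

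The main obstacle is bookkeeping rather than conceptual: installing the correct hyperk\"{a}hler structure on the dual factor $B\times(T^{2n})^*$, so that the musical identification of $b$ with $w$ and any attendant conjugations remain consistent, and then carrying out the $J$- and $K$-type decompositions of $F$ without sign errors. Once the two holomorphic coframes are pinned down, matching coefficients of the independent $(0,2)$ basis $2$-forms is routine.
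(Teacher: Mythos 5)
Your proposal is correct, and its skeleton is the same as the paper's: reduce the $\Sp(n)$-instanton condition for the line bundle to the pair $F^{0,2}_J = 0$, $F^{0,2}_K = 0$ (Proposition \ref{prop:Sp(n)-equiv-def}(v), i.e.\ Lemma \ref{lem:Spn-Equiv-Cond}), then check in the flat model that this pair and the complex Lagrangian condition impose the identical linear system on the Jacobian of $h$. Where you differ is the bookkeeping, and your version is genuinely cleaner. The paper works in real coordinates with four real matrices $A_{aj}, B_{aj}, C_{aj}, G_{aj}$, restricting $\omega_J, \omega_K$ to the graph entry by entry (Lemma \ref{lem:omega-comp}) and performing two separate real coordinate changes to extract $J$- and $K$-types (Lemmas \ref{lem:J-Type}, \ref{lem:K-Type}); you work in $I$-holomorphic coordinates with the complex Jacobian $(P,Q) = (\partial \eta, \bar\partial \eta)$ and the $I$-holomorphic symplectic form $\Omega = \omega_J + i\omega_K = \sum_j da^j \wedge db^j$, so the calibration side collapses to the one-line statement $\Omega|_{S_h} = 0 \iff P = P^T,\ Q = 0$, and the purely mixed type of $\Omega$ is precisely the paper's observation that $\omega_J, \omega_K$ have no fibre-fibre (or base-base) terms --- the source of the linearity. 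Your claimed gauge-side conditions check out: writing $P = \tfrac12\left[(A+G) + i(C-B)\right]$ and $Q = \tfrac12\left[(A-G) + i(C+B)\right]$, your $J$-condition ($P$ symmetric, $\bar Q = Q^T$) unwinds to $A = A^T$, $G = G^T$, $B = -C^T$, matching Lemma \ref{lem:J-Type}(iv), and your $K$-condition ($P$ symmetric, $\bar Q = -Q^T$) unwinds to $B = B^T$, $C = C^T$, $A = G^T$, which is what Lemma \ref{lem:K-Type}(iv) evidently intends (the printed condition $C_{aj} = G_{ja}$ there appears to be a typo for $C_{aj} = C_{ja}$; with it read literally, the lemma would force $A = C$, which is inconsistent with the final matching in the proof). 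Imposing both, $Q^T = -Q^T$ forces $Q = 0$, giving exactly the calibration-side system, as you say. The trade-off: your complexified setup halves the number of matrices and makes both the linearity and the final comparison transparent, at the cost of the conjugation/identification care you flag (fixing the hyperk\"{a}hler structure on $B \times (T^{2n})^*$ and the pairing of dual coordinates); the paper's real-variable computation is longer but entirely mechanical. The one step you leave unexecuted --- extracting the $(0,2)_J$ and $(0,2)_K$ parts of $F = \tfrac i2 (\Theta + \overline{\Theta})$ --- is exactly the content of Lemmas \ref{lem:J-Type}--\ref{lem:K-Type}, and your stated answers are the correct ones, so there is no gap.
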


\indent This result is surprising for two reasons.  First, it stands in stark contrast to the situation for the other Ricci-flat holonomy groups ($H = \SU(n), \G_2, \Spin(7)$).  In each of those cases, the deformed $H$-instanton equation is more complicated than the ordinary $H$-instanton equation.  Second, since the complex Lagrangian condition is equivalent to the vanishing of the $2$-forms $\omega_J$ and $\omega_K$, one expects that the deformed $\Sp(n)$-instanton equation would be quadratic in the curvature of $\nabla$.  Despite this, Theorem \ref{thm:RFM} shows that the equation is \emph{linear} in the curvature.

\subsubsection{Setup}

\indent \indent We now prove Theorem \ref{thm:RFM}.  To begin, let $B \subset \R^{2n}$ be an open set, and let $T^{2n}$ be a $2n$-torus.  We introduce:
\begin{align*}
\text{coordinates }(z^1, \ldots, z^n) & = (x^1 + iy^1, \ldots, x^n + iy^n) \text{ on } B^{2n} \\
\text{coordinates }(w^1, \ldots, w^n) & = (u^1 + iv^1, \ldots, u^n + iv^n) \text{ on } T^{2n}
\end{align*}
We use index ranges $1 \leq i,j \leq n$ and $1 \leq a,b \leq n$.  Let us equip the product $X^{4n} = B \times T^{2n}$ with the flat metric $g = \sum (dx^a)^2 + (dy^a)^2 + (du^a)^2 + (dv^a)^2$ and complex structures $I,J,K$ defined by the following requirements:
\begin{align*}
I \!\left(\frac{\partial}{\partial x^a} \right) & = \frac{\partial}{\partial y^a} & J \!\left(\frac{\partial}{\partial x^a} \right) & = \frac{\partial}{\partial u^a} & K \!\left(\frac{\partial}{\partial x^a} \right) & = \frac{\partial}{\partial v^a} \\
I \!\left(\frac{\partial}{\partial u^a} \right) & = \frac{\partial}{\partial v^a} & J \!\left(\frac{\partial}{\partial v^a} \right) & = \frac{\partial}{\partial y^a} & K \!\left(\frac{\partial}{\partial y^a} \right) & = \frac{\partial}{\partial u^a}
\end{align*}
One can check that $IJ = K$.  Finally, we note that the induced K\"{a}hler forms $\omega_I, \omega_J, \omega_K$, defined by
\begin{align*}
\omega_I(X,Y) & = g(IX,Y) & \omega_J(X,Y) & = g(JX,Y) & \omega_K(X,Y) & = g(KX,Y)
\end{align*}
are given in coordinates by
\begin{align*}
\omega_I & = \sum dx^a \wedge dy^a + du^a \wedge dv^a, & \omega_J & = \sum dx^a \wedge du^a + dv^a \wedge dy^a, \\
& &  \omega_K & = \sum dx^a \wedge dv^a + dy^a \wedge du^a. 
\end{align*}
In particular, $\omega_J$ and $\omega_K$ contain no terms of the form $du^a \wedge dv^b$.  As we are about to demonstrate, the absence of such terms is the reason why the ``deformed $\Sp(n)$-instanton equation" is linear in the first derivatives (rather than quadratic).

\subsubsection{The graph $S_h$}

\indent \indent Let $h \colon B \to T^{2n}$ be a smooth function, say $h = (f^1, \ldots f^n, g^1, \ldots, g^n)$.  We denote the graph of $h$ by
\begin{align*}
S_h & = \{ (z, h(z)) \colon z \in B \} = \{ (x,y, f^a(x,y), g^a(x,y)) \colon (x,y) \in B\} \subset B \times T^{2n}.
\end{align*}
For ease of notation, we define functions
\begin{align*}
A_{aj} & = \frac{\partial f^a}{\partial x^j} & B_{aj} & = \frac{\partial f^a}{\partial y^j} & C_{aj} & = \frac{\partial g^a}{\partial x^j} & G_{aj} & = \frac{\partial g^a}{\partial y^j}.
\end{align*}
Then the tangent space to $S_h$ is spanned by the $2n$ vectors $X_1, \ldots, X_n, Y_1, \ldots, Y_n$ given by
\begin{align*}
X_j & := \frac{\partial}{\partial x^j} + \sum_{a} A_{aj} \frac{\partial}{\partial u^a} + \sum_a C_{aj} \frac{\partial}{\partial v^a}, & Y_j & := \frac{\partial}{\partial y^j} + \sum_{a} B_{aj} \frac{\partial}{\partial u^a} + \sum_a G_{aj} \frac{\partial}{\partial v^a}.
\end{align*}
Notice that
\begin{align*}
dx^a(X_i) & = \delta_{ai} & dy^a(X_i) & = 0 & du^a(X_i) & = A_{ai} & dv^a(X_i) & = C_{ai} \\
dx^a(Y_i) & = 0 & dy^a(Y_i) & = \delta_{ai} & du^a(Y_i) & = B_{ai}  & dv^a(Y_i) & = G_{ai}.
\end{align*}

\begin{lem} \label{lem:omega-comp} We have
\begin{align*}
\omega_J(X_i, X_j) & = A_{ij}  - A_{ji} & \omega_K(X_i, X_j) & = C_{ij} - C_{ji} \\
\omega_J(Y_i, Y_j) & =  G_{ji} - G_{ij} & \omega_K(Y_i, Y_j) & = B_{ij} - B_{ji} \\
\omega_J(X_i, Y_j) & = B_{ij} + C_{ji} & \omega_K(X_i, Y_j) & = G_{ij} - A_{ji}.
\end{align*}
\end{lem}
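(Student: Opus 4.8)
The plan is to evaluate each of the two forms directly on the relevant tangent vectors using the standard decomposable-form rule $(\alpha \wedge \beta)(V, W) = \alpha(V)\beta(W) - \alpha(W)\beta(V)$, together with the table of values $dx^a(X_i), dy^a(X_i), du^a(X_i), dv^a(X_i)$ (and likewise for $Y_i$) recorded immediately above the statement. Since $\omega_J = \sum_a dx^a \wedge du^a + dv^a \wedge dy^a$ and $\omega_K = \sum_a dx^a \wedge dv^a + dy^a \wedge du^a$, each form is a sum of $2n$ decomposable two-forms, so every evaluation reduces to a finite sum of products of the tabulated scalars, which I would then collapse using the Kronecker deltas.

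The key simplification I would exploit is that the tangent vectors have vanishing complementary base-components, namely $dy^a(X_i) = 0$ and $dx^a(Y_i) = 0$. Because the only base-direction factors appearing in $\omega_J$ and $\omega_K$ are the $dx^a$ and $dy^a$, this means that for each pair of arguments at most one of the two decomposable summands survives, and the accompanying $\delta$-factor collapses the sum over $a$. For instance, in $\omega_J(X_i, X_j)$ the summand $dv^a \wedge dy^a$ contributes nothing since $dy^a(X_\bullet) = 0$, leaving only $\sum_a (dx^a \wedge du^a)(X_i, X_j) = \sum_a (\delta_{ai} A_{aj} - \delta_{aj} A_{ai}) = A_{ij} - A_{ji}$. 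The remaining five identities follow by the same bookkeeping: identify which summand survives, read off the surviving $\delta$ and derivative factors, and collapse.

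There is no genuine obstacle here; the lemma is mechanical, and its content is really a structural observation already flagged in the setup: neither $\omega_J$ nor $\omega_K$ contains any term of the form $du^a \wedge dv^b$. This is precisely why no product of two first derivatives (such as $A_{ai} C_{bj}$) ever arises — each fiber-direction component of $X_i$ or $Y_i$ is always paired against a base-direction form contributing a Kronecker delta rather than a second derivative, which is the mechanism behind the linearity asserted in Theorem \ref{thm:RFM}. The only care required is clerical: keeping straight the six argument-pairs against the two forms, and respecting the sign and index order in the antisymmetrization $\alpha(V)\beta(W) - \alpha(W)\beta(V)$, which is exactly what produces the correct index transpositions (e.g.\ the entry $\omega_J(Y_i, Y_j) = G_{ji} - G_{ij}$ rather than $G_{ij} - G_{ji}$).
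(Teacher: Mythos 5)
Your proposal is correct and follows essentially the same route as the paper's proof: the paper likewise evaluates $\omega_J$ and $\omega_K$ directly on the pairs $(X_i,X_j)$, $(Y_i,Y_j)$, $(X_i,Y_j)$ via the decomposable-form rule, reads off the tabulated pairings, and collapses the Kronecker deltas, and your structural remark (no $du^a \wedge dv^b$ terms, hence no products of first derivatives and hence linearity) matches the paper's own commentary. One small slip in your prose: for the mixed pairs $(X_i,Y_j)$ it is not true that at most one decomposable summand survives --- both do, which is exactly how $\omega_J(X_i,Y_j)=B_{ij}+C_{ji}$ and $\omega_K(X_i,Y_j)=G_{ij}-A_{ji}$ acquire two terms --- but this misstatement does not affect the mechanical computation you describe, which yields the correct identities.
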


\begin{proof} Omitting the summation sign, we compute
\begin{align*}
\omega_J(X_i, X_j) & = dx^a(X_i) du^a(X_j) - dx^a(X_j) du^a(X_i) + dv^a(X_i) dy^a(X_j) - dv^a(X_i) dy^a(X_i)  \\
& = \delta_{ai} A_{aj} - \delta_{aj} A_{ai} + C_{ai} \cdot 0 - C_{aj} \cdot 0 \\
& = A_{ij} - A_{ji}
\end{align*}
and
\begin{align*}
\omega_J(Y_i, Y_j) & = dx^a(Y_i) du^a(Y_j) - dx^a(Y_j) du^a(Y_i) + dv^a(Y_i) dy^a(Y_j) - dv^a(Y_j) dy^a(Y_i) \\
& = 0 \cdot B_{aj} - 0 \cdot B_{ai} + G_{ai} \delta_{aj} - G_{aj}  \delta_{ai} \\
& = G_{ji} - G_{ij}
\end{align*}
and
\begin{align*}
\omega_J(X_i, Y_j) & = dx^a(X_i) du^a(Y_j) - dx^a(Y_j) du^a(X_i) + dv^a(X_i) dy^a(Y_j) - dv^a(Y_j) dy^a(X_i) \\
& = \delta_{ai} B_{aj} - 0 \cdot A_{ai} + C_{ai} \delta_{aj} - G_{aj} \cdot 0 \\
& = B_{ij} + C_{ji}
\end{align*}
Analogous computations yield the expressions for $\omega_K|_{S_h}$.
\end{proof}

\subsubsection{The connection $\nabla^h$}

\indent \indent By the discussion in $\S$\ref{sub:RFM}, the real Fourier-Mukai transform of $S_h$ is the connection $\nabla^h$ on the trivial complex line bundle $\underline{\C} \to X^* = B \times (T^{2n})^*$ given by
$$\nabla^h := d + i \sum_{a=1}^{n} f^a du^a + g^a dv^a.$$
Since the curvature of $\nabla^h$, denoted $F_{\nabla^h} \in \Omega^2(X^*; i \R)$, is an $i\R$-valued $2$-form, we will write $F_{\nabla^h} = iF$.  A computation shows that
\begin{equation}
F = \sum_{j=1}^n \sum_{a = 1}^n A_{aj}\,dx^j \wedge du^a + B_{aj}\,dy^j \wedge du^a + C_{aj}\,dx^j \wedge dv^a + G_{aj}\,dy^j \wedge dv^a.
\label{eq:F-Model}
\end{equation}
We now decompose $F$ into $(p,q)$-types with respect to $J$:

\begin{lem} \label{lem:J-Type} Let $F \in \Omega^2(X)$ be a $2$-form of the form (\ref{eq:F-Model}).  The following are equivalent:
\begin{enumerate}[(i)]
\item $F^{2,0}_J = 0$.
\item $F^{0,2}_J = 0$.
\item $F$ is of $J$-type $(1,1)$.
\item $A_{aj} = A_{ja}$ and $G_{aj} = G_{ja}$ and $B_{aj} = -C_{ja}$.
\end{enumerate}
\end{lem}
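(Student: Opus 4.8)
The plan is to set up the complex structure $J$ explicitly on the cotangent space, express the action of $J$ on the relevant $1$-forms, and then read off the type $(p,q)$-decomposition of $F$ directly. First I would compute how $J$ acts on the $1$-forms $dx^j, dy^j, du^a, dv^a$ by dualizing the tangent-space definitions given in the Setup. From $J(\partial_{x^a}) = \partial_{u^a}$, $J(\partial_{v^a}) = \partial_{y^a}$, and the quaternionic relations, one obtains $J(\partial_{u^a}) = -\partial_{x^a}$ and $J(\partial_{y^a}) = -\partial_{v^a}$; transposing to the coframe gives $J^*(dx^a) = -du^a$, $J^*(du^a) = dx^a$, $J^*(dy^a) = dv^a$, $J^*(dv^a) = -dy^a$ (signs to be verified carefully against the conventions). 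The natural $(1,0)$-forms for $J$ are then the eigenforms of $J^*$ with eigenvalue $-i$, namely combinations such as $dx^a - i\,du^a$ and $dy^a + i\,dv^a$.

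The core step is the equivalence of (iv) with the vanishing of one half of the type decomposition. Since $F$ is real (it equals the curvature $iF$ stripped of its factor, hence $F$ is a genuine real $2$-form), complex conjugation exchanges $F^{2,0}_J$ and $F^{0,2}_J$, so $F^{2,0}_J = 0$ if and only if $F^{0,2}_J = 0$; this gives (i) $\iff$ (ii) immediately. Both are equivalent to $F$ being of pure type $(1,1)$, i.e. (iii), since $\Lambda^2 = \Lambda^{2,0} \oplus \Lambda^{1,1} \oplus \Lambda^{0,2}$ and the real form $F$ has vanishing $(2,0)$-part precisely when it has vanishing $(0,2)$-part and hence lies in $\Lambda^{1,1}$. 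The substantive content is therefore (iii) $\iff$ (iv). I would rewrite $F$ from (\ref{eq:F-Model}) in terms of the $J$-holomorphic coframe: grouping $dx^j$ with $du^j$ and $dy^j$ with $dv^j$, each monomial $dx^j \wedge du^a$, etc., decomposes into $(2,0)$, $(1,1)$, and $(0,2)$ pieces, and collecting the coefficient of each pure wedge of two $(1,0)$-forms yields linear expressions in $A_{aj}, B_{aj}, C_{aj}, G_{aj}$. Setting the $(0,2)$-coefficients to zero should produce exactly the three symmetry conditions $A_{aj} = A_{ja}$, $G_{aj} = G_{ja}$, and $B_{aj} = -C_{ja}$.

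The main obstacle will be bookkeeping: correctly pairing the real coordinate $1$-forms into $J$-holomorphic combinations with the right signs, and then tracking which index symmetrizations of $A, B, C, G$ arise when projecting each term of (\ref{eq:F-Model}) onto $\Lambda^{2,0}_J$. The antisymmetry of the wedge product means that, for instance, the $du^a \wedge du^b$-type contributions combine the $A$- and $B$-coefficients in a way that forces symmetry of $A$ and $G$ separately, while the mixed $dx$–$dv$ and $dy$–$du$ structure ties $B$ to $C$ with a sign. I expect the cleanest route is to directly expand $dx^j = \frac{1}{2}\big((dx^j - i\,du^j) + (dx^j + i\,du^j)\big)$ and similarly for the other coordinates, substitute into (\ref{eq:F-Model}), and extract the antiholomorphic-antiholomorphic coefficient; the condition $F^{0,2}_J = 0$ then reads off as a system whose solution set is precisely (iv). Once the sign conventions are pinned down this is a routine but careful linear computation.
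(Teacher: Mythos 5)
Your proposal is correct and takes essentially the same route as the paper: the paper introduces complex coordinates $(\zeta^a,\eta^a) = (x^a + iu^a,\, y^a + iv^a)$ adapted to $J$, substitutes $dx^a = \tfrac{1}{2}(d\zeta^a + d\overline{\zeta}^a)$, $du^a = \tfrac{1}{2i}(d\zeta^a - d\overline{\zeta}^a)$, etc.\ into (\ref{eq:F-Model}), and reads off condition (iv) from the vanishing of the pure-type components, with the equivalence of (i), (ii), (iii) resting on the realness of $F$ exactly as you argue. Your sign bookkeeping is in fact self-consistent --- your two proposed forms $dx^a - i\,du^a$ and $dy^a + i\,dv^a$ genuinely lie in a common $J^*$-eigenspace, whereas the paper's stated pair $d\zeta^a$, $d\eta^a$ lie in opposite ones (since $J\partial_{y^a} = -\partial_{v^a}$, the $J$-holomorphic combination is $y^a - iv^a$); this is a harmless slip because the resulting linear system $A_{aj}=A_{ja}$, $G_{aj}=G_{ja}$, $B_{aj}=-C_{ja}$ is invariant under conjugation, and carrying out the expansion you describe yields exactly the lemma.
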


\begin{proof} Recall that $X^{4n}$ has coordinates $(z,w) = (x+iy, u+iv)$. Define a new coordinate system on $X^{4n}$ by
$$(\zeta, \eta) := (x+iu, y + iv).$$
These coordinates have the property that $d\zeta^i \wedge d\zeta^j$ and $d\zeta^i \wedge d\eta^j$ and $d\eta^i \wedge d\eta^j$ are all of $J$-type $(2,0)$.  Now, observe that
\begin{align*}
dx^a & = \textstyle \frac{1}{2}(d\zeta^a + d\overline{\zeta}^a) & du^a & = \textstyle \frac{1}{2i}(d\zeta^a - d\overline{\zeta}^a) \\
dy^a & = \textstyle \frac{1}{2i}(d\eta^a - d\overline{\eta}^a) & dv^a & = \textstyle \frac{1}{2}(d\eta^a + d\overline{\eta}^a)
\end{align*}
Substituting these expressions into the terms of (\ref{eq:F-Model}), a straightforward computation yields:
\begin{align*}
\sum_{a,j} A_{aj}\,dx^j \wedge du^a & = \frac{1}{4i}\left[ \sum_{a < j} \left(A_{aj} - A_{ja}\right) d\zeta^j \wedge d\zeta^a - \sum_{a < j} \left(A_{aj} - A_{ja}\right) d\overline{\zeta}^j \wedge d\overline{\zeta}^a \right.  \\
& \ \ \ \ - \left.\sum_{j} 2A_{jj}\,d\zeta^j \wedge d\overline{\zeta}^j - \sum_{a < j} \left(A_{aj} + A_{ja}\right) \left(d\zeta^a \wedge d\overline{\zeta}^j + d\zeta^j \wedge d\overline{\zeta}^a\right)  \right]
\end{align*}
and
\begin{align*}
\sum_{a,j} G_{aj}\,dy^j \wedge dv^a & = \frac{1}{4i}\left[ \sum_{a < j} \left(G_{aj} - G_{ja}\right) d\eta^j \wedge d\eta^a - \sum_{a < j} \left(G_{aj} - G_{ja}\right) d\overline{\eta}^j \wedge d\overline{\eta}^a \right.  \\
& \ \ \ \ + \left.\sum_{j} 2G_{jj}\,d\eta^j \wedge d\overline{\eta}^j + \sum_{a < j} \left(G_{aj} + G_{ja}\right) \left(d\eta^a \wedge d\overline{\eta}^j + d\eta^j \wedge d\overline{\eta}^a \right) \right] 
\end{align*}
and
\begin{align*}
\sum_{a,j} B_{aj}\,dy^j \wedge du^a + C_{aj}\,dx^j \wedge dv^a & = \frac{1}{4} \left[ \sum_{a,j} (B_{aj} + C_{ja}) \left(d\zeta^a \wedge d\eta^j + d\overline{\zeta}^a \wedge d\overline{\eta}^j \right) \right. \\
& \left. + \sum_{a,j} \left(B_{aj} - C_{ja}\right)\left(-d\zeta^a \wedge d\overline{\eta}^j + d\eta^j \wedge d\overline{\zeta}^a \right) \right]
\end{align*}
The result follows.
\end{proof}

We will also need to decompose $F$ into $(p,q)$-types with respect to $K$.

\begin{lem} \label{lem:K-Type} Let $F \in \Omega^2(X)$ be a $2$-form of the form (\ref{eq:F-Model}).  The following are equivalent:
\begin{enumerate}[(i)]
\item $F^{2,0}_K = 0$.
\item $F^{0,2}_K = 0$.
\item $F$ is of $K$-type $(1,1)$.
\item $B_{aj} = B_{ja}$ and $C_{aj} = G_{ja}$ and $A_{aj} = G_{ja}$.
\end{enumerate}
\end{lem}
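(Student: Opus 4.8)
The plan is to run the proof of Lemma \ref{lem:J-Type} essentially verbatim, replacing the $J$-adapted coordinates by coordinates adapted to $K$. To begin, the equivalences (i) $\iff$ (ii) $\iff$ (iii) are formal and require no computation: the form $F$ is real (recall $F_{\nabla^h} = iF$ with $F$ real-valued), so complex conjugation interchanges $F^{2,0}_K$ and $F^{0,2}_K$; hence either vanishes iff the other does, and this is exactly the assertion that $F$ is of $K$-type $(1,1)$. Thus the entire content is the equivalence (iii) $\iff$ (iv).

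For that, I would introduce the $K$-holomorphic coordinates $\xi^a := x^a + i v^a$ and $\psi^a := y^a + i u^a$. Using $K\partial_{x} = \partial_v$, $K\partial_v = -\partial_x$, $K\partial_y = \partial_u$, $K\partial_u = -\partial_y$, one checks directly that $d\xi^a \circ K = i\,d\xi^a$ and $d\psi^a \circ K = i\,d\psi^a$, so that $d\xi^i \wedge d\xi^j$, $d\xi^i \wedge d\psi^j$, and $d\psi^i \wedge d\psi^j$ are precisely the $K$-type $(2,0)$ monomials. Inverting gives $dx^a = \tfrac12(d\xi^a + d\bar\xi^a)$, $dv^a = \tfrac{1}{2i}(d\xi^a - d\bar\xi^a)$, $dy^a = \tfrac12(d\psi^a + d\bar\psi^a)$, and $du^a = \tfrac{1}{2i}(d\psi^a - d\bar\psi^a)$. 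Substituting these into (\ref{eq:F-Model}) and extracting the $(2,0)_K$ part, the term $\sum C_{aj}\,dx^j \wedge dv^a$ becomes pure in $\xi$ and contributes $\tfrac{1}{4i}\sum_{j<a}(C_{aj} - C_{ja})\,d\xi^j\wedge d\xi^a$; the term $\sum B_{aj}\,dy^j \wedge du^a$ becomes pure in $\psi$ and contributes $\tfrac{1}{4i}\sum_{j<a}(B_{aj} - B_{ja})\,d\psi^j\wedge d\psi^a$; and the remaining terms $\sum A_{aj}\,dx^j\wedge du^a$ and $\sum G_{aj}\,dy^j\wedge dv^a$ produce, after relabelling, the mixed contribution $\tfrac{1}{4i}\sum_{a,j}(A_{aj} - G_{ja})\,d\xi^j\wedge d\psi^a$. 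Setting $F^{2,0}_K = 0$ therefore forces $B$ and $C$ to be symmetric and $A_{aj} = G_{ja}$, which is (iv).

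The computation is routine bookkeeping; the only genuine care needed is twofold. First, one must choose the orientation of the $K$-holomorphic coordinates correctly, pairing $x$ with $v$ and $y$ with $u$ — exactly opposite to the $J$-pairing of $x$ with $u$ and $v$ with $y$. Second, one must track the prefactors: for $K$ the two mixed terms each carry one real-part and one imaginary-part factor (both $\tfrac{1}{4i}$), so their relative sign comes only from wedge reordering, giving the antisymmetric combination $A_{aj} - G_{ja}$, whereas in Lemma \ref{lem:J-Type} the analogous terms carry two imaginary-part versus two real-part factors, which is what produces the sign in $B_{aj} = -C_{ja}$ there.

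As an independent consistency check I would compare the output with Lemma \ref{lem:omega-comp}: the conditions $\omega_K(X_i,X_j) = \omega_K(Y_i,Y_j) = \omega_K(X_i,Y_j) = 0$ read $C_{ij} = C_{ji}$, $B_{ij} = B_{ji}$, and $A_{ji} = G_{ij}$, which match (iv) exactly. This mirrors the $J$-case, where Lemma \ref{lem:J-Type}(iv) reproduces $\omega_J|_{S_h} = 0$, and it is the structural reason the correspondence of Theorem \ref{thm:RFM} turns out to be linear rather than quadratic: the forms $\omega_J,\omega_K$ contain no $du^a \wedge dv^b$ terms, so the graph conditions and the $(1,1)$-type conditions are governed by the same linear relations among $A,B,C,G$.
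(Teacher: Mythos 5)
Your proof is correct and is essentially identical to the paper's own: the paper introduces the same $K$-adapted coordinates $(\alpha,\beta) := (x+iv,\,y+iu)$ (your $(\xi,\psi)$), substitutes the same expressions for $dx^a, dy^a, du^a, dv^a$ into (\ref{eq:F-Model}), and reads off the $(2,0)$-part, declaring the computation ``tedious but straightforward''; your bookkeeping of the coefficients $C_{aj}-C_{ja}$, $B_{aj}-B_{ja}$, and $A_{aj}-G_{ja}$ carries it out accurately.

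One point you should make explicit rather than gloss over: the conditions your computation actually produces --- $B_{aj}=B_{ja}$, $C_{aj}=C_{ja}$, and $A_{aj}=G_{ja}$ --- are \emph{not} literally item (iv) as printed, which asserts ``$C_{aj}=G_{ja}$'' in place of ``$C_{aj}=C_{ja}$.'' The printed (iv) is a typo: it is not equivalent to (iii) (it would force $A=C$ but not the symmetry of $C$), and your own consistency check shows the corrected version is what is needed, since Lemma \ref{lem:omega-comp} gives $\omega_K(X_i,X_j)=C_{ij}-C_{ji}$, $\omega_K(Y_i,Y_j)=B_{ij}-B_{ji}$, $\omega_K(X_i,Y_j)=G_{ij}-A_{ji}$, and the proof of Theorem \ref{thm:RFM} matches these exactly against Lemma \ref{lem:K-Type}(iv). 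So rather than writing that your derived conditions ``is (iv),'' you should state that you have proved the equivalence with the corrected form of (iv), thereby also flagging the misprint in the statement.
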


\begin{proof} Recall that $X^{4n}$ has coordinates $(z,w) = (x+iy, u+iv)$. Define a new coordinate system on $X^{4n}$ by
$$(\alpha, \beta) := (x+iv, y + iu).$$
These coordinates have the property that $d\alpha^i \wedge d\alpha^j$ and $d\alpha^i \wedge d\beta^j$ and $d\beta^i \wedge d\beta^j$ are all of $K$-type $(2,0)$.  Now, observe that
\begin{align*}
dx^a & = \textstyle \frac{1}{2}(d\alpha^a + d\overline{\alpha}^a) & du^a & = \textstyle \frac{1}{2i}(d\beta^a - d\overline{\beta}^a) \\
dy^a & = \textstyle \frac{1}{2}(d\beta^a + d\overline{\beta}^a) & dv^a & = \textstyle \frac{1}{2i}(d\alpha^a - d\overline{\alpha}^a).
\end{align*}
Substituting these expressions into the terms of (\ref{eq:F-Model}), a tedious but straightforward computation yields the result exactly as in the proof of Lemma \ref{lem:J-Type}.
\end{proof}

\subsubsection{Completing the proof of Theorem \ref{thm:RFM}}

\indent \indent The condition that $S_h$ be complex Lagrangian with respect to $I$ is equivalent to requiring that both $\left.\omega_J\right|_{S_h} = 0$ and $\left.\omega_K\right|_{S_h} = 0$.  This is equivalent to requiring that for all $1 \leq i,j \leq n$, we have
\begin{align*}
\omega_J(X_i, X_j) & = 0 & \omega_K(X_i, X_j) & = 0 \\
\omega_J(Y_i, Y_j) & =  0 & \omega_K(Y_i, Y_j) & = 0 \\
\omega_J(X_i, Y_j) & = 0 & \omega_K(X_i, Y_j) & = 0.
\end{align*}
By Lemma \ref{lem:omega-comp}, this is equivalent to the first-order linear PDE system
\begin{align*}
A_{ij}  - A_{ji} & = 0 & C_{ij} - C_{ji} & = 0 \\
G_{ji} - G_{ij} & = 0 &  B_{ij} - B_{ji} & = 0 \\
B_{ij} + C_{ji} & = 0 &  G_{ij} - A_{ji} & = 0.
\end{align*}
By Lemmas \ref{lem:J-Type} and \ref{lem:K-Type}, we see that this system is equivalent to requiring that $F \in [\Lambda^{1,1}_J] \cap [\Lambda^{1,1}_K]$.  Finally, by Lemma \ref{lem:Spn-Equiv-Cond}, this last condition is equivalent to $\nabla^h$ being an $\Sp(n)$-instanton.  This completes the proof of Theorem \ref{thm:RFM}.

\subsection{Complex Lagrangian Distributions and dHYM Connections} \label{sub:Complex-Lag-dHYM}

\indent \indent Recall from $\S$\ref{subsub:Examples-Deformed} that a connection $A$ on a Hermitian line bundle $L \to X$ over a Calabi-Yau $2n$-manifold $(X^{2n}, ( g,\omega_I, I, \Upsilon_I))$ is a \emph{deformed HYM connection} if it satisfies
\begin{align}
(F_A)^{0,2} & = 0 \label{eq:dHYM1} \\
 \mathrm{Im}( (\omega_I + F_A)^{n}) & = 0. \label{eq:dHYM2}
\end{align}
As the PDE system (\ref{eq:dHYM1})-(\ref{eq:dHYM2}) is nonlinear, examples of dHYM connections are non-trivial to construct.  In this subsection, we point out that when $X$ is a \emph{hyperk\"{a}hler} manifold equipped with a complex Lagrangian distribution, there exists a considerably simpler PDE whose solutions are deformed Hermitian Yang-Mills. \\

\indent So, let $(X, g, (I,J,K))$ be a hyperk\"{a}hler $4n$-manifold equipped with a distribution $\mathsf{B} \subset TX$ of $I$-complex Lagrangian $2n$-planes.  That is, we assume that $X$ carries an orthogonal splitting $TX = \mathsf{B} \oplus \mathsf{F}$, where $\mathsf{B} \subset TX$ is a rank $2n$-subbundle consisting of $I$-complex Lagrangian $2n$-planes, and $\mathsf{F} := \mathsf{B}^\perp$.  Notice that the data of $\mathsf{B} \subset TX$ preferences a particular complex structure in the hyperk\"{a}hler triple (namely $I$).  In this context, the bundle of $2$-forms on $X$ decomposes into two subbundles of ``pure forms," and one subbundle of ``mixed forms":
$$\Lambda^2(T^*X) = \Lambda^2(\mathsf{B}^*) \oplus (\mathsf{B}^* \otimes \mathsf{F}^*) \oplus \Lambda^2(\mathsf{F}^*).$$
We will say that a $2$-form $F \in \Omega^2(X)$ is \emph{mixed} if $F \in \Gamma(\mathsf{B}^* \otimes \mathsf{F}^*)$.  This condition arises naturally in connection with the RFM transform:

\begin{example} Let $X = B \times T^{2n}$, where $B \subset \R^{2n}$ is an open subset, equip $X$ with the (flat) product hyperk\"{a}hler structure, and let $\mathsf{B} = TB$ and $\mathsf{F} = T(T^{2n})$.  Given a smooth function $h \colon B \to T^{2n}$, the RFM transformed connection $\nabla^h$ on the trivial line bundle $\underline{\C} \to B \times (T^{2n})^*$ has a curvature $2$-form $F_h$ that is mixed.
\end{example}

We now observe:

\begin{prop} Let $L \to X$ be a Hermitian line bundle over a hyperk\"{a}hler manifold $X$, and let $A$ be a connection on $L$.  Suppose that $X$ is equipped with a distribution $\mathsf{B} \subset TX$ of $I$-complex Lagrangian $2n$-planes. \\
\indent If $A$ is a $\U(2n)$-instanton (with respect to $I$) whose curvature $F_A$ is mixed (w.r.t. $I$), then $A$ is deformed Hermitian-Yang-Mills (w.r.t. $I$).
\end{prop}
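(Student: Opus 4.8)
The first of the two dHYM equations, $(F_A)^{0,2}_I = 0$, is \emph{exactly} the hypothesis that $A$ is a $\U(2n)$-instanton with respect to $I$ (note that $(X,I)$ has complex dimension $2n$), so there is nothing to prove there. The whole content of the statement is therefore the second equation, which here reads $\mathrm{Im}\!\left((\omega_I + F_A)^{2n}\right) = 0$. The plan is to expand this $2n$-th power and show that its imaginary part vanishes \emph{term by term}, purely for bidegree reasons arising from the splitting $TX = \mathsf{B} \oplus \mathsf{F}$. Since $A$ is a connection on a Hermitian line bundle, its curvature is $i\R$-valued, so I write $F_A = iF$ with $F$ a real $2$-form; then $\omega_I + F_A = \omega_I + iF$ and
\begin{equation*}
\mathrm{Im}\!\left((\omega_I + iF)^{2n}\right) = \sum_{k \ \mathrm{odd}} (-1)^{(k-1)/2} \binom{2n}{k}\, \omega_I^{2n-k} \wedge F^k,
\end{equation*}
using that $\omega_I$ and $F$ are both of even degree and hence commute. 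It therefore suffices to prove that $\omega_I^{2n-k} \wedge F^k = 0$ for every odd $k$.

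To this end I would introduce the $(\mathsf{B},\mathsf{F})$-bidegree associated to the decomposition $\Lambda^\bullet(T^*X) = \bigoplus_{p,q} \Lambda^p(\mathsf{B}^*) \otimes \Lambda^q(\mathsf{F}^*)$. The hypothesis that $F_A$ (equivalently $F$) is \emph{mixed} says precisely that $F$ has bidegree $(1,1)$, so $F^k$ has bidegree $(k,k)$. The one structural fact I need about $\omega_I$ is that it is block-diagonal with respect to the splitting: since $\mathsf{B}$ is $I$-complex we have $I\mathsf{B} = \mathsf{B}$, and because $\mathsf{F} = \mathsf{B}^\perp$ while $I$ is a $g$-isometry, also $I\mathsf{F} = \mathsf{F}$; hence for $X \in \mathsf{B}$, $Y \in \mathsf{F}$ one has $\omega_I(X,Y) = g(IX,Y) = 0$, giving $\omega_I = \omega_I^{\mathsf{B}} + \omega_I^{\mathsf{F}}$ with $\omega_I^{\mathsf{B}}$ of bidegree $(2,0)$ and $\omega_I^{\mathsf{F}}$ of bidegree $(0,2)$. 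Notice that I never use that $\mathsf{F}$ is Lagrangian --- only that both summands are $I$-invariant.

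The argument then closes by a parity count. Expanding $\omega_I^{2n-k} = (\omega_I^{\mathsf{B}} + \omega_I^{\mathsf{F}})^{2n-k}$, every monomial is $(\omega_I^{\mathsf{B}})^{j} \wedge (\omega_I^{\mathsf{F}})^{2n-k-j}$, of bidegree $(2j,\, 2(2n-k-j))$, so each summand of $\omega_I^{2n-k} \wedge F^k$ has $\mathsf{B}$-degree $2j + k$. But $\omega_I^{2n-k} \wedge F^k$ is a $4n$-form, i.e. of top degree, and since $\mathrm{rank}\,\mathsf{B} = \mathrm{rank}\,\mathsf{F} = 2n$, any nonzero top-degree form must have bidegree exactly $(2n, 2n)$. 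This forces $2j + k = 2n$, which admits no integer solution $j$ when $k$ is odd. Hence every monomial of $\omega_I^{2n-k} \wedge F^k$ has $\mathsf{B}$-degree different from $2n$ and therefore vanishes, so $\omega_I^{2n-k} \wedge F^k = 0$ for all odd $k$, yielding $\mathrm{Im}\!\left((\omega_I + F_A)^{2n}\right) = 0$.

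I expect no serious obstacle here: the only points requiring care are the bookkeeping of bidegrees and the justification of the block-diagonality of $\omega_I$ (which rests solely on the $I$-invariance of both $\mathsf{B}$ and $\mathsf{F}$). Conceptually, the vanishing of all odd powers of $F$ is the exact analogue, in this line-bundle-with-distribution setting, of the linearity phenomenon observed in Theorem \ref{thm:RFM}: the absence of any ``pure'' bidegree-$(2,0)$ or $(0,2)$ part in $F$ is precisely what annihilates the would-be nonlinear terms in the dHYM equation.
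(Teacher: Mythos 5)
Your proposal is correct and follows essentially the same route as the paper: write $F_A = iF$ with $F$ real, expand $(\omega_I + iF)^{2n}$ binomially, and kill each odd-power term $\omega_I^{p} \wedge F^{q}$ using the bidegree decomposition induced by $TX = \mathsf{B} \oplus \mathsf{F}$, with $\omega_I$ block-diagonal and $F$ mixed. The only cosmetic difference is in the last step: the paper concludes by writing $\omega_I^p \wedge F^q = \langle \omega_I^p, \ast(F^q) \rangle\,\vol_X$ and noting the two factors lie in orthogonal bidegree summands, whereas you argue directly that a nonzero top-degree form must have bidegree $(2n,2n)$, which the odd $\mathsf{B}$-degree $2j+k$ can never equal --- the same parity observation, phrased without the Hodge star (and your explicit verification that block-diagonality of $\omega_I$ needs only $I$-invariance of $\mathsf{B}$ and $\mathsf{F}$, not the Lagrangian condition, is a nice point the paper leaves implicit).
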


\begin{proof} Let $A$ be a $\U(2n)$-instanton with respect to $I$ whose curvature $2$-form $F_A$ is mixed.  Since $A$ is a $\U(2n)$-instanton, we have $(F_A)^{0,2}_I = 0$, so it remains to verify (\ref{eq:dHYM2}).  Since $F_A \in \Omega^2(X; \mathfrak{u}(1)) \cong \Omega^2(X; i\R)$, we may write $F_A = iF$ for some real $2$-form $F \in \Omega^2(X)$.  Abbreviating $\omega := \omega_I$, we expand
\begin{equation*}
(\omega + iF)^{2n} = \textstyle \binom{2n}{0}\omega^{2n} + \binom{2n}{1}\omega^{2n-1} \wedge iF + \cdots + \binom{2n}{2n-1} \omega \wedge i^{2n-1}F^{2n-1} + \binom{2n}{2n} i^{2n} F^{2n}
\end{equation*}
so that
\begin{align*}
\mathrm{Im}[(\omega + iF)^{2n} ] & = \sum_{k=1}^n \textstyle (-1)^{k+1}\binom{2n}{2k-1}\, \omega^{2n-(2k-1)} \wedge  F^{2k-1}.
\end{align*}
Thus, $\mathrm{Im}[(\omega + iF)^{2n} ]$ is a linear combination of $(p+q)$-forms $\omega^p \wedge F^q$, where $p,q \geq 1$ are odd integers with $p+q = 2n$.  Now, since $\omega \in \Lambda^2(\mathsf{B}^*) \oplus \Lambda^2(\mathsf{F}^*)$, we have
$$\omega^p \in \Lambda^{2p}(\mathsf{B}^*) \oplus \left(\Lambda^{2p-2}(\mathsf{B}^*) \otimes \Lambda^2(\mathsf{F}^*)\right) \oplus \cdots \oplus \left(\Lambda^{2}(\mathsf{B}^*) \otimes \Lambda^{2p-2}(\mathsf{F}^*)\right) \oplus \Lambda^{2p}(\mathsf{F}^*).$$
On the other hand, since $F_A$ is mixed, we have $F^q \in \Lambda^q(\mathsf{B}^*) \otimes \Lambda^q(\mathsf{F}^*)$, and hence
$$\ast(F^q) \in \Lambda^{2n-q}(\mathsf{B}^*) \otimes \Lambda^{2n-q}(\mathsf{F}^*).$$
Thus, $\omega^p$ and $\ast(F^q)$ lie in orthogonal subspaces of $\Lambda^{2p}(T^*X)$, so that $\omega^p \wedge F^q = \langle \omega^p, \ast(F^q) \rangle\,\vol_X = 0$ for all odd integers $p,q \geq 1$ with $p+q = 2n$.  This verifies (\ref{eq:dHYM2}).
\end{proof}

\begin{rmk} In the language of $H$-structures, the (hyperk\"{a}hler) $\Sp(n)$-structure on $X$ together with the complex Lagrangian distribution $\mathsf{B} \subset TX$ constitutes a $\U(n)_\Delta$-structure, where $\U(n)_\Delta$ is the image of the usual diagonal embedding $\U(n) \hookrightarrow \Sp(n) \leq \SO(4n)$. 
\end{rmk}

\section{Gauge Theory on Calabi-Yau Cones and Sasaki-Einstein Links} \label{sec:CYCones-SasEin}

\indent \indent We now turn our attention to gauge theory on Calabi-Yau cones $C^{2n+2} = \mathrm{C}(M)$ with $n \geq 2$.  In Proposition \ref{prop:Cone-HYM}, we recall that conically singular pHYM connections over $C$ are modeled by contact instantons over the Sasaki-Einstein link $M$.  Then, in $\S$\ref{sub:DimReduction-Contact} we consider a dimensional reduction of the contact instanton equation.  More precisely, we recall in Proposition \ref{prop:Contact-Descent} how pHYM connections on the K\"{a}hler-Einstein twistor space $Z^{2n} = M/S^1$ yield simple examples of contact instantons on $M$. \\
\indent The results of this section are not new (see e.g., \cite{portilla2023instantons}, \cite{papoulias2022spin} for the case of $n = 3$).  However, they provide a useful framework for understanding the analogous results in the subsequent section.

\subsection{Calabi-Yau Cones and Sasaki-Einstein Links}

\indent \indent Let $C^{2n+2}$ be a Calabi-Yau cone, $n \geq 2$.  By the discussion in $\S$\ref{sec:H-struc}, $C$ carries an $\SU(n+1)$-structure $(g_C, J_C, \omega_C, \Upsilon_C)$, where $(g_C, J_C, \omega_C)$ is a K\"{a}hler structure and $\Upsilon_C \in \Omega^{n+1,0}(C)$ is a complex volume form.  Since $C$ is a cone, we have that
$$(C^{2n+2}, g_C) = (M^{2n+1} \times \R^+, dr^2 + r^2 g_M)$$
for some Sasaki-Einstein manifold $(M, g_M)$.  Going forward, we identify $M$ with the hypersurface $M \times \{1\} \subset C$. \\
\indent On $M$, we define a $1$-form $\alpha \in \Omega^1(M)$, a vector field $\xi \in \Gamma(TM)$, an endomorphism $\mathsf{J}_M \in \End(TM)$, and an $n$-form $\Psi \in \Omega^n(M; \C)$ as follows:
\begin{align*}
\alpha & := \left.\left( r\partial_r\,\lrcorner\,\omega_C \right)\right|_M & \xi & := \alpha^\sharp & \mathsf{J}_M & := \begin{cases}
J_C & \mbox{on } \mathrm{Ker}(\alpha) \\
0 & \mbox{on } \R \xi
\end{cases} & \Psi & := \left.\left( r\partial_r\,\lrcorner\,\Upsilon_C \right)\right|_M.
\end{align*}
We refer to $\alpha$ as the \emph{contact form}, and to $\xi$ as the \emph{Reeb field}.  The data $(g_M, \alpha, \mathsf{J}_M, \Psi)$ comprises an $(\SU(n) \times \{1\})$-structure.    \\
\indent Note that the tangent bundle of $M$ splits orthogonally as
\begin{align*}
TM & = \R \xi \oplus \mathsf{D}, & \mathsf{D} & := \Ker(\alpha),
\end{align*}
the endomorphism $\left.\mathsf{J}_M\right|_{\mathsf{D}} \colon \mathsf{D} \to \mathsf{D}$ is a $g_M$-orthogonal complex structure on $\mathsf{D}$, and  the restriction $\Psi|_{\mathsf{D}}$ is an $(n,0)$-form with respect to $\mathsf{J}_M|_{\mathsf{D}}$.  Altogether, the $2n$-plane field $\mathsf{D} \subset TM$ carries the Hermitian structure $(g_M, \mathsf{J}_M, \sigma)$, where $\sigma := g_M(\mathsf{J}_M \cdot, \cdot)$ is the non-degenerate $2$-form, as well as the complex volume form $\Psi|_{\mathsf{D}}$.  In particular, the metric $\left.g_M\right|_{\mathsf{D}}$ and orientation on $\mathsf{D}$ induce a natural Hodge star operator
\begin{align*}
\ast_{\mathsf{D}} \colon \Lambda^k(\mathsf{D}^*) & \to \Lambda^{2n-k}(\mathsf{D}^*).
\end{align*}
We will say that a $k$-form $\beta \in \Lambda^k(T^*M)$ is \emph{transverse} if $\beta \in \Lambda^k(\mathsf{D}^*) \subset \Lambda^k(T^*M)$. Note that if $\beta$ is transverse, then
\begin{equation}
\ast\! \beta = (\ast_{\mathsf{D}} \beta) \wedge \alpha. \label{eq:TransverseHodge}
\end{equation}
Finally, noting that $\omega_C = r\,dr \wedge \alpha + r^2\sigma$, we observe that $d\omega_C = 0$ implies $d\alpha = 2\sigma$.

\subsection{Contact Instantons on Sasaki-Einstein Manifolds} \label{subsec:Contact-Sas-Ein}

\indent \indent Let $M^{2n+1}$ be a Sasaki-Einstein manifold with $n \geq 2$ as above.  The bundle of $2$-forms on $M$ decomposes into $(\U(n) \times \{1\})$-irreducible subbundles as follows:
$$\Lambda^2(T^*M) = \left(\alpha \otimes \mathsf{D}^*\right) \oplus \LB \Lambda^{2,0} \RB \oplus [\Lambda^{1,1}_0] \oplus \R\sigma$$
where here
\begin{align*}
\LB \Lambda^{2,0} \RB & = \{ \beta \in \Lambda^2(\mathsf{D}^*) \colon \beta(\mathsf{J}_MX, \mathsf{J}_MY) = -\beta(X,Y) \text{ for all } X,Y \in \mathsf{D} \} \\
[ \Lambda^{1,1}_0 ] & = \{ \beta \in \Lambda^2(\mathsf{D}^*) \colon \beta(\mathsf{J}_MX, \mathsf{J}_MY) = \beta(X,Y) \text{ for all } X,Y \in \mathsf{D} \text{ and } \langle \beta, \sigma \rangle = 0 \}.
\end{align*}
We now define the notion of a (anti-self-dual) contact instanton.

\begin{prop} \label{prop:Contact-Instanton-Def} Let $A$ be a connection on a principal bundle $P \to M$.  We say that $A$ is a \emph{contact instanton} if either of the following equivalent conditions hold:
\begin{enumerate}[(i)]
\item $A$ is an $(\SU(n) \times \{1\})$-instanton (i.e., $F_A \in \Gamma([\Lambda^{1,1}_0] \otimes \mathrm{ad}_P)$).
\item $A$ is $\Omega$-ASD for $\Omega = \frac{1}{(n-2)!} \alpha \wedge \sigma^{n-2}$.
\end{enumerate}
\end{prop}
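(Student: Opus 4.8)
The plan is to reduce the equivalence to a fiberwise statement in linear algebra and settle it by decomposing the curvature according to the $(\U(n)\times\{1\})$-splitting of $\Lambda^2(T^*M)$ displayed above. Both the $(\SU(n)\times\{1\})$-instanton condition and the $\Omega$-ASD condition are pointwise $\R$-linear conditions on $F_A$, so it suffices to treat $F := F_A$ as a general $\mathrm{ad}_P$-valued $2$-form and write
$$F = \alpha\wedge\gamma + \beta_{2,0} + \beta_{1,1} + f\,\sigma,$$
with $\gamma$ a transverse $1$-form, $\beta_{2,0}\in\LB\Lambda^{2,0}\RB$, $\beta_{1,1}\in[\Lambda^{1,1}_0]$, and $f\,\sigma\in\R\sigma$ (all coefficients valued in $\mathrm{ad}_P$, on which the operations below act trivially). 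Condition (i) is precisely $\gamma=0$, $\beta_{2,0}=0$, $f=0$, so the goal is to show that $\ast F=-\Omega\wedge F$ holds if and only if these three components vanish.

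First I would compute $\ast F$ term by term. The mixed term contributes $\ast(\alpha\wedge\gamma)=\ast_{\mathsf{D}}\gamma$, a \emph{transverse} $(2n-1)$-form (the analogue of (\ref{eq:TransverseHodge}) for $\alpha$-forms), while for the transverse terms I would combine (\ref{eq:TransverseHodge}) with the Weil identity on the $2n$-dimensional transverse Hermitian space $(\mathsf{D},\mathsf{J}_M,\sigma)$ to record
$$\ast_{\mathsf{D}}\beta_{2,0}=\tfrac{1}{(n-2)!}\,\sigma^{n-2}\wedge\beta_{2,0},\qquad \ast_{\mathsf{D}}\beta_{1,1}=-\tfrac{1}{(n-2)!}\,\sigma^{n-2}\wedge\beta_{1,1},\qquad \ast_{\mathsf{D}}\sigma=\tfrac{1}{(n-1)!}\,\sigma^{n-1},$$
the opposite signs on the first two reflecting that $(2,0)+(0,2)$-forms and primitive $(1,1)$-forms are $\ast_{\mathsf{D}}$-eigenforms of opposite sign. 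On the other side, wedging with $\Omega=\frac{1}{(n-2)!}\,\alpha\wedge\sigma^{n-2}$ kills the mixed term (two factors of $\alpha$) and leaves $\frac{1}{(n-2)!}\,\alpha\wedge\sigma^{n-2}\wedge(\,\cdot\,)$ on each transverse term.

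Next I would assemble $\ast F+\Omega\wedge F$ and split it using the complementary decomposition $\Lambda^{2n-1}(T^*M)=\Lambda^{2n-1}(\mathsf{D}^*)\oplus\big(\Lambda^{2n-2}(\mathsf{D}^*)\wedge\alpha\big)$. The purely transverse part is exactly $\ast_{\mathsf{D}}\gamma$, so it vanishes if and only if $\gamma=0$. Commuting $\alpha$ past the (even-degree) transverse factors, the $\alpha$-part becomes $\Theta\wedge\alpha$ with $\Theta$ a transverse $(2n-2)$-form; a short computation shows the two $\beta_{1,1}$-contributions cancel, so that
$$\Theta=\tfrac{2}{(n-2)!}\,\sigma^{n-2}\wedge\beta_{2,0}+\Big(\tfrac{1}{(n-1)!}+\tfrac{1}{(n-2)!}\Big)f\,\sigma^{n-1}.$$
Thus $\beta_{1,1}$ is unconstrained (as it must be), and $\ast F=-\Omega\wedge F$ is equivalent to $\gamma=0$ together with $\Theta=0$.

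The one point that needs care is concluding from $\Theta=0$ that $\beta_{2,0}=0$ and $f=0$ separately. Here I would note that $\sigma^{n-2}\wedge\beta_{2,0}$ has Hodge type $(n,n-2)+(n-2,n)$ while $\sigma^{n-1}$ has type $(n-1,n-1)$, so the two summands occupy orthogonal subspaces and cannot cancel each other; and that the hard-Lefschetz operators $L^{n-2}$ (on primitive $(2,0)+(0,2)$-forms) and $L^{n-1}$ (on functions) are injective — which is exactly where the hypothesis $n\ge 2$ enters — forcing each coefficient to vanish. Reading the chain of equivalences in both directions yields (i) $\Leftrightarrow$ (ii); in particular, when $F=\beta_{1,1}$ the identity $\ast F+\Omega\wedge F=0$ holds on the nose, recovering (i) $\Rightarrow$ (ii) directly.
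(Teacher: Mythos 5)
Your proof is correct: the Weil identities you quote for the transverse Hermitian space $(\mathsf{D}, \mathsf{J}_M, \sigma)$ have the right signs, the cancellation of the two $\beta_{1,1}$-contributions is genuine (both are $\pm\frac{1}{(n-2)!}\sigma^{n-2}\wedge\beta_{1,1}\wedge\alpha$ with opposite signs, since $\sigma^{n-2}\wedge\beta_{1,1}$ has even degree), and your separation of $\Theta=0$ into $\beta_{2,0}=0$ and $f=0$ via Hodge-type orthogonality and Lefschetz injectivity is sound. However, your route is organized differently from the paper's. The paper first proves that \emph{either} condition forces $F_A$ to be transverse: for (i) this is immediate from the definition of $[\Lambda^{1,1}_0]$, and for (ii) it is the short observation that $-\Omega\wedge F_A$ manifestly carries a factor of $\alpha$, so $F_A = \ast(\ast F_A)$ lies in $\Lambda^2(\mathsf{D}^*)\otimes\mathrm{ad}_P$. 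Once transversality is in hand, the paper simply quotes the standard $2n$-dimensional fact that a transverse $2$-form lies in $[\Lambda^{1,1}_0]$ iff $\ast_{\mathsf{D}}F_A = -\frac{1}{(n-2)!}\sigma^{n-2}\wedge F_A$, and wedges with $\alpha$ using (\ref{eq:TransverseHodge}); this single cited equivalence disposes of the $\LB\Lambda^{2,0}\RB$, $[\Lambda^{1,1}_0]$, and $\R\sigma$ components all at once, so the paper never needs your final untangling step. Your approach instead expands $F$ in all four summands of the $(\U(n)\times\{1\})$-decomposition and computes $\ast F + \Omega\wedge F$ componentwise; transversality is not isolated as a step but falls out of the computation, since $\ast_{\mathsf{D}}\gamma$ is the only purely transverse term. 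What this buys is self-containedness: rather than citing the primitive-$(1,1)$ characterization as a black box, you rederive it, and in doing so you effectively verify the eigenvalue structure ($0$, $+1$, $-1$, $n-1$ on the four summands) of the operator $\beta\mapsto\ast(\beta\wedge\Omega)$ that the paper only records, without proof, in the Remark following this Proposition. One small quibble: the hypothesis $n\geq 2$ enters already in the definition of $\Omega$ and in the coefficient $\frac{1}{(n-2)!}$ of the Weil identities (and in the existence of nonzero primitive $(1,1)$-forms), not only in the injectivity of the Lefschetz powers as you suggest; injectivity of $f\mapsto f\sigma^{n-1}$ holds for all $n\geq 1$.
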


\begin{proof} Note that if $A$ satisfies (i), then $F_A$ is a transverse $\mathrm{ad}_P$-valued $2$-form.  Similarly, if $A$ satisfies (ii), then
$$\ast F_A = -\frac{1}{(n-2)!} \alpha \wedge \sigma^{n-2} \wedge F_A \in \Gamma( \R\alpha \otimes \Lambda^{2n-2}(\mathsf{D}^*) \otimes \mathrm{ad}_P),$$
and hence $F_A = \ast (\ast F_A)$ is a section of $\Lambda^2(\mathsf{D}^*) \otimes \mathrm{ad}_P$.  Thus, in either case, $F_A$ is transverse.  Therefore, we have
\begin{align*}
F_A \in \Gamma( [\Lambda^{1,1}_0] \otimes \mathrm{ad}_P) & \iff \ast_{\mathsf{D}} F_A = -\frac{1}{(n-2)!} \sigma^{n-2} \wedge F_A \\
& \iff \alpha \wedge \ast_{\mathsf{D}} F_A = -\frac{1}{(n-2)!} \alpha \wedge \sigma^{n-2} \wedge F_A \\
& \iff \ast F_A = -\frac{1}{(n-2)!} \alpha \wedge \sigma^{n-2} \wedge F_A,
\end{align*}
where the last equivalence uses (\ref{eq:TransverseHodge}).
\end{proof}

\begin{rmk} Let $\Omega = \frac{1}{(n-2)!} \alpha \wedge \sigma^{n-2} \in \Omega^{2n-3}(M)$.  One can check that the $\SU(n)$-equivariant linear operator $\Lambda^2(T_x^*M) \to \Lambda^2(T_x^*M)$ given by $\beta \mapsto \ast(\beta \wedge \Omega)$ has eigenvalues $0$, $1$, $-1$, and $(n-1)$, with respective eigenspaces $(\alpha \otimes \mathsf{D}^*)$, $\LB \Lambda^{2,0} \RB$, $[\Lambda^{1,1}_0]$, and $\R\sigma$.  In particular, the $(+1)$-eigenspace provides a notion of ``self-dual contact instantons," i.e., connections $A$ whose curvature is a section of $\LB \Lambda^{2,0} \RB \otimes \mathrm{ad}_P$.  See, e.g., \cite{baraglia2016moduli} and \cite{portilla2023instantons}.
\end{rmk}

\indent Contact instantons on a Sasaki-Einstein manifold $M$ are intimately related to primitive Hermitian Yang-Mills connections on its Calabi-Yau cone $C$.  To explain this, we need some preliminary notions.  Let $P \to C$ be a principal $G$-bundle over the cone $C = \R^+ \times M$, let $\mathfrak{g}$ be the Lie algebra of $G$, let $\iota_r \colon M \to C$ denote the inclusion maps $\iota_r(x) = (r,x)$, and abbreviate $\iota := \iota_1$.  We have a diagram:
$$\begin{tikzcd}
\iota_r^*P \arrow[d] \arrow[r] & P \arrow[d] \\
M \arrow[r, "\iota_r"', hook]  & C          
\end{tikzcd}$$
\indent Let $A \in \Omega^1(P; \mathfrak{g})$ be a connection on $P$.  Then along $\iota_r^*P$, we can write $A = B_r + h_r\,dr$, where $B_r \in \Omega^1(\iota_r^*P; \mathfrak{g})$ is a connection, and $h_r \in \Omega^0(\iota_r^*P; \mathfrak{g})$ is a $G$-equivariant function $h_r \colon \iota_r^*P \to \mathfrak{g}$.  We say that $A$ is in \emph{temporal gauge} if $h_r = 0$ for all $r$.  Note that if $A$ is a connection and $g_r$ is a gauge transformation satisfying $g_r^{-1} \frac{\partial}{\partial r} g_r = -h_r$, then the connection $\widetilde{A} = A + g^{-1}dg$ is gauge-equivalent to $A$ and in temporal gauge (cf. \cite[Lemma 1.21]{freed1995classical}, \cite[$\S$3]{stein20232}).  If $A$ is in temporal gauge, then the curvature of $A$ is given by
$$F_A = F_{B_r} - \partial_r B_r \wedge dr.$$
Finally, a connection $A$ in temporal gauge is called \emph{dilation-invariant} if $\partial_r B_r = 0$.

\begin{prop} \label{prop:Cone-HYM} Let $P \to C^{2n+2}$ be a principal $G$-bundle, and let $A$ be a dilation-invariant connection on $P$ in temporal gauge.  Then $A$ is a primitive Hermitian Yang-Mills connection on $P \to C$ if and only if $\iota^*A$ is a contact instanton on $\iota^*P \to M$.
\end{prop}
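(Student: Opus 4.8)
The plan is to reduce both conditions to explicit statements about the curvature $2$-form and match them termwise. Since $A$ is dilation-invariant and in temporal gauge, writing $A = B_r + h_r\,dr$ we have $h_r = 0$ and $\partial_r B_r = 0$, so $B_r = B$ is $r$-independent and $F_A = F_B = \pi^* F_{\iota^*A}$, the pullback to $C$ of the curvature of $\iota^*A$ on $M$; in particular $F_A$ contains no $dr$ and is $r$-independent. I would then decompose the curvature on $M$ according to $TM = \R\xi \oplus \mathsf{D}$:
$$F_{\iota^*A} = \alpha\wedge\eta + \phi, \qquad \eta \in \Gamma(\mathsf{D}^*\otimes\mathrm{ad}_P),\ \ \phi \in \Gamma(\Lambda^2\mathsf{D}^*\otimes\mathrm{ad}_P).$$
By Proposition \ref{prop:Contact-Instanton-Def}, $\iota^*A$ is a contact instanton exactly when $\eta = 0$ (transversality), $\phi$ has transverse type $(1,1)$, and $\phi$ is transverse-primitive, i.e. $\langle \phi, \sigma \rangle = 0$. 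The goal is to recover precisely these three conditions from primitive HYM on $C$.

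Next I would set up the complex geometry of the cone. From $\omega_C = r\,dr\wedge\alpha + r^2\sigma$ and $g_C = dr^2 + r^2 g_M$ one checks $J_C(r\partial_r) = \xi$, while $J_C$ restricts to $\mathsf{J}_M$ on $\mathsf{D}$; correspondingly the radial $(1,0)$-form is $\beta_0 := dr + i\,r\alpha$, the remaining $(1,0)$-forms being the transverse ones for $\mathsf{J}_M$, and $r\alpha = \frac{1}{2i}(\beta_0 - \overline{\beta_0})$. Now compute the $(0,2)_{J_C}$-part of $F_A$: the transverse piece contributes its transverse $(0,2)$-component $\phi^{0,2}$, whereas $\alpha\wedge\eta$ contributes a term proportional to $\overline{\beta_0}\wedge\eta^{0,1}$. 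These lie in complementary subspaces (one purely transverse, one involving $\overline{\beta_0}$), so $(F_A)^{0,2}_{J_C} = 0$ if and only if $\eta^{0,1} = 0$ and $\phi^{0,2} = 0$. Since $F_A$ is $\mathrm{ad}_P$-valued and real, these are equivalent to $\eta = 0$ and $\phi$ being transverse $(1,1)$. Thus the HYM type condition recovers exactly transversality and transverse $(1,1)$-ness.

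It then remains to handle primitivity. Using $(dr\wedge\alpha)^2 = 0$, I expand $\omega_C^n = r^{2n}\sigma^n + n\,r^{2n-1}\,dr\wedge\alpha\wedge\sigma^{n-1}$. Granting the type condition (so $\eta = 0$ and $\phi$ is transverse $(1,1)$), wedging with $F_A = \phi$ kills the first summand, since $\phi\wedge\sigma^n$ is a $(2n+2)$-form in the $2n$-dimensional $\mathsf{D}^*$, leaving $F_A\wedge\omega_C^n = n\,r^{2n-1}\,dr\wedge\alpha\wedge(\phi\wedge\sigma^{n-1})$. The transverse Lefschetz identity $\phi\wedge\sigma^{n-1} = \frac{1}{n}\langle\phi,\sigma\rangle\,\sigma^n$ shows this vanishes if and only if $\langle\phi,\sigma\rangle = 0$, i.e. $\phi$ is transverse-primitive.

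Combining the two computations, $A$ is primitive HYM on $C$ (that is, $(F_A)^{0,2}_{J_C} = 0$ together with $F_A\wedge\omega_C^n = 0$) precisely when $\eta = 0$, $\phi$ is transverse $(1,1)$, and $\langle\phi,\sigma\rangle = 0$, which by Proposition \ref{prop:Contact-Instanton-Def} is exactly the statement that $\iota^*A$ is a contact instanton; the equivalences run in both directions. The main obstacle is the bookkeeping in the type computation: one must verify that the $(0,2)$-contributions of the transverse piece $\phi$ and of the Reeb piece $\alpha\wedge\eta$ genuinely decouple, and that reality of the $\mathrm{ad}_P$-valued curvature permits passing between vanishing of the $(0,2)$- and $(2,0)$-parts. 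Everything else is a short direct calculation.
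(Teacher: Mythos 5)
Your proof is correct, but it takes a genuinely different route from the paper's. The paper works entirely with the $\Omega$-ASD formulations already established: primitive HYM on the cone is $\ast_C F_A = -\frac{1}{(n-1)!}\omega_C^{n-1}\wedge F_A$, the contact instanton condition is the $\Omega$-ASD equation of Proposition \ref{prop:Contact-Instanton-Def}(ii), and the proof is a direct Hodge-star computation: expand $\frac{1}{(n-1)!}\omega_C^{n-1} = \frac{1}{(n-2)!}r^{2n-3}\,dr\wedge\alpha\wedge\sigma^{n-2} + \frac{1}{(n-1)!}r^{2n-2}\sigma^{n-1}$ using $\omega_C = r\,dr\wedge\alpha + r^2\sigma$, contract with $\partial_r$ at $r=1$ for the forward direction, and for the converse use that a transverse, $\sigma$-primitive curvature satisfies $F_{B_r}\wedge\sigma^{n-1}=0$, so the extra term in $\omega_C^{n-1}$ contributes nothing. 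You instead use the complex-geometric characterization of pHYM ($(F_A)^{0,2}_{J_C}=0$ plus $F_A\wedge\omega_C^n=0$), the identification $J_C(r\partial_r)=\xi$ with radial $(1,0)$-form $dr+ir\alpha$, and the splitting $F_{\iota^*A} = \alpha\wedge\eta + \phi$, matching the three components of the contact instanton condition (transversality, transverse type $(1,1)$, transverse primitivity) one by one against the two pHYM equations. Your key steps all check out: the $(0,2)$-parts of $\phi$ and of $\alpha\wedge\eta$ do decouple (the latter lies in $\overline{\beta_0}\wedge\Lambda^{0,1}_{\mathsf{D}}$, the former is purely transverse), the reality argument for passing from $\eta^{0,1}=0$ to $\eta=0$ is the same device the paper uses in Lemma \ref{lem:Spn-Equiv-Cond}, the degree count killing $\phi\wedge\sigma^n$ is right, and the transverse Lefschetz identity is standard. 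The trade-off: your argument is more conceptual, exhibiting exactly which piece of each equation enforces which piece of the contact instanton condition, and it does not need Proposition \ref{prop:Contact-Instanton-Def}(ii); the paper's argument is shorter given the $\Omega$-ASD machinery, never needs the complex structure of the cone explicitly, and — being purely a computation with calibration forms — transfers verbatim to the tri-contact/hyperk\"{a}hler case in Proposition \ref{prop:Cone-Sp(n)}, which is why the paper sets it up that way.
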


\begin{proof} Let $A$ be a dilation-invariant connection on $P$ in temporal gauge, and abbreviate $c_k := \frac{1}{k!}$.  Before beginning, we note that the formula $\omega_C = r\,dr \wedge \alpha + r^2\sigma$ gives
\begin{align*}
c_{n-1}\omega_C^{n-1} & = c_{n-2}\,r^{2n - 3}\,dr \wedge \alpha \wedge \sigma^{n-2} + c_{n-1}r^{2n-2}\,\sigma^{n-1}.
\end{align*}
\indent $(\Longrightarrow)$ Suppose that $A$ is primitive Hermitian Yang-Mills.  Then
\begin{align*}
dr \wedge (\ast_M F_{B_r}) = \ast_C F_A & = -c_{n-1}\omega_C^{n-1} \wedge F_A \\
& = -c_{n-2}\,r^{2n - 3}\,dr \wedge \alpha \wedge \sigma^{n-2} \wedge F_{B_r} - c_{n-1}r^{2n-2}\,\sigma^{n-1} \wedge F_{B_r}
\end{align*}
Contracting with $\frac{\partial}{\partial r}$, and then setting $r = 1$, we obtain:
\begin{align*}
\ast_M F_{B_1} = -c_{n-2}\, \alpha \wedge \sigma^{n-2} \wedge F_{B_1}
\end{align*}
Noting that $F_{B_1} = F_{\iota^*A}$ gives the result.  \\
\indent $(\Longleftarrow)$ Suppose that $\iota^*A$ is a contact instanton.  Since $F_{\iota^*A} = F_{B_1}$, we have
\begin{align*}
\ast_M F_{B_1} = -c_{n-2}\, \alpha \wedge \sigma^{n-2} \wedge F_{B_1}
\end{align*}
Dilation invariance, followed by wedging with $dr$, gives:
\begin{align*}
dr \wedge (\ast_M F_{B_r}) = -c_{n-2}\, r^{2n-3}dr \wedge \alpha \wedge \sigma^{n-2} \wedge F_{B_r}.
\end{align*}
Viewing $F_{B_r}$ as an $\mathrm{ad}_P$-valued $2$-form on $M$, we see that $F_{B_r}$ is transverse and primitive with respect to the non-degenerate $2$-form $\sigma$, so that $F_{B_r} \wedge \sigma^{n-1} = 0$.  Consequently,
\begin{align*}
\ast_C F_A = dr \wedge (\ast_M F_{B_r})   & = \ -c_{n-2}\,r^{2n - 3}\,dr \wedge \alpha \wedge \sigma^{n-2} \wedge F_{B_r} - c_{n-1}r^{2n-2}\,\sigma^{n-1} \wedge F_{B_r} \\
& = -c_{n-1}\omega_C^{n-1} \wedge F_A,
\end{align*}
showing that $A$ is primitive Hermitian Yang-Mills.
\end{proof}

\indent Contact instantons have been studied by both physicists \cite{kallen2012twisted} and mathematicians \cite{baraglia2016moduli}, \cite{portilla2023instantons}, \cite{papoulias2022spin}, \cite{loubeau2024weitzenb}.  On Sasaki-Einstein $7$-manifolds, contact instantons are related to $\G_2$-instantons, as we now recall.

\begin{example}[Dimension 7] \label{ex:SasEin7Mfld} Let $M^7$ be a Sasaki-Einstein $7$-manifold.  It is well-known that the Sasaki-Einstein structure on $M$ induces a natural nearly-parallel $\G_2$-structure $\varphi \in \Omega^3(M)$.  Let $A$ be a connection on a Hermitian vector bundle $E \to M$.  Then:
$$A \text{ is a contact instanton } \ \implies \ A \text{ is a }\G_2\text{-instanton.}$$
See, for example, \cite{papoulias2022spin} and \cite{portilla2023instantons}.
\end{example}

\subsection{Dimensional Reduction of Contact Instantons} \label{sub:DimReduction-Contact}

\indent \indent In this subsection, we work with Sasaki-Einstein $(2n+1)$-manifolds $M$ that are compact and quasi-regular.  The following result is well-known; see \cite[Theorems 7.1.3 and 11.1.3]{boyer2008sasakian}.

\begin{thm}[\cite{boyer2008sasakian}] Let $M$ be a compact, quasi-regular Sasaki-Einstein $(2n+1)$-manifold.  Let $\mathcal{F} \subset M$ denote the quasi-regular foliation defined by the Reeb field $\xi$.  Then:
\begin{enumerate}[(a)]
\item The (orbifold) leaf space $Z := M/\mathcal{F}$ admits a K\"{a}hler-Einstein structure $(g_{\mathrm{KE}}, J_{\mathrm{KE}}, \omega_{\mathrm{KE}})$ of positive scalar curvature such that the canonical projection $p \colon M \to Z$ is an orbifold Riemannian submersion.  Moreover, $p^*(\omega_{\mathrm{KE}}) = \sigma$.
\item The projection map $p \colon M \to Z$ is a principal $S^1$-orbibundle with connection $1$-form $\alpha$.
\end{enumerate}
\end{thm}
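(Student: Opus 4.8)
The plan is to realize $Z$ as the quotient of $M$ by the Reeb flow, push the transverse Sasakian data down to $Z$, and then verify each clause in turn. First I would unpack \emph{quasi-regularity}: by definition it means that the Reeb field $\xi$ has closed orbits with at most finite isotropy, so that its flow integrates to a locally free, effective action of $S^1$ on the compact manifold $M$ whose orbits are exactly the leaves of $\mathcal{F}$. By the standard theory of quotients by locally free circle actions (equivalently, of Seifert fibrations), the leaf space $Z = M/\mathcal{F} = M/S^1$ inherits the structure of a compact orbifold, with singular locus the image of the points carrying nontrivial (finite cyclic) isotropy, and the projection $p \colon M \to Z$ is a principal $S^1$-orbibundle. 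This already delivers the orbibundle assertion (b), pending the identification of $\alpha$ as a connection.

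Second, I would descend the transverse geometry. The transverse tensors $\sigma$, $\mathsf{J}_M|_{\mathsf{D}}$, and $g_M|_{\mathsf{D}}$ are all \emph{basic}: each is annihilated by contraction with $\xi$ and is invariant under the Reeb flow ($\mathcal{L}_\xi = 0$), the latter being immediate from the Sasakian identities together with $d\alpha = 2\sigma$. Basic tensors are precisely those that descend to the orbifold, so they define an almost-Hermitian structure $(g_{\mathrm{KE}}, J_{\mathrm{KE}}, \omega_{\mathrm{KE}})$ on $Z$; in particular $\sigma = p^*\omega_{\mathrm{KE}}$ and $g_M|_{\mathsf{D}} = p^*g_{\mathrm{KE}}$ hold by construction, which is the content of the equality $p^*(\omega_{\mathrm{KE}}) = \sigma$. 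Integrability of $J_{\mathrm{KE}}$ follows from the normality of the transverse complex structure on a Sasakian manifold, and $d\omega_{\mathrm{KE}} = 0$ follows from the fact that $\sigma$ is a closed basic form (recall $d\alpha = 2\sigma$); hence $(Z, g_{\mathrm{KE}}, J_{\mathrm{KE}})$ is genuinely K\"{a}hler.

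Third, I would establish the Einstein condition together with the submersion structure. Writing the Sasakian metric as $g_M = \alpha \otimes \alpha + p^*g_{\mathrm{KE}}$ exhibits $p$ as a Riemannian submersion with totally geodesic $S^1$-fibres, the distribution $\mathsf{D}$ mapping isometrically onto $TZ$; this is the "orbifold Riemannian submersion" clause. For the Einstein statement, the O'Neill formulas for this submersion relate $\mathrm{Ric}_{g_M}$ to the transverse Ricci tensor, and the Sasaki-Einstein normalization $\mathrm{Ric}_{g_M} = 2n\,g_M$ translates into $\mathrm{Ric}_{g_{\mathrm{KE}}} = (2n+2)\,g_{\mathrm{KE}}$, so that $g_{\mathrm{KE}}$ is K\"{a}hler-Einstein of positive Einstein constant, hence of positive scalar curvature. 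Finally, $\alpha$ is $S^1$-invariant with $\alpha(\xi) = 1$, so it is a principal connection $1$-form for $p \colon M \to Z$, completing (b), and its curvature is the basic form $d\alpha = 2\sigma = 2\,p^*\omega_{\mathrm{KE}}$.

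The main obstacle is not any single computation but the orbifold bookkeeping: one must verify that the quotient of a compact manifold by a locally free $S^1$-action is honestly a compact orbifold, and that the basic tensors descend to \emph{orbifold} tensors across the singular strata — in particular that $J_{\mathrm{KE}}$ remains integrable and $g_{\mathrm{KE}}$ positive-definite at the orbifold points, where $p$ is only locally modelled on a finite quotient. Once the orbifold structure and the descent of basic forms are in hand, the remaining identities ($p^*\omega_{\mathrm{KE}} = \sigma$, the O'Neill translation of the Einstein condition, and the connection property of $\alpha$) are routine transverse geometry.
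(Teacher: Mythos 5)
Your proposal is correct, and it follows the standard argument. Note that the paper itself gives no proof of this statement --- it is imported verbatim from Boyer--Galicki (Theorems 7.1.3 and 11.1.3 of \cite{boyer2008sasakian}) --- and your sketch (quasi-regularity yields a locally free $S^1$-action whose quotient is a compact orbifold, the basic transverse K\"{a}hler data $\sigma$, $\mathsf{J}_M|_{\mathsf{D}}$, $g_M|_{\mathsf{D}}$ descend, $d\alpha = 2\sigma$ gives closedness and the connection property of $\alpha$, and the O'Neill formulas convert $\mathrm{Ric}_{g_M} = 2n\,g_M$ into $\mathrm{Ric}_{g_{\mathrm{KE}}} = (2n+2)\,g_{\mathrm{KE}} > 0$) is precisely the route taken in that reference.
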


\begin{rmk} In general, $Z$ is an orbifold.  To avoid technical complications, we will always work over the smooth part of $Z$.
\end{rmk}

\indent Equipping $Z^{2n}$ with the K\"{a}hler-Einstein structure $(g_{\mathrm{KE}}, J_{\mathrm{KE}}, \omega_{\mathrm{KE}})$, its bundle of $2$-forms on decomposes into $\U(n)$-irreducible subbundles as follows:
\begin{equation*}
\Lambda^2(T^*Z) = \LB \Lambda^{2,0} \RB \oplus [\Lambda^{1,1}_0] \oplus \R \omega_{\mathrm{KE}}.
\end{equation*}

\begin{prop} \label{prop:Contact-Descent} Let $P \to Z^{2n}$ be a principal bundle, and let $A$ be a connection on $P$.  The following are equivalent:
\begin{enumerate}[(i)]
\item $A$ is a primitive Hermitian Yang-Mills connection on $P$.
\item $p^*A$ is a contact instanton on $p^*P$.
\end{enumerate}
Moreover, if $n = 3$ (so that $\dim(Z) = 6$ and $\dim(M) = 7$), then the above are equivalent to:
\begin{enumerate}[(i)]   \setcounter{enumi}{2}
\item $p^*A$ is a $\G_2$-instanton on $p^*P$.
\end{enumerate}
\end{prop}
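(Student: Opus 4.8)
The plan is to prove the two equivalences separately, the second only under the additional hypothesis $n = 3$. For (i) $\iff$ (ii), I would first record how $p \colon M \to Z$ interacts with the transverse geometry. Since $p$ is an orbifold Riemannian submersion and a principal $S^1$-bundle with connection $\alpha$, its horizontal distribution is precisely $\mathsf{D} = \Ker(\alpha)$, and $dp|_{\mathsf{D}} \colon \mathsf{D} \to TZ$ is a fiberwise isometry; combined with $p^*\omega_{\mathrm{KE}} = \sigma$ this forces $dp|_{\mathsf{D}}$ to intertwine $\mathsf{J}_M|_{\mathsf{D}}$ with $J_{\mathrm{KE}}$. Consequently, pullback gives an isomorphism $p^* \colon \Lambda^2(T^*Z) \to \Lambda^2(\mathsf{D}^*)$ onto the transverse $2$-forms that respects the $\U(n)$-type decompositions, i.e. $p^*\LB\Lambda^{2,0}\RB = \LB\Lambda^{2,0}\RB$, $p^*[\Lambda^{1,1}_0] = [\Lambda^{1,1}_0]$, and $p^*(\R\omega_{\mathrm{KE}}) = \R\sigma$. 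Since $F_{p^*A} = p^*F_A$ and the pullback of any form from $Z$ is transverse, the equivalence is then immediate: $F_A \in \Gamma([\Lambda^{1,1}_0] \otimes \mathrm{ad}_P)$ (that is, $A$ is pHYM) holds if and only if $p^*F_A \in \Gamma([\Lambda^{1,1}_0] \otimes \mathrm{ad}_{p^*P})$ (that is, $p^*A$ is a contact instanton).

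For (ii) $\iff$ (iii) with $n = 3$, the implication (ii) $\implies$ (iii) is exactly the content of Example \ref{ex:SasEin7Mfld}, since $p^*A$ is in particular a connection on $M^7$. The substance is the reverse implication, and here the crucial point is once more that $F_{p^*A} = p^*F_A$ is transverse. It therefore suffices to prove the linear-algebraic statement that a transverse $2$-form $\beta \in \Lambda^2(\mathsf{D}^*)$ is a $\G_2$-instanton (i.e. $\beta \in \Lambda^2_{14} \cong \mathfrak{g}_2$) if and only if $\beta \in [\Lambda^{1,1}_0]$.

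To prove this I would use the $\Omega$-ASD characterization $\beta \wedge \ast\varphi = 0$, where $\varphi$ is the nearly-parallel $\G_2$-structure induced by the Sasaki-Einstein structure. Writing $\varphi$ and $\ast\varphi$ in terms of $\alpha$, $\sigma$, and the transverse holomorphic volume form $\Psi = \Psi^+ + i\Psi^-$ (so that, after normalization, $\ast\varphi = \tfrac{1}{2}\sigma^2 - \alpha \wedge \Psi^-$), I would expand $\beta \wedge \ast\varphi$ against the decomposition $\beta = \lambda\sigma + \beta^{2,0} + \beta^{1,1}$ induced by $\Lambda^2(\mathsf{D}^*) = \R\sigma \oplus \LB\Lambda^{2,0}\RB \oplus [\Lambda^{1,1}_0]$. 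For type reasons on the transverse Calabi-Yau $3$-fold, both $\wedge\sigma^2$ and $\wedge\Psi^-$ annihilate primitive $(1,1)$-forms, so I expect $\beta \wedge \ast\varphi$ to reduce to two independent terms: a term $\tfrac{\lambda}{2}\sigma^3 \in \Lambda^6(\mathsf{D}^*)$ detecting the $\R\sigma$-component, and a term $-\alpha \wedge (\beta^{2,0} \wedge \Psi^-) \in \alpha \wedge \Lambda^5(\mathsf{D}^*)$ detecting the $\LB\Lambda^{2,0}\RB$-component. As these lie in independent summands, their joint vanishing forces $\lambda = 0$ and $\beta^{2,0} = 0$, i.e. $\beta \in [\Lambda^{1,1}_0]$.

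The main obstacle is precisely this computation, and in particular fixing the correct normalization of $\varphi$ (hence $\ast\varphi$) coming from the Sasaki-Einstein data, since it is exactly the $\alpha \wedge \Psi^-$ term that must genuinely detect the $\LB\Lambda^{2,0}\RB$-component rather than cancel. Alternatively, one can sidestep the explicit wedge computation by decomposing the $\G_2$-modules under the subgroup $\SU(3) \subset \G_2$ stabilizing the Sasaki structure, verifying that $\Lambda^2_7 = \R\sigma \oplus \LB\Lambda^{2,0}\RB$ and $\Lambda^2_{14} = (\alpha \otimes \mathsf{D}^*) \oplus [\Lambda^{1,1}_0]$; intersecting with $\Lambda^2(\mathsf{D}^*)$ then yields $\Lambda^2_{14} \cap \Lambda^2(\mathsf{D}^*) = [\Lambda^{1,1}_0]$ at once, which is the desired fact.
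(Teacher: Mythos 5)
Your primary route is sound, and for (i) $\iff$ (ii) it is in substance the paper's own argument: the paper likewise reduces everything to the transversality of $p^*F_A$, the identity $p^*\omega_{\mathrm{KE}} = \sigma$, and injectivity of $p^*$, the only difference being that it phrases both conditions through their $\Omega$-ASD forms (via $\ast \beta = (\ast_{\mathsf{D}}\beta)\wedge\alpha$, equation (\ref{eq:TransverseHodge})) rather than through the $\U(n)$-type decomposition; the two packagings are interchangeable by Proposition \ref{prop:Contact-Instanton-Def}. For (iii) $\implies$ (ii) your wedge computation does work --- the two surviving terms $\tfrac{\lambda}{2}\sigma^3$ and $-\alpha\wedge\beta^{2,0}\wedge\mathrm{Im}(\Psi)$ lie in the complementary summands $\Lambda^6(\mathsf{D}^*)$ and $\alpha\wedge\Lambda^5(\mathsf{D}^*)$, and wedging with $\overline{\Psi}$ is injective on $(2,0)$-forms --- but note that the paper's argument is more economical and removes your ``main obstacle'' altogether: it uses the instanton equation in the form $\ast F = -\varphi\wedge F$ with $\varphi = \alpha\wedge\sigma - \mathrm{Re}(\Psi)$, observes that the Hodge star of a transverse $2$-form lies in $\alpha\wedge\Lambda^4(\mathsf{D}^*)$, and reads off $\mathrm{Re}(\Psi)\wedge p^*F = 0$ by comparing components. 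No formula for $\ast\varphi$, hence no normalization, is ever needed.

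Your proposed ``alternative'' shortcut, however, is genuinely wrong as stated: $\Lambda^2_7 \neq \R\sigma\oplus\LB\Lambda^{2,0}\RB$ and $\Lambda^2_{14}\neq(\alpha\otimes\mathsf{D}^*)\oplus[\Lambda^{1,1}_0]$ as subspaces of $\Lambda^2(T^*M)$. Indeed, no nonzero form $\alpha\wedge\theta$ with $\theta\in\mathsf{D}^*$ can lie in $\Lambda^2_{14}$, since $(\alpha\wedge\theta)\wedge\ast\varphi = \tfrac{1}{2}\,\alpha\wedge\theta\wedge\sigma^2 \neq 0$ (the Lefschetz map $\theta\mapsto\theta\wedge\sigma^2$ is injective on transverse $1$-forms). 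What actually happens is that for $X\in\mathsf{D}$ one has $X\lrcorner\varphi = -\alpha\wedge(X\lrcorner\sigma) - X\lrcorner\mathrm{Re}(\Psi)$, so the $\mathsf{D}^*$-isotypic summands of $\Lambda^2_7$ and of $\Lambda^2_{14}$ are diagonal and anti-diagonal subspaces of $(\alpha\otimes\mathsf{D}^*)\oplus\LB\Lambda^{2,0}\RB$; your decompositions hold only as abstract $\SU(3)$-module isomorphisms, not as equalities of subspaces. The conclusion you want, $\Lambda^2_{14}\cap\Lambda^2(\mathsf{D}^*) = [\Lambda^{1,1}_0]$, is nevertheless true, but it has to come from your main computation (or from the correct diagonal description just given); the claimed splitting cannot be used to sidestep it.
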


\begin{proof} Let $F := F_A$ denote the curvature of $A$, and note that $p^*F = F_{p^*A}$ is the curvature of $p^*A$.  For ease of notation, let $c_n = \frac{1}{(n-2)!}$, and abbreviate $\omega := \omega_{\mathrm{KE}}$. \\
\indent (i) $\iff$ (ii).  Suppose $A$ is primitive Hermitian Yang-Mills.  Since $p^*F$ is a transverse $\mathrm{ad}_P$-valued $2$-form, we have 
$$\ast(p^*F) = \alpha \wedge \ast_{\mathsf{D}}(p^*F) = \alpha \wedge p^*(\ast F) = \alpha \wedge p^*(-c_n \omega^{n-2} \wedge F) = -c_n\,\alpha \wedge \sigma^{n-2} \wedge p^*F,$$
which shows that $p^*A$ is a contact instanton.   Conversely, suppose that $p^*A$ is a contact instanton.  Then
$$ \alpha \wedge p^*(\ast F)  = \alpha \wedge \ast_{\mathsf{D}}(p^*F) = \ast(p^*F) = -c_n\,\alpha \wedge \sigma^{n-2} \wedge p^*F = \alpha \wedge p^*(-c_n\,\omega^{n-2} \wedge F),$$
so contracting with the Reeb field gives $p^*(\ast F) = p^*(-c_n \omega^{n-2} \wedge F)$.  Since $p$ is a submersion, $p^*$ is injective, so $\ast F = -c_n \omega^{n-2} \wedge F$, whence $A$ is primitive Hermitian Yang-Mills. \\
\indent (ii) $\implies$ (iii).  This follows from Example \ref{ex:SasEin7Mfld}. \\
\indent (iii) $\implies$ (ii).  This direction is well-known, but we give a proof for completeness.  Suppose $p^*A$ is a $\G_2$-instanton on $p^*P \to M^7$, and let $\phi \in \Omega^3(M)$ be the induced nearly-parallel $\G_2$-structure on $M$, i.e.:
$$\phi = \alpha \wedge \sigma - \mathrm{Re}(\Psi).$$
Since $p^*A$ is a $\G_2$-instanton, we have
$$\ast(p^*F) = -\phi \wedge p^*F = -\alpha \wedge \sigma \wedge p^*F + \mathrm{Re}(\Psi) \wedge p^*F.$$
On the other hand, since $p^*F$ is transverse, it follows that $\ast(p^*F) = \alpha \wedge \psi$ for some $\psi \in \Omega^1(M; \mathrm{ad}_P)$.  Therefore, $\mathrm{Re}(\Psi) \wedge p^*F = 0$, which yields $\ast(p^*F) = -\alpha \wedge \sigma \wedge p^*F$, so that $p^*A$ is a contact instanton.
\end{proof}

\section{Gauge Theory on Hyperk\"{a}hler Cones and $3$-Sasakian Links} \label{sec:HKCones-3Sas}

\indent \indent In this section, we study conically singular $\Sp(n+1)$-instantons over hyperk\"{a}hler cones $C^{4n+4} = \Cone(M)$ with $n \geq 1$.  In Proposition \ref{prop:TriContact-Def} we define a natural gauge-theoretic condition --- the \emph{tri-contact instanton equation} --- for connections over $3$-Sasakian manifolds $M^{4n+3}$.  Then, in Proposition \ref{prop:Cone-Sp(n)}, we observe that conically singular $\Sp(n+1)$-instantons over $C$ are modeled by tri-contact instantons over $M$. \\
\indent This raises the question of how to construct tri-contact instantons.  To this end, we consider two dimensional reductions.  First, by quotienting $M$ by a particular Reeb field $\xi_1$, we reduce the tri-contact instanton equation on $M$ to the twistor space $Z^{4n+2} = M/\langle \xi_1 \rangle$.  The result (Proposition \ref{prop:Tri-Contact-MZ}) is that a particular class of pHYM connections on $Z$ yields tri-contact instantons on $M$.  Second, by quotienting $M$ by all three Reeb fields $\xi_1, \xi_2, \xi_3$ simultaneously, we may reduce further to the quaternionic-K\"{a}hler quotient $Q^{4n} = M/\langle \xi_1, \xi_2, \xi_3 \rangle$.  The result (Proposition \ref{prop:Tri-Contact-MZQ}) is that $\Sp(n)$-instantons on $Q$ yield pHYM connections on $Z$ of the relevant type, and hence also tri-contact instantons on $M$.

\subsection{Hyperk\"{a}hler Cones and $3$-Sasakian Links}

\indent \indent Let $C^{4n+4}$ be a hyperk\"{a}hler cone, $n \geq 1$.  By the discussion in $\S$\ref{sec:H-struc}, $C$ carries an $\Sp(n+1)$-structure $(g_C, (I_C, J_C, K_C))$, where $I_C, J_C, K_C$ are $g_C$-orthogonal complex structures satisfying $I_C J_C = K_C$.  We let $\omega_1 := g_C(I_C \cdot, \cdot)$, and similarly for $\omega_2, \omega_3$.  Since $C$ is a cone, we have that
$$(C^{4n+4}, g_C) = (M^{4n+3} \times \R^+, dr^2 + r^2 g_M)$$
for some $3$-Sasakian manifold $(M, g_M)$.  Going forward, we identify $M$ with the hypersurface $M \times \{1\} \subset C$. \\
\indent On $M$, we define $1$-forms $\alpha_1, \alpha_2, \alpha_3 \in \Omega^1(M)$, vector fields $\xi_1, \xi_2, \xi_3 \in \Gamma(TM)$, and endomorphisms $\mathsf{I}_M, \mathsf{J}_M, \mathsf{K}_M \in \End(TM)$ as follows:
\begin{align*}
\alpha_1 & := \left.\left( r\partial_r\,\lrcorner\,\omega_1 \right)\right|_M & \xi_1 & := \alpha_1^\sharp & \mathsf{I}_M & = \begin{cases}
I_C & \mbox{on } \mathrm{Ker}(\alpha_1) \\
0 & \mbox{on } \R \xi_1
\end{cases}
\end{align*}
and analogously for $\alpha_2, \alpha_3$ and $\xi_2, \xi_3$ and $\mathsf{J}_M, \mathsf{K}_M$. The data $(g_M, (\alpha_1, \alpha_2, \alpha_3), (\mathsf{I}_M, \mathsf{J}_M, \mathsf{K}_M))$ comprises an $(\Sp(n) \times \{1\}^3)$-structure on $M$. \\
\indent Note that the tangent bundle splits orthogonally as
\begin{align*}
TM & = \widetilde{\mathsf{V}} \oplus \widetilde{\mathsf{H}}, & \widetilde{\mathsf{V}} & = \R \xi_1 \oplus \R \xi_2 \oplus \R \xi_3 & \widetilde{\mathsf{H}} & = \Ker(\alpha_1, \alpha_2, \alpha_3).
\end{align*}
Moreover, the restricted endomorphisms $\mathsf{I}_M, \mathsf{J}_M, \mathsf{K}_M \colon \widetilde{\mathsf{H}} \to \widetilde{\mathsf{H}}$ are $g_M$-orthogonal complex structures that satisfy the quaternionic relations, so each $4n$-dimensional subspace $\widetilde{\mathsf{H}}_x \subset T_xM$ is hyper-Hermitian (recall $\S$\ref{subsub:LinAlgHK}). \\
\indent As in the previous section, we let $\sigma_1 := g_M(\mathsf{I}_M \cdot, \cdot)$, and similarly for $\sigma_2, \sigma_3$.  With respect to the decomposition 
$$\Lambda^2(T^*M) = \Lambda^2(\widetilde{\mathsf{V}}^*) \oplus (\widetilde{\mathsf{V}}^* \otimes \widetilde{\mathsf{H}}^*) \oplus \Lambda^2(\widetilde{\mathsf{H}}^*)$$
one can show that $\sigma_2$ (respectively, $\sigma_3$) has no component in $\widetilde{\mathsf{V}}^* \otimes \widetilde{\mathsf{H}}^*$, and that its $\Lambda^2(\widetilde{\mathsf{V}}^*)$-component is $\alpha_3 \wedge \alpha_1$ (respectively, $\alpha_1 \wedge \alpha_2)$.  Letting $\kappa_2$ and $\kappa_3$ denote the respective $\Lambda^2(\widetilde{\mathsf{H}}^*)$-components of $\sigma_2$ and $\sigma_3$, we have
\begin{align*}
\sigma_2 & = \alpha_3 \wedge \alpha_1 + \kappa_2 & \sigma_3 & = \alpha_1 \wedge \alpha_2 + \kappa_3.
\end{align*}

\subsection{Tri-Contact Instantons on $3$-Sasakian Manifolds}

\indent \indent Let $M^{4n+3}$ be a $3$-Sasakian manifold with $n \geq 1$ as above.   The bundle of $2$-forms
$$\Lambda^2(T^*M) = \Lambda^2(\widetilde{\mathsf{V}}^*) \oplus (\widetilde{\mathsf{V}}^* \otimes \widetilde{\mathsf{H}}^*) \oplus \Lambda^2(\widetilde{\mathsf{H}}^*)$$
decomposes further into $(\Sp(n) \times \{1\}^3)$-invariant subbundles as follows:
\begin{align*}
\Lambda^2(\widetilde{\mathsf{V}}^*) & = \R(\alpha_2 \wedge \alpha_3) \oplus \R(\alpha_3 \wedge \alpha_1) \oplus \R(\alpha_{I} \wedge \alpha_2) \\
\widetilde{\mathsf{V}}^* \otimes \widetilde{\mathsf{H}}^* & = (\alpha_1 \otimes \widetilde{\mathsf{H}}^*) \oplus (\alpha_2 \otimes \widetilde{\mathsf{H}}^*) \oplus (\alpha_3 \otimes \widetilde{\mathsf{H}}^*) \\
\Lambda^2(\widetilde{\mathsf{H}}^*) & = \widetilde{U}_I \oplus \widetilde{U}_J \oplus \widetilde{U}_K \oplus \widetilde{W} \oplus \R\kappa_1 \oplus \R\kappa_2 \oplus \R\kappa_3.
\end{align*}
In the last line, we are viewing $\widetilde{\mathsf{H}}$ as a hyper-Hermitian vector space and employing the notation of $\S$\ref{subsub:LinAlgHK}.  To be more explicit, for $\mathsf{L} \in \{\mathsf{I}, \mathsf{J}, \mathsf{K}\}$, we let $\Lambda^{p,q}_{\mathsf{L}} \subset \Lambda^{p+q}(\widetilde{\mathsf{H}}^*; \C)$ denote the subbundle of $(p+q)$-forms on $\mathsf{H}$ that have type $(p,q)$ with respect to $\mathsf{L}_M|_{\widetilde{\mathsf{H}}}$.  Then:
\begin{align*}
\widetilde{U}_I & = [\Lambda^{1,1}_{0,\mathsf{I}}] \cap \LB \Lambda^{2,0}_{\mathsf{J}} \RB \cap \LB \Lambda^{2,0}_{\mathsf{K}} \RB & \widetilde{W} & = [\Lambda^{1,1}_{0,\mathsf{I}}] \cap [\Lambda^{1,1}_{0,\mathsf{J}}] \cap [\Lambda^{1,1}_{0,\mathsf{K}}] \\
\widetilde{U}_J & = \LB \Lambda^{2,0}_{\mathsf{I}} \RB \cap [\Lambda^{1,1}_{0,\mathsf{J}}] \cap \LB \Lambda^{2,0}_{\mathsf{K}} \RB \\
\widetilde{U}_K & = \LB \Lambda^{2,0}_{\mathsf{I}} \RB \cap \LB \Lambda^{2,0}_{\mathsf{J}} \RB \cap [\Lambda^{1,1}_{0,\mathsf{K}}].
\end{align*}
In particular, these subbundles have the following ranks:
\begin{align*}
\mathrm{rank}(\widetilde{U}_I) = \mathrm{rank}(\widetilde{U}_J) = \mathrm{rank}(\widetilde{U}_K) & = 2n^2 - n - 1 & \mathrm{rank}(\widetilde{W}) & = 2n^2 + n.
\end{align*}
We now define the notion of a \emph{tri-contact instanton}.

\begin{prop} \label{prop:TriContact-Def} Let $A$ be a connection on a principal bundle $P \to M$.  We say that $A$ is a \emph{tri-contact instanton} if any of the following equivalent conditions hold:
\begin{enumerate}[(i)]
\item $A$ is an $(\Sp(n) \times \{1\}^3)$-instanton (i.e., $F_A \in \Gamma(\widetilde{W} \otimes \mathrm{ad}_P)$).
 \item $A$ is $\Omega$-ASD for $\Omega = \frac{1}{(2n-1)!} \alpha_1 \wedge \alpha_2 \wedge \alpha_3 \wedge (\kappa_1^2 + \kappa_2^2 + \kappa_3^2)^{n-1}$.
\item $A$ is simultaneously $\Omega_1$-, $\Omega_2$- and $\Omega_3$-ASD for $\Omega_p = \frac{1}{(2n-1)!}\alpha \wedge \sigma_p^{2n-1}$.
\end{enumerate}
\end{prop}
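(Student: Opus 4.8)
The plan is to prove both equivalences by transferring each condition to the fiberwise quaternionic linear algebra on the horizontal distribution $\widetilde{\mathsf{H}}$. I would handle (i)$\iff$(ii) using a transverse Hodge star together with the characterization (\ref{eq:W-char}) of $\widetilde{W}$, and (i)$\iff$(iii) by recognizing each $\Omega_p$ as the contact-instanton calibration of Proposition \ref{prop:Contact-Instanton-Def}. The first step is to record a transverse Hodge-star identity on $M^{4n+3}$: since the Reeb fields $\xi_1,\xi_2,\xi_3$ are orthonormal and span $\widetilde{\mathsf{V}} = \widetilde{\mathsf{H}}^\perp$, we may orient $M$ so that $\mathrm{vol}_M = \mathrm{vol}_{\widetilde{\mathsf{H}}}\wedge\alpha_1\wedge\alpha_2\wedge\alpha_3$. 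Writing $\mu := \alpha_1\wedge\alpha_2\wedge\alpha_3$, every transverse form $\beta\in\Lambda^k(\widetilde{\mathsf{H}}^*)$ then satisfies $\ast_M\beta = (\ast_{\widetilde{\mathsf{H}}}\beta)\wedge\mu$, the exact analogue of (\ref{eq:TransverseHodge}). In particular $\beta\mapsto(\ast_{\widetilde{\mathsf{H}}}\beta)\wedge\mu$ is a bijection from transverse forms onto forms lying in $\mu\wedge\Lambda^\bullet(\widetilde{\mathsf{H}}^*)$, so a $2$-form $F$ is transverse if and only if $\ast_M F\in\mu\wedge\Lambda^\bullet(\widetilde{\mathsf{H}}^*)$.

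For (i)$\iff$(ii), I would first observe that both conditions force $F_A$ to be transverse: this is immediate for (i) since $\widetilde{W}\subset\Lambda^2(\widetilde{\mathsf{H}}^*)$, while for (ii) the factor $\mu$ in $\Omega$ annihilates every non-horizontal term, so $\Omega\wedge F_A = \frac{1}{(2n-1)!}\mu\wedge(\kappa_1^2+\kappa_2^2+\kappa_3^2)^{n-1}\wedge F_A$ lies in $\mu\wedge\Lambda^\bullet(\widetilde{\mathsf{H}}^*)$, whence $\ast_M F_A=-\Omega\wedge F_A$ makes $F_A$ transverse by the previous paragraph. Assuming transversality, I would substitute $\ast_M F_A = (\ast_{\widetilde{\mathsf{H}}}F_A)\wedge\mu$ into (ii); since $(\kappa_1^2+\kappa_2^2+\kappa_3^2)^{n-1}\wedge F_A$ has even degree it commutes past $\mu$, and wedging with $\mu$ is injective on transverse forms, so (ii) is equivalent to $\ast_{\widetilde{\mathsf{H}}}F_A = -\frac{1}{(2n-1)!}(\kappa_1^2+\kappa_2^2+\kappa_3^2)^{n-1}\wedge F_A$. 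Recognizing $\frac{1}{(2n-1)!}(\kappa_1^2+\kappa_2^2+\kappa_3^2)^{n-1}$ as the fundamental form $\Pi_{n-1}$ of the hyper-Hermitian space $\widetilde{\mathsf{H}}$, this is precisely the characterization (\ref{eq:W-char}) of membership $F_A\in\widetilde{W}$, i.e. condition (i).

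For (i)$\iff$(iii), the key observation is that each Reeb field $\xi_p$ equips $M$ with a Sasaki-Einstein structure, namely the link structure associated to the corresponding complex structure $I_C$, $J_C$, or $K_C$ on the hyperkähler cone $C$ (which is Calabi-Yau with respect to each). Its contact form is $\alpha_p$, its distribution is $\mathsf{D}_p=\Ker(\alpha_p)$, and its transverse Kähler form is $\sigma_p|_{\mathsf{D}_p}=\alpha_q\wedge\alpha_r+\kappa_p$ for $(p,q,r)$ a cyclic permutation of $(1,2,3)$. With $N:=2n+1$, the form $\Omega_p=\frac{1}{(2n-1)!}\alpha_p\wedge\sigma_p^{2n-1}$ is then exactly the calibration $\frac{1}{(N-2)!}\alpha\wedge\sigma^{N-2}$ of Proposition \ref{prop:Contact-Instanton-Def}, so $\Omega_p$-ASD is equivalent to $F_A$ being a contact instanton for the $\xi_p$-structure, i.e. $F_A\in\Gamma([\Lambda^{1,1}_0]_{\mathsf{D}_p}\otimes\mathrm{ad}_P)$. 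Thus (iii) holds if and only if $F_A$ is a contact instanton for all three Reeb fields. Each such condition gives $\iota_{\xi_p}F_A=0$, so all three together force $F_A$ to be transverse; and on $\widetilde{\mathsf{H}}$ the three ``$(1,1)$-and-primitive'' conditions reduce, once the vertical legs are gone, to being type $(1,1)$ and $\kappa_p$-primitive with respect to $\mathsf{I}_M,\mathsf{J}_M,\mathsf{K}_M$. By Proposition \ref{prop:HyperHermitianDecomp} this is exactly $F_A\in\widetilde{W}=[\Lambda^{1,1}_{0,\mathsf{I}}]\cap[\Lambda^{1,1}_{0,\mathsf{J}}]\cap[\Lambda^{1,1}_{0,\mathsf{K}}]$, which is (i); the converse implication (i)$\Rightarrow$(iii) runs backwards along the same reductions.

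The main obstacle I anticipate is the bookkeeping in (i)$\iff$(ii): fixing the orientation so that the transverse Hodge-star identity holds with the stated sign, and verifying that once $F_A$ is transverse the global $\Omega$-ASD equation factors cleanly through $\mu$, so that the vertical volume form cancels and one lands precisely on the fiberwise identity (\ref{eq:W-char}). By contrast, (i)$\iff$(iii) is comparatively painless once one identifies $\Omega_p$ with the Sasaki-Einstein calibration for $\xi_p$, since Proposition \ref{prop:Contact-Instanton-Def} then absorbs all the Hodge-theoretic work on each $(4n+2)$-dimensional distribution $\mathsf{D}_p$.
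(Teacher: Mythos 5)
Your proposal is correct and takes essentially the same approach as the paper: for (i)$\iff$(ii) you restrict to transverse curvature, apply the transverse Hodge star, and invoke the characterization (\ref{eq:W-char}) of $\widetilde{W}$ as the $(-1)$-eigenspace of wedging with the fundamental form, exactly as the paper does; for (i)$\iff$(iii) you reduce to Proposition \ref{prop:Contact-Instanton-Def} applied to each of the three Sasaki--Einstein structures, which is precisely the paper's (one-line) argument. The additional details you supply --- the justification that condition (ii) forces transversality, and the identification of $\bigcap_{p} [\Lambda^{1,1}_0]_{\mathsf{D}_p}$ with $\widetilde{W}$ once all three vertical contractions vanish --- are steps the paper leaves implicit, and they check out.
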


\begin{proof}  (i) $\iff$ (ii). Notice that both conditions (i) and (ii) imply that $F_A \in \Gamma( \Lambda^2(\widetilde{\mathsf{H}}^*) \otimes \mathrm{ad}_P)$, so it suffices to restrict attention to those connections $A$ whose curvature satisfies this horizontality condition.  If $A$ is such a connection, then:
\begin{align*}
\ast F_A  = -\Omega \wedge F_A & \iff \alpha_1 \wedge \alpha_2 \wedge \alpha_3 \wedge (\ast_{\widetilde{\mathsf{H} }} F_A) =  -\frac{1}{(2n-1)!} \alpha_1 \wedge \alpha_2 \wedge \alpha_3 \wedge (\kappa_1^2 + \kappa_2^2 + \kappa_3^2)^{n-1}\wedge F_A  \\
& \iff \ast_{\widetilde{\mathsf{H}}} F_A  = -\frac{1}{(2n-1)!}(\kappa_1^2 + \kappa_2^2 + \kappa_3^2)^{n-1}\wedge F_A \\
& \iff F_A \in \Gamma( \widetilde{W} \otimes \mathrm{ad}_P).
\end{align*}
The last equivalence follows from the observation that each $\widetilde{\mathsf{H}}_x \subset T_xM$ is a quaternionic-Hermitian vector space with fundamental $4k$-forms $\widetilde{\Pi}_k = \frac{1}{(2k+1)!}(\kappa_1^2 + \kappa_2^2 + \kappa_3^2)^{k}$ and using (\ref{eq:W-char}). \\
\indent (i) $\iff$ (iii).  This follows from Proposition \ref{prop:Contact-Instanton-Def}.
\end{proof}

\indent We now observe that tri-contact instantons on a $3$-Sasakian manifold $M$ are related to $\Sp(n+1)$-instantons on its hyperk\"{a}hler cone $C$.  Letting $\iota \colon M \hookrightarrow C$ denote the inclusion map $\iota(x) = (1,x)$, we have:

\begin{prop} \label{prop:Cone-Sp(n)} Let $P \to C$ be a principal $G$-bundle, and let $A$ be a dilation-invariant connection on $P$ that is in temporal gauge.  Then $A$ is an $\Sp(n+1)$-instanton on $P \to C$ if and only if $\iota^*A$ is a tri-contact instanton on $\iota^*P \to M$.
\end{prop}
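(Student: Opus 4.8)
The plan is to deduce this statement from its Calabi--Yau analogue, Proposition \ref{prop:Cone-HYM}, applied once for each of the three complex structures $I_C, J_C, K_C$. The guiding principle is that a hyperk\"{a}hler cone carries three distinct K\"{a}hler cone structures, and that both the $\Sp(n+1)$-instanton condition and the tri-contact instanton condition split into three conditions indexed by $p \in \{1,2,3\}$ that correspond term-by-term under Proposition \ref{prop:Cone-HYM}.

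I would begin by recording the two relevant splittings. On the cone, Proposition \ref{prop:Sp(n)-equiv-def}(iv), applied to the hyperk\"{a}hler manifold $C^{4(n+1)}$, shows that $A$ is an $\Sp(n+1)$-instanton if and only if it is primitive Hermitian Yang--Mills with respect to all three K\"{a}hler structures $(g_C, I_C, \omega_1)$, $(g_C, J_C, \omega_2)$, and $(g_C, K_C, \omega_3)$ simultaneously. On the link, the equivalence (i) $\iff$ (iii) of Proposition \ref{prop:TriContact-Def} shows that $\iota^*A$ is a tri-contact instanton if and only if it is simultaneously $\Omega_p$-ASD for $\Omega_p = \frac{1}{(2n-1)!}\alpha_p \wedge \sigma_p^{2n-1}$, that is, a contact instanton for each of the three Sasaki--Einstein structures on $M$. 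It therefore suffices to prove, for each fixed $p$, that $A$ is $\omega_p$-primitive HYM on $C$ if and only if $\iota^*A$ is a contact instanton for the $p$-th structure on $M$. This is exactly Proposition \ref{prop:Cone-HYM}, provided we know that $(g_C, L_p, \omega_p)$ realizes $C$ as a Calabi--Yau cone whose Sasaki--Einstein link is $M$ with contact form $\alpha_p$, Reeb field $\xi_p$, and transverse K\"{a}hler form $\sigma_p$; and since $A$ is dilation-invariant and in temporal gauge by hypothesis, that proposition applies verbatim (with the cone of complex dimension $2n+2$, so the constraint $n \geq 1$ suffices). Chaining the three resulting equivalences with the two splittings above gives the claim.

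The only substantive point to verify---and hence the main obstacle---is that each $\omega_p$ genuinely endows $C$ with a Calabi--Yau cone structure for which $\alpha_p$ and $\sigma_p$ play precisely the roles demanded by Proposition \ref{prop:Cone-HYM}. Concretely, one must check that $\Hol(g_C) \subseteq \Sp(n+1) \subseteq \SU(2n+2)$ makes $(C, g_C, L_p)$ Ricci-flat K\"{a}hler, that the homogeneity of $\omega_p$ under $r\partial_r$ yields the cone identity $\omega_p = r\,dr \wedge \alpha_p + r^2\sigma_p$ together with $d\alpha_p = 2\sigma_p$, and that the transverse curvature $F_{B_r}$ is $\sigma_p$-primitive (so that $F_{B_r} \wedge \sigma_p^{2n} = 0$) exactly as in the proof of Proposition \ref{prop:Cone-HYM}. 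These are standard consequences of the $3$-Sasakian structure equations. Alternatively, one could bypass the reduction and argue directly, expanding $\ast_C F_A = -\frac{1}{(2n)!}\omega_p^{2n} \wedge F_A$ and contracting with $\partial_r$ for each $p$ as in that proof; but invoking Proposition \ref{prop:Cone-HYM} three times is cleaner and avoids repeating the computation.
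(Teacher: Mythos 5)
Your proposal is correct and follows essentially the same route as the paper, whose proof consists precisely of combining Proposition \ref{prop:Cone-HYM} (applied once for each of the three K\"{a}hler cone structures) with the equivalences of Propositions \ref{prop:Sp(n)-equiv-def} and \ref{prop:TriContact-Def}. Your additional verification that each $(g_C, L_p, \omega_p)$ is a Calabi--Yau cone structure with Sasaki--Einstein link data $(\alpha_p, \xi_p, \sigma_p)$ is exactly the point the paper leaves implicit in its one-line proof.
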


\begin{proof} This follows immediately from Proposition \ref{prop:Cone-HYM} together with Propositions \ref{prop:Sp(n)-equiv-def} and \ref{prop:TriContact-Def}.
\end{proof}

\begin{example}[Dimension 7] If $M^7$ is a $3$-Sasakian $7$-manifold, then (recalling Example \ref{ex:SasEin7Mfld}), we have the implications:
$$A \text{ is a tri-contact instanton } \ \implies \ A \text{ is a contact instanton } \ \implies \ A \text{ is a }\G_2\text{-instanton.}$$
\end{example}

\subsection{Dimensional Reductions of Tri-Contact Instantons} \label{sub:DimRed}

\indent \indent In this subsection, we reduce the tri-contact instanton equation on $M^{4n+3}$ to a PDE for connections over the twistor space $Z^{4n+2}$ (cf. Proposition \ref{prop:Tri-Contact-MZ}).  We then reduce it even further to a PDE for connections over a positive quaternionic-K\"{a}hler $4n$-orbifold $Q^{4n}$ (cf. Proposition \ref{prop:Tri-Contact-MZQ}).  Our treatment makes crucial use of the canonical $\Sp(n)\U(1)$-structure on $Z$.  Since this structure is not widely known, we spend some time reviewing the relevant geometry and representation theory.

\subsubsection{The Structure of Compact $3$-Sasakian Manifolds} \label{subsub:Structure-Theorem}

\indent \indent From now on, we work with \emph{compact} $3$-Sasakian manifolds $M^{4n+3}$.  The structure of such manifolds is governed by the following well-known result: see \cite[Theorems 13.2.5, 13.3.1, 13.3.13]{boyer2008sasakian}.

\begin{thm}[\cite{boyer2008sasakian}] \label{thm:3Sasakian-structure-1} Let $M$ be a compact $3$-Sasakian $(4n+3)$-manifold.  For $v = (v_1, v_2, v _3) \in S^2$, let $\xi_v = v_1 \xi_1 + v_2 \xi_2 + v_3 \xi_3$ denote the corresponding Reeb field.  Then:
\begin{enumerate}[(a)]
\item Each $\xi_v$ defines a locally free $S^1$-action on $M$ and quasi-regular foliation $\mathcal{F}_v \subset M$.  Further, the leaf space $Z_v := M/\mathcal{F}_v$ is naturally a positive K\"{a}hler-Einstein $(4n+2)$-orbifold.
\item The projection $p_v \colon M \to Z_v$ is a principal $S^1$-orbibundle with connection $1$-form $\alpha_v = v_1 \alpha_1 + v_2 \alpha_2 + v_3 \alpha_3$, and is an orbifold Riemannian submersion.
\item Together, $\xi_1, \xi_2, \xi_3$ define a locally free $\SU(2)$-action on $M$ and quasi-regular foliation $\mathcal{F}_A \subset M$.  Further, the leaf space $Q := M/\mathcal{F}_A$ is naturally a positive quaternionic-K\"{a}hler $4n$-orbifold.
\item The projection $h \colon M \to Q$ is a principal $\SU(2)$- or $\SO(3)$-orbibundle and orbifold Riemannian submersion.
\item For $v,v' \in S^2$, there is a diffeomorphism $Z_v \approx Z_{v'}$.  In fact, each $Z_v$ may be identified with the (orbifold) twistor space $Z$ of $Q$. \\
\end{enumerate}
\end{thm}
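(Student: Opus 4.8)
The plan is to build everything from the $\SU(2)$-symmetry generated by the three Reeb fields, establishing (c)--(d) first and then descending to (a)--(b) and (e). The starting point is the standard fact that on a $3$-Sasakian manifold the Reeb fields $\xi_1, \xi_2, \xi_3$ are unit-length Killing fields satisfying the bracket relations of $\mathfrak{su}(2)$, namely $[\xi_p, \xi_q] = 2\,\epsilon_{pqr}\,\xi_r$ in the normalization compatible with $d\alpha_p = 2\sigma_p$. Since $M$ is compact these Killing fields are complete, so the Lie algebra $\mathrm{span}_\R(\xi_1, \xi_2, \xi_3) \cong \mathfrak{su}(2)$ integrates to a smooth action of a compact Lie group $G$, necessarily a quotient of the simply-connected $\SU(2)$; the two possibilities $G = \SU(2)$ and $G = \SO(3)$ account for the dichotomy in (d). Because $\xi_1, \xi_2, \xi_3$ are pointwise orthonormal, and in particular nowhere zero and linearly independent, the infinitesimal isotropy is trivial, so the $G$-action is locally free, with finite isotropy groups and compact $3$-dimensional orbits. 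This gives the foliation $\mathcal{F}_A$ of (c).

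First I would treat (a)--(b). For $v \in S^2$, the unit vector $\xi_v = v_1\xi_1 + v_2\xi_2 + v_3\xi_3$ lies in $\mathfrak{su}(2)$, and every one-parameter subgroup generated by a nonzero element of $\mathfrak{su}(2)$ closes up into a circle; hence $\exp(\R\,\xi_v)$ is a circle subgroup $S^1_v \leq G$ with closed orbits. Restricting the locally free $G$-action, the induced $S^1_v$-action is locally free with circle orbits, so $\mathcal{F}_v$ is quasi-regular and $Z_v := M/\mathcal{F}_v$ is an orbifold with $p_v \colon M \to Z_v$ a principal $S^1$-orbibundle. The connection $1$-form is $\alpha_v = v_1\alpha_1 + v_2\alpha_2 + v_3\alpha_3$, since $\alpha_v(\xi_v) = 1$ while $\alpha_v$ annihilates the horizontal distribution $\widetilde{\mathsf{H}}$. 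The Kähler-Einstein structure on $Z_v$ then comes from the transverse geometry of the Sasakian structure associated to $\xi_v$: because $3$-Sasakian manifolds are Einstein of positive scalar curvature, each such Sasakian structure is Sasaki-Einstein, and its transverse Kähler metric descends to a positive Kähler-Einstein metric on $Z_v$ making $p_v$ an orbifold Riemannian submersion.

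Next I would construct $Q$ and the twistor identification (e). The locally free $G$-action yields an orbifold quotient $Q = M/G$ of dimension $4n$, and the horizontal distribution $\widetilde{\mathsf{H}}$, equipped pointwise with the restricted hyper-Hermitian data $(\mathsf{I}_M, \mathsf{J}_M, \mathsf{K}_M)$, descends to an $\Sp(n)\Sp(1)$-structure on $Q$; the rank-$3$ bundle $\mathcal{F} = \mathrm{span}(\mathsf{I}_M, \mathsf{J}_M, \mathsf{K}_M)$ is well-defined on $Q$ precisely because the $G$-action rotates the triple among itself. I would verify the quaternionic-Kähler condition (parallelism of $\mathcal{F}$, equivalently $\nabla\Pi = 0$) through O'Neill's submersion formulas, relating the Levi-Civita connection of $M$ to that of $Q$ and invoking the explicit $3$-Sasakian curvature identities, with positivity of the scalar curvature of $Q$ inherited from the positive Einstein constant of $M$. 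Finally, since each $S^1_v \leq G$ is a maximal torus and all such tori are conjugate in $\SU(2)$, the composite $Z_v = M/S^1_v \to M/G = Q$ is an $S^2$-orbibundle (fiber $G/S^1$), conjugacy of the $S^1_v$ furnishes the diffeomorphisms $Z_v \approx Z_{v'}$, and identifying this $S^2$-bundle with the unit sphere bundle of $\mathcal{F} \to Q$ exhibits each $Z_v$ as the twistor space $Z$ of $Q$.

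The main obstacle is the verification that $Q = M/G$ is genuinely quaternionic-Kähler, that is, that $\mathcal{F}$ is parallel for the Levi-Civita connection of the quotient metric rather than merely defining an $\Sp(n)\Sp(1)$-structure. This is where the full $3$-Sasakian curvature is indispensable: one must control the $A$- and $T$-tensors of the submersion $h \colon M \to Q$ and show that the induced connection on the normal data preserves the rank-$3$ subbundle $\mathcal{F}$. By contrast, the remaining assertions---quasi-regularity of the $\mathcal{F}_v$, the orbifold principal-bundle statements, and the twistor fibration---are comparatively formal consequences of the compactness of $G$ and the conjugacy of its circle subgroups.
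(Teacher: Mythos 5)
This statement is not proved in the paper at all: it is imported verbatim from Boyer--Galicki (the paper cites \cite[Theorems 13.2.5, 13.3.1, 13.3.13]{boyer2008sasakian} and uses the result as background), so there is no in-paper argument to compare against. Your proposal is, in outline, exactly the standard proof from that reference: integrate the $\mathfrak{su}(2)$ of Reeb fields to an isometric $\SU(2)$- or $\SO(3)$-action (compactness of $M$ gives completeness, and semisimplicity of $\mathfrak{su}(2)$ guarantees the integral subgroup of the compact isometry group is closed, a point worth stating explicitly); pointwise linear independence of $\xi_1,\xi_2,\xi_3$ gives local freeness; circle subgroups give the quasi-regular foliations $\mathcal{F}_v$ and the K\"{a}hler--Einstein leaf spaces via transverse Sasaki--Einstein geometry; the full action gives $Q$; and transitivity of the adjoint action on $S^2$ plus the sphere bundle of the rank-$3$ bundle $\mathcal{F} \to Q$ give the twistor identification in (e). Two caveats. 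First, the step you correctly flag as the crux --- that $\mathcal{F}$ descends to a \emph{parallel} subbundle of $\End(TQ)$, i.e.\ that $Q$ is genuinely quaternionic-K\"{a}hler --- is where essentially all the work in the literature lies (Ishihara's theorem in the regular case, Boyer--Galicki--Mann for orbifolds); your proposal names the right tools (O'Neill tensors plus $3$-Sasakian curvature identities) but does not carry them out, so as written this is a plan rather than a proof of (c). Second, a minor imprecision: to see that $\alpha_v$ is a connection form for $p_v$ you need $\alpha_v(\xi_v)=1$ \emph{and} $\mathcal{L}_{\xi_v}\alpha_v = 0$; the kernel of $\alpha_v$ is the $(4n+2)$-dimensional horizontal space of the bundle, which strictly contains $\widetilde{\mathsf{H}}$, so annihilating $\widetilde{\mathsf{H}}$ does not by itself characterize $\alpha_v$.
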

\indent We reiterate that there is an entire $S^2$-family of projection maps $p_v \colon M \to Z$.  However, for definiteness, we will work only with $p := p_{(1,0,0)} \colon M \to Z$.   Letting $\tau \colon Z \to Q$ denote the twistor fibration, we can summarize the various spaces and maps under consideration in the following \emph{diamond diagram}:
$$\begin{tikzcd}
M^{4n+3} \arrow[dd, "h"'] \arrow[rd, "p"'] \arrow[r, hook] & C^{4n+4} \arrow[d]           \\
                                                             & Z^{4n+2} \arrow[ld, "\tau"'] \\
Q^{4n}                                                       &                             
\end{tikzcd}$$
As in $\S$\ref{sub:DimReduction-Contact}, we will always ignore the orbifold points of $Z$ and $Q$, and work only on the smooth loci.

\subsubsection{The Twistor Space: Revisited}

\indent \indent In $\S$\ref{sub:DimReduction-Contact}, we noted that if $M^{4n+3}$ is Sasaki-Einstein, then the twistor space $Z^{4n+2}$ admits a K\"{a}hler-Einstein structure.  It turns out that if $M$ is $3$-Sasakian, then $Z$ admits even more structure.  To explain this, we need the following notion:

\begin{defn}[\cite{aslan2023calibrated}] Let $Z^{4n+2}$ be a $(4n+2)$-manifold.  An \emph{$\Sp(n)\U(1)$-structure} on $Z$ consists of data $(g_Z, J_Z, \omega_Z, \mathsf{H}, \gamma)$, where:
\begin{itemize}
\item $(g_Z, J_Z, \omega_Z)$ is a $\U(2n+1)$-structure.
\item $\mathsf{H} \subset TZ$ is a distribution of $J_Z$-invariant $4n$-planes.  Setting $\mathsf{V} := \mathsf{H}^\perp$, we have a $J_Z$-invariant, $g_Z$-orthogonal splitting
$$TZ = \mathsf{V} \oplus \mathsf{H}.$$
With respect to this decomposition, we write $\omega_Z = \omega_{\mathsf{V}} + \omega_{\mathsf{H}}$.
\item $\gamma \in \Omega^3(Z;\C)$ is a complex $3$-form such that: At each $z \in Z$, there exists an orthonormal coframe
$$(e_{10}, e_{11}, e_{12}, e_{13}, \ldots, e_{n0}, e_{n1}, e_{n2}, e_{n3}, f_2, f_3) \colon T_zZ \to \R^{4n} \times \R^2$$
for which $\omega_{\mathsf{V}}|_z = f_2 \wedge f_3$ and
\begin{align*}
 \omega_{\mathsf{H}}|_z & = \sum_{j=1}^n (e_{j0} \wedge e_{j1} + e_{j2} \wedge e_{j3}), &  \gamma|_z & = (f_2 - if_3) \wedge \sum_{j=1}^n (e_{j0} + ie_{j1}) \wedge (e_{j2} + ie_{j3}).
\end{align*}
\end{itemize}
When the data $(g_Z, J_Z, \omega_Z, \mathsf{H})$ are understood, we sometimes abuse language and refer to $\gamma \in \Omega^3(Z;\C)$ itself as the $\Sp(n)\U(1)$-structure.  
\end{defn}

\indent The following fact was first proved by Alexandrov \cite{alexandrov2006sp}.  Recently, two alternative proofs were given in \cite{aslan2023calibrated}.

\begin{prop}[\cite{alexandrov2006sp}, \cite{aslan2023calibrated}] \label{prop:Sp(n)U(1)-Twistor} If $M$ is a compact $3$-Sasakian $(4n+3)$-manifold, then the twistor space $Z^{4n+2}$ inherits a canonical $\Sp(n)\U(1)$-structure $(g_\mathrm{KE}, J_{\mathrm{KE}}, \omega_{\mathrm{KE}}, \mathsf{H}, \gamma)$.  In fact, $\gamma \in \Omega^3(Z;\C)$ is the complex $3$-form for which $p^*(\gamma) = (\alpha_2 - i\alpha_3) \wedge (\kappa_2 + i\kappa_3)$.
\end{prop}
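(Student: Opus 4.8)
The plan is to construct the missing data $(\mathsf{H},\gamma)$ by descent from $M$, leveraging the fact that the $S^1$-quotient $p\colon M\to Z$ along the Reeb field $\xi_1$ already equips $Z=M/\langle\xi_1\rangle$ with its K\"{a}hler--Einstein structure $(g_{\mathrm{KE}},J_{\mathrm{KE}},\omega_{\mathrm{KE}})$, exactly as in the Sasaki--Einstein reduction of $\S$\ref{sub:DimReduction-Contact} applied to the Sasaki--Einstein structure $(\alpha_1,\xi_1,\mathsf{I}_M)$; in particular $p^*\omega_{\mathrm{KE}}=\sigma_1=\alpha_2\wedge\alpha_3+\kappa_1$ (the cyclic analogue of the displayed formulas for $\sigma_2,\sigma_3$). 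Identifying $T_zZ\cong\mathrm{Ker}(\alpha_1)_x=(\R\xi_2\oplus\R\xi_3)\oplus\widetilde{\mathsf{H}}_x$ via horizontal lift, I would define $\mathsf{H}\subset TZ$ as the descent of $\widetilde{\mathsf{H}}$ and $\mathsf{V}:=\mathsf{H}^\perp$ as the descent of $\R\xi_2\oplus\R\xi_3$. Since $\mathsf{I}_M$ preserves both $\widetilde{\mathsf{H}}$ and $\R\xi_2\oplus\R\xi_3$ (indeed $\mathsf{I}_M\xi_2=\xi_3$ and $\mathsf{I}_M\xi_3=-\xi_2$, read off directly from $\sigma_1=\alpha_2\wedge\alpha_3+\kappa_1$), the distribution $\mathsf{H}$ is $J_{\mathrm{KE}}$-invariant and $\omega_{\mathrm{KE}}$ splits as $\omega_{\mathsf{V}}+\omega_{\mathsf{H}}$ with $p^*\omega_{\mathsf{V}}=\alpha_2\wedge\alpha_3$ and $p^*\omega_{\mathsf{H}}=\kappa_1$.

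The crux is to produce $\gamma$, which I would obtain by showing that the complex $3$-form $(\alpha_2-i\alpha_3)\wedge(\kappa_2+i\kappa_3)$ on $M$ is \emph{basic} for the $\xi_1$-action and hence descends to a genuine $\gamma\in\Omega^3(Z;\C)$. Horizontality is immediate, since $\xi_1\,\lrcorner\,\alpha_2=\xi_1\,\lrcorner\,\alpha_3=0$ and $\kappa_2,\kappa_3$ are horizontal. For $\xi_1$-invariance, the key input is the structure equation $d\alpha_p=2\sigma_p$, valid for each $p$ by the same cone computation as in $\S$\ref{subsec:Contact-Sas-Ein}. Cartan's formula then gives $\mathcal{L}_{\xi_1}\alpha_2=2\,\xi_1\,\lrcorner\,\sigma_2=-2\alpha_3$ and $\mathcal{L}_{\xi_1}\alpha_3=2\,\xi_1\,\lrcorner\,\sigma_3=2\alpha_2$, whence $\mathcal{L}_{\xi_1}(\alpha_2-i\alpha_3)=-2i(\alpha_2-i\alpha_3)$. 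Differentiating via $\mathcal{L}_{\xi_1}\sigma_p=\tfrac12\,d\,\mathcal{L}_{\xi_1}\alpha_p$ and subtracting the vertical $\alpha_i\wedge\alpha_j$ terms from the decompositions $\sigma_2=\alpha_3\wedge\alpha_1+\kappa_2$, $\sigma_3=\alpha_1\wedge\alpha_2+\kappa_3$ yields $\mathcal{L}_{\xi_1}\kappa_2=-2\kappa_3$ and $\mathcal{L}_{\xi_1}\kappa_3=2\kappa_2$, so that $\mathcal{L}_{\xi_1}(\kappa_2+i\kappa_3)=+2i(\kappa_2+i\kappa_3)$. The two phases are opposite, so their product is annihilated by $\mathcal{L}_{\xi_1}$, and basic-ness of $(\alpha_2-i\alpha_3)\wedge(\kappa_2+i\kappa_3)$ follows.

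It then remains to verify the pointwise normalization in the definition of an $\Sp(n)\U(1)$-structure. Fixing $x\in p^{-1}(z)$, I would choose an orthonormal coframe $(e_{j0},e_{j1},e_{j2},e_{j3})_{j=1}^n$ of $\widetilde{\mathsf{H}}_x$ adapted to its hyper-Hermitian structure, so that $\kappa_1=\sum_j(e_{j0}\wedge e_{j1}+e_{j2}\wedge e_{j3})$ and $\kappa_2+i\kappa_3=\sum_j(e_{j0}+ie_{j1})\wedge(e_{j2}+ie_{j3})$; such a coframe exists by the linear algebra of $\S$\ref{subsub:LinAlgHK}. Letting $f_2,f_3$ be the images of $\alpha_2,\alpha_3$ under $T_zZ\cong\mathrm{Ker}(\alpha_1)_x$, the forms $\omega_{\mathsf{V}}=f_2\wedge f_3$, $\omega_{\mathsf{H}}=\sum_j(e_{j0}\wedge e_{j1}+e_{j2}\wedge e_{j3})$, and $\gamma=(f_2-if_3)\wedge\sum_j(e_{j0}+ie_{j1})\wedge(e_{j2}+ie_{j3})$ are precisely the required model, completing the verification.

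I expect the main obstacle to be the invariance computation of the second paragraph: keeping the cyclic sign conventions of $(\alpha_p,\sigma_p,\kappa_p)$ mutually consistent so that the phases $-2i$ and $+2i$ genuinely cancel, and correctly extracting $\mathcal{L}_{\xi_1}\kappa_p$ from $\mathcal{L}_{\xi_1}\sigma_p$ after discarding the vertical components. Once the phases are pinned down, basic-ness—and hence the global existence of $\gamma$ on the smooth locus of the orbibundle $p\colon M\to Z$—follows, and the coframe check is routine.
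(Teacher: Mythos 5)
The paper offers no proof of this proposition to compare against: it is quoted from the literature, with the text immediately above it attributing the result to Alexandrov \cite{alexandrov2006sp} and noting two alternative proofs in \cite{aslan2023calibrated}. So your argument must be judged on its own, and it holds up. Checking your Lie-derivative computation against the paper's stated conventions ($d\alpha_p = 2\sigma_p$, $\sigma_2 = \alpha_3 \wedge \alpha_1 + \kappa_2$, $\sigma_3 = \alpha_1 \wedge \alpha_2 + \kappa_3$, and the cyclic companion $\sigma_1 = \alpha_2 \wedge \alpha_3 + \kappa_1$, which is consistent with the paper's formula $p^*(\omega_{\mathrm{KE}}) = \alpha_2 \wedge \alpha_3 + \kappa_1$): Cartan's formula with $\alpha_p(\xi_q) = \delta_{pq}$ and the horizontality of the $\kappa_p$ gives exactly $\mathcal{L}_{\xi_1}\alpha_2 = -2\alpha_3$, $\mathcal{L}_{\xi_1}\alpha_3 = 2\alpha_2$, $\mathcal{L}_{\xi_1}\alpha_1 = 0$, and then $\mathcal{L}_{\xi_1}\sigma_2 = -2\sigma_3$, $\mathcal{L}_{\xi_1}\sigma_3 = 2\sigma_2$ yield $\mathcal{L}_{\xi_1}\kappa_2 = -2\kappa_3$, $\mathcal{L}_{\xi_1}\kappa_3 = 2\kappa_2$ after stripping the vertical parts. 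The opposite phases $-2i$ and $+2i$ do cancel, so $(\alpha_2 - i\alpha_3)\wedge(\kappa_2 + i\kappa_3)$ is basic, and the pointwise model follows from the standard quaternionic identity $\omega_J + i\omega_K = (e^0 + ie^1)\wedge(e^2 + ie^3)$ on $\HH$, which matches the coframe normalization in the paper's definition of an $\Sp(n)\U(1)$-structure. Your proof has the merit of making the proposition self-contained within the paper's own framework ($\S$4.1 cone identities plus the $3$-Sasakian decompositions of $\S$6.1), rather than outsourcing it.

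Two small points you should make explicit. First, descent of the \emph{distributions} $\mathsf{H}$ and $\mathsf{V}$ (not just of the forms) requires that the flow of $\xi_1$ preserve $\widetilde{\mathsf{H}}$ and $\R\xi_2 \oplus \R\xi_3$; this does follow from your own formulas --- the span of $\{\alpha_1,\alpha_2,\alpha_3\}$ is flow-invariant, hence so is its common kernel $\widetilde{\mathsf{H}}$, and the flow is isometric so the orthogonal complement $\R\xi_2 \oplus \R\xi_3$ of $\R\xi_1 \oplus \widetilde{\mathsf{H}}$ is preserved --- but it is a separate assertion from basic-ness of $\gamma$ and should be stated. Second, vanishing of the Lie derivative gives invariance only under the identity component of the acting group; since the quasi-regular $\xi_1$-action is by a connected $S^1$ (locally free), this is enough for descent to the smooth locus of the orbifold $Z$, but the sentence deserves to be said, since the same argument applied to a disconnected symmetry group would fail.
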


\indent It it shown in \cite[Prop 3.2.1]{aslan2023calibrated} that $Z$ also admits a canonical $4$-form $\xi \in \Omega^4(Z)$ defined by
$$p^*(\xi) = \kappa_2^2 + \kappa_3^2.$$
Moreover, one can show that $p^*(\omega_{\mathsf{V}}) = \alpha_2 \wedge \alpha_3$ and $p^*(\omega_{\mathsf{H}}) = \kappa_1$, from which one obtains
\begin{align*}
p^*(\omega_{\mathrm{KE}}) = p^*(\omega_{\mathsf{H}} + \omega_{\mathsf{V}}) = \alpha_2 \wedge \alpha_3 + \kappa_1.
\end{align*}

\subsubsection{Some $\Sp(n)\U(1)$-Representation Theory} \label{subsub:RepTheory}

\indent \indent We digress to discuss some aspects of $\Sp(n)\U(1)$ representation theory.  First, note that every irreducible complex $\Sp(n)\U(1)$-representation is of the form $\mathsf{F}\mathsf{L}^k := \mathsf{F} \otimes_{\C} \mathsf{L}^k$ for some $k \in \Z$, where $\mathsf{F}$ is an irreducible complex $\Sp(n)$-module, $\mathsf{L} = \C$ is the standard complex $\U(1)$-representation, and we are abbreviating $\mathsf{L}^k := \mathsf{L}^{\otimes k}$.  Moreover, if $k \neq 0$, then $\mathsf{F} \mathsf{L}^k$ is of complex type, and hence $\LB \mathsf{F} \mathsf{L}^k \RB = \LB \mathsf{F}\mathsf{L}^{-k} \RB$ is an irreducible \emph{real} $\Sp(n)\U(1)$-module. \\

\indent Let $\mathsf{T} = \mathsf{H} \oplus \mathsf{V} = \HH^n \oplus \C$ denote the following $\Sp(n)\U(1)$-representation: For $(A,\lambda) \in \Sp(n) \times \U(1)$ and $(h,v) \in \HH^n \oplus \C$, we define
\begin{equation}
(A, \lambda) \cdot (h, v) := (Ah\overline{\lambda}, \lambda^{-2}v). \label{eq:Sp(n)U(1)-Def}
\end{equation}
Since $(A,\lambda) \cdot (h,v) = (-A,-\lambda) \cdot (h,v)$, this $(\Sp(n) \times \U(1))$-action descends to an $\Sp(n)\U(1)$-action.  Note that this is the $\Sp(n)\U(1)$-representation implicit in the above definition of ``$\Sp(n)\U(1)$-structure."  We refer the reader to \cite{aslan2023calibrated} for details.

\begin{rmk} Here is a geometric interpretation.  Note that $\Sp(n+1)$ acts transitively on the set of complex lines in $\C^{2n+2}$ (i.e., on $\CP^{2n+1}$) with stabilizer $\Sp(n)\U(1)$.  The isotropy representation of $\Sp(n)\U(1)$ on the tangent space $T_z\CP^{2n+1} \simeq \R^{4n+2}$ is the $\Sp(n)\U(1)$-action given in (\ref{eq:Sp(n)U(1)-Def}).
\end{rmk}

\indent We now aim to decompose $\Lambda^2(\mathsf{T}^*; \C)$ into $\Sp(n)\U(1)$-irreducible submodules.  The result (Proposition \ref{cor:Lambda2-Twistor-Decomp}) requires some preparation.  First, following \cite[$\S$4.1]{aslan2023calibrated}, we identify $\HH^n \simeq \C^{2n}$ by writing $h \in \HH^n$ as $h = h_1 + jh_2$ with $h_1,h_2 \in \C^n$.  This identification endows $\HH^n$ with a complex structure (given by right-multiplication by $i$), thereby yielding an embedding $\iota \colon \Sp(n) \to \U(2n)$.  Consequently, the representation (\ref{eq:Sp(n)U(1)-Def}) induces an embedding
\begin{align*}
\Sp(n)\U(1) & \to \U(2n) \times \U(1) \\
(A,\lambda) & \mapsto (\iota(A)\lambda^{-1}, \lambda^{-2}).
\end{align*}
In this way, we view $\Sp(n)\U(1)$ as a subgroup of $\U(2n) \times \U(1)$. \\
\indent Next, we let $\omega_{\mathsf{H}} \in \Lambda^2(\mathsf{H}^*)$ denote the real non-degenerate $2$-form on $\mathsf{H}$ fixed by the $\U(2n)$-action, and analogously for $\omega_{\mathsf{V}} \in \Lambda^2(\mathsf{V}^*)$.  Letting $\langle \cdot, \cdot \rangle$ denote the standard inner product on $\mathsf{H} \oplus \mathsf{V} = \HH^n \oplus \C$, we obtain respective complex structures $J_{\mathsf{H}}, J_{\mathsf{V}}$ on $\mathsf{H}$ and $\mathsf{V}$ by $\omega_{\mathsf{H}} = \langle J_{\mathsf{H}} \cdot, \cdot \rangle$ and $\omega_{\mathsf{V}} = \langle J_{\mathsf{V}} \cdot, \cdot \rangle$.  With respect to these complex structures, we decompose the complexifications $\mathsf{H}^* \otimes_{\R} \C$ and $\mathsf{V}^* \otimes_{\R} \C$ into $(\pm i)$-eigenspaces in the usual fashion:
\begin{align*}
\mathsf{H}^* \otimes_{\R} \C & = \mathsf{H}^{1,0} \oplus \mathsf{H}^{0,1} & \mathsf{V}^* \otimes_{\R} \C & = \mathsf{V}^{1,0} \oplus \mathsf{V}^{0,1}.
\end{align*}

\indent Let $\mathsf{E} = \C^{2n}$ denote the standard complex $\Sp(n)$-representation, and (as above) we let $\mathsf{L} = \C$ denote the standard complex $\U(1)$-representation.  As $\Sp(n)\U(1)$-representations, we have:
\begin{align*}
\mathsf{H}^{1,0} & \cong \mathsf{E}\mathsf{L} & \mathsf{V}^{1,0} & \cong \mathsf{L}^2 \\
\mathsf{H}^{0,1} & \cong \mathsf{E}\mathsf{L}^{-1} & \mathsf{V}^{0,1} & \cong \mathsf{L}^{-2}.
\end{align*}
In view of the isomorphism $\Lambda^2(\mathsf{V}_1\mathsf{V}_2) \cong \Lambda^2(\mathsf{V}_1)\Sym^2(\mathsf{V}_2) \oplus \Sym^2(\mathsf{V}_1)\Lambda^2(\mathsf{V}_2)$ and recalling that $\dim_\C(\mathsf{L}^k) = 1$, we find that $\Lambda^2(\mathsf{E} \mathsf{L}^k) \cong \Lambda^2(\mathsf{E}) \mathsf{L}^{2k}$. Using this fact, we decompose
\begin{align} \label{eq:Lambda2-H10}
\Lambda^2(\mathsf{H}^{1,0}) = \Lambda^2(\mathsf{EL}) \cong \Lambda^2(\mathsf{E}) \mathsf{L}^2 \cong (\Lambda^2_0(\mathsf{E}) \mathsf{L}^2) \oplus \mathsf{L}^2.
\end{align}
Moreover:

\begin{lem} \label{lem:Rep-Theory} There are $\Sp(n)\U(1)$-equivariant isomorphisms
\begin{align*}
\mathsf{H}^{1,0} \otimes_{\C} \mathsf{H}^{0,1} & \cong \Sym^2(\mathsf{E}) \oplus \Lambda^2_0(\mathsf{E}) \oplus \C \omega_{\mathsf{H}} & \mathsf{H}^{1,0} \otimes_{\C} \mathsf{V}^{0,1} & \cong \mathsf{E}  \mathsf{L}^{-1} \\
\mathsf{V}^{1,0} \otimes_{\C} \mathsf{V}^{0,1} & \cong \C \omega_{\mathsf{V}} & \mathsf{V}^{1,0} \otimes_{\C} \mathsf{H}^{0,1} & \cong \mathsf{E} \mathsf{L}.
\end{align*}
 \end{lem}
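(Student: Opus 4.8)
The plan is to reduce all four statements to the identifications $\mathsf{H}^{1,0} \cong \mathsf{E}\mathsf{L}$, $\mathsf{H}^{0,1} \cong \mathsf{E}\mathsf{L}^{-1}$, $\mathsf{V}^{1,0} \cong \mathsf{L}^2$, $\mathsf{V}^{0,1} \cong \mathsf{L}^{-2}$ established just above, and then to perform the tensor-product bookkeeping using two elementary inputs: the relation $\mathsf{L}^a \otimes_{\C} \mathsf{L}^b \cong \mathsf{L}^{a+b}$ (with each $\mathsf{L}^k$ one-dimensional), and the standard $\Sp(n)$-decomposition of $\mathsf{E} \otimes_{\C} \mathsf{E}$. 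First I would dispatch the three isomorphisms that involve no nontrivial $\Sp(n)$-branching. Substituting yields $\mathsf{H}^{1,0} \otimes \mathsf{V}^{0,1} \cong \mathsf{E}\mathsf{L} \otimes \mathsf{L}^{-2} \cong \mathsf{E}\mathsf{L}^{-1}$ and $\mathsf{V}^{1,0} \otimes \mathsf{H}^{0,1} \cong \mathsf{L}^2 \otimes \mathsf{E}\mathsf{L}^{-1} \cong \mathsf{E}\mathsf{L}$, while $\mathsf{V}^{1,0} \otimes \mathsf{V}^{0,1} \cong \mathsf{L}^2 \otimes \mathsf{L}^{-2} \cong \C$ is the trivial module. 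For the last of these I would then match the one-dimensional trivial summand with $\C\omega_{\mathsf{V}}$, noting that the K\"{a}hler form $\omega_{\mathsf{V}}$ of the line $\mathsf{V}$ is a nonzero $\Sp(n)\U(1)$-invariant element of $\mathsf{V}^{1,0} \otimes \mathsf{V}^{0,1}$ and hence spans it.

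The remaining isomorphism is the only one requiring the $\Sp(n)$-representation theory. Substituting the identifications and cancelling the $\mathsf{L}$-weights gives $\mathsf{H}^{1,0} \otimes \mathsf{H}^{0,1} \cong \mathsf{E}\mathsf{L} \otimes \mathsf{E}\mathsf{L}^{-1} \cong \mathsf{E} \otimes_{\C} \mathsf{E}$, so it remains to decompose the latter. Here I would invoke the splitting $\mathsf{E} \otimes \mathsf{E} \cong \Sym^2(\mathsf{E}) \oplus \Lambda^2(\mathsf{E})$, together with the fact that the $\Sp(n)$-invariant symplectic form on $\mathsf{E}$ splits off a trivial summand, $\Lambda^2(\mathsf{E}) \cong \Lambda^2_0(\mathsf{E}) \oplus \C$; this yields $\mathsf{E} \otimes \mathsf{E} \cong \Sym^2(\mathsf{E}) \oplus \Lambda^2_0(\mathsf{E}) \oplus \C$. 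As in the $\mathsf{V}$ case, the trivial summand is spanned by the invariant $(1,1)$-form $\omega_{\mathsf{H}}$ on $\mathsf{H}$, giving the stated $\C\omega_{\mathsf{H}}$.

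The argument is essentially formal, and I expect no serious obstacle. The only points deserving care are tracking the $\U(1)$-weights — they sum to zero in the two diagonal products and to $\pm 1$ in the off-diagonal ones — and confirming that the abstract trivial summands really are the lines $\C\omega_{\mathsf{H}}$ and $\C\omega_{\mathsf{V}}$, rather than merely abstractly trivial. The latter follows from the $\Sp(n)\U(1)$-invariance of $\omega_{\mathsf{H}}$ and $\omega_{\mathsf{V}}$ together with the one-dimensionality of the relevant invariant subspaces, which is guaranteed by the irreducibility of $\Sym^2(\mathsf{E})$ and $\Lambda^2_0(\mathsf{E})$ (so that they contribute no further invariants).
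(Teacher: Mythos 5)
Your proposal is correct and follows essentially the same route as the paper: substitute the weight identifications $\mathsf{H}^{1,0} \cong \mathsf{E}\mathsf{L}$, $\mathsf{H}^{0,1} \cong \mathsf{E}\mathsf{L}^{-1}$, $\mathsf{V}^{1,0} \cong \mathsf{L}^2$, $\mathsf{V}^{0,1} \cong \mathsf{L}^{-2}$, reduce the first isomorphism to $\mathsf{E} \otimes \mathsf{E} \cong \Sym^2(\mathsf{E}) \oplus \Lambda^2_0(\mathsf{E}) \oplus \C\omega_{\mathsf{H}}$, and treat the remaining three as immediate weight bookkeeping. Your added care in pinning the abstract trivial summands to the lines $\C\omega_{\mathsf{H}}$ and $\C\omega_{\mathsf{V}}$ simply fills in details the paper leaves implicit.
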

 \begin{proof}  The first isomorphism follows by calculating
 $$\mathsf{H}^{1,0} \otimes_{\C} \mathsf{H}^{0,1} \cong \mathsf{E} \otimes \mathsf{E} \cong \Sym^2(\mathsf{E}) \oplus \Lambda^2(\mathsf{E}) \cong \Sym^2(\mathsf{E}) \oplus \Lambda^2_0(\mathsf{E}) \oplus \C\omega_{\mathsf{H}}.$$
 The others are immediate.
 \end{proof}

\begin{prop}[\cite{aslan2023calibrated}] \label{cor:Lambda2-Twistor-Decomp} Let $\Lambda^{p,q} \subset \Lambda^{p+q}(\mathsf{T}^*; \C)$ denote the space of $(p,q)$-forms on $\mathsf{T}$ with respect to the complex structure $J := J_{\mathsf{H}} + J_{\mathsf{V}}$.
\begin{enumerate}[(a)]
\item There are $\Sp(n)\U(1)$-irreducible decompositions
\begin{align*}
\Lambda^2(\mathsf{V}^*) & \cong \R \omega_{\mathsf{V}} \\
\mathsf{H}^* \otimes_{\R} \mathsf{V}^* & \cong \LB \mathsf{E}\mathsf{L}^3 \RB \oplus \LB \mathsf{E} \mathsf{L} \RB  \\
\Lambda^2(\mathsf{H}^*) & \cong \LB \Lambda^2_0(\mathsf{E}) \mathsf{L}^2 \RB \oplus [\Lambda^2_0(\mathsf{E})] \oplus [\Sym^2(\mathsf{E})] \oplus \R \omega_{\mathsf{H}} \oplus \LB \mathsf{L}^2 \RB
\end{align*}
\item There are $\Sp(n)\U(1)$-irreducible decompositions
\begin{align*}
\LB \Lambda^{2,0} \RB & \cong \LB \Lambda^2_0(\mathsf{E})\mathsf{L}^2 \RB \oplus \LB \mathsf{L}^2 \RB \oplus \LB \mathsf{E}\mathsf{L}^3 \RB \\
[\Lambda^{1,1}] & \cong   [\Lambda^2_0(\mathsf{E})] \oplus [\Sym^2(\mathsf{E})] \oplus \LB \mathsf{EL} \RB   \oplus \R \omega_{\mathsf{H}} \oplus \R \omega_{\mathsf{V}}.
\end{align*}
\end{enumerate}
\end{prop}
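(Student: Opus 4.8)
The plan is to reduce everything to the complex computations already packaged in Lemma~\ref{lem:Rep-Theory} and in (\ref{eq:Lambda2-H10}), and then to translate back into real $\Sp(n)\U(1)$-modules by carefully tracking the complex-conjugation (real) structure. Throughout I use the following standing facts: for a nonzero weight $k$ the module $\mathsf{F}\mathsf{L}^k$ is of complex type, with $\overline{\mathsf{L}^k} \cong \mathsf{L}^{-k}$ and $\LB \mathsf{F}\mathsf{L}^k \RB = \LB \mathsf{F}\mathsf{L}^{-k} \RB$; the standard module $\mathsf{E}$ is self-conjugate (it is of quaternionic type for $\Sp(n)$), so $\Lambda^2_0(\mathsf{E})$ and $\Sym^2(\mathsf{E})$ are self-conjugate and carry $\U(1)$-weight zero; and $\mathsf{F}\mathsf{L}^k$ is $\Sp(n)\U(1)$-irreducible exactly when $\mathsf{F}$ is $\Sp(n)$-irreducible, which holds for each of $\mathsf{E}$, $\Lambda^2_0(\mathsf{E})$, $\Sym^2(\mathsf{E})$, and the trivial module. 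Hence every summand produced below is automatically irreducible, and only the isomorphisms need verification.

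For part (a) I would start from the real $g_Z$-orthogonal splitting $\mathsf{T} = \mathsf{H}\oplus\mathsf{V}$, which gives
\begin{equation*}
\Lambda^2(\mathsf{T}^*) = \Lambda^2(\mathsf{H}^*) \oplus (\mathsf{H}^* \otimes_{\R} \mathsf{V}^*) \oplus \Lambda^2(\mathsf{V}^*),
\end{equation*}
and treat each summand after complexifying. Since $\mathsf{V} \cong \C$ is two-real-dimensional, $\Lambda^2(\mathsf{V}^*)$ is a line, necessarily $\R\omega_{\mathsf{V}}$. For the mixed summand I would use $(\mathsf{H}^*\otimes_{\R}\mathsf{V}^*)\otimes_{\R}\C \cong (\mathsf{H}^*\otimes_{\R}\C)\otimes_{\C}(\mathsf{V}^*\otimes_{\R}\C)$ and expand via $\mathsf{H}^*\otimes_{\R}\C = \mathsf{H}^{1,0}\oplus\mathsf{H}^{0,1}$ and $\mathsf{V}^*\otimes_{\R}\C = \mathsf{V}^{1,0}\oplus\mathsf{V}^{0,1}$; substituting the identifications $\mathsf{H}^{1,0}\cong\mathsf{E}\mathsf{L}$, $\mathsf{H}^{0,1}\cong\mathsf{E}\mathsf{L}^{-1}$, $\mathsf{V}^{1,0}\cong\mathsf{L}^2$, $\mathsf{V}^{0,1}\cong\mathsf{L}^{-2}$ produces the four terms $\mathsf{E}\mathsf{L}^3$, $\mathsf{E}\mathsf{L}^{-1}$, $\mathsf{E}\mathsf{L}$, $\mathsf{E}\mathsf{L}^{-3}$, which fall into the conjugate pairs $\{\mathsf{E}\mathsf{L}^{3},\mathsf{E}\mathsf{L}^{-3}\}$ and $\{\mathsf{E}\mathsf{L},\mathsf{E}\mathsf{L}^{-1}\}$, with real forms $\LB\mathsf{E}\mathsf{L}^3\RB$ and $\LB\mathsf{E}\mathsf{L}\RB$. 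For $\Lambda^2(\mathsf{H}^*)$ I complexify to $\Lambda^2(\mathsf{H}^{1,0}) \oplus (\mathsf{H}^{1,0}\otimes\mathsf{H}^{0,1}) \oplus \Lambda^2(\mathsf{H}^{0,1})$, insert (\ref{eq:Lambda2-H10}) and its conjugate together with the first isomorphism of Lemma~\ref{lem:Rep-Theory}, and recombine: the conjugate pair $\Lambda^2_0(\mathsf{E})\mathsf{L}^{\pm2}$ yields $\LB\Lambda^2_0(\mathsf{E})\mathsf{L}^2\RB$, the pair $\mathsf{L}^{\pm2}$ yields $\LB\mathsf{L}^2\RB$, and the weight-zero, conjugation-invariant pieces $\Lambda^2_0(\mathsf{E})$, $\Sym^2(\mathsf{E})$, $\C\omega_{\mathsf{H}}$ give $[\Lambda^2_0(\mathsf{E})]$, $[\Sym^2(\mathsf{E})]$, $\R\omega_{\mathsf{H}}$.

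For part (b) I would instead organize $\Lambda^2(\mathsf{T}^*;\C)$ by $J$-bidegree, writing $\mathsf{T}^{1,0} = \mathsf{H}^{1,0}\oplus\mathsf{V}^{1,0}$ and $\mathsf{T}^{0,1} = \mathsf{H}^{0,1}\oplus\mathsf{V}^{0,1}$. Then $\Lambda^{2,0} = \Lambda^2(\mathsf{H}^{1,0}) \oplus (\mathsf{H}^{1,0}\otimes\mathsf{V}^{1,0}) \oplus \Lambda^2(\mathsf{V}^{1,0})$, where the last term vanishes since $\mathsf{V}^{1,0}$ is a line, the middle term is $\mathsf{H}^{1,0}\otimes\mathsf{V}^{1,0}\cong\mathsf{E}\mathsf{L}^3$, and (\ref{eq:Lambda2-H10}) supplies the first; thus $\Lambda^{2,0}\cong\Lambda^2_0(\mathsf{E})\mathsf{L}^2\oplus\mathsf{L}^2\oplus\mathsf{E}\mathsf{L}^3$, and since each summand has nonzero weight, realifying gives the stated $\LB\Lambda^{2,0}\RB$. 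Likewise $\Lambda^{1,1} = \mathsf{T}^{1,0}\otimes\mathsf{T}^{0,1}$ expands into the four tensor products of Lemma~\ref{lem:Rep-Theory}, namely $\Sym^2(\mathsf{E})\oplus\Lambda^2_0(\mathsf{E})\oplus\C\omega_{\mathsf{H}}$, $\mathsf{E}\mathsf{L}^{-1}$, $\mathsf{E}\mathsf{L}$, and $\C\omega_{\mathsf{V}}$; since $\Lambda^{1,1}$ is conjugation-invariant, its real form $[\Lambda^{1,1}]$ is obtained by applying $[\cdot]$ to the weight-zero self-conjugate pieces (giving $[\Lambda^2_0(\mathsf{E})]$, $[\Sym^2(\mathsf{E})]$, $\R\omega_{\mathsf{H}}$, $\R\omega_{\mathsf{V}}$) and merging the conjugate pair $\mathsf{E}\mathsf{L}^{\pm1}$ into $\LB\mathsf{E}\mathsf{L}\RB$.

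The displayed identifications are mechanical once the complex pieces are in hand; the step I expect to demand genuine care — and the main obstacle — is the final bookkeeping of the real structure. One must check that complex conjugation (the antilinear involution coming from $\Lambda^2(\mathsf{T}^*)$ being a real space) preserves each isotypic summand, which holds because each weight-zero type $\Lambda^2_0(\mathsf{E})$, $\Sym^2(\mathsf{E})$, $\C\omega_{\mathsf{H}}$, $\C\omega_{\mathsf{V}}$ occurs with multiplicity one; this guarantees that the restriction of conjugation is a genuine real structure $r$ with $r^2=\mathrm{Id}$ (so $[\cdot]$ is well-defined on these pieces, independently of their abstract Frobenius--Schur type), while the nonzero-weight summands are correctly grouped with their conjugates into single real-irreducible modules $\LB\cdot\RB$. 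A useful consistency check, which I would include, is that the decompositions in (a) and (b) sum to the same list of real $\Sp(n)\U(1)$-modules, as they must, since both compute $\Lambda^2(\mathsf{T}^*) = \LB\Lambda^{2,0}\RB\oplus[\Lambda^{1,1}]$.
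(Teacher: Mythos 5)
Your proposal is correct and follows essentially the same route as the paper's proof: both reduce everything to the splitting $\mathsf{T} = \mathsf{H} \oplus \mathsf{V}$, the identifications $\mathsf{H}^{1,0} \cong \mathsf{E}\mathsf{L}$, $\mathsf{V}^{1,0} \cong \mathsf{L}^2$, equation (\ref{eq:Lambda2-H10}), and Lemma \ref{lem:Rep-Theory}, then regroup into real modules. The only difference is presentational --- you complexify and then realify with explicit conjugation bookkeeping, whereas the paper manipulates the Salamon brackets $\LB \cdot \RB$, $[\cdot]$ directly --- which amounts to the same computation.
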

 \begin{proof} (a) The first of these is immediate.  The second follows from
 $$\mathsf{H}^* \otimes_{\R} \mathsf{V}^* = \LB \mathsf{H}^{1,0} \RB \otimes_{\R} \LB \mathsf{V}^{1,0} \RB \cong \LB \mathsf{H}^{1,0} \otimes_{\C} \mathsf{V}^{1,0} \RB \oplus \LB \mathsf{H}^{1,0} \otimes_{\C} \mathsf{V}^{1,0} \RB$$
 and then applying Lemma \ref{lem:Rep-Theory}.  The third follows from
 $$\Lambda^2(\mathsf{H}^*) = \LB \Lambda^{2,0}(\mathsf{H}) \RB \oplus [\Lambda^{1,1}(\mathsf{H})] \cong \LB \Lambda^2(\mathsf{H}^{1,0}) \RB \oplus [ \mathsf{H}^{1,0} \otimes_{\C} \mathsf{H}^{0,1} ]$$
 followed by (\ref{eq:Lambda2-H10}) and Lemma \ref{lem:Rep-Theory}. \\
 \indent (b) We note that
 \begin{align*}
\Lambda^{2,0} & \cong \Lambda^{2,0}(\mathsf{H}^{1,0}) \oplus (\mathsf{H}^{1,0} \otimes_{\C} \mathsf{V}^{1,0}) \\
 \Lambda^{1,1} & \cong \left( \mathsf{H}^{1,0} \otimes_{\C} \mathsf{H}^{0,1} \right)  \oplus \left( \mathsf{H}^{1,0} \otimes_{\C} \mathsf{V}^{0,1} \right)  \oplus \left( \mathsf{V}^{1,0} \otimes_{\C} \mathsf{H}^{0,1} \right) \oplus \left( \mathsf{V}^{1,0} \otimes_{\C} \mathsf{V}^{0,1} \right)
 \end{align*}
 and then apply (\ref{eq:Lambda2-H10}) and Lemma \ref{lem:Rep-Theory} as needed.
 \end{proof}
 
 \indent To conclude this digression, we make two more remarks.  First, we observe that $\mathsf{H}$ is a quaternionic-Hermitian vector space with fundamental $4k$-forms $\frac{1}{(2k+1)!}(\omega_{\mathsf{H}}^2 + \xi)^k \in \Lambda^{4k}(\mathsf{H}^*)$.  From this point of view, the $\Sp(n)\Sp(1)$-irreducible splitting $\Lambda^2(\mathsf{H}^*) = U_Z \oplus W_Z \oplus Y_Z$ is given by
 \begin{align*}
 U_Z & = \LB \Lambda^2_0(\mathsf{E})\mathsf{L}^2 \RB \oplus [\Lambda^2_0(\mathsf{E})] & W_Z & = [\Sym^2(\mathsf{E})] & Y_Z & = \R\omega_{\mathsf{H}} \oplus \LB \mathsf{L}^2 \RB.
 \end{align*}
In particular, by the discussion in $\S$\ref{subsub:Lin-Alg-QuatHerm},
\begin{align*}
\mathfrak{sp}(n) \cong [\Sym^2(\mathsf{E})] & = \textstyle \left\{ \beta \in \Lambda^2(\mathsf{H}^*) \colon \ast_{\mathsf{H}}\! \beta = -\beta \wedge \frac{1}{(2n-1)!}\, (\omega_{\mathsf{H}}^2 + \xi)^{n-1} \right\} \\
& = \textstyle \left\{ \beta \in \Lambda^2(\mathsf{T}^*) \colon \ast\!\beta = -\beta \wedge  \frac{1}{(2n-1)!}\, \omega_{\mathsf{V}} \wedge (\omega_{\mathsf{H}}^2 + \xi)^{n-1} \right\}\!.
\end{align*}
  Second, viewing $\mathsf{T}$ as a Hermitian vector space with respect to $J := J_{\mathsf{H}} + J_{\mathsf{V}}$, we have a $\U(2n+1)$-equivariant isomorphism $\mathfrak{u}(2n+1) \cong [\Lambda^{1,1}]$.  Moreover, one can check that $a\omega_{\mathsf{H}} + b\omega_{\mathsf{V}} \in [\Lambda^{1,1}]$ is primitive (with respect to $\omega = \omega_{\mathsf{H}} + \omega_{\mathsf{V}}$) if and only if $2na + b = 0$.  Consequently,
\begin{equation} \label{eq:su-identification}
\mathfrak{su}(2n+1) \cong [\Lambda^{1,1}_0] \cong [\Lambda^2_0(\mathsf{E})] \oplus [\Sym^2(\mathsf{E})] \oplus \LB \mathsf{EL} \RB   \oplus \R (\omega_{\mathsf{H}} - 2n\omega_{\mathsf{V}}).
\end{equation}

\subsubsection{Gauge Theory on Twistor Spaces}

\indent \indent We now consider connections over the $(4n+2)$-dimensional twistor space $Z$, equipped with the $\Sp(n)\U(1)$-structure $(g_{\mathrm{KE}}, J_{\mathrm{KE}}, \omega_{\mathrm{KE}}, \mathsf{H}, \gamma)$ of Proposition \ref{prop:Sp(n)U(1)-Twistor}.  Since $TZ = \mathsf{H} \oplus \mathsf{V}$, the bundle of $2$-forms decomposes as
\begin{align*}
\Lambda^2(T^*Z) = \Lambda^2(\mathsf{V}^*) \oplus (\mathsf{V}^* \otimes \mathsf{H}^*) \oplus \Lambda^2(\mathsf{H}^*).
\end{align*}
By Proposition \ref{cor:Lambda2-Twistor-Decomp}(a), these summands decompose further into $\Sp(n)\U(1)$-irreducible subbundles as follows:
\begin{align}
\Lambda^2(\mathsf{V}^*) & = \R \omega_{\mathsf{V}} \notag \\
\mathsf{V}^* \otimes \mathsf{H}^* & = \LB \mathsf{E} \mathsf{L}^3 \RB \oplus \LB \mathsf{E}\mathsf{L} \RB  \notag \\
\Lambda^2(\mathsf{H}^*) & = \LB \Lambda^2_0(\mathsf{E}) \mathsf{L}^2 \RB \oplus [\Lambda^2_0(\mathsf{E})] \oplus [\Sym^2(\mathsf{E})] \oplus \R \omega_{\mathsf{H}} \oplus \LB \mathsf{L}^2 \RB. \label{eq:Lambda2H-Decomp}
\end{align}
In particular, the components of $\Lambda^2(\mathsf{H}^*)$ have the following (real) ranks:
\begin{align*}
\mathrm{rank}  \LB \Lambda^2_0(\mathsf{E}) \mathsf{L}^2 \RB & = 4n^2 - 2n - 2 & \mathrm{rank}[\Sym^2(\mathsf{E})]   & = 2n^2+n & \mathrm{rank} \LB \mathsf{L}^2 \RB & = 2 \\
\mathrm{rank} [\Lambda^2_0(\mathsf{E})] & = 2n^2 - n - 1 & & & \mathrm{rank} (\R\omega_{\mathsf{H}}) & = 1.
\end{align*}
Note that when $n = 1$, the subbundles $\LB \Lambda^2_0(\mathsf{E})\mathsf{L}^2 \RB$ and $[\Lambda^2_0(\mathsf{E})]$ have rank $0$ (i.e., are not present in the decomposition).

\begin{rmk} It should be possible to describe the six subbundles $\LB \mathsf{E} \mathsf{L}^3 \RB$, $\LB \mathsf{E}\mathsf{L} \RB$, $\LB \Lambda^2_0(\mathsf{E}) \mathsf{L}^2 \RB$, $[\Lambda^2_0(\mathsf{E})]$, $[\Sym^2(\mathsf{E})]$, and $\LB \mathsf{L}^2 \RB$ in a concrete, explicit fashion.  For example, one can show that $\LB \mathsf{L}^2 \RB = \left\{ \iota_X(\mathrm{Re}(\gamma)) \in \Lambda^2(T^*Z) \colon X \in \mathsf{V} \right\}\!$.  We will not pursue this here.
\end{rmk}

\indent Let $P \to Z$ be a principal bundle, and let $A$ be a connection on $P$.  The decomposition (\ref{eq:Lambda2H-Decomp}) distinguishes several natural classes of connections on $P$.  For example, by  (\ref{eq:su-identification}) and Proposition \ref{prop:Contact-Descent}, we observe that:
\begin{align*}
A \text{ is an }\SU(2n+1)\text{-instanton} & \iff F_A \in \Gamma\!\left(  \left\{ \LB \mathsf{E}  \mathsf{L} \RB \oplus [\Lambda^2_0(\mathsf{E})] \oplus [\Sym^2(\mathsf{E})]  \oplus \R (\omega_{\mathsf{H}} - 2n\omega_{\mathsf{V}}) \right\} \otimes \mathrm{ad}_P \right) \\
& \iff A \text{ is primitive Hermitian Yang-Mills} \\
& \iff p^*A \text{ is a contact instanton on }p^*E.
\end{align*}
In particular, we find $2^4 = 16$ (resp. $2^3 = 8$) classes of primitive HYM connections on $Z$ when $n \geq 2$ (resp. $n = 1$).  One such consists of $\Sp(n)$-instantons:
\begin{align*}
A \text{ is an }\Sp(n)\text{-instanton} & \iff F_A \in \Gamma( [\Sym^2(\mathsf{E})] \otimes \mathrm{ad}_P) \\
& \iff A \text{ is }\Omega\text{-ASD for } \Omega \textstyle = \frac{1}{(2n-1)!}\,\omega_{\mathsf{V}} \wedge (\omega_{\mathsf{H}}^2 + \xi)^{n-1}
\end{align*}
Note that this notion of ``$\Sp(n)$-instanton" is not the one discussed in $\S$\ref{sec:Spn-Instantons}.  Indeed, while that section pertained to $\Sp(n)$-instantons on $4n$-manifolds with a QK structure, here our $\Sp(n)$-instantons reside on $(4n+2)$-manifolds with an $\Sp(n)\U(1)$-structure.  (Recall also Remark \ref{rmk:RepDep}.)

\indent We now observe that $\Sp(n)$-instantons on $Z$ are related to tri-contact instantons on $M$:

\begin{prop} \label{prop:Tri-Contact-MZ} Let $P \to Z^{4n+2}$ be a principal bundle, and let $A$ be a connection on $P$.  The following are equivalent:
\begin{enumerate}[(i)]
\item $A$ is an $\Sp(n)$-instanton on $P \to Z$.
\item $p^*A$ is a tri-contact instanton on $p^*P \to M$.
\end{enumerate}
\end{prop}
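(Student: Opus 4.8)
The plan is to mirror the proof of Proposition \ref{prop:Contact-Descent}, exploiting that both conditions are pointwise linear-algebraic constraints on the curvature, each expressible through a single Hodge-star identity restricted to horizontal forms. Since $p \colon M \to Z$ is a principal $S^1$-orbibundle with connection $1$-form $\alpha_1$ and an orbifold Riemannian submersion (Theorem \ref{thm:3Sasakian-structure-1}), its vertical space is $\R\xi_1$ and its horizontal space is $\Ker(\alpha_1) = \R\xi_2 \oplus \R\xi_3 \oplus \widetilde{\mathsf{H}}$. Under $p$ this horizontal space is carried isometrically onto $TZ = \mathsf{V} \oplus \mathsf{H}$, with $\R\xi_2 \oplus \R\xi_3 \to \mathsf{V}$ and $\widetilde{\mathsf{H}} \to \mathsf{H}$; in particular $p$ restricts to an oriented isometry $\widetilde{\mathsf{H}} \to \mathsf{H}$, so that $p^* \circ \ast_{\mathsf{H}} = \ast_{\widetilde{\mathsf{H}}} \circ p^*$ on forms valued in $\Lambda^\bullet(\mathsf{H}^*)$. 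As elsewhere in $\S$\ref{sec:HKCones-3Sas}, I work over the smooth loci of the orbifolds.

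First I would assemble the pullback relations recorded after Proposition \ref{prop:Sp(n)U(1)-Twistor}, namely $p^*\omega_{\mathsf{H}} = \kappa_1$, $p^*\xi = \kappa_2^2 + \kappa_3^2$, and $p^*\omega_{\mathsf{V}} = \alpha_2 \wedge \alpha_3$, which together give $p^*(\omega_{\mathsf{H}}^2 + \xi) = \kappa_1^2 + \kappa_2^2 + \kappa_3^2$. By the characterization at the end of $\S$\ref{subsub:RepTheory}, a connection $A$ on $P \to Z$ is an $\Sp(n)$-instanton precisely when $F_A \in \Gamma(\Lambda^2(\mathsf{H}^*) \otimes \mathrm{ad}_P)$ and $\ast_{\mathsf{H}} F_A = -F_A \wedge \frac{1}{(2n-1)!}(\omega_{\mathsf{H}}^2 + \xi)^{n-1}$, while by Proposition \ref{prop:TriContact-Def} the pullback $p^*A$ is a tri-contact instanton precisely when $p^*F_A \in \Gamma(\Lambda^2(\widetilde{\mathsf{H}}^*) \otimes \mathrm{ad}_P)$ and $\ast_{\widetilde{\mathsf{H}}}(p^*F_A) = -p^*F_A \wedge \frac{1}{(2n-1)!}(\kappa_1^2 + \kappa_2^2 + \kappa_3^2)^{n-1}$.

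For (i) $\Rightarrow$ (ii) I would simply pull back. If $A$ is an $\Sp(n)$-instanton, then $F_A \in \Gamma(\Lambda^2(\mathsf{H}^*) \otimes \mathrm{ad}_P)$ forces $p^*F_A = F_{p^*A} \in \Gamma(\Lambda^2(\widetilde{\mathsf{H}}^*) \otimes \mathrm{ad}_P)$, and applying $p^*$ to the Hodge-star identity---using $p^* \ast_{\mathsf{H}} = \ast_{\widetilde{\mathsf{H}}} p^*$ together with $p^*(\omega_{\mathsf{H}}^2 + \xi) = \kappa_1^2 + \kappa_2^2 + \kappa_3^2$---yields exactly the tri-contact identity, so $p^*A$ is a tri-contact instanton.

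The converse is where the one genuine subtlety lies, and I expect it to be the main obstacle. Assuming $p^*A$ is tri-contact, I must first show that $F_A$ has no $\mathsf{V}$-component on $Z$, i.e.\ that $F_A \in \Gamma(\Lambda^2(\mathsf{H}^*) \otimes \mathrm{ad}_P)$. To see this, decompose $F_A$ according to $\Lambda^2(T^*Z) = \Lambda^2(\mathsf{V}^*) \oplus (\mathsf{V}^* \otimes \mathsf{H}^*) \oplus \Lambda^2(\mathsf{H}^*)$ and observe that $p^*$ sends these three summands into $\R(\alpha_2 \wedge \alpha_3)$, $\,\mathrm{span}(\alpha_2, \alpha_3) \otimes \widetilde{\mathsf{H}}^*$, and $\Lambda^2(\widetilde{\mathsf{H}}^*)$ respectively, which are linearly independent inside $\Lambda^2(T^*M)$. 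Since the tri-contact condition places $p^*F_A$ in $\Lambda^2(\widetilde{\mathsf{H}}^*) \otimes \mathrm{ad}_P$, the $p^*$-images of the first two summands must vanish, and injectivity of $p^*$ (valid since $p$ is a submersion) forces the corresponding components of $F_A$ to vanish. Having established horizontality, I would then pull the tri-contact Hodge-star identity back through $p^* \ast_{\mathsf{H}} = \ast_{\widetilde{\mathsf{H}}} p^*$ and invoke injectivity of $p^*$ once more to recover $\ast_{\mathsf{H}} F_A = -F_A \wedge \frac{1}{(2n-1)!}(\omega_{\mathsf{H}}^2 + \xi)^{n-1}$, whence $A$ is an $\Sp(n)$-instanton. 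Everything apart from this horizontality descent is a direct transcription of the pullback relations.
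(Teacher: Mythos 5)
Your proposal is correct, and its skeleton is the paper's: everything rests on $p$ being a Riemannian submersion carrying $\Ker(\alpha_1)$ isometrically onto $TZ$, on the pullback identities $p^*\omega_{\mathsf{V}} = \alpha_2 \wedge \alpha_3$, $p^*\omega_{\mathsf{H}} = \kappa_1$, $p^*\xi = \kappa_2^2 + \kappa_3^2$, and on injectivity of $p^*$. The one genuine difference is in how the instanton conditions are packaged, and it changes where the work lies. The paper phrases both conditions as single $\Omega$-ASD equations with respect to the \emph{ambient} Hodge stars --- $\ast F = -c_n\,\omega_{\mathsf{V}} \wedge (\omega_{\mathsf{H}}^2 + \xi)^{n-1} \wedge F$ on $Z$, and condition (ii) of Proposition \ref{prop:TriContact-Def} on $M$ --- so that horizontality of the curvature never needs separate treatment: it is already forced by the eigenspace characterizations (the last display of $\S$\ref{subsub:RepTheory}, and the first step in the proof of Proposition \ref{prop:TriContact-Def}). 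The converse direction then needs only the maneuver of Proposition \ref{prop:Contact-Descent}: write both sides of the $\Omega$-ASD equation on $M$ as $\alpha_1$ wedge a pulled-back form, contract with $\xi_1$, and invoke injectivity of $p^*$. You instead work with the horizontal Hodge stars $\ast_{\mathsf{H}}$ and $\ast_{\widetilde{\mathsf{H}}}$, which obliges you to prove separately that horizontality of $p^*F_A$ descends to horizontality of $F_A$; your argument for this (decompose $\Lambda^2(T^*Z)$ into its three summands, note that $p^*$ maps them into linearly independent subbundles of $\Lambda^2(T^*M)$, and use injectivity of $p^*$ on each) is correct. The paper's formulation is shorter because the horizontality descent is silently absorbed into linear algebra; yours has the merit of making that descent --- which you rightly identify as the only real subtlety --- completely explicit.
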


\begin{proof} Abbreviate $F = F_A$ and $c_n = \frac{1}{(2n-1)!}$.  Note that
\begin{align*}
A \text{ is an }\Sp(n)\text{-instanton} & \iff \ast F = -c_n\,\omega_{\mathsf{V}} \wedge (\omega_{\mathsf{H}}^2 + \xi)^{n-1} \wedge F \\
p^*A \text{ is a }\text{tri-contact instanton}  & \iff \ast(p^*F) = -c_n\,\alpha_1 \wedge \alpha_2 \wedge \alpha_3 \wedge (\kappa_1^2 + \kappa_2^2 + \kappa_3^2)^{n-1} \wedge p^*F.
\end{align*}
The result now follows from noting that
$$p^*(\omega_{\mathsf{V}} \wedge (\omega_{\mathsf{H}}^2 + \xi)^{n-1}) = \alpha_2 \wedge \alpha_3 \wedge (\kappa_1^2 + \kappa_2^2 + \kappa_3^2)^{n-1}$$
and repeating the argument used in the proof of Proposition \ref{prop:Contact-Descent} (i) $\iff$ (ii).
\end{proof}

\subsubsection{Gauge Theory on the Quaternionic-K\"{a}hler Quotient}

\indent \indent We now consider connections over $Q$.  Since $Q$ is quaternionic-K\"{a}hler, its bundle of $2$-forms decomposes as
$$\Lambda^2(T^*Q) = U \oplus W \oplus Y$$
where $\mathrm{rank}(U) = 6n^2 - 3n - 3$ and $\mathrm{rank}(W) = 2n^2 + n$ and $\mathrm{rank}(Y) = 3$, recalling the notation of $\S$\ref{sub:Spn-QK}.  Let
$$\textstyle \Pi_{k} := \frac{1}{(2k+1)!}(\beta_I^2 + \beta_J^2 + \beta_K^2)^k \in \Omega^{4k}(Q)$$
denote the fundamental $4k$-forms on $Q$, where $\{\beta_I, \beta_J, \beta_K\}$ is a local hyperk\"{a}hler triple on $Q$ compatible with quaternionic-K\"{a}hler structure. \\
\indent Now, since the twistor fibration $\tau \colon Z \to Q$ is a Riemannian submersion, the horizontal part of the derivative $\tau_*|_{\mathsf{H}} \colon \mathsf{H} \to TQ$ is an isometry.  In fact, it is an quaternionic-Hermitian isomorphism in the sense that
$$\tau^*(\Pi_1) =  \textstyle \frac{1}{6}(\omega_{\mathsf{H}}^2 + \xi).$$
Similarly, the map $h \colon M \to Q$ has the property that $h_*|_{\widetilde{\mathsf{H}}} \colon \widetilde{\mathsf{H}} \to TQ$ is a quaternionic-Hermitian isomorphism, satisfying $h^*(\Pi_1) = \frac{1}{6}(\kappa_1^2 + \kappa_2^2 + \kappa_3^2)$.

\begin{prop} \label{prop:Tri-Contact-MZQ} Let $P \to Q^{4n}$ be a principal bundle, and let $A$ be a connection on $P$.  The following are equivalent:
\begin{enumerate}[(i)]
\item $A$ is an $\Sp(n)$-instanton on $P \to Q$.
\item $\tau^*A$ is an $\Sp(n)$-instanton on $\tau^*P \to Z$.
\item $h^*A$ is a tri-contact instanton on $h^*P \to M$.
\end{enumerate}
Moreover, if $n = 1$ (so that $\dim(Q) = 4$, $\dim(Z) = 6$, and $\dim(M) = 7$), then the above are equivalent to:
\begin{enumerate}[(i)]   \setcounter{enumi}{3}
\item $h^*A$ is a $\G_2$-instanton on $h^*P \to M$.
\end{enumerate}
\end{prop}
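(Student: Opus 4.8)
The plan is to establish the chain (i) $\iff$ (ii) $\iff$ (iii) directly, and then treat the extra equivalence with (iv) separately in the case $n=1$. For \textbf{(ii) $\iff$ (iii)} there is nothing new to compute: the diamond diagram gives $h = \tau \circ p$, so $h^*A = p^*(\tau^*A)$, and applying Proposition \ref{prop:Tri-Contact-MZ} to the connection $\tau^*A$ on $Z$ yields at once that $\tau^*A$ is an $\Sp(n)$-instanton on $Z$ if and only if $p^*(\tau^*A) = h^*A$ is a tri-contact instanton on $M$.

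For \textbf{(i) $\iff$ (ii)}, which is the main content, I would exploit the twistor fibration $\tau \colon Z \to Q$, using that $\tau_*|_{\mathsf{H}} \colon \mathsf{H} \to TQ$ is a quaternionic-Hermitian isometry with $\tau^*(\Pi_1) = \frac{1}{6}(\omega_{\mathsf{H}}^2 + \xi)$, whence $\tau^*(\Pi_{n-1}) = \frac{1}{(2n-1)!}(\omega_{\mathsf{H}}^2 + \xi)^{n-1}$. Since $F_{\tau^*A} = \tau^*F_A$ is horizontal on $Z$ (it annihilates the vertical distribution $\mathsf{V}$), one has $\ast_Z(\tau^*F_A) = (\ast_{\mathsf{H}}\tau^*F_A) \wedge \omega_{\mathsf{V}}$, where $\omega_{\mathsf{V}}$ is the vertical area form. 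Starting from the $\Omega$-ASD characterization of $\Sp(n)$-instantons on $Q$, namely $\ast_Q F_A = -\Pi_{n-1} \wedge F_A$, I would pull back along $\tau$ (the isometry $\tau_*|_{\mathsf{H}}$ intertwines $\ast_Q$ with $\ast_{\mathsf{H}}$ on horizontal forms) to obtain
\[ \ast_{\mathsf{H}}(\tau^*F_A) = -\tfrac{1}{(2n-1)!}(\omega_{\mathsf{H}}^2+\xi)^{n-1} \wedge \tau^*F_A, \]
then wedge with $\omega_{\mathsf{V}}$ and recognize the result as the $\Omega$-ASD characterization of $\Sp(n)$-instantons on $Z$ for $\Omega = \frac{1}{(2n-1)!}\omega_{\mathsf{V}} \wedge (\omega_{\mathsf{H}}^2 + \xi)^{n-1}$. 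As $\tau$ is a submersion, $\tau^*$ is injective, so every step is reversible. Conceptually, this is just the statement that the $\Sp(n)\Sp(1)$-equivariant isomorphism $\tau^* \colon \Lambda^2(T^*Q) \to \Lambda^2(\mathsf{H}^*)$ carries $W$ onto $W_Z = [\Sym^2(\mathsf{E})]$.

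For \textbf{(iii) $\iff$ (iv) when $n=1$} we have $\dim Z = 6$ and $\dim M = 7$, so Proposition \ref{prop:Contact-Descent} (with its ``$n$'' equal to $3$) applies to the $S^1$-fibration $p \colon M^7 \to Z^6$ determined by $\xi_1$: it gives that $h^*A = p^*(\tau^*A)$ is a $\G_2$-instanton if and only if it is a contact instanton for the Sasaki-Einstein structure of $\xi_1$. It then remains to check that, for a curvature horizontal with respect to $h$ (hence supported on the $4$-dimensional $\widetilde{\mathsf{H}}$), the contact-instanton condition coincides with $F \in \widetilde{W}$. This is a pointwise linear-algebra verification: on the $6$-dimensional distribution $\mathsf{D} = \Ker(\alpha_1)$ the transverse Kähler form is $\sigma_1|_{\mathsf{D}} = \alpha_2 \wedge \alpha_3 + \kappa_1$, so for $F \in \Lambda^2(\widetilde{\mathsf{H}}^*)$ the conditions ``$(1,1)$ with respect to $\mathsf{I}$'' and ``primitive'' reduce to $F \in [\Lambda^{1,1}_{\mathsf{I}}]$ on $\widetilde{\mathsf{H}}$ together with $F \perp \kappa_1$; since $\widetilde{\mathsf{H}}$ is $4$-dimensional this is precisely $F \in [\Lambda^{1,1}_{0,\mathsf{I}}] = \widetilde{W}$, using that in dimension $4$ the primitive $(1,1)$-forms are anti-self-dual and hence automatically primitive $(1,1)$ for all of $\mathsf{I}, \mathsf{J}, \mathsf{K}$.

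I expect the \textbf{main obstacle} to be the bookkeeping in step (i) $\iff$ (ii): one must correctly account for the $2$-dimensional vertical factor of $\tau$ — that is, track the form $\omega_{\mathsf{V}}$ (with the right sign and ordering) relating $\ast_Z$ and $\ast_{\mathsf{H}}$, and confirm that the extra $\omega_{\mathsf{V}}$ appearing in the $Z$-side calibration $\Omega$ is exactly what the pullback of $\Pi_{n-1}$ produces. Everything else follows the template already established in Propositions \ref{prop:Contact-Descent} and \ref{prop:Tri-Contact-MZ}.
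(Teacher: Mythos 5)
Your proposal is correct and follows essentially the same route as the paper: (i) $\iff$ (ii) via the identity $\tau^*\Pi_{n-1} = \frac{1}{(2n-1)!}(\omega_{\mathsf{H}}^2+\xi)^{n-1}$ together with the horizontality/submersion argument of Proposition \ref{prop:Contact-Descent}, (ii) $\iff$ (iii) by applying Proposition \ref{prop:Tri-Contact-MZ} to $\tau^*A$ using $h = \tau \circ p$, and (iii) $\iff$ (iv) via Proposition \ref{prop:Contact-Descent}. Your explicit four-dimensional linear-algebra check (that $[\Lambda^{1,1}_{0,\mathsf{I}}] = \Lambda^2_- = \widetilde{W}$ on $\widetilde{\mathsf{H}}$ when $n=1$) simply fills in the low-dimensional coincidence that the paper leaves implicit in its citation of Proposition \ref{prop:Contact-Descent}, equivalently the vanishing of $[\Lambda^2_0(\mathsf{E})]$ for $n=1$.
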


\begin{proof} (i) $\iff$ (ii).  Abbreviate $F = F_A$ and $c_n = \frac{1}{(2n-1)!}$.  Note that
\begin{align*}
A \text{ is an }\Sp(n)\text{-instanton over } Q & \iff \ast F = -\Pi_{n-1} \wedge F \\
\tau^*A \text{ is an }\Sp(n)\text{-instanton over } Z & \iff \ast (\tau^*F) = -c_n\,\omega_{\mathsf{V}} \wedge (\omega_{\mathsf{H}}^2 + \xi)^{n-1} \wedge \tau^*F
\end{align*}
The result now follows from noting that
$$\tau^*\Pi_{n-1} = c_n\,(\omega_{\mathsf{H}}^2 + \xi)^{n-1}$$
and modifying the argument used in the proof of Proposition \ref{prop:Contact-Descent} (i) $\iff$ (ii). \\
\indent (ii) $\iff$ (iii).  This follows immediately from Proposition \ref{prop:Tri-Contact-MZ}. \\
\indent (iii) $\iff$ (iv) This follows from the corresponding statement in Proposition \ref{prop:Contact-Descent}.
\end{proof}

\section{Lewis-Type Theorems} \label{sec:LewisThms}

\indent \indent Let $X^{4n}$ be a hyperk\"{a}hler manifold, let $P \to X$ be a principal bundle admitting at least one $\Sp(n)$-instanton, and let $A$ be a connection on $P$.  By definition,
\begin{equation} \label{eq:Spn-implies-pHYM}
A \text{ is an }\Sp(n)\text{-instanton } \ \implies \ A \text{ is a primitive HYM connection w.r.t. }I.
\end{equation}
In this section, we investigate the converse. \\
\indent In $\S$\ref{sec:CompactCase}, we show that if $X$ is compact, then the converse of (\ref{eq:Spn-implies-pHYM}) indeed holds (see Theorem \ref{thm:Compact-Lewis}).  This can be deduced from results of Verbitsky \cite{verbitsky1993hyperholomorphic}, but we will provide a self-contained proof with an eye towards the non-compact setting.  Then, in $\S$\ref{sec:AC-Case}, we show that if $X$ is asymptotically conical of rate $\nu \leq -\frac{2}{3}(2n+1)$, then the converse of (\ref{eq:Spn-implies-pHYM}) again holds under suitable decay assumptions on $A$ (see Theorem \ref{thm:ACLewis}).  Results of this sort are sometimes called \emph{Lewis-type theorems} in reference to Lewis' study of $\Spin(7)$-instantons on Calabi-Yau $4$-folds \cite{lewis1999spin}.

\subsection{The Compact Case} \label{sec:CompactCase}

\indent \indent The purpose of this subsection is to prove the following result, which states that if a bundle $P$ over a compact hyperk\"{a}hler manifold admits at least one $\Sp(n)$-instanton, then every primitive $I$-HYM connection on $P$ is automatically $J$- and $K$-HYM as well.

\begin{thm} \label{thm:Compact-Lewis} Let $(X^{4n}, g, (I,J,K))$ be a compact hyperk\"{a}hler $4n$-manifold.  Let $P \to X$ be a principal $G$-bundle, where $G \leq \U(r)$ is a compact Lie group, that admits an $\Sp(n)$-instanton.  Let $A$ be a connection on $P$.  Then:
$$A \text{ is an }\Sp(n)\text{-instanton } \iff A \text{ is primitive HYM with respect to }I.$$
\end{thm}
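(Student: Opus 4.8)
The plan is to dispatch the forward implication by definition and to establish the reverse implication by a Chern--Weil comparison, leveraging the facts recorded in $\S$\ref{sub:Omega-ASD}: that $\Omega$-ASD connections are the absolute minimizers of the functional $\mathrm{YM}_\Omega$ in (\ref{eq:Omega-YM-Functional}), and that the second (topological) term of $\mathrm{YM}_\Omega$ is independent of the connection when $\Omega$ is closed and $X$ is compact. The direction ($\Rightarrow$) is immediate from Proposition \ref{prop:Sp(n)-equiv-def}(iv): an $\Sp(n)$-instanton satisfies $(F_A)^{0,2}_I = 0$ and $F_A \wedge \omega_I^{2n-1} = 0$, which is exactly the $I$-primitive HYM condition. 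So the content is the converse.

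For ($\Leftarrow$), I would introduce the two forms $\Omega_I = \frac{1}{(2n-2)!}\omega_I^{2n-2}$ and $\Omega_{\mathrm{Sp}} = \frac{1}{(2n-1)!}(\omega_I^2+\omega_J^2+\omega_K^2)^{n-1}$, both of which are closed since $\omega_I,\omega_J,\omega_K$ are parallel. By Proposition \ref{prop:Sp(n)-equiv-def}, an $I$-primitive HYM connection is precisely an $\Omega_I$-ASD connection, while the $\Sp(n)$-instantons are precisely the $\Omega_{\mathrm{Sp}}$-ASD connections. Because $X$ is compact and both forms are closed, Chern--Weil theory shows that $\int_X \mathrm{tr}(F\wedge F)\wedge\Omega_I$ and $\int_X \mathrm{tr}(F\wedge F)\wedge\Omega_{\mathrm{Sp}}$ are topological invariants of $P$, independent of the chosen connection.

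The crux is then a short energy comparison. Any $\Omega$-ASD connection $B$ satisfies $\ast F_B = -\Omega\wedge F_B$, so $\int_X|F_B|^2\,\mathrm{vol} = \int_X \mathrm{tr}(F_B\wedge F_B)\wedge\Omega$, i.e. $\mathrm{YM}_\Omega(B)=0$. Applying this with $\Omega=\Omega_I$ to the given connection $A$ and to the hypothesized $\Sp(n)$-instanton $A_0$ (which is $I$-primitive HYM by the forward direction), and invoking topological invariance of the right-hand side, I obtain $\int_X|F_A|^2\,\mathrm{vol} = \int_X|F_{A_0}|^2\,\mathrm{vol}$. Consequently $\mathrm{YM}_{\Omega_{\mathrm{Sp}}}(A) = \int_X|F_A|^2\,\mathrm{vol} - \int_X\mathrm{tr}(F_A\wedge F_A)\wedge\Omega_{\mathrm{Sp}} = \int_X|F_{A_0}|^2\,\mathrm{vol} - \int_X\mathrm{tr}(F_{A_0}\wedge F_{A_0})\wedge\Omega_{\mathrm{Sp}} = \mathrm{YM}_{\Omega_{\mathrm{Sp}}}(A_0) = 0$, where the middle step again uses that the topological term is the same for $A$ and $A_0$. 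Thus $A$ attains the minimum value of $\mathrm{YM}_{\Omega_{\mathrm{Sp}}}$, so by $\S$\ref{sub:Omega-ASD} it is $\Omega_{\mathrm{Sp}}$-ASD, i.e. an $\Sp(n)$-instanton.

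The step requiring the most care is the sharp equality case: that $\mathrm{YM}_{\Omega_{\mathrm{Sp}}}(A)=0$ forces $A$ to be $\Omega_{\mathrm{Sp}}$-ASD, i.e. that the Bogomolov-type inequality $\mathrm{YM}_{\Omega_{\mathrm{Sp}}}\ge 0$ is strict off the instanton locus. To make this explicit rather than merely citing ``absolute minimizer,'' I would decompose $F_A$ along $\Lambda^2(T^*X)\cong U_I\oplus U_J\oplus U_K\oplus W\oplus \R\omega_I\oplus \R\omega_J\oplus \R\omega_K$ and use the eigenvalue relations around (\ref{eq:W-char}) (eigenvalues $3$ on $U$, $-1$ on $W$, $\tfrac{3}{2n+1}$ on $Y$) to rewrite $\mathrm{YM}_{\Omega_{\mathrm{Sp}}}(A)$ as a positive linear combination of $\int_X|F_A^U|^2\,\mathrm{vol}$ and $\int_X|F_A^Y|^2\,\mathrm{vol}$, with the $W$-component contributing nothing. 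For an $I$-primitive HYM connection, $(1,1)$-ness and $\omega_I$-primitivity kill every summand except $U_I$ and $W$, so only the $U_I$-term survives; vanishing of $\mathrm{YM}_{\Omega_{\mathrm{Sp}}}(A)$ then yields $F_A^{U_I}=0$, whence $F_A\in\Gamma(W\otimes\mathrm{ad}_P)$, completing the proof.
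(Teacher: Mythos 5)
Your proposal is correct, but it takes a genuinely different route from the paper's. The paper never invokes the functional $\mathrm{YM}_\Omega$ or an equality-case analysis for minimizers. Instead, it proves a K\"{a}hler-type Chern--Weil identity (Lemma \ref{lem:HYM-Decomp}) and applies it to each of $I,J,K$ separately, then combines the three identities using a family of closed forms $\Phi_\lambda = \frac{1}{(2n-2)!}\left( \lambda_1 \omega_I^{2n-2} + \lambda_2 \omega_J^{2n-2} + \lambda_3 \omega_K^{2n-2} \right)$ subject to the constraint $\lambda_1+\lambda_2+\lambda_3=0$; that constraint is exactly what cancels the $F_W$-contribution, producing Lemma \ref{lem:Phi-lambda-topological}: $\sum_L a_L \Vert F_L \Vert^2 + b_L \Vert \mu_L \omega_L \Vert^2 = \int_X \mathrm{tr}(F\wedge F)\wedge \Phi_\lambda$ with topological right-hand side. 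For an $I$-primitive HYM connection this collapses to the statement that $a_I \Vert F_I \Vert^2$ is topological with $a_I > 0$ (Corollary \ref{cor:Primitive-Topological}), and comparing with the reference instanton $\widetilde{A}$ (for which $\widetilde{F}_I = 0$) finishes. Your argument instead runs two separate Chern--Weil comparisons --- one with $\Omega_I$ to equate the total $L^2$-energies of $A$ and $A_0$, one with the quaternionic fundamental form $\Omega_{\mathrm{Sp}}$ to get $\mathrm{YM}_{\Omega_{\mathrm{Sp}}}(A)=0$ --- and then settles the equality case via the Galicki--Poon relations (\ref{eq:W-char}) on the coarser quaternionic splitting $U \oplus W \oplus Y$. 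Both are energy comparisons against the reference instanton, and all the ingredients you cite are available in the paper. What yours buys: it is arguably more conceptual, uses the standard Bogomolov-type formalism, and avoids computing the constants $a_L, b_L$. What the paper's buys: its key identity holds for an \emph{arbitrary} connection and is an honest integral identity over any region, which is precisely what lets the authors recycle it in the asymptotically conical setting via Stokes' theorem and the Chern--Simons functional (Proposition \ref{prop:CS-Identity}); your global closed-form-plus-compactness argument does not transfer there as directly, which is the paper's stated reason for its choice of proof.

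One small terminological slip, not a gap: the constants $3$, $-1$, $\frac{3}{2n+1}$ in (\ref{eq:W-char}) are the coefficients $c$ in the relations $\ast\beta = c\,\beta\wedge\Pi_{n-1}$, so the eigenvalues of the operator $\beta \mapsto \ast(\Omega_{\mathrm{Sp}}\wedge\beta)$ are their reciprocals $\frac{1}{3}$, $-1$, $\frac{2n+1}{3}$. Your conclusion is unaffected: the quadratic form $\mathrm{YM}_{\Omega_{\mathrm{Sp}}}$ picks up the positive weights $\frac{4}{3}$ on $U$ and $\frac{2n+4}{3}$ on $Y$, and weight $0$ on $W$, so vanishing of $\mathrm{YM}_{\Omega_{\mathrm{Sp}}}(A)$ indeed forces $F_A \in \Gamma(W \otimes \mathrm{ad}_P)$.
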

Although this can be obtained from results of Verbitsky \cite{verbitsky1993hyperholomorphic}, we will provide a direct proof, as certain aspects will be needed in the asymptotically conical setting.   \\

\indent In dimension $8$, the above theorem implies that if an $\Sp(2)$-instanton exists, then the class of $\Spin(7)$-instantons coincides with the (\emph{a priori} smaller) class of $\Sp(2)$-instantons.  More precisely:

\begin{cor} \label{cor:Lewis-Dimension8} Let $(X^8, g, (I,J,K))$ be a compact hyperk\"{a}hler $8$-manifold.  Let $P \to X$ be a principal $G$-bundle, where $G \leq \U(r)$ is a compact Lie group, that admits an $\Sp(2)$-instanton.  Let $A$ be a connection on $P$.  Then:
$$A \text{ is an }\Sp(2)\text{-instanton } \iff A \text{ is primitive HYM w.r.t. } I \iff A \text{ is a }\Spin(7)\text{-instanton.}$$
\end{cor}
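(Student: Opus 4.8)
The plan is to derive the Corollary from Theorem~\ref{thm:Compact-Lewis} together with a single additional implication, the latter proved by comparing topological energies. The biconditional
$$A \text{ is an } \Sp(2)\text{-instanton} \iff A \text{ is primitive HYM with respect to } I$$
is exactly Theorem~\ref{thm:Compact-Lewis} specialized to $n = 2$, while Example~\ref{ex:Dim8} already supplies the chain $\Sp(2)\text{-instanton} \implies \text{primitive } I\text{-HYM} \implies \Spin(7)\text{-instanton}$. Hence the whole statement follows once I establish the reverse implication
$$A \text{ is a } \Spin(7)\text{-instanton} \implies A \text{ is an } \Sp(2)\text{-instanton},$$
which closes the cycle. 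Since $\mathfrak{sp}(2) \subsetneq \mathfrak{spin}(7)$, this cannot hold fiberwise, so I would instead argue globally.

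The key observation is that both equations are $\Omega$-ASD equations for \emph{closed} forms: $\Spin(7)$-instantons are $\Phi$-ASD, and by Proposition~\ref{prop:Sp(n)-equiv-def}(ii) the $\Sp(2)$-instantons are $\Pi$-ASD for $\Pi = \frac{1}{3!}(\omega_I^2 + \omega_J^2 + \omega_K^2)$, both $\Phi$ and $\Pi$ being parallel, hence closed. For any closed $\Omega$ on the compact manifold $X$ I would use that
$$\mathrm{YM}_\Omega(A) = \int_X |F_A|^2\,\mathrm{vol} - \kappa_\Omega(P), \qquad \kappa_\Omega(P) := \int_X \mathrm{tr}(F_A \wedge F_A) \wedge \Omega,$$
where $\kappa_\Omega(P)$ is independent of $A$: for two connections the difference $\mathrm{tr}(F_A\wedge F_A) - \mathrm{tr}(F_{A'}\wedge F_{A'})$ is exact, so wedging with the closed form $\Omega$ and applying Stokes on the closed $X$ gives zero. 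Diagonalizing the symmetric operator $\beta \mapsto \ast(\beta \wedge \Omega)$ on $\Lambda^2$ — whose eigenvalue on the $\Omega$-ASD summand is $-1$ and on every complementary summand is strictly greater than $-1$ (for $\Phi$ it is $+3$ on $\Lambda^2_7$; for $\Pi$ it is positive on both $U$ and $Y$, as follows from the eigenvalue data of \S\ref{subsub:Lin-Alg-QuatHerm}) — and keeping track that $\mathrm{tr}$ is negative definite on $\mathfrak{u}(r)$, one finds that $\mathrm{YM}_\Omega(A)$ is the integral of a nonnegative quadratic form in the curvature components transverse to the ASD summand. Thus $\mathrm{YM}_\Omega(A) \geq 0$, with equality exactly when $A$ is $\Omega$-ASD.

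With these facts in place the argument is immediate. The hypothesis furnishes an $\Sp(2)$-instanton $A_0$ on $P$, which by Example~\ref{ex:Dim8} is both $\Pi$-ASD and $\Phi$-ASD; therefore $\mathrm{YM}_\Pi(A_0) = \mathrm{YM}_\Phi(A_0) = 0$, forcing
$$\kappa_\Pi(P) = \Vert F_{A_0} \Vert^2 = \kappa_\Phi(P).$$
If now $A$ is a $\Spin(7)$-instanton, then $\mathrm{YM}_\Phi(A) = 0$, so $\Vert F_A \Vert^2 = \kappa_\Phi(P) = \kappa_\Pi(P)$, and hence $\mathrm{YM}_\Pi(A) = \Vert F_A \Vert^2 - \kappa_\Pi(P) = 0$. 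By the equality case, $A$ is $\Pi$-ASD, i.e.\ an $\Sp(2)$-instanton, completing the chain of implications.

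The main obstacle is the sharp nonnegativity of $\mathrm{YM}_\Omega$ together with its equality case, which rests on the eigenvalue computation for $\beta \mapsto \ast(\beta \wedge \Omega)$ and on careful sign bookkeeping for the $\mathrm{ad}$-valued inner product. The genuinely new idea, however, is conceptual rather than computational: the existence of a \emph{single} connection that is simultaneously ASD for two distinct closed calibrations equalizes their topological energies $\kappa_\Phi(P) = \kappa_\Pi(P)$, and this one numerical coincidence is precisely what promotes an arbitrary $\Phi$-ASD connection to a $\Pi$-ASD one. The same mechanism, applied to the pair $\Omega_I, \Omega_J$, is what I expect to underlie Theorem~\ref{thm:Compact-Lewis} itself.
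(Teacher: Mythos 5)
Your proof is correct, but it takes a genuinely different route from the paper's for the second equivalence, so a comparison is in order. The paper handles the first equivalence exactly as you do (Theorem \ref{thm:Compact-Lewis} with $n=2$), but then settles the second in one line: the forward implication is automatic (Example \ref{ex:Dim8}), and the reverse implication $\Spin(7)\text{-instanton} \Rightarrow \text{pHYM}$ is quoted from Lewis \cite{lewis1999spin}. You instead prove the stronger implication $\Spin(7)\text{-instanton} \Rightarrow \Sp(2)\text{-instanton}$ from scratch by an energy comparison: both equations are $\Omega$-ASD for closed (indeed parallel) calibrations $\Phi$ and $\Pi = \frac{1}{3!}(\omega_I^2 + \omega_J^2 + \omega_K^2)$; the eigenvalues of $\beta \mapsto \ast(\beta \wedge \Omega)$ are $\{3,-1\}$ for $\Phi$ and $\{\tfrac{1}{3},\,-1,\,\tfrac{5}{3}\}$ for $\Pi$ (the latter read off from (\ref{eq:W-char}) with $n=2$), so in both cases $\mathrm{YM}_\Omega \geq 0$ with equality exactly at $\Omega$-ASD connections; and the hypothesized $\Sp(2)$-instanton $A_0$, being ASD for both calibrations simultaneously, equalizes the two Chern--Weil invariants $\kappa_\Phi(P) = \Vert F_{A_0} \Vert^2 = \kappa_\Pi(P)$ (in your notation), which is exactly what promotes an arbitrary $\Phi$-ASD connection to a $\Pi$-ASD one. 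Your eigenvalue data, sign bookkeeping for the negative-definite trace on $\mathfrak{u}(r)$, and the Stokes argument for topological invariance of $\kappa_\Omega(P)$ all check out. What the paper's route buys is brevity; what yours buys is self-containedness --- in effect you have re-proven Lewis' theorem in the hyperk\"{a}hler special case rather than citing it --- and your closing remark is accurate: the mechanism of two closed calibrations sharing a common instanton is precisely what drives the paper's own proof of Theorem \ref{thm:Compact-Lewis}, which runs the same comparison through the closed forms $\Phi_\lambda$ and their equality-case analysis (Lemma \ref{lem:Phi-lambda-topological} and Corollary \ref{cor:Primitive-Topological}), and it is the mechanism of Lewis' original argument as well.
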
 

\begin{proof} The first equivalence is Theorem \ref{thm:Compact-Lewis}.  For the second equivalence, note that the forward implication $(\Longrightarrow)$ is automatic (recall Example \ref{ex:Dim8}), while the reverse implication $(\Longleftarrow)$ is a theorem of Lewis \cite{lewis1999spin}.
\end{proof}

\subsubsection{A result from Hermitian Yang-Mills theory}

\indent \indent Let $(M^{2k}, g, L, \omega)$ be a compact K\"{a}hler manifold, where $L \colon TM \to TM$ denotes the complex structure and $\omega = g(L \cdot, \cdot)$ denotes the K\"{a}hler form.  Let $P \to M$ be a principal $G$-bundle, where $G \leq \U(r)$ is a compact Lie group.  Let $A$ be a connection on $P$, and let $F := F_A \in \Omega^2(M; \mathrm{ad}_P)$.  Recalling the decomposition $\Lambda^2(T^*M; \C) = \Lambda^{2,0} \oplus \Lambda^{0,2} \oplus \Lambda^{1,1}_0 \oplus \R\omega$, we split
$$F = F^{2,0} + F^{0,2} + F^{1,1}_0 + F_\omega.$$
The following lemma is well-known (e.g., it follows from formulas in \cite[$\S$1.3]{itoh1990yang}), but we provide a proof for completeness.

\begin{lem} \label{lem:HYM-Decomp} We have
$$ -2 \Vert F^{2,0} \Vert^2 + \Vert F^{1,1}_{0} \Vert^2 + (1-k) \Vert F_{\omega} \Vert^2 =  \int_M \mathrm{tr}(F \wedge F) \wedge \frac{\omega^{k-2}}{(k-2)!}$$
Moreover, the integral on the right side depends only on the topology of the bundle $P \to M$.
\end{lem}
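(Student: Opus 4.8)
The plan is to reduce the statement to a single pointwise identity on forms and then integrate. Concretely, I would show that for any $\mathrm{ad}_P$-valued $2$-form $\eta$ with type decomposition $\eta = \eta^{2,0} + \eta^{0,2} + \eta^{1,1}_0 + \eta_\omega$ (here $\eta_\omega$ is the multiple of $\omega$ and $\eta^{1,1}_0$ is primitive of type $(1,1)$), one has the pointwise equality
\begin{equation*}
\mathrm{tr}(\eta \wedge \eta) \wedge \frac{\omega^{k-2}}{(k-2)!} = \left( -2|\eta^{2,0}|^2 + |\eta^{1,1}_0|^2 + (1-k)|\eta_\omega|^2 \right) \vol.
\end{equation*}
Applying this to $\eta = F$ and integrating over the compact manifold $M$ then yields the asserted $L^2$ identity immediately. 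The first step is therefore purely linear-algebraic and fiberwise.

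To prove the pointwise identity, I would argue as follows. Since $\omega^{k-2}$ has type $(k-2,k-2)$, only the $(2,2)$-component of $\eta \wedge \eta$ survives the wedge, and a quick type count shows this component is $2\,\eta^{2,0}\wedge\eta^{0,2} + \eta^{1,1}_0\wedge\eta^{1,1}_0 + \eta_\omega\wedge\eta_\omega$ together with the cross term $2\,\eta^{1,1}_0\wedge\eta_\omega$; the latter dies after wedging with $\tfrac{\omega^{k-2}}{(k-2)!}$ because $L^{k-1}$ annihilates primitive $2$-forms. For the surviving terms I would invoke the Weil identities for the Hodge star on primitive forms of degree $2$ on a Kähler $2k$-manifold, namely $\ast\beta = \tfrac{\omega^{k-2}}{(k-2)!}\wedge\beta$ for $\beta$ of type $(2,0)$ or $(0,2)$, and $\ast\beta = -\tfrac{\omega^{k-2}}{(k-2)!}\wedge\beta$ for primitive $\beta$ of type $(1,1)$, while for the $\omega$-part I use $\omega^{k} = k!\,\vol$ and $|\omega|^2 = k$. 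Converting each wedge into an inner product via $\alpha\wedge\ast\overline\beta = \langle\alpha,\beta\rangle\vol$, and using that $F$ is real so $\overline{F^{2,0}} = F^{0,2}$, produces the three coefficients $-2$, $+1$, $(1-k)$ — crucially, each term picks up a single overall sign from the fact that on the skew-Hermitian fibers $\mathrm{ad}_P \subset \mathfrak{u}(r)$ the matrix trace form satisfies $\mathrm{tr}(ab) = -\langle a,b\rangle$, so that the (a priori positive) Hodge pairings are flipped to match the Lie-algebra metric used to define $|\cdot|$. I expect the sign bookkeeping in this step — reconciling the Weil-identity signs for $(2,0)$ versus primitive $(1,1)$ forms with the negative-definiteness of the trace form — to be the main obstacle and the only place where care is genuinely required.

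For the final sentence, I would appeal to Chern--Weil theory. By the Bianchi identity $d_A F = 0$ and invariance of $\mathrm{tr}$, the $4$-form $\mathrm{tr}(F \wedge F)$ is closed, and the standard transgression argument shows that its de Rham class in $H^4(M;\R)$ is independent of the connection $A$, depending only on the isomorphism type of $P \to M$. Since $M$ is Kähler, $\omega$ is closed, so $\tfrac{\omega^{k-2}}{(k-2)!}$ is a closed $(2k-4)$-form; hence
\begin{equation*}
\int_M \mathrm{tr}(F\wedge F)\wedge\frac{\omega^{k-2}}{(k-2)!} = \big\langle [\mathrm{tr}(F\wedge F)] \smile [\omega]^{k-2},\, [M]\big\rangle \cdot \tfrac{1}{(k-2)!}
\end{equation*}
is a pairing of fixed cohomology classes against the fundamental class, and therefore depends only on the topology of $P \to M$ (together with the fixed Kähler class), as claimed.
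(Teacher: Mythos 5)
Your proposal is correct and follows essentially the same route as the paper's proof: a fiberwise computation built on the Weil identities for the Hodge star of $(2,0)$-forms, primitive $(1,1)$-forms, and multiples of $\omega$ on a K\"{a}hler $2k$-manifold, combined with the sign flip $\mathrm{tr}(ab) = -\langle a,b\rangle$ on the skew-Hermitian fibers, followed by integration over $M$. The only differences are organizational — you expand $\mathrm{tr}(F\wedge F)\wedge \frac{\omega^{k-2}}{(k-2)!}$ directly by type (killing the mixed term by Lefschetz primitivity) where the paper instead manipulates $F\wedge \frac{\omega^{k-2}}{(k-2)!} - \ast F$ and corrects against $\mathrm{tr}(F\wedge\ast F) = -|F|^2\,\mathrm{vol}$, and you spell out the Chern--Weil justification of the final ``topological'' sentence, which the paper leaves implicit.
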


\begin{proof} Set $\Theta = \frac{1}{(k-2)!}\omega^{k-2}$.  We calculate
\begin{align*}
F \wedge \Theta - \ast F & = \left(F^{2,0} + F^{0,2} + F^{1,1}_0 + F_\omega\right) \wedge \Theta - \left(F^{2,0} + F^{2,0} - F^{1,1}_0 + \frac{1}{k-1}F_\omega\right) \wedge \Theta \\
& = \left(2 F^{1,1}_0 + \frac{k-2}{k-1}F_\omega\right) \wedge \Theta,
\end{align*}
so that
$$F \wedge \Theta = \ast F + \left(2 F^{1,1}_0 + \frac{k-2}{k-1}F_\omega\right) \wedge \Theta,$$
and hence
\begin{equation}
F^2 \wedge \Theta = F \wedge \ast F + 2 F \wedge F^{1,1}_0 \wedge \Theta + \frac{k-2}{k-1}F \wedge F_\omega \wedge \Theta. \label{eq:F-squared-Theta}
\end{equation}
Regarding the last two terms, noting that $F^{1,1}_0 \wedge \Theta$ and $F_\omega \wedge \Theta$ are $(k-1,k-1)$-forms, degree considerations give
\begin{align*}
F \wedge F^{1,1}_0 \wedge \Theta & = F^{1,1}_0 \wedge F^{1,1}_0 \wedge \Theta = -F^{1,1}_0 \wedge \ast F^{1,1}_0 \\
F \wedge F_\omega \wedge \Theta & = F_\omega \wedge F_\omega \wedge \Theta = (k-1) F_\omega \wedge \ast F_\omega.
\end{align*}
Using these, (\ref{eq:F-squared-Theta}) becomes
\begin{align*}
F^2 \wedge \Theta & = F \wedge \ast F - 2F^{1,1}_0 \wedge \ast F^{1,1}_0 + (k-2)\,F_\omega \wedge \ast F_\omega,
\end{align*}
and so
\begin{align*}
\mathrm{tr}(F^2) \wedge \Theta & = \mathrm{tr}(F \wedge \ast F) + \left( 2 |F^{1,1}_0|^2 + (2-k) |F_\omega|^2 \right)\mathrm{vol}_M.
\end{align*}
Finally, using $\mathrm{tr}(F \wedge \ast F) = -|F|^2\,\mathrm{vol} = \left( -2 |F^{2,0}|^2 - |F^{1,1}_0|^2 - |F_\omega|^2\right) \mathrm{vol}_M$ and integrating gives the desired formula.
\end{proof}

\begin{cor} If $A$ is a primitive Hermitian Yang-Mills connection, then $\Vert F \Vert^2 = \Vert F^{1,1}_0 \Vert^2$ is a topological quantity.
\end{cor}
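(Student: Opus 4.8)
The plan is to read this off directly from Lemma \ref{lem:HYM-Decomp}, since the primitive Hermitian Yang-Mills hypothesis is precisely what kills the unwanted terms in that integral identity. First I would recall the two defining conditions of a primitive HYM connection in this K\"{a}hler setting: that $A$ be a $\U(n)$-instanton, so its curvature has vanishing $(2,0)$ and $(0,2)$ parts (i.e.\ $F^{2,0} = F^{0,2} = 0$), and that it be \emph{primitive}, so that $\langle F, \omega \rangle = 0$ (i.e.\ $F_\omega = 0$). Feeding these into the orthogonal decomposition $F = F^{2,0} + F^{0,2} + F^{1,1}_0 + F_\omega$ shows that $F = F^{1,1}_0$, and hence $\Vert F \Vert^2 = \Vert F^{1,1}_0 \Vert^2$ after integration. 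This disposes of the first asserted equality.

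For the topological claim, I would substitute the vanishing conditions $F^{2,0} = 0$ and $F_\omega = 0$ into the identity of Lemma \ref{lem:HYM-Decomp}. The left-hand side $-2 \Vert F^{2,0} \Vert^2 + \Vert F^{1,1}_0 \Vert^2 + (1-k) \Vert F_\omega \Vert^2$ then collapses to the single term $\Vert F^{1,1}_0 \Vert^2$, leaving
$$\Vert F \Vert^2 = \Vert F^{1,1}_0 \Vert^2 = \int_M \mathrm{tr}(F \wedge F) \wedge \frac{\omega^{k-2}}{(k-2)!}.$$
Since Lemma \ref{lem:HYM-Decomp} already guarantees that this integral depends only on the topology of the bundle $P \to M$ (being a Chern--Weil pairing of characteristic classes of $P$ against the fixed K\"{a}hler class $[\omega]$), the quantity $\Vert F \Vert^2$ is indeed a topological invariant.

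The only point requiring any care is the bookkeeping of which curvature components vanish under the primitive HYM hypothesis, and this is immediate from the definitions recalled in $\S$\ref{subsec:HYM}. There is no real obstacle here: the entire analytic content has been front-loaded into Lemma \ref{lem:HYM-Decomp}, so the corollary follows as a one-line specialization of that identity.
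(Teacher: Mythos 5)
Your proof is correct and matches the paper's intent exactly: the paper states this corollary without proof precisely because it is the one-line specialization of Lemma \ref{lem:HYM-Decomp} that you describe, namely that primitivity and the $\U(n)$-instanton condition kill $F^{2,0}$, $F^{0,2}$, and $F_\omega$, so $\Vert F \Vert^2 = \Vert F^{1,1}_0 \Vert^2$ equals the Chern--Weil integral on the right-hand side, which is topological.
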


\subsubsection{Proof of Theorem \ref{thm:Compact-Lewis}} \label{subsub:Proof-Cpt-Lewis}

\indent \indent As in the theorem statement, we let $(X^{4n}, g, (I,J,K))$ be a compact hyperk\"{a}hler $4n$-manifold, and let $P \to X$ be a principal $G$-bundle that admits an $\Sp(n)$-instanton.  Let $A$ be a connection on $P$, and let $F := F_A \in \Omega^2(X; \mathrm{ad}_P)$ denote its curvature $2$-form.  By Proposition \ref{prop:HyperHermitianDecomp}, we have a decomposition
\begin{equation} \label{eq:Spn-Split}
\Lambda^2(T^*X) = U_I \oplus U_J \oplus U_K \oplus W \oplus \R\omega_I \oplus \R\omega_J \oplus \R\omega_K
\end{equation}
so that we may split
$$F = F_I + F_J + F_K + F_W + \mu_I \omega_I + \mu_J \omega_J + \mu_K \omega_K$$
where $\mu_I, \mu_J, \mu_K \in \Gamma(\mathrm{ad}_P)$.  Observe that
\begin{align*}
F \text{ is an }I\text{-primitive HYM connection} & \iff F = F_I + F_W \\
F \text{ is an }\Sp(n)\text{-instanton} & \iff F =  F_W
\end{align*}
\indent Now, for $\mathbf{\lambda} = (\lambda_1, \lambda_2, \lambda_3) \in \R^3$ satisfying the conditions
\begin{align}
\lambda_1 + \lambda_2 + \lambda_3 & = 0 & \lambda_1 - 2\lambda_2 - 2\lambda_3 & > 0, \label{eq:lambda-conditions}
\end{align}
define a $(4n-4)$-form $\Phi_\lambda \in \Omega^{4n-4}(X)$ by
$$\Phi_\lambda = \textstyle \frac{1}{(2n-2)!}\left( \lambda_1 \omega_I^{2n-2} + \lambda_2 \omega_J^{2n-2} + \lambda_3 \omega_K^{2n-2} \right)\!,$$
and define constants $a_I, a_J, a_K$ and $b_I, b_J, b_K$ by
\begin{align}
\begin{bmatrix}
a_I \\ a_J \\ a_K \end{bmatrix} & =
\begin{bmatrix}
1 & -2 & - 2 \\
-2 & 1 & -2 \\
-2 & -2 & 1 \end{bmatrix} \begin{bmatrix} \lambda_1 \\ \lambda_2 \\ \lambda_3 \end{bmatrix}
&
\begin{bmatrix} b_I \\ b_J \\ b_K \end{bmatrix}  =
\begin{bmatrix}
1-2n & -2 & - 2 \\
-2 & 1-2n & -2 \\
-2 & -2 & 1-2n \end{bmatrix} \begin{bmatrix} \lambda_1 \\ \lambda_2 \\ \lambda_3 \end{bmatrix}\!.
\label{eq:AB-constants}
\end{align}
Note that $a_I > 0$ by assumption.

\begin{lem} \label{lem:Phi-lambda-topological} For each $\lambda \in \R^3$ satisfying (\ref{eq:lambda-conditions}), we have
$$\sum_{L \in \{I,J,K\}} a_L \left\Vert F_L \right\Vert^2 + b_L \left\Vert \mu_L \omega_L \right\Vert^2 = \int_X \mathrm{tr}(F \wedge F) \wedge \Phi_\lambda.$$
(Note that there is no $F_W$ term on the left side.)  Moreover, the integral on the right side depends only on the topology of the bundle $P \to X$.
\end{lem}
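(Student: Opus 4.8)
The plan is to apply Lemma \ref{lem:HYM-Decomp} separately to each of the three Kähler structures $(g,I,\omega_I)$, $(g,J,\omega_J)$, $(g,K,\omega_K)$ on $X^{4n}$ (so that $k=2n$ in the notation there, which is legitimate since $n\geq 1$), and then to take the linear combination of the three resulting identities weighted by $\lambda_1,\lambda_2,\lambda_3$. Explicitly, for $L\in\{I,J,K\}$ the lemma gives
\begin{equation*}
-2\,\Vert F^{2,0}_L \Vert^2 + \Vert F^{1,1}_{0,L} \Vert^2 + (1-2n)\,\Vert F_{\omega_L} \Vert^2 = \int_X \mathrm{tr}(F\wedge F)\wedge \frac{\omega_L^{2n-2}}{(2n-2)!},
\end{equation*}
where the Hodge types are taken with respect to $L$. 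Multiplying the $I$-, $J$-, $K$-versions by $\lambda_1,\lambda_2,\lambda_3$ and summing, the right-hand side collapses to $\int_X \mathrm{tr}(F\wedge F)\wedge\Phi_\lambda$ by the very definition of $\Phi_\lambda$.

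The heart of the argument is to rewrite the three left-hand sides in terms of the $\Sp(n)$-components $F_I,F_J,F_K,F_W,\mu_I\omega_I,\mu_J\omega_J,\mu_K\omega_K$ of (\ref{eq:Spn-Split}). This is a direct bookkeeping exercise using Proposition \ref{prop:HyperHermitianDecomp}: the component $F_W\in W$ is primitive $(1,1)$ with respect to all three of $I,J,K$; each $F_L$ is primitive $(1,1)$ with respect to $L$ but of type $(2,0)+(0,2)$ with respect to the other two structures (since $U_L$ lies in $\LB\Lambda^{2,0}_{L'}\RB$ for $L'\neq L$); and because $\omega_J+i\omega_K$ is the holomorphic symplectic form $n!\,\Upsilon_I$, hence of type $(2,0)$ with respect to $I$ (and cyclically), each $\mu_L\omega_L$ feeds the $F_{\omega_L}$-term of the $L$-equation but the $(2,0)+(0,2)$-part of the other two. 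As the seven summands are mutually $g$-orthogonal, the relevant squared norms simply add; one must also track the factor of $2$ relating the Hermitian norm $\Vert F^{2,0}_L\Vert^2$ of the complex $(2,0)$-component to the real norm of the $(2,0)+(0,2)$-part. Collecting coefficients then produces $\sum_L a_L\Vert F_L\Vert^2 + b_L\Vert\mu_L\omega_L\Vert^2$ with $a_L,b_L$ as in (\ref{eq:AB-constants}); crucially, since $F_W$ is primitive $(1,1)$ simultaneously for $I,J,K$, its total coefficient is $\lambda_1+\lambda_2+\lambda_3=0$ by (\ref{eq:lambda-conditions}), so no $F_W$-term survives.

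For the final assertion, topological invariance of the right-hand side is immediate from Chern–Weil theory: $\mathrm{tr}(F\wedge F)$ is a closed $4$-form whose de Rham class is a characteristic class of $P$, independent of the connection $A$, while $\Phi_\lambda$ is a fixed closed form (each $\omega_L$ is parallel, hence $\omega_L^{2n-2}$ is closed). Thus $\int_X \mathrm{tr}(F\wedge F)\wedge\Phi_\lambda$ is a pairing of fixed cohomology classes and depends only on the topology of $P\to X$.

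The main obstacle is precisely the orthogonality-and-type bookkeeping of the middle step: one must confirm that each of the seven $\Sp(n)$-irreducible summands lands cleanly in a single $\U(2n)$-Hodge type with respect to each of $I,J,K$, and handle the complex-versus-real norm factors consistently, in order to land on exactly the constants of (\ref{eq:AB-constants}). Once these type assignments are pinned down, the remainder is a finite linear-algebra computation together with the closedness facts above.
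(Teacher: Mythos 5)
Your proposal is correct and follows essentially the same route as the paper: apply Lemma \ref{lem:HYM-Decomp} (the paper uses its pointwise form, you use the integrated form, which is immaterial) to each of $I,J,K$, re-express the resulting norms in the $\Sp(n)$-irreducible components of Proposition \ref{prop:HyperHermitianDecomp} using exactly the type assignments you list, observe that the $F_W$ coefficient is $\lambda_1+\lambda_2+\lambda_3=0$, and invoke Chern--Weil for topological invariance. (One trivial slip: $\omega_J+i\omega_K$ is the holomorphic symplectic $2$-form with $\Upsilon_I=\frac{1}{n!}(\omega_J+i\omega_K)^n$, not itself equal to $n!\,\Upsilon_I$; the fact you actually use, that it is of $I$-type $(2,0)$, is correct.)
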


\begin{proof} Fix $\lambda \in \R^3$ satisfying (\ref{eq:lambda-conditions}).  For $L \in \{I,J,K\}$, we set $\Theta_L := \frac{\omega_L^{2n-2}}{(2n-2)!}$.  By the proof of Lemma \ref{lem:HYM-Decomp}, we have
\begin{align} \label{eq:Set1}
\mathrm{tr}(F \wedge F) \wedge \Theta_I & = \left( -2 | F^{2,0}_I |^2 + | F^{1,1}_{I,0} |^2 + \left(1-2n\right) | \mu_I \omega_I |^2\right) \vol_X \notag \\
\mathrm{tr}(F \wedge F) \wedge \Theta_J & = \left( -2 | F^{2,0}_J |^2 + | F^{1,1}_{J,0} |^2 + \left(1-2n\right) | \mu_J \omega_J |^2\right) \vol_X \\
\mathrm{tr}(F \wedge F) \wedge \Theta_K & = \left( -2 | F^{2,0}_K |^2 + | F^{1,1}_{K,0} |^2 + \left(1-2n \right) | \mu_K \omega_K |^2\right) \vol_X. \notag
\end{align}
We now decompose the first two terms on the right hand sides with respect the splitting (\ref{eq:Spn-Split}).  The result is
\begin{align} \label{eq:Set2A}
| F^{2,0}_I |^2 & =  | F_J |^2 + | F_K |^2 + | \mu_J \omega_J |^2 + | \mu_K \omega_K |^2  \notag \\
| F^{2,0}_J |^2 & = | F_I |^2 + | F_K |^2 + | \mu_I \omega_I |^2 + | \mu_K \omega_K |^2  \\
| F^{2,0}_K |^2 & = | F_I |^2 + | F_J |^2 + | \mu_I \omega_I |^2 + | \mu_J \omega_J |^2  \notag
\end{align}
and
\begin{align} \label{eq:Set2B}
| F^{1,1}_{0,L} |^2 & = | F_L |^2 + | F_W |^2.
\end{align}
For brevity, we introduce the following shorthand: For constants $p_I, p_J, p_K, r, q_I, q_J, q_K \in \R$, we let
\begin{align*}
[p_I, p_J, p_K;\ r;\ q_I, q_J, q_K] & := \sum_{L \in \{I,J,K\}} p_L | F_L |^2 + r | F_W |^2 +  \sum_{L \in \{I,J,K\}} q_L | \mu_L \omega_L |^2.
\end{align*}
Then using (\ref{eq:Set2A}) and (\ref{eq:Set2B}), we can write equations (\ref{eq:Set1}) as follows:
\begin{align}
\mathrm{tr}(F \wedge F) \wedge \Theta_I & = \left[ 1, -2, -2;\  1;\ 1-2n, -2, -2 \right] \vol_X \notag \\
\mathrm{tr}(F \wedge F) \wedge \Theta_J & = \left[ -2, 1, -2;\ 1;\ -2, 1-2n, -2 \right] \vol_X \\
\mathrm{tr}(F \wedge F) \wedge \Theta_K & = \left[ -2, -2, 1;\ 1;\ -2, -2, 1-2n \right] \vol_X \notag
\end{align}
Therefore, since $\lambda_1 + \lambda_2 + \lambda_3 = 0$, we have that
$$\mathrm{tr}(F \wedge F) \wedge \Phi_\lambda = [ a_I, a_J, a_K; \ 0; \ b_I, b_J, b_K ]\,\vol_X.$$
\end{proof}

\begin{cor} \label{cor:Primitive-Topological} If $A$ is an $I$-primitive HYM connection, then $\Vert F_{I} \Vert$ depends only on the topology of the bundle $P \to X$.
\end{cor}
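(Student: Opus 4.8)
The plan is to apply Lemma \ref{lem:Phi-lambda-topological} and exploit the vanishing of most curvature components under the primitive HYM hypothesis. First I would fix any $\lambda = (\lambda_1, \lambda_2, \lambda_3) \in \R^3$ satisfying the conditions (\ref{eq:lambda-conditions}); for concreteness one may take $\lambda = (2, -1, -1)$, for which indeed $\lambda_1 + \lambda_2 + \lambda_3 = 0$ and $\lambda_1 - 2\lambda_2 - 2\lambda_3 = 6 > 0$. With this choice the constant $a_I = \lambda_1 - 2\lambda_2 - 2\lambda_3$ is strictly positive.

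Next I would invoke the characterization recorded just before (\ref{eq:lambda-conditions}): a connection $A$ is $I$-primitive HYM precisely when $F = F_I + F_W$. In particular the remaining components all vanish,
\begin{equation*}
F_J = F_K = 0, \qquad \mu_I = \mu_J = \mu_K = 0.
\end{equation*}
Substituting this into the identity of Lemma \ref{lem:Phi-lambda-topological}, every summand on the left side except the $a_I \Vert F_I \Vert^2$ term drops out, leaving
\begin{equation*}
a_I \Vert F_I \Vert^2 = \int_X \mathrm{tr}(F \wedge F) \wedge \Phi_\lambda.
\end{equation*}
Since the right-hand side depends only on the topology of $P \to X$ (again by Lemma \ref{lem:Phi-lambda-topological}) and $a_I > 0$ is a fixed constant, dividing by $a_I$ and taking square roots exhibits $\Vert F_I \Vert$ as a topological quantity, as desired.

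There is essentially no analytic obstacle here: all of the work has already been absorbed into Lemma \ref{lem:Phi-lambda-topological}, and the corollary follows in one line once the primitive HYM hypothesis is used to annihilate the $F_J$, $F_K$, and $\mu_L \omega_L$ contributions. The only point demanding care is the strict positivity $a_I > 0$, which is exactly what the inequality $\lambda_1 - 2\lambda_2 - 2\lambda_3 > 0$ in (\ref{eq:lambda-conditions}) supplies and which is precisely what allows one to isolate $\Vert F_I \Vert^2$.
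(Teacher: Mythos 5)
Your proposal is correct and follows essentially the same argument as the paper: use the $I$-primitive HYM hypothesis to kill the $F_J$, $F_K$, and $\mu_L \omega_L$ terms in Lemma \ref{lem:Phi-lambda-topological}, leaving $a_I \Vert F_I \Vert^2$ equal to the topological integral, with $a_I > 0$ guaranteed by (\ref{eq:lambda-conditions}). The explicit choice $\lambda = (2,-1,-1)$ is a fine (if unnecessary) concretization of the paper's ``fix any $\lambda$ satisfying (\ref{eq:lambda-conditions})''.
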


\begin{proof} If $A$ is an $I$-primitive HYM connection, then $F_J = F_K = 0$ and $\mu_I = \mu_J = \mu_K = 0$.  Therefore, by Lemma \ref{lem:Phi-lambda-topological}, the quantity
$$a_I \left\Vert F_I \right\Vert^2 = \int_X \mathrm{tr}(F \wedge F) \wedge \Phi_\lambda$$
is topological, and $a_I$ is a positive constant.
\end{proof}

\indent We are now in a position to prove Theorem \ref{thm:Compact-Lewis}.  Suppose that $A$ is a primitive Hermitian Yang-Mills connection with respect to $I$, so that its curvature $2$-form decomposes as $F = F_I + F_W$.  By assumption, $P$ admits an $\Sp(n)$-instanton.  Let $\widetilde{A}$ denote such an $\Sp(n)$-instanton, and let $\widetilde{F}$ denote its curvature $2$-form.  Since both $A$ and $\widetilde{A}$ are $I$-primitive HYM connections, Corollary \ref{cor:Primitive-Topological} implies that
$$\Vert F_I \Vert = \Vert \widetilde{F}_I \Vert.$$
But since $\widetilde{A}$ is an $\Sp(n)$-instanton, we have $\widetilde{F} = \widetilde{F}_W$, meaning that $\widetilde{F}_I = 0$.  Therefore, $F_I = 0$, so that $F = F_W$, and hence $A$ is an $\Sp(n)$-instanton.  This completes the proof.

\subsection{The Asymptotically Conical Case} \label{sec:AC-Case}

\indent \indent This section is aimed at stating and proving Theorem \ref{thm:ACLewis}, a Lewis-type theorem for instantons on asymptotically conical (AC) hyperk\"{a}hler manifolds $X$.  In essence, the idea is to adapt Lemma \ref{lem:Phi-lambda-topological} and Corollary \ref{cor:Primitive-Topological} to the AC setting.  This is accomplished in Proposition \ref{prop:CS-Identity}, which, by applying Stokes' Theorem to truncations of $X$ at radius $r > 0$, features a Chern-Simons type functional $\mathrm{CS}_\lambda[A]$ that measures the failure of a pHYM connection $A$ from being an $\Sp(n)$-instanton.  By estimating $\mathrm{CS}_\lambda[A]$ at infinity, we show that pHYM connections with sufficiently fast decay will satisfy $\mathrm{CS}_\lambda[A] = 0$ for large values of $r$, which implies the desired Lewis theorem. \\
\indent Our approach follows that of Papoulias \cite{papoulias2022spin}, who obtains an analogous result for AC $\Spin(7)$-manifolds.  Preparation of this section was also aided by helpful discussions in \cite{conlon2011construction}, \cite{driscoll2019deformations}, and \cite{ghosh2024deformation}.

\subsubsection{Asymptotically conical Riemannian manifolds}

\begin{defn} Let $(X^N, h_X)$ be a Riemannian $N$-manifold, $N \geq 2$.  We say that $X$ is \emph{asymptotically conical (AC)} with \emph{rate $\nu < 0$} if there exists a compact set $K \subset X$, a constant $R > 0$, a compact connected Riemannian $(N-1)$-manifold $(\Sigma^{N-1}, g_\Sigma)$, and a diffeomorphism
$$\Psi \colon (R,\infty) \times \Sigma \to X - K$$
such that
\begin{equation*}
\left| \nabla^k_{g_C}(\Psi^*h_X - g_{C}) \right|_{g_C} = O(r^{\nu - k}) \ \ \ \text{as } r \to \infty
\end{equation*}
for all $k \in \Z_{\geq 0}$, where $(C, g_C) = ( (R,\infty) \times \Sigma, dr^2 + r^2g_\Sigma)$ is the metric cone of $\Sigma$.  We then call $\Sigma$ the \emph{asymptotic link} of $X$.
\end{defn}

\indent Let $h_X^{-1}$ and $g_C^{-1}$ denote the dual metrics on $1$-forms, and let $\mathrm{vol}_X$ and $\mathrm{vol}_C$ denote the corresponding volume forms.  It can be shown that the AC condition implies the estimates
\begin{align}
\left| \nabla^k_{g_C}(\Psi^*h_X^{-1} - g_{C}^{-1}) \right|_{g_C} & = O(r^{\nu - k}) \label{eq:dual-metric-bound} \\
\left| \nabla^k_{g_C}(\Psi^*\mathrm{vol}_X - \mathrm{vol}_C) \right|_{g_C} & = O(r^{\nu - k}). \notag
\end{align}
See \cite[$\S$3.1]{conlon2011construction} for a proof. \\

\indent We now set notation.  Let $r \colon C \to (R, \infty)$ denote the natural radius function, and let $\rho = r \circ \Psi^{-1} \colon X - K \to (R,\infty)$
denote the corresponding function on $X - K$.  For $a > R$, we set
\begin{align*}
\Sigma_a & = \{a\} \times \Sigma = \{p \in C \colon r(p) = a\} & S_a & = \{ x \in X \colon \rho(x) = a\}.
\end{align*}
Let $\iota_a \colon \Sigma \to \Sigma_a$ denote the identification $\iota_a(p) = (a,p)$, and let $\Psi_a := \left.\Psi\right|_{\Sigma_a} \colon \Sigma_a \to S_a$. We equip each $\Sigma_a \subset C$ with its induced metric $g_a$, and similarly equip each $S_a \subset X$ with its induced metric $h_a$.  The following diagram summarizes the manifolds under consideration, where vertical arrows denote the obvious inclusions.
$$\begin{tikzcd}
& {(C,g_C) = (R,\infty) \times \Sigma} \arrow[r, "\Psi"] & (X - K, h_X)         \\
(\Sigma, g_\Sigma) \arrow[r, "\iota_r"] &  (\Sigma_r, g_r) = \{r\} \times \Sigma \arrow[u] \arrow[r, "\Psi_r"']        & (S_r, h_r) \arrow[u]
\end{tikzcd}$$
\indent To streamline notation, we will often suppress the identification maps $\iota_r$ (as well as their pushforwards and pullbacks) from the notation.  Let $\{e_1, \ldots, e_{N-1}\}$ be a $g_\Sigma$-orthonormal basis of $T_p\Sigma$, and let $\{e^1, \ldots, e^{N-1}\}$ be the dual $g_\Sigma^{-1}$-orthonormal basis of $T_p^*\Sigma$.  Note that $\{  \frac{1}{r} e_j\}$ is a $g_r$-orthonormal basis of $T_p\Sigma_r$, and $\{re^j\}$ is a $g_r^{-1}$-orthonormal basis of $T_p^*\Sigma_r$.  In particular,
$$\mathrm{vol}_{\Sigma_r} = r^{N-1}\,e^1 \wedge \cdots \wedge e^{N-1} = r^{N-1}\,\mathrm{vol}_\Sigma.$$
In this way, we are able to relate the metric $g_r$ to $g_\Sigma$.  However, we also will need to relate $\Psi_r^*h_r$ to $g_\Sigma$.   This is accomplished by the following lemma.

\begin{lem} \label{lem:Psi-r-Bounds} We have
\begin{align}
\mathrm{vol}_{\Psi_r^*h_r} & = O\!\left(r^{N-1}\right) \mathrm{vol}_\Sigma \label{eq:Vol-Bound} \\
\Psi_r^*h_r & = O\!\left(r^{2}\right) g_\Sigma \label{eq:Metric-Bound} \\
\Psi_r^*h_r^{-1} & = O\!\left(r^{-2}\right) g_\Sigma^{-1} \label{eq:Dual-Metric-Bound} 
\end{align}
\end{lem}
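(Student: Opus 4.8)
The plan is to exploit the fact that the diffeomorphism $\Psi$ carries the cone slices $\Sigma_r$ onto the level sets $S_r$, so that $\Psi_r^* h_r$ is literally the restriction of the pulled-back metric $\Psi^* h_X$ to the slice. First I would observe that since $\rho \circ \Psi = r$ we have $\Psi(\Sigma_r) = S_r$, and since both $g_r$ and $h_r$ are \emph{induced} (restricted) metrics, for vectors tangent to the slice one has
\begin{equation*}
\Psi_r^* h_r = \left.(\Psi^* h_X)\right|_{T\Sigma_r}.
\end{equation*}
Writing $\Psi^* h_X = g_C + T$, the AC condition with $k = 0$ gives $|T|_{g_C} = O(r^{\nu})$, and restricting to the slice yields $\Psi_r^* h_r = g_r + E_r$, where $g_r = \left.g_C\right|_{T\Sigma_r}$ and $E_r := \left.T\right|_{T\Sigma_r}$.

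The second step is a scaling computation. Under the identification $\iota_r$ one has $\iota_r^* g_r = r^2 g_\Sigma$. Because restricting a symmetric $2$-tensor to a subspace carrying the ambient inner product cannot increase its norm, $|E_r|_{g_r} \le |T|_{g_C} = O(r^{\nu})$; and since for a $(0,2)$-tensor the norm scales as $|\cdot|_{r^2 g_\Sigma} = r^{-2}|\cdot|_{g_\Sigma}$, I would convert this to
\begin{equation*}
|\iota_r^* E_r|_{g_\Sigma} = r^2\,|E_r|_{g_r} = O(r^{\nu + 2}).
\end{equation*}
Thus $\iota_r^*(\Psi_r^* h_r) = r^2 g_\Sigma + O(r^{\nu + 2})\,g_\Sigma$; as $\nu < 0$ the error is of strictly lower order, so for $r$ large this tensor is uniformly (two-sidedly) equivalent to $r^2 g_\Sigma$ as a positive-definite form.

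From this, all three estimates follow. The bound (\ref{eq:Metric-Bound}) is immediate. For (\ref{eq:Vol-Bound}), computing the determinant of $\iota_r^*(\Psi_r^* h_r)$ relative to $g_\Sigma$ gives $r^{2(N-1)}(1 + O(r^{\nu}))$, whence $\mathrm{vol}_{\Psi_r^* h_r} = r^{N-1}(1 + O(r^{\nu}))\,\mathrm{vol}_\Sigma = O(r^{N-1})\,\mathrm{vol}_\Sigma$. For the dual-metric bound (\ref{eq:Dual-Metric-Bound}), since $\Psi_r$ is a diffeomorphism we have $\Psi_r^*(h_r^{-1}) = (\Psi_r^* h_r)^{-1}$, and inverting a form uniformly equivalent to $r^2 g_\Sigma$ produces one uniformly equivalent to $r^{-2} g_\Sigma^{-1}$.

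I expect the main obstacle to be bookkeeping rather than anything deep: precisely, one must (i) justify the slice-restriction identity of the first step, where the point that $\Psi$ respects slices is essential, and (ii) resist the temptation to obtain (\ref{eq:Dual-Metric-Bound}) by restricting the cone estimate (\ref{eq:dual-metric-bound}) to $T^*\Sigma_r$, since the dual of a restricted metric is \emph{not} the restriction of the dual metric; inverting $\Psi_r^* h_r$ directly avoids this pitfall. The one genuinely error-prone point is tracking the norm-scaling exponents (a factor $r^{-2}$ per lower index), which is exactly what converts the $O(r^{\nu})$ cone estimate into the stated $r^2$, $r^{N-1}$, and $r^{-2}$ growth rates.
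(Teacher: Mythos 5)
Your proof is correct, and while the overall strategy (restrict the AC estimate to the slices and track the scaling exponents) matches the paper's, the execution differs in two ways worth noting. The paper diagonalizes $\Psi_r^*h_r$ against $g_\Sigma$ via the spectral theorem, writes $\Psi_r^*h_r = r^2\sum \tau_j^2 (e^j)^2$, and bounds the eigenvalues: it gets $\tau_j^2 = 1 + O(r^\nu)$ from the metric estimate, but then bounds $1/\tau_j^2$ by invoking the \emph{separate} ambient dual-metric estimate (\ref{eq:dual-metric-bound}) and evaluating $\Psi^*h_X^{-1}$ on slice covectors. You instead establish the two-sided quadratic-form equivalence $\iota_r^*(\Psi_r^*h_r) = r^2 g_\Sigma + O(r^{\nu+2})g_\Sigma$ and obtain (\ref{eq:Dual-Metric-Bound}) by inverting this form directly, so you never need (\ref{eq:dual-metric-bound}) at all. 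Your route is the more economical one, and the pitfall you flag in (ii) is real: evaluating the ambient dual metric on covectors annihilating $\partial_r$ does not in general reproduce the inverse of the restricted slice metric when the perturbation has $dr$-cross terms, so the paper's computation for $1/\tau_j^2$ implicitly discards an error that happens to be of the same order $O(r^\nu)$ and hence harmless, but which your argument avoids having to discuss. (Indeed, once $\tau_j^2 = 1 + O(r^\nu)$ is known, $1/\tau_j^2 = O(1)$ follows immediately for large $r$, which is in effect what your inversion step does.) The one point to keep precise in your write-up is the restriction inequality $|E_r|_{g_r} \le |T|_{g_C}$: it requires choosing the $g_C$-orthonormal frame adapted to the $g_C$-orthogonal splitting $T_{(r,p)}C = \R\partial_r \oplus T\Sigma_r$, which holds here because $g_C = dr^2 + r^2 g_\Sigma$; with that remark included, all three bounds (\ref{eq:Vol-Bound}), (\ref{eq:Metric-Bound}), (\ref{eq:Dual-Metric-Bound}) follow as you say.
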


\begin{proof} Let us fix $p \in \Sigma$ and suppress the identification map $\iota_r \colon \Sigma \to \Sigma_r$ throughout.  Since $\Psi_r^*h_r$ is a symmetric bilinear form, the spectral theorem yields a $g_\Sigma$-orthonormal basis $\{e_1, \ldots, e_{N-1}\}$ of $T_p\Sigma$ with respect to which $\Psi_r^*h_r$ is diagonal.  That is, there exist $\tau_j = \tau_j(r)$ for which
\begin{align*}
\Psi_r^*h_r = r^2 \sum \tau_j^2 (e^j)^2
\end{align*}
where in this sum (and all those throughout the proof) the index range is $j = 1, \ldots, N-1$.  In particular,
\begin{align*}
\Psi_r^*h_r^{-1} & = \frac{1}{r^2} \sum \frac{1}{\tau_j^2} e_j^2 &  \mathrm{vol}_{\Psi_r^*h_r} & = r^{N-1} \tau_1 \cdots \tau_{N-1} \,e^1 \wedge \cdots \wedge e^{N-1}.
\end{align*}
\indent Now, since $h_X$ is asymptotically conical, we have $\left| \Psi^*h_X - g_{C} \right|_{g_C} = O(r^{\nu})$. Thus, for each $j = 1, \ldots, N-1$,
\begin{align*}
|\tau_j^2 - 1| \leq \left(\sum \left|\tau_j^2 - 1\right|^2\right)^{1/2} & = \left(\sum \left| \tau_j^2 (\Psi^*h_X)\!\left( \frac{1}{\tau_j r}e_j, \frac{1}{\tau_j r} e_j \right) - g_{C}\!\left(\frac{1}{r}e_j, \frac{1}{r}e_j\right) \right|^2\right)^{1/2} = O(r^{\nu})
\end{align*}
and so
$$\tau_j^2 = 1 + O(r^{\nu}) = O(1).$$
Therefore,
$$r^{N-1} \tau_1 \cdots \tau_{N-1} = O( r^{N-1} )$$
proves (\ref{eq:Vol-Bound}) and
$$\Psi_r^*h_r = r^2 \sum \tau_j^2 (e^j)^2 = O(r^2) \sum (e^j)^2 = O(r^{2}) g_\Sigma$$
proves (\ref{eq:Metric-Bound}).  Next, since $h_X$ is asymptotically conical, we have $\left| \Psi^*h_X^{-1} - g_{C}^{-1} \right|_{g_C} = O(r^{\nu})$ from (\ref{eq:dual-metric-bound}). Thus, for each $j = 1, \ldots, N-1$,
\begin{align*}
\left|\frac{1}{\tau_j^2} - 1\right| \leq \left(\sum \left|\frac{1}{\tau_j^2} - 1\right|^2\right)^{1/2} & = \left(\sum \left| \frac{1}{\tau_j^2} (\Psi^*h_X^{-1})\!\left( \tau_j r e^j, \tau_jr e^j \right) - g_{C}^{-1}\!\left(re^j, re^j \right) \right|^2\right)^{1/2}  = O(r^{\nu})
\end{align*}
and so
$$\frac{1}{\tau_j^2}  = 1 + O(r^{\nu}) = O(1).$$
Therefore,
$$\Psi_r^*h_r^{-1} = \frac{1}{r^2} \sum \frac{1}{\tau_j^2} e_j^2 =  O(r^{-2}) \sum e_j^2 = O(r^{-2}) g_\Sigma^{-1},$$
which proves (\ref{eq:Dual-Metric-Bound}).
\end{proof}

\indent Before continuing, we establish a few more estimates.

\begin{lem} \label{lem:PullbackBounds} Let $E \to X$ be a Hermitian vector bundle.
\begin{enumerate}[(a)]
\item Let $\gamma \in \Lambda^k(T^*X) \otimes E$.  Then $\left| \Psi_r^* \gamma \right|_{g_\Sigma} \leq r^k \left| \Psi^*\gamma \right|_{g_C}$, suppressing pullbacks via the inclusions $S_r \hookrightarrow X - K$ from the notation.
\item Let $\gamma \in \Omega^k(X)$.  Then $\left| \Psi^*\gamma \right|_{g_C} =O(1) \left|\gamma\right|_{h_X}$.
\end{enumerate}
\end{lem}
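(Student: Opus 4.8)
The plan is to prove the two estimates separately: (a) is an exact statement about the conical geometry alone, while (b) is a comparability statement that invokes the AC decay.

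For (a), I would first isolate the purely conical scaling. Write $\omega := \Psi^*\gamma \in \Lambda^k(T^*C) \otimes \Psi^*E$, so that $\Psi_r^*\gamma$ (as a form on $\Sigma$) is obtained from $\omega$ by restricting to the slice $\Sigma_r = \{r\} \times \Sigma$ and then pulling back along $\iota_r$. Two distinct effects enter. First, the inclusion $\Sigma_r \hookrightarrow C$, equipped with its induced metric $g_r$, is an isometric embedding; since the pullback of forms along an isometric embedding is norm-non-increasing (the dual map $df^*$ satisfies $|df^*\alpha| \leq |\alpha|$ on $1$-forms because $|df\,v| = |v|$, and $\Lambda^k$ of a contraction is a contraction), one gets $|\iota_r^*\omega|_{g_r} \leq |\omega|_{g_C}$ pointwise — the inequality coming precisely from discarding the $dr$-components. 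Second, under the identification $\iota_r$ one has exactly $g_r = r^2 g_\Sigma$, so the dual metric on $1$-forms rescales by $r^{-2}$ and the norm of any $k$-form obeys $|\eta|_{g_\Sigma} = r^k |\eta|_{r^2 g_\Sigma} = r^k|\eta|_{g_r}$. Because the Hermitian metric on $E$ is pulled back unchanged and carries no $r$-dependence, the tensor-product norm scales only through its form factor. Combining the two effects yields $|\Psi_r^*\gamma|_{g_\Sigma} = r^k|\Psi_r^*\gamma|_{g_r} \leq r^k|\Psi^*\gamma|_{g_C}$, which is exactly (a).

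For (b), the key observation is that $\Psi$ is a diffeomorphism, so its pullback is a pointwise isometry between $(T^*X, h_X^{-1})$ and $(T^*C, \Psi^*h_X^{-1})$; consequently $|\gamma|_{h_X}\circ\Psi = |\Psi^*\gamma|_{\Psi^*h_X}$, and it suffices to compare the norms $|\cdot|_{g_C}$ and $|\cdot|_{\Psi^*h_X}$ on $C$. The AC estimate (\ref{eq:dual-metric-bound}) with $k=0$ gives $|\Psi^*h_X^{-1} - g_C^{-1}|_{g_C} = O(r^\nu) \to 0$, so for $r$ large the eigenvalues of $\Psi^*h_X^{-1}$ relative to $g_C^{-1}$ lie in a fixed interval bounded away from $0$ and $\infty$; this produces two-sided bounds on the induced $k$-form norms that are uniform in $r$. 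On the complementary region the two smooth positive-definite metrics are comparable by continuity, and together these give $|\Psi^*\gamma|_{g_C} = O(1)\,|\Psi^*\gamma|_{\Psi^*h_X} = O(1)\,|\gamma|_{h_X}$.

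The only genuine point requiring care is the bookkeeping in (a): one must keep the estimate an inequality (restriction to $\Sigma_r$ kills the normal $dr$-components) while tracking the conformal factor so that a $k$-form norm picks up exactly $r^k$ and no other power. The $E$-valued case contributes nothing extra, since the bundle metric is $r$-independent, and part (b) is routine once reduced to comparing $g_C$ with $\Psi^*h_X$ through the AC estimate.
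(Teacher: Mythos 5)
Your proposal is correct and takes essentially the same approach as the paper: in (a), your ``isometric embedding is a contraction on forms, then rescale by $g_r = r^2 g_\Sigma$'' argument is precisely the paper's computation, which splits $\theta = \theta_H + dr \wedge \theta_V$ and notes $\left|\iota_r^*\theta\right|_{g_\Sigma} = r^k \left|\theta_H\right|_{g_C} \leq r^k\left|\theta\right|_{g_C}$. In (b), your eigenvalue comparison of $\Psi^*h_X^{-1}$ against $g_C^{-1}$ via the AC estimate is the same as the paper's simultaneous diagonalization with $\mu_j^2 = 1 + O(r^\nu) = O(1)$, so the two arguments coincide in substance.
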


\begin{proof} (a) For $\theta \in \Lambda^k(T^*C) \otimes \Psi^*E$, we may write $\theta = \theta_{H} + dr \wedge \theta_{V}$ where $\partial_r\,\lrcorner\,\theta_H = 0$.  Then
$$\left| \iota_r^* \theta\right|_{g_\Sigma} = r^k \left| \theta_H \right|_{g_C} \leq r^k \left| \theta \right|_{g_C}.$$
Taking $\theta = \Psi^*\gamma$ gives the result. \\

\indent (b) Let $\{e_1, \ldots, e_N\}$ be a $g_C$-orthonormal basis of $T_{(r,p)}C$ with respect to which $\Psi^*h_X$ is diagonal.  This means that there exist $\mu_j = \mu_j(r)$ for which
\begin{align*}
g_C & = (e^1)^2 + \cdots + (e^N)^2, & \Psi^*h_X & = \mu_1^2 (e^1)^2 + \cdots + \mu_N^2(e^N)^2.
\end{align*}
Repeating the argument in the proof of Lemma \ref{lem:Psi-r-Bounds} shows that each $\mu_j = O(1)$.  Now, in terms of the basis $\{e_j\}$, we may write $\Psi^*\gamma = \sum_{I} A_I e^I$, say, where we sum over multi-indices $I = (i_1, \ldots, i_k)$ and write $e^I = e^{i_1} \wedge \cdots \wedge e^{i_k}$.  Therefore,
\begin{align*}
\left|\Psi^*\gamma\right|_{g_C}^2 = \sum_I A_I^2 = \sum_I \frac{1}{\mu_I^2}A_I^2 \mu_I^2 = O(1) \sum_I \frac{1}{\mu_I^2}A_I^2 = O(1) \left| \Psi^*\gamma\right|^2_{\Psi^*h_X} = O(1) \left|\gamma\right|^2_{h_X}.
\end{align*}
\end{proof}

\subsubsection{Asymptotically conical hyperk\"{a}hler manifolds}

\begin{defn} Let $(X^{4n+4}, h_X, (I,J,K), (\omega_I, \omega_J, \omega_K))$ be a hyperk\"{a}hler $(4n+4)$-manifold, $n \geq 1$.  We say that $X$ is \emph{asymptotically conical (AC)} with \emph{rate $\nu < 0$} if there exists a compact set $K \subset X$, a constant $R > 0$, a compact connected $3$-Sasakian $(4n+3)$-manifold $(\Sigma^{4n+3}, g_\Sigma)$, and a diffeomorphism
$$\Psi \colon (R,\infty) \times \Sigma \to X - K$$
such that for each $L \in \{I,J,K\}$ and $k \in \Z_{\geq 0}$,
\begin{align*}
\left| \nabla^k_{g_C}(\Psi^*\omega_L - \beta_L) \right|_{g_C} & = O(r^{\nu - k}) \ \ \ \text{as } r \to \infty \\
\left| \nabla^k_{g_C}(\Psi^*L - L_{C}) \right|_{g_C} & = O(r^{\nu - k}) \ \ \ \text{as } r \to \infty
\end{align*}
where $C = \mathrm{C}(\Sigma) = (R, \infty) \times \Sigma$ is the metric cone over $\Sigma$, and $(g_C$, $(I_C, J_C, K_C)$, $(\beta_I, \beta_J, \beta_K))$ is the natural conical hyperk\"{a}hler structure on $C$.  We call $\Sigma$ the \emph{asymptotic link} of $X$.
\end{defn}

\indent It is well-known that AC hyperk\"{a}hler $(4n+4)$-manifolds of rate $\nu$ are AC Riemannian $(4n+4)$-manifolds of rate $\nu$, meaning that we have the estimate
\begin{equation*}
\left| \nabla^k_{g_C}(\Psi^*h_X - g_{C}) \right|_{g_C} = O(r^{\nu - k}) \ \ \ \text{as } r \to \infty
\end{equation*}
See \cite[$\S$3.3]{conlon2011construction} for a proof.  From now on, we assume that $X$ is an AC hyperk\"{a}hler manifold of rate $\nu$. \\

\indent As in $\S$\ref{subsub:Proof-Cpt-Lewis}, for $\lambda = (\lambda_1, \lambda_2, \lambda_3) \in \R^3$ satisfying both $\lambda_1 + \lambda_2 + \lambda_3 = 0$ and $\lambda_1 - 2\lambda_2 - 2\lambda_3 > 0$, we define a $4n$-form $\Phi_\lambda \in \Omega^{4n}(X)$ by
$$\Phi_\lambda = \textstyle \frac{1}{(2n)!}(\lambda_1 \omega_I^{2n} + \lambda_2 \omega_J^{2n} + \lambda_3 \omega_K^{2n}).$$
The following algebraic fact will be used repeatedly in the sequel.  To state it, recall the decomposition
$$\Lambda^2(T^*X) = U_I \oplus U_J \oplus U_K \oplus W \oplus \R\omega_I \oplus \R\omega_J \oplus \R\omega_J$$
of Proposition \ref{prop:HyperHermitianDecomp}.  For a $2$-form $F \in \Lambda^2(T^*X)$, we let $F_I$ and $F_W$ denote its components in $U_I$ and $W$, respectively.

\begin{lem} \label{lem:Hodge-Star-F} Let $F \in \Lambda^2(T^*X)$.  Then:
\begin{enumerate}[(a)]
\item $F_W \wedge \Phi_\lambda = 0$.
\item $F_I \wedge \Phi_\lambda = (-\lambda_1 + \lambda_2 + \lambda_3) \ast\!F_I$.
\end{enumerate}
\end{lem}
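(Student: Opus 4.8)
The plan is to recognize that both (a) and (b) are pointwise identities in the exterior algebra of a single hyper-Hermitian vector space; the asymptotically conical structure plays no role, so I may work fiberwise in $V = T_xX$, of real dimension $4n+4 = 2m$ with $m = 2n+2$. Everything then reduces to understanding how wedging a $2$-form against $\frac{1}{(2n)!}\omega_L^{2n}$ interacts with the Hodge star, for each of the three complex structures $L \in \{I,J,K\}$ separately.

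The key algebraic input is the Weil (Lefschetz) identity on a Hermitian vector space of complex dimension $m$: for a primitive form $\beta$ of type $(p,q)$ one has $\ast\beta = (-1)^{(p+q)(p+q+1)/2}\,i^{p-q}\,\frac{1}{(m-p-q)!}\,\omega^{m-p-q}\wedge\beta$. Specializing to $2$-forms and using $m-2 = 2n$, this yields $\frac{1}{(2n)!}\omega_L^{2n}\wedge\beta = -\ast\beta$ when $\beta$ is primitive of $L$-type $(1,1)$ (so $\beta \in [\Lambda^{1,1}_{0,L}]$), and $\frac{1}{(2n)!}\omega_L^{2n}\wedge\beta = +\ast\beta$ when $\beta \in \LB \Lambda^{2,0}_L \RB$ (here every $(2,0)+(0,2)$-form is automatically primitive). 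The first thing I would verify is that the single Hodge star on the right is unambiguous across the three cases: $I,J,K$ are all compatible with the one metric $g$ and induce the same orientation, with common volume form $\frac{1}{(2n+2)!}\omega_L^{2n+2}$, so $\ast$ is literally the same operator for each $L$.

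With these sign rules in hand, the proof is bookkeeping against Proposition \ref{prop:HyperHermitianDecomp}. For (a), since $F_W \in W = [\Lambda^{1,1}_{0,I}]\cap[\Lambda^{1,1}_{0,J}]\cap[\Lambda^{1,1}_{0,K}]$ is primitive $(1,1)$ for all three $L$, each of the three terms of $F_W\wedge\Phi_\lambda = \sum_L \lambda_L\,\frac{1}{(2n)!}\omega_L^{2n}\wedge F_W$ equals $-\ast F_W$, whence $F_W\wedge\Phi_\lambda = -(\lambda_1+\lambda_2+\lambda_3)\ast F_W = 0$ by the constraint $\lambda_1+\lambda_2+\lambda_3 = 0$. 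For (b), since $F_I \in U_I = [\Lambda^{1,1}_{0,I}]\cap\LB \Lambda^{2,0}_J \RB\cap\LB \Lambda^{2,0}_K \RB$, the $I$-term of $\sum_L\lambda_L\,\frac{1}{(2n)!}\omega_L^{2n}\wedge F_I$ contributes $-\lambda_1\ast F_I$ while the $J$- and $K$-terms contribute $+\lambda_2\ast F_I$ and $+\lambda_3\ast F_I$; summing gives $F_I\wedge\Phi_\lambda = (-\lambda_1+\lambda_2+\lambda_3)\ast F_I$, as desired. (The agreement of this output with the stated coefficient is itself a useful consistency check on the sign rules.)

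The only genuine obstacle is pinning down the signs in the Weil identity and confirming the orientation consistency across $I,J,K$; once these are fixed, both parts follow immediately. I would nail the signs by evaluating the general primitive-form identity at the types $(1,1)$ and $(2,0)$, and I would use that $\ast^2 = \mathrm{Id}$ on $2$-forms in dimension $4n+4$ to pass freely between the ``$\ast\beta = \cdots$'' and ``$\cdots = \ast\beta$'' forms of each relation.
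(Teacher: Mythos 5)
Your proposal is correct and follows essentially the same route as the paper: the paper's proof likewise reduces to the two pointwise sign identities $F_W \wedge \tfrac{1}{(2n)!}\omega_L^{2n} = -\ast F_W$ (since $W \subset [\Lambda^{1,1}_{0,L}]$ for all three $L$) and $F_I \wedge \tfrac{1}{(2n)!}\omega_I^{2n} = -\ast F_I$, $F_I \wedge \tfrac{1}{(2n)!}\omega_J^{2n} = F_I \wedge \tfrac{1}{(2n)!}\omega_K^{2n} = +\ast F_I$, then sums with the constraint $\lambda_1+\lambda_2+\lambda_3 = 0$. Your only addition is to justify those signs via the Weil identity and to verify that $I,J,K$ induce a common orientation and hence a common $\ast$, which the paper takes as known.
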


\begin{proof} (a) Note that $F_W \wedge \frac{1}{(2n)!}\omega_L^{2n} = -\ast\! F_W$ for each $L \in \{I,J,K\}$.  Therefore,
\begin{align*}
F_W \wedge \Phi_\lambda & = -(\lambda_1 + \lambda_2 + \lambda_3) \ast\!F_W = 0.
\end{align*}
\indent (b) Note that $F_I \wedge \omega_I^{2n} = -\ast\! F_I$, whereas $F_I \wedge \omega_J^{2n} = F_K \wedge \omega_J^{2n} = \ast F_I$.  Therefore,
\begin{align*}
F_I \wedge \Phi_\lambda & = \textstyle \frac{1}{(2n)!}\left( \lambda_1 \,F_I \wedge \omega_I^{2n} + \lambda_2 \,F_I \wedge \omega_J^{2n} + \lambda_3 \,F_I \wedge \omega_K^{2n} \right)  \\
& = (-\lambda_1 + \lambda_2 + \lambda_3) \ast\!F_I.
\end{align*}
\end{proof}

\subsubsection{The Chern-Simons functional}

\indent \indent Let $X^{4n+4}$ be an AC hyperk\"{a}hler manifold of rate $\nu < 0$.   To study gauge-theoretic objects over $X$, we consider a class of bundles that respects the asymptotic geometry of $X$.

\begin{defn} A principal bundle $P \to X^{4n+4}$ is \emph{asymptotically framed} if there exists a principal bundle $Q \to \Sigma$ (called an \emph{asymptotic framing}) such that $\Psi^*P \cong \pi^*Q$, where $\pi \colon C = (R,\infty) \times \Sigma \to \Sigma$ is the projection onto the second factor.
$$\begin{tikzcd}
Q \arrow[d] & \pi^*Q \cong \Psi^*P \arrow[d] \arrow[l] \arrow[r] & P \arrow[d] \\
\Sigma      & C \arrow[r, "\Psi"'] \arrow[l, "\pi"]          & X - K      
\end{tikzcd}$$
Fix a connection $A_\infty$ on $Q \to \Sigma$.  A connection $A$ on $P \to X$ is called \emph{asymptotically conical (AC)} of \emph{rate $\alpha$} \emph{with respect to $A_\infty$} provided that
$$\left| \nabla^k_{g_C}(\Psi^*A - \pi^*A_\infty) \right|_{g_C} = O(r^{\alpha-k}) \ \ \text{ as } r \to \infty$$
for all $k \geq 0$.  We let
\begin{align*}
\mathcal{A}_\alpha(P, A_\infty) & = \left\{\text{AC connections on }P \text{ of rate }\alpha \text{ with respect to }A_\infty \right\} \\
\mathcal{M}_\alpha(P, A_\infty) & = \left\{A \in \mathcal{A}_\alpha(P, A_\infty) \colon A \text{ is an }\Sp(n+1)\text{-instanton} \right\}\!. \\
\end{align*}
\end{defn} 

\indent We now seek AC analogues of Lemma \ref{lem:Phi-lambda-topological} and Corollary \ref{cor:Primitive-Topological}, which as stated apply only to \emph{compact} hyperk\"{a}hler manifolds.  To this end, by using the truncations $X_r = \{x \in X \colon \rho(x) \leq r\}$ and their boundaries $S_r = \{x \in X \colon \rho(x) = r\}$, we introduce the following functional.

\begin{defn} Let $P \to X^{4n+4}$ be an asymptotically framed principal bundle, and let $A_0$ be an $\Sp(n+1)$-instanton on $P$.  For a connection $A$ on $P$, its \emph{Chern-Simons functional} is
\begin{align*}
\mathrm{CS}_\lambda[A] \colon [R, \infty) & \to [0, \infty) \\
\mathrm{CS}_\lambda[A](r) & := \int_{S_r} \mathrm{tr}\!\left( F_A \wedge (A - A_0) - \frac{1}{3}(A - A_0)^3 \right) \wedge \Phi_\lambda.
\end{align*}
\end{defn}

\indent The key properties of the CS functional are summarized in the following proposition.  To state it, recall once more that if $A$ is a connection on $P \to X$, then its curvature $2$-form $F := F_A$ decomposes as
$$F = F_I + F_J + F_K + F_W + \mu_I \omega_I + \mu_J \omega_J + \mu_K \omega_K.$$
In particular, $A$ is $I$-primitive HYM if and only if $F = F_I + F_W$, and $A$ is an $\Sp(n+1)$-instanton if and only if $F_I = 0$.

\begin{prop} \label{prop:CS-Identity} Let $A$ be a connection on $P$.
\begin{enumerate}[(a)]
\item We have
$$\mathrm{CS}_\lambda[A](r) = \sum_{L \in \{I,J,K\}}  \int_{X_r} \left(a_L | F_L |^2 +   b_L | \mu_L \omega_L |^2\right) \vol_{X_r},$$
where the constants $a_L$ and $b_L$ are as in (\ref{eq:AB-constants}). In particular, $\mathrm{CS}_\lambda[A]$ is independent of the choice of $\Sp(n+1)$-instanton $A_0$.  
\item If $A$ is $I$-primitive HYM, then
$$\mathrm{CS}_\lambda[A](r) = a_I \int_{X_r} |F_I|^2\,\mathrm{vol}_{X_r}.$$
Thus, $A$ is an $\Sp(n+1)$-instanton if and only if $\mathrm{CS}_\lambda[A](r) = 0$ for all sufficiently large $r \geq R$.
\end{enumerate}
\end{prop}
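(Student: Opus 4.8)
The plan is to express the Chern--Simons functional as a bulk integral by means of a relative transgression identity together with Stokes' theorem, and then to identify the bulk integrand using the pointwise algebra already carried out in the proof of Lemma \ref{lem:Phi-lambda-topological}. Write $a := A - A_0$, so that $a \in \Omega^1(X; \mathrm{ad}_P)$ is a globally defined $\mathrm{ad}_P$-valued $1$-form. The starting point is the standard relative Chern--Simons identity
$$\mathrm{tr}(F_A \wedge F_A) - \mathrm{tr}(F_{A_0} \wedge F_{A_0}) = d\,\mathrm{tr}\!\left( a \wedge (F_A + F_{A_0}) - \tfrac{1}{3} a^3 \right),$$
which I would derive in a line from the homotopy $A_t = A_0 + ta$, using $\tfrac{d}{dt}F_{A_t} = d_{A_t}a$, the Bianchi identity, and cyclicity of the trace. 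Since each $\omega_L$ is closed on a hyperk\"{a}hler manifold, $\Phi_\lambda$ is closed; hence wedging the transgression with $\Phi_\lambda$ and applying Stokes on the truncation $X_r$ (whose boundary is $S_r$) converts the boundary integral into a bulk integral of $\left(\mathrm{tr}(F_A^2) - \mathrm{tr}(F_{A_0}^2)\right) \wedge \Phi_\lambda$.

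The decisive simplification is that $A_0$ is an $\Sp(n+1)$-instanton, i.e.\ $F_{A_0} = (F_{A_0})_W$. By Lemma \ref{lem:Hodge-Star-F}(a) this gives $F_{A_0} \wedge \Phi_\lambda = 0$, with two consequences. First, $\mathrm{tr}(a \wedge F_{A_0}) \wedge \Phi_\lambda = \mathrm{tr}\!\left(a \wedge (F_{A_0} \wedge \Phi_\lambda)\right) = 0$, so that after wedging with $\Phi_\lambda$ the symmetric transgression form agrees with the asymmetric integrand $\mathrm{tr}(F_A \wedge a - \tfrac{1}{3}a^3) \wedge \Phi_\lambda$ appearing in the definition of $\mathrm{CS}_\lambda[A]$. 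Second, the pointwise identity from the proof of Lemma \ref{lem:Phi-lambda-topological} (applied with $n$ replaced by $n+1$, so that $\Theta_L = \tfrac{1}{(2n)!}\omega_L^{2n}$) shows that the $|F_W|^2$-coefficient of $\mathrm{tr}(F \wedge F) \wedge \Phi_\lambda$ is $\lambda_1 + \lambda_2 + \lambda_3 = 0$; in particular $\mathrm{tr}(F_{A_0}^2) \wedge \Phi_\lambda = 0$, and in general
$$\mathrm{tr}(F_A \wedge F_A) \wedge \Phi_\lambda = \left( \sum_{L \in \{I,J,K\}} a_L |F_L|^2 + \sum_{L \in \{I,J,K\}} b_L |\mu_L \omega_L|^2 \right) \vol_X.$$
Assembling these, Stokes yields $\mathrm{CS}_\lambda[A](r) = \int_{X_r} \mathrm{tr}(F_A^2) \wedge \Phi_\lambda$, which equals the claimed expression in part (a); independence of $A_0$ is then automatic, since the right-hand side no longer mentions $A_0$. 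For part (b), if $A$ is $I$-primitive HYM then $F_J = F_K = 0$ and $\mu_I = \mu_J = \mu_K = 0$, leaving only $\mathrm{CS}_\lambda[A](r) = a_I \int_{X_r} |F_I|^2\,\vol$ with $a_I = \lambda_1 - 2\lambda_2 - 2\lambda_3 > 0$; the final equivalence follows because the integrand is nonnegative, so $\mathrm{CS}_\lambda[A](r) = 0$ for all large $r$ forces $F_I \equiv 0$ on $X = \bigcup_r X_r$, and conversely.

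The genuinely delicate bookkeeping --- and the step I expect to be the main obstacle --- is matching conventions so that the \emph{asymmetric} integrand $\mathrm{tr}(F_A \wedge a - \tfrac{1}{3}a^3)$ in the definition of $\mathrm{CS}_\lambda$ lines up with the \emph{symmetric} transgression form $\mathrm{tr}(a \wedge (F_A + F_{A_0}) - \tfrac{1}{3}a^3)$. This is precisely where the instanton hypothesis on $A_0$ enters, and it must be invoked with care: the two forms are genuinely distinct and agree only after wedging with $\Phi_\lambda$. By contrast, the asymptotically conical geometry plays no role at this stage --- Stokes is applied on the compact manifold-with-boundary $X_r$, and no decay estimates on $A$ are required here. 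Those estimates enter only later, when $\mathrm{CS}_\lambda[A](r)$ is controlled as $r \to \infty$.
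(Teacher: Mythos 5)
Your proposal is correct and follows essentially the same route as the paper's proof: the relative transgression identity wedged with the closed form $\Phi_\lambda$, Stokes' Theorem on the truncation $X_r$, the pointwise identity underlying Lemma \ref{lem:Phi-lambda-topological} (with the dimension shift so that $\Theta_L = \tfrac{1}{(2n)!}\omega_L^{2n}$), and Lemma \ref{lem:Hodge-Star-F}(a) to show $\mathrm{tr}\!\left(F_{A_0} \wedge (A-A_0)\right) \wedge \Phi_\lambda = 0$, which reconciles the symmetric transgression form with the asymmetric integrand in the definition of $\mathrm{CS}_\lambda$. Your treatment of part (b) and of the vanishing of the $|F_W|^2$-coefficient is slightly more explicit than the paper's, but the argument is the same.
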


\begin{proof} (a) Let $A$ be a connection on $P$, let $A_0$ be an $\Sp(n+1)$-instanton on $P$, and let $F$ and $F_0$ be their respective curvature forms.  It is well-known that
$$d \left[ \mathrm{tr}\!\left( (F + F_0) \wedge (A - A_0)  - \frac{1}{3} (A-A_0)^3 \right) \right] = \mathrm{tr}(F^2) - \mathrm{tr}(F_0^2).$$
Wedging with $\Phi_\lambda$, integrating over $X_r$, and then applying Stokes' Theorem gives:
\begin{align*}
\int_{S_r}  \mathrm{tr}\!\left( (F + F_0) \wedge (A - A_0)  - \frac{1}{3} (A - A_0)^3  \right) \wedge \Phi_\lambda & = \int_{X_r} \mathrm{tr}(F^2) \wedge \Phi_\lambda - \int_{X_r} \mathrm{tr}(F_0^2) \wedge \Phi_\lambda \\
& = \sum_{L \in \{I,J,K\}} \int_{X_r} \left(a_L | F_L |^2 +   b_L | \mu_L \omega_L |^2\right) \vol_{X_r},
\end{align*}
where in the last step we applied Lemma \ref{lem:Phi-lambda-topological} to both integrals.  To simplify the left side, note that since $A_0$ is an $\Sp(n+1)$-instanton, we have $F_0 \in \Gamma( W \otimes \mathrm{ad}_P)$.  Therefore, Lemma \ref{lem:Hodge-Star-F}(a) gives $F_0 \wedge \Phi_\lambda = 0$, and so
$$\mathrm{tr}(F_0 \wedge (A - A_0)) \wedge \Phi_\lambda = \mathrm{tr}(F_0 \wedge (A - A_0) \wedge \Phi_\lambda) = 0.$$
Therefore,
$$\int_{S_r}  \mathrm{tr}\!\left( F \wedge (A - A_0)  - \frac{1}{3} (A - A_0)^3 \right) \wedge \Phi_\lambda = \sum_{L \in \{I,J,K\}} \int_{X_r} \left(a_L | F_L |^2 +   b_L | \mu_L \omega_L |^2\right) \vol_{X_r}.$$
\indent (b) This follows immediately from part (a).
\end{proof}

\subsubsection{The asymptotically conical Lewis theorem} \label{subsub:ACLewis}

\indent \indent We are finally in a position to state and prove the AC analogue of Theorem \ref{thm:Compact-Lewis}.  

\begin{thm} \label{thm:ACLewis} Let $P \to X^{4n+4}$ be an asymptotically framed principal $G$-bundle over an asymptotically conical hyperk\"{a}hler manifold $X$ of rate $\nu < 0$, where $G$ is a compact Lie group.  Let $\Sigma^{4n+3}$ be the $3$-Sasakian asymptotic link of $X$, let $Q \to \Sigma$ be an asymptotic framing, and fix a tri-contact instanton $A_\infty$ on $Q \to \Sigma$. \\
\indent Suppose that $\nu \leq - 2 - \frac{4}{3}n$, and that $\mathcal{M}_\alpha(P, A_\infty)$ is non-empty for some $\alpha < \nu + 1$.  If $A \in \mathcal{A}_\alpha(P, A_\infty)$ is an asymptotically conical $I$-primitive HYM connection satisfying
\begin{align}
\left| \partial_r \,\lrcorner\,\Psi^*F_I \right|_{g_C} & = O(r^{\beta}) \ \ \text{ where } \beta < -4n - \nu - 4, \label{eq:beta-bound}
\end{align}
where $F = F_A$ is the curvature of $A$, then $A$ is an $\Sp(n+1)$-instanton.
\end{thm}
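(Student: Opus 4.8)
The plan is to prove that the Chern--Simons functional $\mathrm{CS}_\lambda[A]$ of Proposition \ref{prop:CS-Identity} tends to $0$ as $r\to\infty$, and then to exploit its monotonicity. Since $\mathcal{M}_\alpha(P,A_\infty)$ is non-empty, fix once and for all an $\Sp(n+1)$-instanton $A_0\in\mathcal{M}_\alpha(P,A_\infty)$ to define $\mathrm{CS}_\lambda[A]$; as $A,A_0\in\mathcal{A}_\alpha(P,A_\infty)$ are both asymptotic to $\pi^*A_\infty$, their difference satisfies $|\Psi^*(A-A_0)|_{g_C}=O(r^\alpha)$, hence $|A-A_0|_{h_X}=O(r^\alpha)$ by Lemma \ref{lem:PullbackBounds}(b). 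The structural observation I would record first is that the $W$-component of curvature contributes nothing: since $A$ is $I$-primitive HYM we have $F_A=F_I+F_W$, and Lemma \ref{lem:Hodge-Star-F} gives both $F_W\wedge\Phi_\lambda=0$ and $F_I\wedge\Phi_\lambda=(-\lambda_1+\lambda_2+\lambda_3)\ast\!F_I$. Rearranging the scalar form $\Phi_\lambda$ inside the trace therefore collapses the quadratic term of the integrand to a constant multiple of $\mathrm{tr}(\ast\!F_I\wedge(A-A_0))$, so that $\mathrm{CS}_\lambda[A](r)$ is the integral over $S_r$ of $(-\lambda_1+\lambda_2+\lambda_3)\,\mathrm{tr}(\ast\!F_I\wedge(A-A_0))-\tfrac{1}{3}\mathrm{tr}((A-A_0)^3)\wedge\Phi_\lambda$.

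Next I would estimate the two surviving terms using the scaling bounds of Lemmas \ref{lem:Psi-r-Bounds} and \ref{lem:PullbackBounds}, recalling that $\mathrm{vol}_{h_r}(S_r)=O(r^{4n+3})$ and $|\Phi_\lambda|_{h_X}=O(1)$ (a constant-norm parallel form). For the cubic term, restriction to $S_r$ only decreases norms, so it is bounded by $C\,|A-A_0|_{h_X}^3\,|\Phi_\lambda|_{h_X}\,\mathrm{vol}_{h_r}(S_r)=O(r^{4n+3+3\alpha})$; since $\alpha<\nu+1\le-1-\tfrac{4}{3}n=-\tfrac{4n+3}{3}$ --- which is exactly where the assumption $\nu\le-2-\tfrac{4}{3}n$ is used --- this exponent is negative and the cubic term tends to $0$. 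The curvature term needs the sharper input (\ref{eq:beta-bound}), since the naive bound via $|F_I|$ is too weak. Here I would use that only the \emph{tangential} part of $\ast\!F_I$ survives restriction to the hypersurface $S_r$: writing $N$ for the unit normal (asymptotic to $\partial_\rho=\Psi_*\partial_r$ up to $O(r^\nu)$ corrections), the standard identity $\iota_{S_r}^*(\ast\!F_I)=\pm\ast_{S_r}\iota_{S_r}^*(N\lrcorner F_I)$ and the fact that $\ast_{S_r}$ is an isometry give $|\iota_{S_r}^*(\ast\!F_I)|_{h_r}\le|N\lrcorner F_I|_{h_X}=O(r^\beta)$, the last step by (\ref{eq:beta-bound}) and Lemma \ref{lem:PullbackBounds}(b). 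Combined with $|\iota_{S_r}^*(A-A_0)|_{h_r}=O(r^\alpha)$ and $\mathrm{vol}_{h_r}(S_r)=O(r^{4n+3})$, this bounds the curvature term by $O(r^{\alpha+\beta+4n+3})$; the hypotheses $\alpha<\nu+1$ and $\beta<-4n-\nu-4$ yield $\alpha+\beta<-4n-3$, so the exponent is negative and this term also tends to $0$.

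Finally I would assemble the conclusion. Both contributions vanish in the limit, so $\lim_{r\to\infty}\mathrm{CS}_\lambda[A](r)=0$. But by Proposition \ref{prop:CS-Identity}(b) we have $\mathrm{CS}_\lambda[A](r)=a_I\int_{X_r}|F_I|^2\,\mathrm{vol}_{X_r}$ with $a_I>0$, which is non-negative and non-decreasing in $r$; a non-negative, non-decreasing function with limit $0$ is identically $0$. Hence $\int_{X_r}|F_I|^2=0$ for every $r\ge R$, so $F_I\equiv0$ on $X$ and $A$ is an $\Sp(n+1)$-instanton.

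I expect the main obstacle to be the curvature term. One must first recognize, via Lemma \ref{lem:Hodge-Star-F}, that the dominant $O(r^{-2})$ part $F_W$ of the curvature --- inherited from the tri-contact model $A_\infty$, whose dilation-invariant extension $\pi^*A_\infty$ is an $\Sp(n+1)$-instanton on the cone by Proposition \ref{prop:Cone-Sp(n)} --- is annihilated upon wedging with $\Phi_\lambda$, and only then perform the hypersurface Hodge reduction so that the hypothesis (\ref{eq:beta-bound}) on the \emph{normal contraction} $\partial_r\lrcorner F_I$ (rather than on the larger quantity $F_I$) becomes precisely the estimate one needs. Arranging the three rates $\alpha,\beta,\nu$ together with the dimensional factor $r^{4n+3}$ into a strictly negative exponent is the delicate bookkeeping at the heart of the argument.
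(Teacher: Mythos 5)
Your proposal is correct and follows essentially the same route as the paper: fix an $\Sp(n+1)$-instanton $A_0 \in \mathcal{M}_\alpha(P,A_\infty)$, use Proposition \ref{prop:CS-Identity} together with Lemma \ref{lem:Hodge-Star-F} to reduce matters to the two boundary terms, derive the same decay rates $O(r^{4n+3+\alpha+\beta})$ and $O(r^{4n+3+3\alpha})$, verify these exponents are negative from $\alpha < \nu+1$, $\beta < -4n-\nu-4$, $\nu \leq -2-\tfrac{4}{3}n$, and conclude $F_I \equiv 0$. The only differences are cosmetic: you estimate intrinsically on $S_r$ (via the hypersurface Hodge identity and the fact that restriction to a submanifold does not increase norms) where the paper pulls everything back to $\Sigma$ via $\Psi_r$ and uses Lemmas \ref{lem:Psi-r-Bounds} and \ref{lem:PullbackBounds}, and you invoke monotonicity of $r \mapsto a_I\int_{X_r}|F_I|^2$ where the paper cites the Monotone Convergence Theorem.
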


\begin{proof} Let $A \in \mathcal{A}_\alpha(P, A_\infty)$ be an asymptotically conical $I$-primitive HYM connection satisfying (\ref{eq:beta-bound}).  Since  $\mathcal{M}_\alpha(P, A_\infty)$ is non-empty, there exists an $\Sp(n+1)$-instanton $A_0$ satisfying
$$|\Psi^*A_0 - \pi^*A_\infty|_{g_C} = O(r^{\alpha}).$$
Consequently,
$$\left|\Psi^*(A-A_0)\right|_{g_C} \leq \left|\Psi^*A - \pi^*A_\infty\right|_{g_C} + \left|\Psi^*A_0 - \pi^*A_\infty\right|_{g_C} = O(r^{\alpha}).$$
In particular, suppressing pullbacks via the inclusions $S_r \hookrightarrow X - K$ from the notation, Lemma \ref{lem:PullbackBounds}(a) gives
\begin{align}
\left|\Psi_r^*(A-A_0)\right|_{g_\Sigma} & \leq r\left|\Psi^*(A-A_0)\right|_{g_C} = O(r^{\alpha+1})  \label{eq:Alpha-Estimate} \\
\left|  \partial_r\,\lrcorner\,\Psi_r^*F_I \right|_{g_\Sigma} & \leq r \left|  \partial_r\,\lrcorner\,\Psi^*F_I \right|_{g_C} = O(r^{\beta+1}).\label{eq:Beta-Estimate}
\end{align}
\indent Now, following the technique of \cite{papoulias2022spin}, we split
\begin{align*}
 \mathrm{CS}_\lambda[A](r) & = \frac{1}{2} \int_{S_r} \mathrm{tr}\!\left( F \wedge (A - A_0) - \frac{1}{3}(A - A_0)^3 \right) \wedge \Phi_\lambda \leq \left| I_1(r) \right|  +  \left| I_2(r) \right|\!,
\end{align*}
where
\begin{align*}
I_1(r) & = \frac{1}{2} \int_{S_r} \mathrm{tr}\!\left(F \wedge (A - A_0) \right) \wedge \Phi_\lambda, & I_2(r) & = \frac{1}{6}\int_{S_r} \mathrm{tr}\!\left( (A - A_0)^3 \right) \wedge \Phi_\lambda.
\end{align*}
We aim to establish the following two estimates:
\begin{align}
\left| I_1(r) \right| & = O\!\left(r^{4n+3+\alpha+\beta}\right) \label{eq:I1-bound}  \\
\left| I_2(r) \right| & = O\!\left(r^{4n+3+3\alpha}\right)\!. \label{eq:I2-bound}
\end{align}
Note that these two bounds imply the result.  To see this, observe that our assumptions imply that $4n + 3 + \alpha + \beta < 0$ and $4n + 3 + 3\alpha < 0$.  Therefore, the Monotone Convergence Theorem, followed by Proposition \ref{prop:CS-Identity}(b), followed by the above estimates gives
$$a_I \int_X |F_I|^2\,\mathrm{vol} = \lim_{r \to \infty} a_I \int_{X_r} |F_I|^2\,\mathrm{vol}_{X_r} = \lim_{r \to \infty} \mathrm{CS}_\lambda[A](r) \leq \lim_{r \to \infty} ( |I_1(r)| + |I_2(r)| ) = 0,$$
whence $F_I = 0$, and $A$ is an $\Sp(n+1)$-instanton. \\
\indent We begin by proving (\ref{eq:I1-bound}).  Set $\tau := -\lambda_1 + \lambda_2 + \lambda_3$.  Since $A$ is $I$-primitive HYM, Lemma \ref{lem:Hodge-Star-F} implies that $F \wedge \Phi_\lambda = \tau \ast_X\! F_I$, and hence
$$\left| I_1(r) \right| = \left|\frac{1}{2} \int_{S_r} \mathrm{tr}\!\left(F \wedge (A - A_0) \right) \wedge \Phi_\lambda \right| = \left| \frac{\tau}{2} \int_{S_r} \mathrm{tr}( (A-A_0) \wedge \ast_X F_I)\right|.$$
Now, setting $\widehat{\partial}_\rho := \frac{1}{\left|\partial_\rho\right|}\partial_\rho$, we have
\begin{align*}
\mathrm{tr}((A - A_0) \wedge \ast_X F_I) & = \mathrm{tr}( (A-A_0) \wedge \ast_{S_r} (\widehat{\partial}_\rho\,\lrcorner\,F_I) )  = -\frac{1}{|\partial_\rho|} \left\langle A - A_0,\, \partial_\rho \,\lrcorner\, F_I \right\rangle_{h_r} \mathrm{vol}_{S_r}.
\end{align*}
Moreover, since $X$ is asymptotically conical of rate $\nu$, we have
$$\left| h_X(\partial_\rho, \partial_\rho) - 1 \right| = \left| (\Psi^*h_X)(\partial_r, \partial_r) - g_C(\partial_r, \partial_r) \right| = O(r^{\nu}),$$
which implies that $\frac{1}{|\partial_\rho|} = O(1)$.  Therefore,
\begin{align*}
\left| I_1(r) \right| & = \left| \frac{\tau}{2} \int_{S_r} \frac{1}{|\partial_\rho|} \left\langle A - A_0,\, \partial_\rho \,\lrcorner\, F_I \right\rangle_{h_r} \mathrm{vol}_{S_r} \right| \\
& = O(1) \left| \int_{\Sigma} \Psi_r^* \!\left( \left\langle A - A_0,\, \partial_\rho \,\lrcorner\, F_I \right\rangle_{h_r} \mathrm{vol}_{S_r} \right) \right| \\
& = O(1) \left| \int_{\Sigma} \left\langle \Psi_r^*(A - A_0),\,  \Psi_r^*(\partial_\rho \,\lrcorner\, F_I) \right\rangle_{\Psi_r^*h_r} \mathrm{vol}_{\Psi_r^*h_r} \right| \\
& = O(r^{4n+1}) \left|  \int_{\Sigma}  \left\langle \Psi_r^*(A - A_0),\,  \partial_r\,\lrcorner\, \Psi_r^*F_I \right\rangle_{g_\Sigma} \mathrm{vol}_{\Sigma}  \right|
\end{align*}
where in the last step we used (\ref{eq:Vol-Bound}) and (\ref{eq:Dual-Metric-Bound}).  Finally, from the bounds (\ref{eq:Alpha-Estimate}) and (\ref{eq:Beta-Estimate}), we obtain
$$\left| \left\langle \Psi_r^*(A - A_0),\,  \partial_r\,\lrcorner\, \Psi_r^*F_I \right\rangle_{g_\Sigma} \right|  \leq \left| \Psi_r^*(A - A_0) \right|_{g_\Sigma} \left|  \partial_r\,\lrcorner\,\Psi_r^*F_I \right|_{g_\Sigma} = O(r^{\alpha + \beta +2}),$$
which gives (\ref{eq:I1-bound}). \\
\indent It remains to prove  (\ref{eq:I2-bound}).  For this, we estimate
\begin{align*}
\left|I_2(r)\right| = \left|\frac{1}{6}\int_{S_r} \mathrm{tr}\!\left[ (A - A_0)^3  \wedge \Phi_\lambda \right]\right| & = \left| \frac{1}{6}\int_{\Sigma} \mathrm{tr}\!\left[ (\Psi_r^*(A - A_0))^3  \wedge \Psi_r^*\Phi_\lambda \right] \right| \\
& \leq \frac{1}{6} \int_\Sigma \left| \Psi_r^*(A - A_0) \right|_{g_\Sigma}^3 \left| \Psi_r^*\Phi_\lambda \right|_{g_\Sigma}\,\vol_\Sigma.
\end{align*}
Now, the bound (\ref{eq:Alpha-Estimate})  gives $\left| \Psi_r^*(A - A_0) \right|_{g_\Sigma}^3 = O( r^{3\alpha + 3})$.  Moreover, Lemma \ref{lem:PullbackBounds}(a) and (b) give
$$\left| \Psi_r^*\Phi_\lambda \right|_{g_\Sigma} \leq r^{4n} \left| \Psi^*\Phi_\lambda \right|_{g_C} = O(r^{4n}) \left| \Phi_\lambda\right|_{h_X}.$$
Finally, since $\nabla \Phi_\lambda = 0$, we have $\left|\Phi_\lambda\right|_{h_X} = \text{constant}$.  Thus, we conclude that $\left|I_2(r)\right| = O(r^{4n+3\alpha+3})$ as claimed.
\end{proof}

\bibliographystyle{plain}
\bibliography{Instanton-Ref}
\Addresses

\end{document}